\pdfoutput=1
\documentclass{scrartcl}
\usepackage[intlimits]{amsmath}

\usepackage{amsthm,amssymb,bm,ifpdf,authblk,extpfeil}
\usepackage{accents}
\usepackage{subcaption}
\usepackage{url}
\usepackage{longtable}
\usepackage[abbrev,nobysame,lite]{amsrefs}
\usepackage[T1]{fontenc}
\usepackage{lmodern}
\usepackage[inline]{enumitem}
\usepackage{microtype}
\usepackage[dvipsnames]{xcolor}
\usepackage{tikz}
\usetikzlibrary{shapes}
\usepackage[all]{xy}
\usepackage{tocstyle}
\usepackage{hyperref}

\definecolor{ourdark}{rgb}{0.0, 0.1, 0.2}
\definecolor{ouryellow}{rgb}{0.85, 0.85, 0}
\definecolor{ourgreen}{rgb}{0.0, 0.5, 0.9}

\definecolor{our0}{rgb}{0.0, 0.0, 0.0}
\definecolor{our1}{rgb}{0.2, 0.2, 0.7}
\definecolor{our2}{rgb}{0.4, 0.4, 0.4}
\definecolor{our3}{rgb}{0.4, 0.6, 0.4}
\definecolor{our4}{rgb}{0.85, 0.85, 0.4}
\definecolor{our5}{rgb}{1, 1, 1}

\allowdisplaybreaks[1]

%%%%%%%%%%%%%%%%%%%%%%%%%%%%%%%%%%%%%%%%%%%%%%%%%%%%%%%%%%%%%%%%%%%%%%%%
% ENVIRONMENTS
%%%%%%%%%%%%%%%%%%%%%%%%%%%%%%%%%%%%%%%%%%%%%%%%%%%%%%%%%%%%%%%%%%%%%%%%
\newtheorem{thm}{Theorem}[section]
\newtheorem{lemma}[thm]{Lemma}
\newtheorem{prop}[thm]{Proposition}

\newtheorem{qu}[thm]{Question}
\newtheorem{cor}[thm]{Corollary}
\newtheorem{thmabc}{Theorem}
\newtheorem{corabc}[thmabc]{Corollary}

\theoremstyle{definition}
\newtheorem{ex}[thm]{Example}
\newtheorem*{ex*}{Example}

\newtheorem{defn}[thm]{Definition}
\newtheorem*{defn*}{Definition}

\newtheorem{rem}[thm]{Remark}
\newtheorem*{rem*}{Remark}

%%%%%%%%%%%%%%%%%%%%%%%%%%%%%%%%%%%%%%%%%%%%%%%%%%%%%%%%%%%%%%%%%%%%%%%%
% REDEFINED BEHAVIOUR
%%%%%%%%%%%%%%%%%%%%%%%%%%%%%%%%%%%%%%%%%%%%%%%%%%%%%%%%%%%%%%%%%%%%%%%%

\numberwithin{equation}{section}
\allowdisplaybreaks[1]

\makeatletter
\DeclareOldFontCommand{\rm}{\normalfont\rmfamily}{\mathrm}
\DeclareOldFontCommand{\sf}{\normalfont\sffamily}{\mathsf}
\DeclareOldFontCommand{\bf}{\normalfont\bfseries}{\mathbf}
\DeclareOldFontCommand{\it}{\normalfont\itshape}{\mathit}
\makeatother

\renewcommand{\le}{\leqslant}
\renewcommand{\ge}{\geqslant}

\def\emptyset{\varnothing}

\def\emph{}
\DeclareTextFontCommand{\bfemph}{\bf}
\DeclareTextFontCommand{\itemph}{\it}
\def\emph{\bfemph}

%%%%%%%%%%%%%%%%%%%%%%%%%%%%%%%%%%%%%%%%%%%%%%%%%%%%%%%%%%%%%%%%%%%%%%%%
% MACROS
%%%%%%%%%%%%%%%%%%%%%%%%%%%%%%%%%%%%%%%%%%%%%%%%%%%%%%%%%%%%%%%%%%%%%%%%

\makeatletter
\def\blankfootnote{\xdef\@thefnmark{}\@footnotetext}

% Source: http://tex.stackexchange.com/questions/54854/reference-plain-text
\newcommand*{\textlabel}[2]{%
  \edef\@currentlabel{#1}% Set target label
  \phantomsection% Correct hyper reference link
  #1\label{#2}% Print and store label
}
\makeatother

\DeclareMathOperator{\infl}{inf}
\DeclareMathOperator{\resn}{res}

\DeclareMathOperator{\CG}{K}
\DeclareMathOperator{\DG}{\Delta}

\newcommand{\blank}{\ensuremath{\square}}
\newcommand{\Colours}{\ensuremath{\mathcal{C}}}
\newcommand{\Coloursb}{\ensuremath{\Colours \cup\{\blank\}}}

\DeclareMathOperator{\Orbit}{Orbit}
\DeclareMathOperator{\orbit}{\mathsf{orbit}}

\newcommand{\Rho}{\ensuremath{\mathsf{P}}}
\newcommand{\bRho}{\ensuremath{\bm{\Rho}}}
\newcommand{\bGamma}{\ensuremath{\bm{\mathsf{\Gamma}}}}
\newcommand{\bTheta}{\ensuremath{\bm{\mathsf{\Theta}}}}
\newcommand{\bSigma}{\ensuremath{\bm{\mathsf{\Sigma}}}}

\DeclareMathOperator{\inc}{\mathsf{inc}}
\DeclareMathOperator{\ret}{\mathsf{ret}}

\newcommand{\onto}{\twoheadrightarrow}
\newcommand{\into}{\rightarrowtail}
\newcommand{\incl}{\hookrightarrow}
\newcommand{\xonto}{\xtwoheadrightarrow}

\newcommand{\xto}{\xrightarrow}

\newcommand{\cc}{\ensuremath{\mathsf{cc}}}

\newcommand{\ak}{\ensuremath{\mathsf{ask}}}

\newcommand{\join}{\ensuremath{\vee}}

\DeclareMathOperator{\Powf}{\mathcal{P}_{\mathrm{f}}}

\let\AA\undefined
\newcommand{\RF}{\mathfrak{K}}
\newcommand{\AA}{\mathbf{A}}
\newcommand{\QQ}{\mathbf{Q}}
\newcommand{\FF}{\mathbf{F}}
\newcommand{\NN}{\mathbf{N}}
\newcommand{\ZZ}{\mathbf{Z}}
\newcommand{\CC}{\mathbf{C}}

\newcommand{\UU}{\mathbf{U}}
\newcommand{\fa}{\ensuremath{\mathfrak a}}
\newcommand{\fg}{\ensuremath{\mathfrak g}}
\newcommand{\Zeta}{\ensuremath{\mathsf{Z}}}
\newcommand{\bone}{\ensuremath{\mathbf{1}}}
\newcommand{\PID}{\textsf{\textup{PID}}}
\newcommand{\DVR}{\textsf{\textup{DVR}}}
\newcommand{\fO}{\mathfrak{O}}
\newcommand{\fP}{\mathfrak{P}}
\newcommand{\cA}{\mathcal{A}}
\newcommand{\cB}{\mathcal{B}}
\newcommand{\cD}{\mathcal{D}}
\newcommand{\cF}{\mathcal{F}}
\newcommand{\cG}{\mathcal{G}}
\newcommand{\cJ}{\mathcal{J}}
\newcommand{\cN}{\mathcal{N}}
\newcommand{\sA}{\mathsf{A}}
\newcommand{\sB}{\mathsf{B}}
\newcommand{\sC}{\mathsf{C}}
\newcommand{\sD}{\mathsf{D}}
\newcommand{\sE}{\mathsf{E}}
\newcommand{\sF}{\mathsf{F}}
\newcommand{\sG}{\mathsf{G}}
\newcommand{\sL}{\mathsf{L}}
\newcommand{\sS}{\mathsf{S}}
\newcommand{\cZ}{\mathcal{Z}}

\DeclareMathOperator{\Board}{Rel}
\DeclareMathOperator{\AltBoard}{ARel}
\DeclareMathOperator{\SymBoard}{SRel}

% Source: https://tex.stackexchange.com/a/10508
\newlength{\dhatheight}

% Source: https://tex.stackexchange.com/a/126730  
\newcommand{\sslash}{\mathbin{/\mkern-6mu/}}

\DeclareMathOperator{\concnt}{k}
\DeclareMathOperator{\Alt}{Alt} 
\DeclareMathOperator{\Sym}{Sym}

\DeclareMathOperator{\GL}{GL}

\newcommand{\XX}{\bm{X}}

\newcommand{\Sl}{\ensuremath{\mathfrak{sl}}}
\newcommand{\tr}{\ensuremath{\mathfrak{tr}}}

\newcommand{\stacks}[1]{{\cite[\href{http://stacks.math.columbia.edu/tag/#1}{Tag #1}]{stacks-project}}}

\DeclareMathOperator{\cent}{c}

\DeclareMathOperator{\Ker}{Ker}
\DeclareMathOperator{\Coker}{Coker}
\DeclareMathOperator{\Zent}{Z}
\DeclareMathOperator{\Fit}{Fit}

\DeclareMathOperator{\Hom}{Hom}
\DeclareMathOperator{\Mat}{M}
\DeclareMathOperator{\ad}{ad}
\DeclareMathOperator{\dd}{d\!}
\newcommand{\normal}{\triangleleft}
\newcommand{\dtimes}{\ensuremath{\cdot}}
\newcommand{\card}[1]{\lvert#1\rvert}

\DeclarePairedDelimiter{\abs}{\lvert}{\rvert}

\DeclareMathOperator{\rank}{rk}

\DeclareMathOperator{\Real}{Re}

\newcommand{\llb}{\ensuremath{[\![ }}
\newcommand{\rrb}{\ensuremath{]\!] }}

\newcommand{\std}{\ensuremath{\mathsf e}}
\newcommand{\ask}[1]{\operatorname{ask}({#1})}

%%%%%%%%%%%%%%%%%%%%%%%%%%%%%%%%%%%%%%%%%%%%%%%%%%%%%%%%%%%%%%%%%%%%%%%%
% TITLE PAGE
%%%%%%%%%%%%%%%%%%%%%%%%%%%%%%%%%%%%%%%%%%%%%%%%%%%%%%%%%%%%%%%%%%%%%%%%

\ifpdf
  \pdfcompresslevel=9
  \hypersetup{pdftitle={Linear relations with disjoint supports and average sizes of kernels},
    pdfauthor={Angela Carnevale and Tobias Rossmann}
  }
\fi
\overfullrule=8pt

\setcounter{tocdepth}{1}
\title{Linear relations with disjoint supports and average sizes of kernels}
\author{Angela Carnevale and Tobias Rossmann}
\date{}

\begin{document}

\maketitle

\vspace*{-5em}
\begin{center}
  \begin{tikzpicture}
    [box/.style={rectangle,draw=black,thick, minimum size=1cm},
    scale=.2, every node/.style={scale=.2}
    ]
    \foreach \x in {0,1,2,3}{
      \foreach \y in {0,1,2,3}
      \node[box,fill=white] at (\x,\y){};
    }
    \node[box,fill=BlueViolet] at (0,3){};  
    \node[box,fill=BlueViolet] at (1,2){};  
    \node[box,fill=BlueViolet] at (2,1){};  
    \node[box,fill=BlueViolet] at (3,0){};
    
    \node[box,fill=Green] at (1,3){};  
    \node[box,fill=Green] at (2,2){};  
    \node[box,fill=Green] at (3,1){};  
    \node[box,fill=Green] at (0,0){};
    
    \node[box,fill=YellowOrange] at (2,3){};  
    \node[box,fill=YellowOrange] at (3,2){};  
    \node[box,fill=YellowOrange] at (0,1){};  
    \node[box,fill=YellowOrange] at (1,0){};      

    \node[box,fill=Red] at (3,3){};  
    \node[box,fill=Red] at (0,2){};  
    \node[box,fill=Red] at (1,1){};  
  \end{tikzpicture}
\end{center}

\begin{abstract}
  \small
  We study the effects of imposing linear relations within modules of
  matrices on average sizes of kernels.
  The relations that we consider can be described combinatorially in terms of
  partial colourings of grids.
  The cells of these grids correspond to positions in matrices and each
  defining relation involves all cells of a given colour.
  We prove that imposing such relations arising from ``admissible'' partial colourings 
  has no effect on average sizes of kernels over finite quotients of discrete
  valuation rings.
  This vastly generalises the known fact that average sizes of kernels
  of general square and traceless matrices of the same size coincide over such rings.
  As a group-theoretic application, we explicitly determine zeta functions enumerating
  conjugacy classes of finite $p$-groups derived from free class-$3$-nilpotent
  groups for $p\ge 5$.
\end{abstract}

\blankfootnote{%
\phantom{.}
 \noindent{\itshape 2020 Mathematics Subject Classification.}
 05A15, 11M41, 20E45, 20D15, 15B33\\
 \noindent{\itshape Keywords.
 Average sizes of kernels, linear relations, partial colourings, conjugacy classes, $p$-groups}
}

\tableofcontents

%%%%%%%%%%%%%%%%%%%%%%%%%%%%%%%%%%%%%%%%%%%%%%%%%%%%%%%%%%%%%%%%%%%%%%%% 
\section{Introduction}
\label{s:intro}
%%%%%%%%%%%%%%%%%%%%%%%%%%%%%%%%%%%%%%%%%%%%%%%%%%%%%%%%%%%%%%%%%%%%%%%%

%%%%%%%%%%%%%%%%%%%%%%%%%%%%%%%%%%%%%%%%%%%%%%%%
\subsection{Motivation: shapes, ranks, and average sizes of kernels of
  matrices}
\label{ss:motivation}
%%%%%%%%%%%%%%%%%%%%%%%%%%%%%%%%%%%%%%%%%%%%%%%% 

The starting point of the research described in this article is
the observation that four families of modules of matrices
share a number of curious features.
For a (commutative) ring $R$, let $\Mat_{d\times e}(R)$ denote  the module of all $d\times e$
matrices over $R$.
Further let $\Alt_d(R)$, $\Sym_d(R)$, and $\Sl_d(R)$ denote the modules of
alternating (i.e.\ antisymmetric with zeros along the diagonal), symmetric,
and traceless $d\times d$ matrices over $R$, respectively.
To describe the aforementioned common features,
let $T_d(R)\subset \Mat_d(R) := \Mat_{d\times d}(R)$ be one of the preceding
four types of modules of $d\times d$ matrices.
Then:
\begin{enumerate}[label={(\Alph*)}]
\item
  \label{motivation_simple}
  The module $T_d(R)$ is defined by ``simple'' 
  linear relations among matrix entries within the ambient module $\Mat_{d}(R)$.
  More precisely, we can choose defining linear relations
  (e.g.\ $x_{ij} - x_{ji} = 0$ or $x_{11}+\dotsb+x_{dd} = 0$) such that
  non-zero coefficients are units and
  different relations are supported on disjoint sets
  of matrix positions.
\item
  \label{motivation_rank}
  Taking $R$ to be a finite field $\FF_q$,
  the number of matrices in $T_d(\FF_q)$ of fixed rank $r$ is given by a polynomial
  in $q$ (which depends on $d$ and $r$).
  Moreover, this polynomial admits an explicit description in
  terms of permutation statistics on the Coxeter group $\mathrm B_d = \{\pm
  1\} \wr \mathrm S_d$ of signed permutations of $d$ letters.
  More generally, such permutation statistics can be used to express the
  number of matrices of given elementary divisor type in $T_d(R)$ when $R$ is
  a finite quotient of a discrete valuation ring (\DVR).
  Such results were first recorded by Stasinski and Voll~\cite{SV14}.
  For further related results, see \cite{BC17,CSV18} and references therein.
\item
  \label{motivation_ask}
  For a module of matrices $A$ over a finite ring, let $\ask A := \frac
  1{\card A}\sum_{a\in A}\card{\Ker(a)} \in \QQ$ be the
  \textbf average \textbf size of the \textbf kernel of
  an element of $A$.
  Then $\ask{T_d(\FF_q)}$ is given by a rather simple rational function in $q$.
  For example, Linial and Weitz~\cite{LW00} and, independently, Fulman and
  Goldstein~\cite{FG15} showed that
  $\ask{\Mat_{d}(\FF_q)}  =  2 - q^{-d}$.
  More generally, the average size of the kernel of an element of $T_d(R)$
  turns out to be well-behaved if $R$ is a finite quotient of a \DVR{};
  see \cite{ask,ask2}.
\end{enumerate}

\paragraph{Rank counts and average sizes of kernels.}
Since $\ask{T_d(\FF_q)}$ is expressible in terms of the numbers of
matrices of given rank in $T_d(\FF_q)$ (see \cite[\S 2.1]{ask}), one might
suspect that the simple shapes of $\ask{T_d(\FF_q)}$ in~\ref{motivation_ask}
could be explained combinatorially via~\ref{motivation_rank}.
Such an explanation has so far remained elusive; see \cite[\S 2.3]{ask}.
Instead, at present, \ref{motivation_ask} is perhaps best understood
using the formalism of ``ask zeta functions'' from \cite{ask,ask2},
sketched in \S\ref{ss:mini_ask} and discussed further in \S\ref{s:ask}.
By using a duality operation,
this point of view explains the simple shape of $\ask{T_d(\FF_q)}$ by relating
the spaces $T_d(\FF_q)$ to a classical topic: spaces of matrices of constant
rank. (See \cite[\S 5.3]{ask2}.) 

\paragraph{Rank counts: tame and wild.}
Beyond the sources cited above, various authors have studied the numbers
of matrices of given rank within combinatorially defined spaces
of matrices; see e.g.~\cite{LLMPSZ11}.
On the other hand,
while polynomiality results have been obtained in some cases,
Belkale and Brosnan~\cite{BB03} showed that the enumeration
of matrices of given rank over $\FF_q$ is ``arithmetically wild'' even for
seemingly simple spaces of symmetric matrices.
Indeed, they showed that, in a precise technical sense, the enumeration of
such matrices is as difficult as counting $\FF_q$-rational points of arbitrary
$\ZZ$-defined varieties.

\paragraph{Average sizes of kernels and support constraints.}
Average sizes of kernels of matrices within spaces and modules of generic,
alternating, or symmetric matrices defined by combinatorial support
constraints have been studied in \cite{cico}.
It turns out that while the aforementioned arithmetically wild behaviour
which is visible on the level of rank counts disappears entirely upon taking
the average,
a rich and intricate combinatorial structure governs the behaviour
of average sizes of kernels.
In particular, in the setting of \cite{cico}, average sizes of kernels are
usually far removed from the simplicity in~\ref{motivation_ask}.

\paragraph{}
In light of the above, the present authors regard it as remarkable that
\ref{motivation_simple}--\ref{motivation_ask} are simultaneously satisfied for
the modules $T_d(R)$ from above.
In fact, we are aware of only few such examples and of no systematic method
for constructing them.
This turns out to be due to the delicate nature
of rank distributions in spaces of matrices beyond~\ref{motivation_rank}.

In the present article, we construct large families
of modules of matrices defined via linear relations as in
\ref{motivation_simple}.
These modules are defined in terms of partial colourings of the cells
of suitable grids, as defined later in the paper.
Subject to admissibility conditions, we will show that average sizes
of kernels within these modules are as tame as for the $T_d(R)$
in \ref{motivation_ask}.
On the other hand, simple examples will show that there can be no analogue of
\ref{motivation_rank} for these modules.
Indeed, it is known (see \cite[Thm~4.11]{ask}) that ask zeta functions
associated with $\ZZ$-defined modules of matrices in general depend on
arithmetic properties of the \DVR{} in question---Example~\ref{ex:guide2} will
provide an explicit illustration of this in the present setting.

%%%%%%%%%%%%%%%%%%%%%%%%%%%%%%%%%%%%%%%%%%%%%%%% 
\subsection{Background: ask zeta functions}
\label{ss:mini_ask}
%%%%%%%%%%%%%%%%%%%%%%%%%%%%%%%%%%%%%%%%%%%%%%%% 

Before stating our main results,
we briefly recall basic
facts on ask zeta functions; we will give a more comprehensive account in
\S\ref{s:ask}.
Let $\fO$ be a compact \DVR{} with maximal ideal~$\fP$ and residue field
$\fO/\fP$ of cardinality $q$.
For example, $\fO$ might be the ring of $p$-adic integers~$\ZZ_p$ or the ring
of formal power series $\FF_q\llb t\rrb$.
Given a module $M\subset \Mat_{d\times e}(\fO)$, let~$M_n$ denote
its image in $\Mat_{d\times e}(\fO/\fP^n)$.
The \emph{ask zeta function} of $M$ is the generating function
\[
  \Zeta^\ak_M(T) := \sum_{n=0}^\infty \ask{M_n}T^n.
\]

These generating functions are closely related to the enumeration of orbits
and conjugacy classes of unipotent groups;
see \cite[\S 8]{ask}, \cite[\S\S 6--7]{ask2}, and \cite[\S 2.4]{cico}.
This connection forms the basis of the group-theoretic results in the
present paper, to be described in \S\ref{ss:adjoint}.

If $\fO$ has characteristic zero, then $\Zeta^\ak_M(T)\in \QQ(T)$
by \cite[Thm~1.4]{ask}.
As examples in \cite{ask,ask2,cico} illustrate, these rational functions can
be quite complicated, even for seemingly harmless and natural examples of
modules of matrices. 
On the other hand, $\Zeta^\ak_M(T)$ is occasionally of the simple
shape $\frac{1-q^aT}{(1-q^bT)(1-q^cT)} = 1 + (q^b + q^c - q^a)T + \mathcal O(T^2)$.
This is closely related to the informal simplicity of $\ask{T_d(\FF_q)}$
mentioned in \ref{motivation_ask} from \S\ref{ss:motivation} via the following.

\begin{prop}[{\cite[\S 5]{ask}}]
  \label{prop:classical_ask}
  Let $\fO$ be a compact \DVR{} with residue cardinality $q$.
  Then:
  \begin{enumerate}[label=(\roman{*})]
  \item
    \label{prop:classical_ask1}
    $\Zeta^{\ak}_{\Mat_{d\times e}(\fO)} =
    \frac{1-q^{-e}T}{(1-T)(1-q^{d-e}T)}$.
  \item
    \label{prop:classical_ask2}
    $\Zeta^{\ak}_{\Alt_d(\fO)} = \Zeta^{\ak}_{\Mat_{d\times(d-1)}(\fO)}(T) = \frac{1-q^{1-d}T}{(1-T)(1-qT)}$.
  \item
    \label{prop:classical_ask3}
    $\Zeta^{\ak}_{\Sym_d(\fO)} = \Zeta^{\ak}_{\Mat_d(\fO)}(T) =
    \frac{1-q^{-d}T}{(1-T)^2}$.
  \item
    \label{prop:classical_sl}
    If $d > 1$, then
    $\Zeta^{\ak}_{\Sl_d(\fO)}(T) = \Zeta^{\ak}_{\Mat_d(\fO)}(T) = \frac{1-q^{-d}T}{(1-T)^2}$.
  \end{enumerate}
\end{prop}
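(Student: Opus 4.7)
The plan is to handle all four formulas via a single strategy. Using the Fubini identity
\begin{equation*}
  \ask{M_n} = \frac{1}{\lvert M_n \rvert} \sum_{v \in (\fO/\fP^n)^e} \lvert \{a \in M_n : av = 0\} \rvert,
\end{equation*}
one reduces the task to counting, for each $v$, the number of $a \in M_n$ annihilating $v$. A key observation is that each of the four modules $T_d(\fO)$ is preserved by a natural action of a classical group ($\GL_e(\fO/\fP^n)$ acting on columns for $\Mat_{d \times e}$; $\GL_d(\fO/\fP^n)$ acting by $a \mapsto P^{\top} a P$ for $\Alt_d$ and $\Sym_d$; conjugation for $\Sl_d$), and that each such action is kernel-preserving in the sense that $\lvert \{a : av = 0\} \rvert = \lvert \{a : a(Pv) = 0\} \rvert$. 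Since $\GL_e(\fO/\fP^n)$ acts transitively on vectors of a given \emph{content} $k(v) := \min_i \operatorname{val}(v_i) \in \{0, \dotsc, n-1\} \cup \{\infty\}$, the inner count is a function of $k$ alone and one may choose the representative $v = \pi^k e_1$.

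For part~(i), when $v = \pi^k e_1$ the condition $av = 0$ forces the first entry of each row of $a$ to lie in $\fP^{n-k}$ while the remaining $e-1$ entries are free, giving $\lvert \{a : av = 0\} \rvert = q^{d(k + (e-1)n)}$. There are $q^{e(n-k)} - q^{e(n-k-1)}$ vectors of content $k$ for $0 \le k < n$, together with the zero vector. Summing the resulting geometric progression yields a closed form for $\ask{\Mat_{d \times e}(\fO/\fP^n)}$ whose generating function in $T$ matches $\frac{1 - q^{-e}T}{(1-T)(1-q^{d-e}T)}$ after routine manipulation.

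Parts~(ii) and~(iii) follow the same outline but with modified per-orbit counts. For $v = \pi^k e_1$ and $a \in \Alt_d$, the antisymmetry forces $a_{11} = 0$ automatically and identifies the constraints on the first column with those on the first row; thus $d - 1$ entries are constrained to lie in $\fP^{n-k}$ while the remaining $\binom{d-1}{2}$ entries form a free alternating block, giving $q^{k(d-1) + n\binom{d-1}{2}}$. For $a \in \Sym_d$, symmetry again identifies the first row with the first column but leaves the diagonal entry $a_{11}$ constrained, yielding $q^{kd + n \binom{d}{2}}$. Summing over contents and simplifying produces the asserted zeta functions. For~(iv), when $d > 1$ the trace map $\{a \in \Mat_d(\fO/\fP^n) : av = 0\} \to \fO/\fP^n$ is surjective for every $v$, since for $v = \pi^k e_1$ the diagonal entry $a_{22}$ is unconstrained (and for $v = 0$ the trace is tautologically surjective). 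Hence $\lvert\{a \in \Sl_d : av = 0\}\rvert = q^{-n}\lvert\{a \in \Mat_d : av = 0\}\rvert$, and this factor cancels exactly against $\lvert \Mat_d \rvert / \lvert \Sl_d \rvert = q^n$, so $\ask{\Sl_d(\fO/\fP^n)} = \ask{\Mat_d(\fO/\fP^n)}$, reducing~(iv) to the $e = d$ case of~(i).

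The main obstacle is the bookkeeping behind the orbit reduction and the per-content counts: one must verify carefully that the congruence and conjugation actions genuinely preserve both the module and the kernel of $a$, account correctly for the zero vector, and track the power-of-$q$ contributions from the constrained versus free entries in the alternating and symmetric cases. Once these pieces are in place, the geometric sums are routine, and the distinctive numerators $1 - q^{-e}T$, $1 - q^{1-d}T$, $1 - q^{-d}T$ emerge from the interplay between the count of content-$k$ vectors and the ambient module size.
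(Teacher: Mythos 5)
Your overall strategy -- exchanging the order of summation, reducing to one representative per content class via a kernel-preserving group action, and summing geometric progressions -- is sound, and since the paper does not prove this proposition itself (it imports it from \cite[\S 5]{ask}, where it is derived via the orbit-module/O-maximality machinery rather than by direct summation), a self-contained elementary computation like yours is a reasonable route. Parts (ii)--(iv) check out: the per-content counts $q^{k(d-1)+n\binom{d-1}{2}}$ and $q^{kd+n\binom d2}$ are correct, and the trace-surjectivity argument for $\Sl_d$ with $d>1$ (transported along content classes by conjugation, which preserves both $\Sl_d$ and the trace) correctly yields $\ask{\Sl_d(\fO/\fP^n)}=\ask{\Mat_d(\fO/\fP^n)}$.

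Part (i), however, contains a genuine error: you have transposed the Fubini exchange. The paper's convention is $\Mat_{d\times e}(\fO)=\Hom(\fO^d,\fO^e)$ with maps acting on the right, so $\Ker(a)=\{x\in(\fO/\fP^n)^d: xa=0\}$ and the dual sum must run over $x\in(\fO/\fP^n)^d$, not over column vectors $v\in(\fO/\fP^n)^e$. Your setup computes the average size of $\{v\in(\fO/\fP^n)^e: av=0\}$ instead, and carrying out your own sum gives $\ask{}=1+(1-q^{-e})\sum_{j=1}^{n}q^{(e-d)j}$, whose generating function is $\frac{1-q^{-d}T}{(1-T)(1-q^{e-d}T)}$ --- the $d\leftrightarrow e$ swap of the asserted formula. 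This does \emph{not} match $\frac{1-q^{-e}T}{(1-T)(1-q^{d-e}T)}$ by any routine manipulation unless $d=e$; compare the degree-one coefficients $1+q^{e-d}-q^{-d}$ versus the correct $1+q^{d-e}-q^{-e}$. The repair is mechanical: sum over row vectors $x$ of content $k$ (there are $q^{d(n-k)}-q^{d(n-k-1)}$ of them) and count the $q^{ek+(d-1)en}$ matrices whose first row lies in $\fP^{n-k}(\fO/\fP^n)^e$, using $\GL_d$ acting on rows. Parts (ii)--(iv) are unaffected by this issue only because those modules consist of square matrices stable under (anti)transposition, so left and right kernels have equal average size there.
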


Note that we may recover $\ask{T_d(\FF_q)}$ 
in part \ref{motivation_ask} of \S\ref{ss:motivation} 
from Proposition~\ref{prop:classical_ask}.
For example, $\ask{\Alt_d(\FF_q)} = 1 + q - q^{1-d}$;
see \cite[Lemma 5.3]{FG15}.

%%%%%%%%%%%%%%%%%%%%%%%%%%%%%%%%%%%%%%%%%%%%%%%%
\subsection{Linear relations from partial colourings}
\label{ss:intro_relations}
%%%%%%%%%%%%%%%%%%%%%%%%%%%%%%%%%%%%%%%%%%%%%%%% 

We now describe the three main types of modules of matrices that we
will consider.

\paragraph{Partial colourings of grids.}
Let $d,e \ge 1$.
We write $[d] = \{ 1,2,\dotsc,d\}$.
A \emph{partial colouring} of the \emph{grid} $[d]\times [e]$
is a family $\mathcal A = (A_c)_{c\in \Colours}$ of pairwise disjoint (possibly empty) subsets
of $[d] \times [e]$ indexed by a given set $\Colours$ of \emph{colours}.
Equivalently, $\mathcal A$ is the family of fibres of the elements of
$\Colours$ under a function $[d]\times [e]\to \Coloursb$.
Here and throughout this paper, $\blank$ is a fixed symbol (which we call \emph{blank}) that
does not belong to $\Colours$.
We refer to the elements of $[d]\times[e]$ as \emph{cells}.
A cell $(i,j)\in [d]\times [e]$ is \emph{blank} if it does not belong to any $A_c$.

\paragraph{Three types of modules.}
Let $R$ be a ring.
Given a partial colouring $\cA = (A_c)_{c\in \Colours}$ of $[d]\times [e]$ and
a $d\times e$ matrix $u$ whose entries are units of $R$, we define three
modules of matrices over~$R$.
First,
\[
  \Board_{d\times e}(\cA,u,R) := \left\{ [x_{ij}] \in \Mat_{d\times
      e}(R) :  \forall c\in \Colours, \sum\limits_{(i,j)\in A_c} u_{ij} x_{ij} = 0 \right\}.
\]
In other words, $\Board_{d\times e}(\cA,u,R)$ is obtained from $\Mat_{d\times e}(R)$
by imposing, for each colour, 
a linear relation among all entries whose positions are of that colour, and with
unit coefficients coming from the matrix $u$.

\begin{ex}
  \label{ex:sl}
  Fix a colour $\mathsf{blue} \in \Colours$ and let $\cA = (A_c)_{c\in
    \Colours}$ be the partial colouring of $[d]\times [d]$ with
  $A_{\mathsf{blue}} = \{ (1,1),(2,2),\dotsc,(d,d)\}$ and $A_c = \emptyset$
  for all other colours $c\in \Colours$.
  Let $u = \bone$ be the all-ones $d\times d$ matrix.
  Then $\Board_{d\times d}(\cA,\bone,R) = \Sl_d(R)$.
\end{ex}

Given a partial colouring $\cA$ of $[d]\times [e]$ and a matrix $u$ as above,
we may impose relations among the entries in the top right $d\times e$ block
of $\Alt_{d+e}(R)$ and $\Sym_{d+e}(R)$.
Formally, define
\begin{align*}
  \AltBoard_{d\times e}(\cA,u,R) &:=
  \left\{
    \begin{bmatrix}
      a & x \\ -x^\top & b
    \end{bmatrix}
    : a\in \Alt_d(R),  b\in \Alt_e(R),
    x\in \Board_{d\times e}(\cA,u,R)
  \right\} \text{ and} \\
  \SymBoard_{d\times e}(\cA,u,R) & :=
  \left\{
    \begin{bmatrix}
      a & x \\ x^\top & b
    \end{bmatrix}
    : a\in \Sym_d(R),  b\in \Sym_e(R),
    x\in \Board_{d\times e}(\cA,u,R)
  \right\}.
\end{align*}

We refer to $\Board_{d\times e}(\cA,u,R)$, $\AltBoard_{d \times e}(\cA,u,R)$, and
$\SymBoard_{d\times e}(\cA,u,R)$ as (rectangular, alternating, or symmetric)
\emph{relation modules} associated with the partial colouring~$\cA$.
If $u$ is the all-ones matrix $\bone$, then we often simply write $\Board_{d \times
  e}(\cA,R)$ instead of $\Board_{d\times e}(\cA,\bone,R)$ and analogously for
$\AltBoard_{d \times e}$ and $\SymBoard_{d\times e}$.

In general, neither
$\AltBoard_{d\times e}(\cA,u,R)$ nor
$\SymBoard_{d\times e}(\cA,u,R)$ is an instance of a module
$\Board_{(d+e)\times (d+e)}(\cA',u',R)$ for a partial colouring $\cA'$ of the
grid $[d+e]\times [d+e]$.
Note that in the definitions of alternating and symmetric relation modules,
we only impose relations among entries in the off-diagonal blocks within the
ambient modules.
Later on in this paper, we will also consider more general relations among entries
within $\Alt_d(R)$ or $\Sym_d(R)$; cf.\ Remark~\ref{rem:middle}.

\paragraph{Relation modules and ask zeta functions.}
Let $\fO$ be a compact \DVR{}.
In general, passing from an ambient module of matrices to a relation module
changes the associated ask zeta functions. 
For example, suppose that $\cA=(A_c)_{c\in\Colours}$ is
a partial colouring of $[d]\times [e]$ such
that there is at most one cell of any given colour.
(That is, $\card{A_c} \le 1$ for all $c\in \Colours$.)
Then $\Board_{d\times e}(\cA,u,\fO)$ consists of those matrices $[x_{ij}] \in
\Mat_{d\times e}(\fO)$ such that $x_{ij} = 0$ whenever~$(i,j)$ is a coloured
cell.
Ask zeta functions associated with modules of general rectangular, symmetric, or
alternating matrices satisfying such support constraints are precisely the
subject of \cite{cico}.
In particular, the results there show that these ask zeta functions are
in general vastly more complicated than the tame formulae for
the ask zeta functions of the ambient modules in Proposition~\ref{prop:classical_ask}.
The present article is devoted to a question which is orthogonal to
the setting of~\cite{cico}:

\begin{qu}
  Which simple linear relations among matrix entries have no effects on
  associated ask zeta functions?
\end{qu}

The complicated formulae in \cite{cico} reflect an intricate combinatorial
structure found in the rank loci of certain types of matrices of linear
forms.
Our approach in this article instead seeks to identify operations which have
no effect on these rank loci, or at least none that would be visible on the
level of ask zeta functions.\\

\noindent
As illustrated by $\Sl_d(\fO)$ in Proposition~\ref{prop:classical_ask}\ref{prop:classical_sl} and 
Example~\ref{ex:sl}, there are examples of partial colourings $\cA$
such that $\Mat_{d\times e}(\fO)$ and its proper submodule
$\Board_{d\times e}(\cA,u,\fO)$ have the same ask zeta function.
As we will now explain, there are many more such examples.

\vspace*{-0.7em}

\paragraph{Admissible partial colourings.}
Let $\cA = (A_c)_{c\in \Colours}$ be a fixed partial colouring of
$[d]\times [e]$.
By a \emph{subgrid} $G$ of $[d] \times [e]$, we mean a set of the form
$G = I\times J$ for $I\subset [d]$ and $J\subset [e]$.
We say that a subgrid $G$ of $[d]\times [e]$ is \emph{colour-closed} (w.r.t.\
$\cA$) if $A_c\subset G$ for each colour $c\in \Colours$ that appears within
$G$ (i.e.\ for which $A_c\cap G\not= \emptyset$).

\begin{defn}
  A partial colouring $\cA$ of $[d]\times [e]$ is \emph{admissible} if
  every non-empty colour-closed subgrid of $[d]\times [e]$ contains a blank cell. 
\end{defn}

\begin{ex}
  \label{ex:guide1}
  For $\ell = \text a,\text b,\text c, \text d$, let
  $\cA(\ell)$ be the partial colouring of $[3]\times [3]$ in
  Figure~\ref{fig:guide1}.
  Here and throughout, white cells indicate blanks and cells are indexed in
  the same way as matrix entries. (For example, the top left cell is $(1,1)$.)

  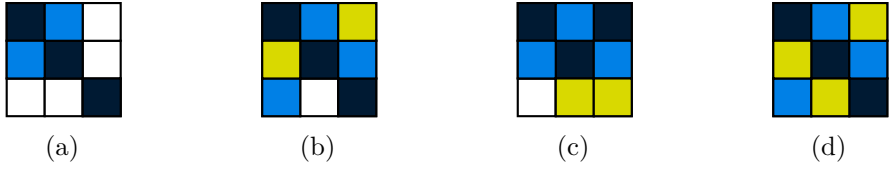
\begin{figure}[h]
    \centering
    \begin{subfigure}[t]{0.22\textwidth}
      \centering
      \begin{tikzpicture}
        [box/.style={rectangle,draw=black,thick, minimum size=1cm},
        scale=0.5, every node/.style={scale=0.5}
        ]
        \foreach \x in {0,1,2}{
          \foreach \y in {0,1,2}
          \node[box,fill=white] at (\x,\y){};
        }
        \node[box,fill=ourdark] at (0,2){};  
        \node[box,fill=ourdark] at (1,1){};  
        \node[box,fill=ourdark] at (2,0){};

        \node[box,fill=ourgreen] at (0,1){};
        \node[box,fill=ourgreen] at (1,2){};
      \end{tikzpicture}
      \caption{}
    \end{subfigure}
    \begin{subfigure}[t]{0.22\textwidth}
      \centering
      \begin{tikzpicture}
        [box/.style={rectangle,draw=black,thick, minimum size=1cm},
        scale=0.5, every node/.style={scale=0.5}
        ]
        \foreach \x in {0,1,2}{
          \foreach \y in {0,1,2}
          \node[box,fill=white] at (\x,\y){};
        }
        \node[box,fill=ourdark] at (0,2){};  
        \node[box,fill=ourdark] at (1,1){};  
        \node[box,fill=ourdark] at (2,0){};
        \node[box,fill=ourgreen] at (1,2){};  
        \node[box,fill=ourgreen] at (2,1){};  
        \node[box,fill=ourgreen] at (0,0){};  
        \node[box,fill=ouryellow] at (2,2){};  
        \node[box,fill=ouryellow] at (0,1){};  
      \end{tikzpicture}
      \caption{}
      \label{fig:PW3}
    \end{subfigure}
    \begin{subfigure}[t]{0.22\textwidth}
      \centering
      \begin{tikzpicture}
        [box/.style={rectangle,draw=black,thick, minimum size=1cm},
        scale=0.5, every node/.style={scale=0.5}
        ]
        \foreach \x in {0,1,2}{
          \foreach \y in {0,1,2}
          \node[box,fill=white] at (\x,\y){};
        }
        \node[box,fill=ourdark] at (0,2){};  
        \node[box,fill=ourdark] at (1,1){};  
        \node[box,fill=ourdark] at (2,2){};  
        \node[box,fill=ourgreen] at (0,1){};  
        \node[box,fill=ourgreen] at (1,2){};  
        \node[box,fill=ourgreen] at (2,1){};  
        \node[box,fill=ouryellow] at (1,0){};  
        \node[box,fill=ouryellow] at (2,0){};  
      \end{tikzpicture}
      \caption{}
    \end{subfigure}
    \begin{subfigure}[t]{0.22\textwidth}
      \centering
      \begin{tikzpicture}
        [box/.style={rectangle,draw=black,thick, minimum size=1cm},
        scale=0.5, every node/.style={scale=0.5}
        ]
        \foreach \x in {0,1,2}{
          \foreach \y in {0,1,2}
          \node[box,fill=white] at (\x,\y){};
        }
        \node[box,fill=ourdark] at (0,2){};  
        \node[box,fill=ourdark] at (1,1){};  
        \node[box,fill=ourdark] at (2,0){};
        \node[box,fill=ourgreen] at (1,2){};  
        \node[box,fill=ourgreen] at (2,1){};  
        \node[box,fill=ourgreen] at (0,0){};  
        \node[box,fill=ouryellow] at (2,2){};  
        \node[box,fill=ouryellow] at (0,1){};  
        \node[box,fill=ouryellow] at (1,0){};  
      \end{tikzpicture}
      \caption{}
    \end{subfigure}
    \caption{Four partial colourings of $[3]\times [3]$}
    \label{fig:guide1}
  \end{figure}

  For an alternative description of $\cA(\ell)$, let $c_1$, $c_2$, and $c_3$
  be distinct colours.
  By using matrix entries to specify colours (or blanks),
  each of the following matrices $C(\ell)$ encodes the partial
  colouring $\cA(\ell)$:

  {\small
  \[
    C(\mathrm a) =
    \begin{bmatrix}
      c_1 & c_2 & \blank \\
      c_2 & c_1 & \blank \\
      \blank & \blank & c_1
    \end{bmatrix},
    C(\mathrm b) =
    \begin{bmatrix}
      c_1 & c_2 & c_3 \\
      c_3 & c_1 & c_2 \\
      c_2 & \blank & c_1
    \end{bmatrix},
    C(\mathrm c) =
    \begin{bmatrix}
      c_1 & c_2 & c_1 \\
      c_2 & c_1 & c_2 \\
      \blank & c_3 & c_3
    \end{bmatrix},
    C(\mathrm c) =
    \begin{bmatrix}
      c_1 & c_2 & c_3 \\
      c_3 & c_1 & c_2 \\
      c_2 & c_3 & c_1
    \end{bmatrix}.
  \]}

  Note that the partial colourings $\cA(\mathrm a)$ and $\cA(\mathrm b)$ are
  admissible, while $\mathcal A(\mathrm c)$ and $\mathcal A(\mathrm d)$ are not.
\end{ex}

\begin{ex}
  \label{ex:sl_reprise}
  The partial colouring in Example~\ref{ex:sl} is admissible
  if and only if $d > 1$.
\end{ex}

%%%%%%%%%%%%%%%%%%%%%%%%%%%%%%%%%%%%%%%%%%%%%%%% 
\subsection{Results I: preservation of ask zeta functions}
\label{ss:preservation}
%%%%%%%%%%%%%%%%%%%%%%%%%%%%%%%%%%%%%%%%%%%%%%%% 

The following theorem is the main result of the present paper.
It states that relation modules arising from admissible partial colourings
have the same ask zeta functions as their ambient modules.
In fact, this remains true if the ambient module in question is suitably
embedded into a larger module of matrices.
Let $\std_{ij}$ be the elementary matrix with an entry $1$ in position $(i,j)$.
Let $\fO$ be a compact \DVR{} with residue cardinality $q$.

\begin{thmabc}
  \label{thm:embedded}
  Let $\cA$ be an admissible partial colouring of $[d]\times [e]$.
  Let $u \in \Mat_{d\times e}(\fO)$ have unit entries.
  Let $m$, $n$, $M$, and $M'\subset M$
  be given by one of the rows of the following table:
  \begin{center}
    \begin{tabular}{cc|cc}
      $m$ & $n$ & $M$ & $M'$ \\
      \hline
      $d$ & $e$& $\Mat_{d\times e}(\fO)$ & $\Board_{d\times e}(\cA,u,\fO)$ \\
      $d+e$ & $d+e$& $\Alt_{d+e}(\fO)$ & $\AltBoard_{d\times e}(\cA,u,\fO)$ \\
      $d+e$ & $d+e$& $\Sym_{d+e}(\fO)$ & $\SymBoard_{d\times e}(\cA,u,\fO)$. \\
    \end{tabular}
  \end{center}
  Let $1 \le r_1 < \dotsb < r_m\le \tilde m$ and
  $1 \le c_1< \dotsb  < c_n\le \tilde n$.
  Let $\widetilde\dtimes\colon \Mat_{m\times n}(\fO) \into \Mat_{\tilde m\times \tilde n}(\fO)$
  be the embedding with $\tilde\std_{ij} = \std_{r_ic_j}$.
  Let $N\subset \Mat_{\tilde m\times \tilde n}(\fO)$ be an arbitrary submodule.
  Then
  \[
    \Zeta^{\ak}_{\widetilde{M\phantom'}+ N}(T) = \Zeta^{\ak}_{\widetilde{M'}+ N}(T).
  \]
\end{thmabc}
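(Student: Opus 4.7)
The plan is to reduce the theorem to a core lemma about imposing a single linear relation, and then iterate. By induction on $\card\Colours$, it should suffice to show that imposing the single relation $L_{c_0}(x) = \sum_{(i,j)\in A_{c_0}} u_{ij} x_{ij} = 0$ associated with one colour $c_0$ preserves the ask zeta function, as long as a single-colour admissibility condition is satisfied. The remaining relations (for the other colours) are folded into an enlarged auxiliary submodule $N'$ at each inductive step. Admissibility of $\cA$ forces the rectangular hull of each $A_{c_0}$ to contain a cell blank in $\cA$; this cell stays blank under the relevant sub-colouring, so the inductive hypothesis applies. Freedom in the choice of $N$ in Theorem~A is exactly what makes this folding legitimate.

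I would handle the three cases (rectangular, alternating, symmetric) in parallel. In the alternating and symmetric cases, the relations only act on the off-diagonal $d\times e$ block of the $(d+e)\times(d+e)$ ambient matrix. The diagonal blocks $\Alt_d,\Alt_e$ or $\Sym_d,\Sym_e$ can be absorbed into $N$, reducing matters to a rectangular-type problem, but with a ``coupled'' embedding that places each entry both in the $(r_i,c_j)$ slot and in its mirror $(c_j,r_i)$ slot with the appropriate sign. This amounts to a minor extension of the core lemma to such coupled embeddings, run in parallel with the uncoupled rectangular case.

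The core lemma is then: for a non-empty $A\subseteq [d]\times[e]$ whose rectangular hull strictly contains $A$, and $L(x)=\sum_{(i,j)\in A} u_{ij}x_{ij}$ with $u_{ij}$ units, imposing $\widetilde L=0$ on $\widetilde{\Mat_{d\times e}(\fO)}+N$ leaves the ask zeta function unchanged for every submodule $N\subseteq\Mat_{\tilde m\times\tilde n}(\fO)$. If $\widetilde L(N)=\fO$ there is nothing to prove since the module is unchanged, so we may assume $\widetilde L$ restricts to a non-zero form with $\widetilde L(N)=0$, cutting out a hyperplane in $\widetilde{\Mat_{d\times e}(\fO)}+N$. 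The single-colour admissibility provides ``L-shaped'' cells $(i_0,j_1),(i_1,j_0)\in A$ and a blank corner $(i_0,j_0)\notin A$, and we will exploit this configuration together with the fact that the $u_{ij}$ are units.

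The main obstacle will be this core lemma: since $\widetilde L=0$ properly shrinks the module by a factor of $q^n$ at level $n$, the equality of ask zeta functions is a non-trivial cancellation. I would attempt a Fourier / character-sum argument, writing the indicator of $\Ker\widetilde L$ over $\fO/\fP^n$ as an average of additive characters and interchanging the order of summation, reducing the claim to showing that a twisted ask average, integrated against the dual Haar variable, equals the untwisted one. The L-shaped configuration is what should deliver the required orthogonality: the blank cell $(i_0,j_0)$ supplies a direction of translational symmetry that, combined with the two $A$-cells on the same row and column, forces the character sum to concentrate at the trivial character. As an alternative, one might hope for a more direct kernel-preserving change of variables, exploiting that column operations (right multiplications by invertible matrices) do not alter kernels and that the blank cell provides a ``free slot'' allowing one to absorb the relation locally; this would sidestep the Fourier machinery but would require the change of variables to respect the arbitrary ambient $N$, which is precisely the delicate point.
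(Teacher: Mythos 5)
There is a genuine gap — in fact two. First, the reduction to a single colour does not work as described. The relations for the remaining colours cut out a \emph{submodule} of $\widetilde{M}$, whereas $N$ enters Theorem~\ref{thm:embedded} \emph{additively}; a module of the form $\{x : L_{c}(x)=0 \text{ for } c\neq c_0\}$ cannot be rewritten as $\widetilde{\Mat_{d'\times e'}(\fO)}+N'$ for a coordinate block plus an auxiliary summand, because the blank cells of a partial colouring do not form a subgrid. So after the first step your ambient module is itself a relation module, and your core lemma (stated for ambient $\widetilde{\Mat_{d\times e}(\fO)}+N$) no longer applies; the induction does not close. Second, and more seriously, the combinatorial input you extract from admissibility is wrong: it is \emph{not} true that admissibility forces the rectangular hull of each $A_{c_0}$ to contain an $\cA$-blank cell. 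In the admissible colouring $\cA(\mathrm b)$ of Example~\ref{ex:guide1}, the hull of $A_{c_3}=\{(1,3),(2,1)\}$ is $\{1,2\}\times\{1,3\}$, which consists entirely of coloured cells. Conversely, the per-colour hull condition you actually use (``the hull of $A$ strictly contains $A$'') is satisfied by \emph{every} colour of the non-admissible colouring $\cA(\mathrm d)$, for which the conclusion fails by Example~\ref{ex:guide2}. Any argument whose only combinatorial hypothesis is the per-colour hull condition therefore cannot be correct; the full force of admissibility — a blank cell in \emph{every} colour-closed subgrid, equivalently (Definition~\ref{d:Rho_adm}) for every subset of rows — must enter, and your scheme has no place for it.

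Beyond this, the core lemma itself is only conjectured: neither the character-sum argument nor the change of variables is carried out, and you yourself flag that compatibility with an arbitrary $N$ is the delicate point. The paper takes an entirely different route: it shows that the restriction to the relation module is an \emph{orbital subrepresentation} (Definition~\ref{d:orbital}), i.e.\ that the orbit modules $\Orbit(\theta)_x$ and $\Orbit(\theta')_x$ agree for all relevant specialisations $x$; the induction is then over deletion of \emph{columns} of the grid, performed separately for every non-empty subset of rows (the ``board game'' of \S\ref{s:board}), which is exactly where full admissibility is consumed. The arbitrary embedding and the arbitrary summand $N$ come for free from the closure properties of orbital subrepresentations (Lemmas~\ref{lem:orbital_enlarge}--\ref{lem:orbital_sum}), with no Fourier analysis needed. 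Note also that your treatment of the alternating case as a ``minor extension'' of the rectangular one overlooks that $\Alt_{d+e}(\fO)$ has generic orbit-module rank $1$ rather than $0$, which is why the paper needs the level-$1$ admissibility machinery (Corollary~\ref{cor:Gamma_crk}) there.
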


\begin{ex}
  \label{ex:trd}
  Let $\tr_d(\fO)$ be the module of upper triangular $d\times d$ matrices over
  $\fO$.
  Let $d > 1$.
  Then Theorem~\ref{thm:embedded},
  Example~\ref{ex:sl}, and Example~\ref{ex:sl_reprise}
  show that $\tr_{2d}(\fO)$
  and
  \[
    L_d(\fO) := \begin{bmatrix}
      \tr_d(\fO) & \Sl_d(\fO) \\
      0 & \tr_d(\fO) 
    \end{bmatrix}
    \subset \tr_{2d}(\fO)
  \]
  have the same ask zeta function.
  In detail, we may take $M = \Mat_d(\fO)$, $M' = \Sl_d(\fO)$, 
  $N = \left[\begin{smallmatrix} \tr_d(\fO) & 0 \\ 0 &
      \tr_d(\fO)\end{smallmatrix}\right]$,
  and let $\widetilde\dtimes\colon \Mat_d(\fO) \into \Mat_{2d}(\fO)$
  be the embedding $a \mapsto \left[\begin{smallmatrix} 0 & a \\ 0 &
      0 \end{smallmatrix}\right]$
  in Theorem~\ref{thm:embedded}.
  Using \cite[Prop.\ 5.15(ii)]{ask}, we then conclude that
  $\Zeta^{\ak}_{L_d(\fO)}(T) = \Zeta^{\ak}_{\tr_{2d}(\fO)}(T) = \frac{(1-q^{-1}T)^{2d}}{(1-T)^{2d+1}}$.
\end{ex}

Theorem~\ref{thm:embedded} asserts that imposing suitable linear relations
within modules of matrices preserves associated ask zeta functions.
Example~\ref{ex:trd} illustrates that the former theorem
sometimes allows us to reduce computations of ask zeta functions to
previous results in the literature.
In the same spirit, by taking $\tilde m = m$, $\tilde n = n$, and $N = 0$
in Theorem~\ref{thm:embedded} and using Proposition~\ref{prop:classical_ask},
we obtain the following.

\begin{corabc}
  \label{cor:rec}
  If $\cA$ is admissible, then
  $\Zeta^{\ak}_{\Board_{d \times e}(\cA,u,\fO)}(T) =
  \frac{1-q^{-e}T}{(1-T)(1-q^{d-e}T)}$.
\end{corabc}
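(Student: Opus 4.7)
The plan is to derive this corollary as a direct specialisation of Theorem~\ref{thm:embedded} combined with the classical formula for generic rectangular matrices in Proposition~\ref{prop:classical_ask}\ref{prop:classical_ask1}. Since the corollary concerns the rectangular relation module $\Board_{d\times e}(\cA,u,\fO)$ in isolation, I would use the first row of the table in Theorem~\ref{thm:embedded}, setting $m=d$, $n=e$, $M = \Mat_{d\times e}(\fO)$, and $M' = \Board_{d\times e}(\cA,u,\fO)$.

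To eliminate the ``embedded'' aspect of the theorem, I would choose $\tilde m = m$, $\tilde n = n$, and the indices $r_i = i$, $c_j = j$, so that the embedding $\widetilde\dtimes$ reduces to the identity map $\Mat_{d\times e}(\fO) \to \Mat_{d\times e}(\fO)$. Taking $N = 0$ then gives $\widetilde M + N = \Mat_{d\times e}(\fO)$ and $\widetilde{M'} + N = \Board_{d\times e}(\cA,u,\fO)$, so Theorem~\ref{thm:embedded} yields
\[
  \Zeta^{\ak}_{\Mat_{d\times e}(\fO)}(T) \;=\; \Zeta^{\ak}_{\Board_{d\times e}(\cA,u,\fO)}(T).
\]

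The final step is to substitute the value of the left-hand side provided by Proposition~\ref{prop:classical_ask}\ref{prop:classical_ask1}, namely $\frac{1-q^{-e}T}{(1-T)(1-q^{d-e}T)}$, to obtain the asserted formula. There is no genuine obstacle at this point in the argument: the entire content of Corollary~\ref{cor:rec} is absorbed into the much stronger Theorem~\ref{thm:embedded}, and the corollary merely records the clean closed form that results when the admissibility of $\cA$ is specialised to the simplest ambient setting.
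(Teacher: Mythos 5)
Your derivation is formally valid, and in fact it is exactly the specialisation the authors themselves point out immediately before stating the corollary (``by taking $\tilde m = m$, $\tilde n = n$, and $N = 0$ in Theorem~\ref{thm:embedded} and using Proposition~\ref{prop:classical_ask}, we obtain the following''). However, it is not the proof the paper actually gives: the authors explicitly reverse the order, proving Corollaries~\ref{cor:rec}--\ref{cor:sym} first and Theorem~\ref{thm:embedded} only at the very end. The paper's own proof of Corollary~\ref{cor:rec} lives in \S\ref{ss:proof_rec}: one translates $\Board_{d\times e}(\cA,u,\fO)$ into the restricted module representation $\rho(I,J)\sslash\beta(I,J)$ (Example~\ref{ex:abstract_intro_relations}), uses admissibility to find a blank cell in every relevant subgrid (Lemma~\ref{lem:ex_blank}) and thereby delete columns one at a time, proving $\Omega^\times(I',J')=0$ (Lemma~\ref{lem:Omega0}); this shows $\bRho\sslash\beta$ is $(I,J)$-constant of rank $0$, and the formula drops out of the constant rank theorem via Corollary~\ref{cor:crk_zeta}. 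Your route buys brevity by absorbing all of this into Theorem~\ref{thm:embedded}, but it contains no independent content, and you should at least note why it is not circular: the eventual proof of Theorem~\ref{thm:embedded} (via Theorem~\ref{thm:template} and Corollary~\ref{cor:adm_blueprint}) reuses the same machinery but never invokes the statement of Corollary~\ref{cor:rec} itself, so the logical order is sound even if the expository order is inverted.
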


Using Example~\ref{ex:sl} and Example~\ref{ex:sl_reprise},
we thus recognise Corollary~\ref{cor:rec} as a vast generalisation of
Proposition~\ref{prop:classical_ask}\ref{prop:classical_sl}.

\begin{corabc}
  \label{cor:asym}
  If $\cA$ is admissible, then $\Zeta^{\ak}_{\AltBoard_{d\times e}(\cA,u,\fO)}(T) =
  \frac{1 - q^{1-d-e}T}{(1-T)(1-qT)}$.
\end{corabc}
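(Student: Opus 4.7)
The plan is to deduce Corollary~\ref{cor:asym} as an essentially immediate consequence of Theorem~\ref{thm:embedded} applied with the second (alternating) row of the table, combined with the classical formula for the ask zeta function of $\Alt_{d+e}(\fO)$ in Proposition~\ref{prop:classical_ask}\ref{prop:classical_ask2}.

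More concretely, I would specialise the data of Theorem~\ref{thm:embedded} as follows: take $m = n = d+e$, $M = \Alt_{d+e}(\fO)$, and $M' = \AltBoard_{d\times e}(\cA,u,\fO) \subset M$, as dictated by the second row. Next, set $\tilde m = m$ and $\tilde n = n$, which forces $r_i = i$ and $c_j = j$, so that $\widetilde{\dtimes}$ is the identity embedding $\Mat_{d+e}(\fO) \into \Mat_{d+e}(\fO)$. Finally, take $N = 0$. The hypothesis that $\cA$ is admissible is exactly what Theorem~\ref{thm:embedded} requires, so it delivers the equality
\[
  \Zeta^{\ak}_{\Alt_{d+e}(\fO)}(T) = \Zeta^{\ak}_{\AltBoard_{d\times e}(\cA,u,\fO)}(T).
\]

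It then remains to invoke Proposition~\ref{prop:classical_ask}\ref{prop:classical_ask2} applied to $\Alt_{d+e}(\fO)$ (i.e.\ with the parameter $d$ there replaced by $d+e$), giving
\[
  \Zeta^{\ak}_{\Alt_{d+e}(\fO)}(T) = \frac{1 - q^{1-(d+e)}T}{(1-T)(1-qT)},
\]
which matches the claimed formula. There is no real obstacle here: all of the work is hidden inside Theorem~\ref{thm:embedded}, and the corollary is just its specialisation to $N=0$ and the trivial ambient embedding, paired with the known closed form for the alternating ambient module.
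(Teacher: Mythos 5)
Your deduction is logically sound: with $\tilde m = m$, $\tilde n = n$, and $N = 0$, Theorem~\ref{thm:embedded} does collapse to $\Zeta^{\ak}_{\Alt_{d+e}(\fO)}(T) = \Zeta^{\ak}_{\AltBoard_{d\times e}(\cA,u,\fO)}(T)$, and Proposition~\ref{prop:classical_ask}\ref{prop:classical_ask2} with $d$ replaced by $d+e$ gives the stated closed form; this is exactly the specialisation the paper itself advertises in \S\ref{ss:preservation}. However, it is not how the paper actually proves the corollary. The paper explicitly reverses the order, establishing Corollaries~\ref{cor:rec}--\ref{cor:sym} \emph{before} Theorem~\ref{thm:embedded}: its proof of Corollary~\ref{cor:asym} relabels so that the row and column index sets $I$ and $J$ are disjoint, passes from the partial colouring $\beta$ of $I\times J$ to its ``antisymmetrisation'' $\hat\beta$ on $\sE(I\cup J, I\cup J)$ (Definition~\ref{d:betahat}), shows via Proposition~\ref{prop:embedded_Gamma} that $\hat\beta$ is $\bGamma(I\cup J,I\cup J)$-admissible of level $1$, and then invokes the constant rank theorem through Corollaries~\ref{cor:Gamma_crk} and~\ref{cor:Gamma_zeta}. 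Your route is shorter on paper but defers all content to Theorem~\ref{thm:embedded}, whose eventual proof (via Theorem~\ref{thm:template}) consumes precisely the same ingredients plus the orbital-subrepresentation formalism needed to handle the embedding $\widetilde\dtimes$ and the auxiliary module $N$; there is no circularity, since the paper's proof of Theorem~\ref{thm:embedded} never cites Corollary~\ref{cor:asym}, but as a self-contained argument yours only works once Theorem~\ref{thm:embedded} has been independently established. The paper's direct route has the advantage of isolating the genuinely new combinatorial step for the alternating case, namely that admissibility of $\beta$ upgrades only to level-$1$ admissibility of $\hat\beta$ (the grid $V\odot V$ has no isolated diagonal cells), which is why the rank-$1$ case of the constant rank theorem, rather than the rank-$0$ case used for Corollaries~\ref{cor:rec} and~\ref{cor:sym}, is what produces the factor $(1-qT)$ in the denominator.
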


Corollary~\ref{cor:asym} has group-theoretic consequences; see \S\ref{ss:adjoint}.

\begin{corabc}
  \label{cor:sym}
  If $\cA$ is admissible, then $\Zeta^{\ak}_{\SymBoard_{d\times e}(\cA,u,\fO)}(T) =
  \frac{1 - q^{-d-e}T}{(1-T)^2}$.
\end{corabc}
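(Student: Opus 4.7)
The plan is to deduce this corollary as an immediate specialization of Theorem~\ref{thm:embedded}, in the same way that Corollaries~\ref{cor:rec} and~\ref{cor:asym} are announced to follow. I would apply the theorem using the third row of its table, so $m = n = d+e$, $M = \Sym_{d+e}(\fO)$, and $M' = \SymBoard_{d\times e}(\cA,u,\fO)$.

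Next, I take the trivial choice of embedding data: set $\tilde m = m$ and $\tilde n = n$, so that necessarily $r_i = i$ and $c_j = j$, and choose $N = 0$. Then $\widetilde{\dtimes}$ is the identity map on $\Mat_{m\times n}(\fO)$, and $\widetilde{M} + N = M$, $\widetilde{M'} + N = M'$. Theorem~\ref{thm:embedded} therefore yields
\[
  \Zeta^{\ak}_{\Sym_{d+e}(\fO)}(T) = \Zeta^{\ak}_{\SymBoard_{d\times e}(\cA,u,\fO)}(T).
\]
Finally, I substitute Proposition~\ref{prop:classical_ask}\ref{prop:classical_ask3} with $d$ replaced by $d+e$, giving
\[
  \Zeta^{\ak}_{\Sym_{d+e}(\fO)}(T) = \frac{1 - q^{-d-e}T}{(1-T)^2},
\]
which is the claimed formula.

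There is essentially no mathematical obstacle at this stage: the genuine content—requiring the admissibility hypothesis on $\cA$ to ensure that imposing the colour-relations does not change the ask zeta function—is packaged entirely into Theorem~\ref{thm:embedded}. The present corollary is a routine specialization, and I would expect the written proof to consist of one or two sentences mirroring the pattern used for Corollaries~\ref{cor:rec} and~\ref{cor:asym}.
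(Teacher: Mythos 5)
Your derivation is correct and non-circular, but it runs in the opposite logical direction from the paper. The paper states explicitly, right after Corollary~\ref{cor:sym}, that although Corollaries~\ref{cor:rec}--\ref{cor:sym} follow from Theorem~\ref{thm:embedded}, the order of proof is reversed: the corollaries are established first, and Theorem~\ref{thm:embedded} is only assembled at the very end (\S\ref{s:board}) from the same machinery. The paper's actual proof of Corollary~\ref{cor:sym} is direct: one passes to the ``symmetric board game'' by forming the induced colouring $\hat\beta$ of the grid on $[d+e]$ (Definition~\ref{d:betahat}); Proposition~\ref{prop:embedded_Sigma} shows $\hat\beta$ is $\bSigma(I\cup J,I\cup J)$-admissible precisely when $\cA$ is admissible; since $\bSigma$ is $(I,J)$-constant of rank $0$ (Example~\ref{ex:Sigma_crk}), Corollaries~\ref{cor:adm_blueprint} and \ref{cor:crk_zeta} (packaged as Corollary~\ref{cor:Sigma_zeta}) deliver the formula with $d=e=\card{I\cup J}$. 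Your route is legitimate because the proof of Theorem~\ref{thm:embedded} given in \S 6.7 (via Theorem~\ref{thm:template} and points (a)--(c)) nowhere invokes Corollary~\ref{cor:sym}; what it buys you is brevity, at the cost of hiding all the content inside the theorem, whereas the paper's order reflects how the argument is actually built. If you adopt your order in a write-up, you should flag the non-circularity explicitly, since the paper's exposition proves the corollary before the theorem.
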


While Corollaries~\ref{cor:rec}--\ref{cor:sym} follow from
Theorem~\ref{thm:embedded}, we will reverse the order and first establish the
former corollaries and only then prove Theorem~\ref{thm:embedded}.

\begin{rem}
  \label{rem:middle}
  Theorem~\ref{thm:template}, our most general version of
  Theorem~\ref{thm:embedded}, will establish the conclusions of the latter
  for more general admissible partial colourings (suitably defined) of ``alternating''
  or ``symmetric'' grids.
  The latter include partial colourings that do not arise from a partial
  colouring of a rectangular grid (i.e.\ by imposing relations in a top right block)
  as in Theorem~\ref{thm:embedded}.
  This will, for instance, explain why the ask zeta function of
  $\{ x\in \Alt_4(\fO) : x_{12} + x_{23} + x_{34} = 0\}$
  coincides with that of $\Alt_4(\fO)$;
  see Example~\ref{ex:snakes}.
\end{rem}

A natural follow-up problem that we shall not pursue here is to find
analogues of Theorem~\ref{thm:embedded} and
Corollaries~\ref{cor:rec}--\ref{cor:sym} for submodules of more general
classes of ambient modules. 
Inspired by \cite{cico}, submodules of $\Mat_{d\times e}(\fO)$, $\Alt_d(\fO)$, or
$\Sym_d(\fO)$ defined via support constraints would provide an interesting class
of such ambient modules.
We will see that Theorem~\ref{thm:jacobi} below is a first step in this direction.

It is also natural to ask to what extent our admissibility assumptions are
necessary for the validity of our results above.
It is possible to construct examples of non-admissible partial
colourings $\cA$ and matrices $u$ with unit entries such that the
conclusion of Corollary~\ref{cor:rec} holds.
However, the authors do not know of an example of a non-admissible partial
colouring $\cA$ such that the conclusions of Corollary~\ref{cor:rec} hold for
\itemph{all} $u$.

%%%%%%%%%%%%%%%%%%%%%%%%%%%%%%%%%%%%%%%%%%%%%%%% 
\subsection{Examples, non-examples, and rank distributions}
\label{ss:intro_exs}
%%%%%%%%%%%%%%%%%%%%%%%%%%%%%%%%%%%%%%%%%%%%%%%% 

We now describe several examples and non-examples of Corollary~\ref{cor:rec}
that illustrate a number of features, in particular pertaining to
rank distributions within spaces of matrices over finite fields as in
\ref{motivation_rank} from \S\ref{ss:motivation}.
(For group-theoretic interpretations, see \S\ref{ss:intro_reprise}.)

\begin{ex}
  \label{ex:guide2}
  Let $M(\ell) = \Board_{3\times 3}(\cA(\ell),\fO)$
  for $\cA(\ell)$ as in Example~\ref{ex:guide1}.
  By Corollary~\ref{cor:rec},
  $\Zeta^\ak_{M(\text a)}(T) = \Zeta^\ak_{M(\text b)}(T) =
  \frac{1-q^{-3}T}{(1-T)^2} = \Zeta^{\ak}_{\Mat_3(\fO)}(T)$.
  On the other hand,
  the conclusion of Corollary~\ref{cor:rec} does not hold for either $M(\text 
  c)$ or $M(\text d)$.
  Indeed, using the package \textsf{Zeta}~\cite{Zeta,1489} for
  SageMath~\cite{SageMath}, we find that if $\fO$ is a compact \DVR{} with sufficiently large 
  residue characteristic and residue cardinality $q$,
  then $\Zeta^{\ak}_{M(\text c)}(T)$ and $\Zeta^{\ak}_{M(\text d)}(T)$ are
  both of the form

  {\footnotesize
  \begin{align*}
     \frac{
      1
      + N q^{-1} T
      - 2 (N + 1) q^{-2}  T
      + N q^{-3}  T
      + q^{-4}T^2}
    {(1- q^{-1}T)(1 - T)^2}
    & =
    1 + \left(2  + \frac{N+1}q - 2\frac{N+1}{q^{2}} + \frac{N}{q^{3}}\right)T + \mathcal O(T^2),
  \end{align*}}
  
  \noindent
  where $N = N(\ell,q)$.
  In detail, $N(\text c,q) = 2$ and $N(\text d,q)$ is the number of
  roots of $X^2 + X + 1$ in $\FF_q$.
  Thus, $N(\text d,q) = 2$ if $q\equiv 1 \pmod 3$ and $N(\text d,q) = 0$ otherwise.
  Hence, the average size of a kernel within the image, $\bar M(\text d)$ say, of $M(\text d)$
  over the residue field of $\fO$ is not rational in~$q$
  and the number of matrices of rank $1$ in $\bar M(\text d)$ is not polynomial in $q$.
\end{ex}

\begin{ex}
  \label{ex:adm_porc}
  Consider the admissible partial colouring $\cA$ of $[3]\times [3]$ given in
  Figure~\ref{fig:adm3x3}.
  Hence, for each commutative ring $R$,
  \[
    \begin{aligned}
      \Board_{3\times 3}(\cA,R) =\bigl\{
      [x_{ij}]\in \Mat_3(R) & : x_{11} + x_{22}\!\!\!\! &= x_{12} + x_{23} &=  x_{13}+x_{31} \\& & = x_{21} + x_{32} &= 0 
      \bigr\}.
    \end{aligned}
  \]
  Using the methods for symbolically counting rational points
  on varieties from \cite[\S 5]{padzeta}
  and implemented in \textsf{Zeta}, we find that if $q$ is a power of a sufficiently large prime,
  then the number, $r_1(q)$ say, of matrices of rank $1$ in $\Board_{3\times 3}(\cA,\FF_q)$ is
  $(N(q)+1)(q-1)$, where $N(q)$ is the number of roots of $X^4 + 1$ in $\FF_q$.
  In particular, $r_1(q)$ is not given by a polynomial in~$q$ (although it is
  a quasi-polynomial).
  This shows that while the modules $\Board_{d\times e}(\cA,u,R)$ associated
  with admissible partial colourings $\cA$ do exhibit all the features
  described in \ref{motivation_simple} and \ref{motivation_ask} from
  \S\ref{ss:motivation},
  we are forced to abandon \ref{motivation_rank}.
\end{ex}

\begin{figure}
  \centering
  \begin{subfigure}[t]{0.45\textwidth}
    \centering
    \begin{tikzpicture}
      [box/.style={rectangle,draw=black,thick, minimum size=1cm},
      scale=0.5, every node/.style={scale=0.5}
      ]
      \foreach \x in {0,1,2}{
        \foreach \y in {0,1,2}
        \node[box,fill=white] at (\x,\y){};
      }
      \node[box,fill=our0] at (0,2){};  
      \node[box,fill=our0] at (1,1){};

      \node[box,fill=our4] at (1,2){};  
      \node[box,fill=our4] at (2,1){};

      \node[box,fill=our1] at (2,2){};  
      \node[box,fill=our1] at (0,0){};

      \node[box,fill=our3] at (0,1){};  
      \node[box,fill=our3] at (1,0){};
    \end{tikzpicture}
    \caption{admissible}
    \label{fig:adm3x3}
  \end{subfigure}
  \begin{subfigure}[t]{0.45\textwidth}
    \centering
    \begin{tikzpicture}
      [box/.style={rectangle,draw=black,thick, minimum size=1cm},
      scale=0.5, every node/.style={scale=0.5}
      ]
      \foreach \x in {0,1,2}{
        \foreach \y in {0,1,2}
        \node[box,fill=white] at (\x,\y){};
      }
      \node[box,fill=our0] at (0,2){};  
      \node[box,fill=our0] at (2,0){};

      \node[box,fill=our4] at (1,2){};  
      \node[box,fill=our4] at (0,1){};

      \node[box,fill=our1] at (2,2){};  
      \node[box,fill=our1] at (1,1){};
      \node[box,fill=our1] at (1,0){};

      \node[box,fill=our3] at (0,0){};  
      \node[box,fill=our3] at (2,1){};
    \end{tikzpicture}
    \caption{non-admissible}
    \label{fig:nonporc}
  \end{subfigure}
  \caption{Two partial colourings of $[3]\times [3]$}
\end{figure}
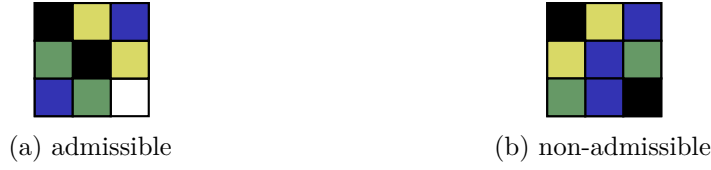

One of the key themes of \cite{cico} is the cancellation of arithmetically
wild behaviour of certain counting problems upon taking an average.
In this spirit, relation modules can be used to produce explicit examples of
mildly wild instances of such cancellations.

\begin{ex}
  \label{ex:nonporc}
  Let $\cN$ be the non-admissible partial colouring
  given in
  Figure~\ref{fig:nonporc}.
  Hence,
  \[
    \begin{aligned}
      \Board_{3\times 3}(\cN,R) =\bigl\{
      [x_{ij}]\in \Mat_3(R) & : x_{11} + x_{33}\!\!\!\! &= x_{12} + x_{21} &=  x_{13}+x_{22}+x_{32} \\& & = x_{23} + x_{31} &= 0 
      \bigr\}.
    \end{aligned}
  \]
  Using \textsf{Zeta}, we find that if $\fO$ is a compact \DVR{} with sufficiently large residue
  characteristic, then
  $\Zeta^{\ak}_{\Board_{3\times 3}(\cN,\fO)}(T) = \frac{(1-q^{-2} T)^2}{(1-q^{-1}T)(1-T)^2}$.
  Curiously, this formula coincides with the ask zeta function of the
  ``staircase module'' 
  $\left[\begin{smallmatrix}
      0 & * & * \\
      * & * & * \\
      * & * & *
    \end{smallmatrix}\right]
    \subset \Mat_{3}(\fO)
    $;
  see \cite[Prop.\ 5.10]{cico}.
  We cannot at present explain this coincidence.

  The rank loci of $\Board_{3\times 3}(\cN,\FF_q)$ are arithmetically richer
  than the tame formula for $\Zeta^{\ak}_{\Board_{3\times
      3}(\cN,\fO)}(T)$ might indicate.
  Indeed, using \textsf{Zeta} again (similarly to Example~\ref{ex:adm_porc}), for
  almost all primes $p$ and all powers $q$ of $p$, we find that the number of matrices of
  rank $1$ in $\Board_{3\times 3}(\cN,\FF_q)$ is $(N(q)+1)(q-1)$, where $N(q)$ is the number of roots of $f(X) := X^5 + X - 1 =
  (X^{2} - X + 1) (X^{3} + X^{2} - 1)$ in $\FF_q$.
  (The method for enumerating rational points implemented in \textsf{Zeta}
  does not keep track of the finitely many primes that need to be excluded.
  Although we will not need it, we note that experimental evidence suggests
  that the above formulae for the numbers of matrices of rank $1$ in
  $\Board_{3\times 3}(\cN,\FF_q)$ here and in
  $\Board_{3\times 3}(\cA,\FF_q)$ from Example~\ref{ex:adm_porc} might in fact
  both be correct without any restrictions on $q$.)

  As $p$ ranges over rational primes, $N(p)$ is \textit{not} constant on
  residue classes modulo any number $m\ge 1$.
  This follows from class field theory.
  Namely, a number field $K$ is abelian if and only if the following
  condition is satisfied:
  there exist  $m\ge 1$ and $H\subset [m]$ such that each sufficiently large rational prime
  $p$ splits completely in $K$ if and only if $p$ is congruent to an element
  of $H$ mod $m$.
  (See \cite[\S 5]{ConHCFT} or \cite[\S 7]{Gar81}.)
  To apply this here, let $a,b\in \CC$ with $a^2 - a + 1 = 0 = b^3 + b^2 - 1$.
  Using e.g.~SageMath, we find that the Galois group of the normal closure $E$
  of $\QQ(a,b)/\QQ$ is non-abelian.
  By basic facts on factorisation in number fields, a rational prime
  splits completely in $E$ if and only if it splits completely
  in $\QQ(a)$ and in $\QQ(b)$.
  It then follows that a rational prime $p\gg 0$ satisfies $N(p) = 5$ if and only
  if $p$ splits completely in $E$, a property that is not
  characterised by congruence conditions by the aforementioned result.
\end{ex}

%%%%%%%%%%%%%%%%%%%%%%%%%%%%%%%%%%%%%%%%%%%%%%%%
\subsection{Results II: class counting zeta functions of free class-$3$-nilpotent groups}
\label{ss:adjoint}
%%%%%%%%%%%%%%%%%%%%%%%%%%%%%%%%%%%%%%%%%%%%%%%% 

Our final main result is a group-theoretic application of ideas
underpinning Theorem~\ref{thm:embedded}.

\paragraph{Class counting zeta functions.}
Grunewald, Segal, and Smith~\cite{GSS88} pioneered the study of zeta functions
in group theory.
Over the following decades, a rich theory encompassing numerous types of
algebraically motivated zeta functions has emerged;
see \cite{Vol11} for a recent survey.
The study of the following class of group-theoretic zeta
functions goes back to du~Sautoy~\cite{dS05}.
Let $\concnt(H)$ denote the number of conjugacy classes of a group $H$.
Let $\fO$ be a compact \DVR{} with maximal ideal $\fP$.
Let $\sG$ be a group scheme of finite type over $\fO$.
The \emph{class counting zeta function} of $\sG$ is the generating
function
\[
  \Zeta^{\cc}_{\sG}(T) := \sum_{n=0}^\infty \concnt(\sG(\fO/\fP^n)) T^n.
\]
In the literature,
these and related functions are also called
``conjugacy class'' and ``class number''  zeta functions.
For recent work in the area, see \cite{BDOP13,Lins1/19,Lins2/20,ask,ask2,cico}.

\paragraph{Ask zeta functions as class counting zeta functions.}
Let $M\subset \Alt_d(\ZZ)$ be a submodule.
As explained in \cite[\S\S 1.2--1.3, 2.4]{cico}, there exists
a unipotent group scheme $\sG_M$
such that $\Zeta^{\cc}_{\sG\otimes \fO}(T) = \Zeta^{\ak}_{M_\fO}(q^\ell T)$
for each compact \DVR{} $\fO$,
where $\ell$ is the rank of $M$ as a $\ZZ$-module, $q$ is the residue
cardinality of $\fO$, and $M_\fO$ is the $\fO$-submodule of $\Alt_d(\fO)$
generated by~$M$.
The group scheme $\sG_M$ is unipotent of class at most $2$ with underlying
scheme $\AA^{d+\ell}_{\ZZ}$.
For odd prime powers $q$, the finite group $\sG_M(\FF_q)$ can be easily
described in terms of the Baer correspondence~\cite{Bae38}; see \cite[\S 2.4]{cico}.

Let $\cA = (A_c)_{c\in\Colours}$
be an admissible partial colouring of of $[d]\times [e]$.
Let $M(\cA) := \AltBoard_{d\times e}(\cA,\ZZ) \subset \Alt_{d+e}(\ZZ)$.
If $b$ denotes the number of colours $c\in \Colours$ with $A_c\not=
\emptyset$, then $M(\cA)$ has rank $\binom{d+e}2 - b$.
Using Corollary~\ref{cor:asym}, we thus conclude that for each compact
\DVR{} $\fO$ as above, $\Zeta^\cc_{\sG_{M(\cA)}\otimes \fO}(T) = (1-q^{\binom{d+e}2 - d - e - b + 1}T)/((1-T)(1-qT))$.
This can e.g.~be used to construct examples of non-isomorphic group schemes
with identical associated class counting zeta functions.
While this constitutes an immediate group-theoretic application of our
results, our final main result (Theorem~\ref{thm:jacobi})
follows a different path.

\paragraph{Unipotent group schemes from Lie algebras.}
Let $R$ be a (commutative) ring.
For further details on the following, see \S\ref{ss:cc_ask} below.
Let $\fg$ be a nilpotent Lie $R$-algebra of class at most $c$.
Suppose that the underlying $R$-module of $\fg$ is free of finite rank.
Further suppose that $c! \in R^\times$.
Then $\fg$ naturally gives rise to a unipotent group scheme $\sG$ over~$R$
via the Baker-Campbell-Hausdorff series.
For each $R$-algebra $\fO$ which is a compact \DVR{}, we may express the class
counting zeta function of $\Zeta^{\cc}_{\sG\otimes \fO}(T)$ in terms of the
ask zeta function associated with the (image of the) adjoint representation of $\fg\otimes
\fO$. 

\paragraph{Free nilpotent Lie algebras and associated group schemes.}
Let $\mathfrak f_{c,d}$ be the free nilpotent Lie $\ZZ[1/c!]$-algebra of class
at most $c$ on $d$ generators.
(This algebra can be described explicitly in terms of Hall bases;
cf.\ \cite[Ch.\ 4]{Reu93}.)
Let $\sF_{c,d}$ be the associated unipotent group scheme over $\ZZ[1/c!]$.
For each prime $p > c$, the group $\sF_{c,d}(\ZZ_p)$ is the free
nilpotent pro-$p$ group of class at most $c$ on $d$ generators.
The class counting zeta functions associated with the group schemes
$\sF_{c,d}$ are of natural interest, in particular due to recent work of O'Brien and
Voll~\cite[\S\S 2,5]{O'BV15} on ``class vectors'' and ``character vectors''
of~$\sF_{c,d}(\FF_q)$.

\paragraph{Class counting zeta functions of $\sF_{c,d}$.}
Apart from trivial cases ($c\le 1$ or $d\le 1$) and the example
$\sF_{3,2}$ (see below), the class counting zeta functions
associated with $\sF_{c,d}$ have only been previously known for $c = 2$:

\begin{prop}[{\cite[Cor.\ 1.5]{Lins2/20}; \cite[Ex.\ 7.3]{ask2}}]
  \label{prop:F2d}
  Let $\fO$ be a compact \DVR{} with odd residue cardinality $q$.
    Then
    \[
      \Zeta^{\cc}_{\sF_{2,d}\otimes \fO}(T) =
      \frac{1 - q^{\binom{d-1} 2}T}{\bigl(1-q^{\binom d 2}T\bigr)\bigl(1-q^{\binom d 2 + 1}T\bigr)}.
    \]
\end{prop}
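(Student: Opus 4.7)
The plan is to invoke the reformulation recalled in Section~\ref{ss:adjoint}, which expresses the class counting zeta function of a class-$2$ unipotent group scheme as a substituted ask zeta function of an alternating matrix module, and then to apply Proposition~\ref{prop:classical_ask}\ref{prop:classical_ask2}.

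First I would identify the submodule $M \subset \Alt_d(\ZZ[1/2])$ associated with $\sF_{2,d}$. The free nilpotent Lie algebra $\mathfrak{f}_{2,d}$ of class $2$ has abelianization of rank $d$ (spanned by images of the free generators $x_1,\dots,x_d$) and derived subalgebra freely spanned by the formal brackets $[x_i,x_j]$ for $1 \le i < j \le d$. In particular, the commutator pairing induces an isomorphism $\Lambda^2(\mathfrak{f}_{2,d}/[\mathfrak{f}_{2,d},\mathfrak{f}_{2,d}]) \cong [\mathfrak{f}_{2,d},\mathfrak{f}_{2,d}]$. Translating this bilinear data into the matrix-of-linear-forms dictionary underlying the construction $M \mapsto \sG_M$ of Section~\ref{ss:adjoint}, one sees that the associated module is the full $\Alt_d(\ZZ[1/2])$, so that $M_\fO = \Alt_d(\fO)$ with $\ZZ$-rank $\ell = \binom{d}{2}$.

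Then the reformulation $\Zeta^{\cc}_{\sG_M \otimes \fO}(T) = \Zeta^{\ak}_{M_\fO}(q^\ell T)$ together with Proposition~\ref{prop:classical_ask}\ref{prop:classical_ask2} yields
\[
  \Zeta^{\cc}_{\sF_{2,d}\otimes \fO}(T) = \frac{1 - q^{1-d}\cdot q^{\binom{d}{2}}T}{\bigl(1 - q^{\binom{d}{2}}T\bigr)\bigl(1 - q\cdot q^{\binom{d}{2}}T\bigr)},
\]
which simplifies to the stated formula via the elementary identity $\binom{d}{2} + 1 - d = \binom{d-1}{2}$.

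The only step requiring genuine care is the identification of $M$ with the \emph{full} ambient alternating module rather than a proper submodule: this comes down to checking that the freeness of the $[x_i,x_j]$ in $\mathfrak{f}_{2,d}$ is exactly what makes the matrix of linear forms read off from the commutator table span $\Alt_d(\ZZ[1/2])$ in full. The hypothesis that $q$ be odd enters at this same stage, since the reduction to an ask zeta function proceeds through the Baer correspondence linking $\sF_{2,d}(\fO/\fP^n)$ with the additive structure underlying $\mathfrak{f}_{2,d}\otimes \fO/\fP^n$.
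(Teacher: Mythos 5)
Your proposal is correct and is essentially the argument behind the paper's citation: the paper offers no proof of its own beyond referencing \cite[Ex.\ 7.3]{ask2}, which proceeds exactly as you do, identifying the module attached to $\sF_{2,d}$ via the Baer/alternating-module correspondence of \S\ref{ss:adjoint} with the full $\Alt_d$, applying $\Zeta^{\cc}_{\sG_M\otimes\fO}(T)=\Zeta^{\ak}_{M_\fO}(q^{\ell}T)$ with $\ell=\binom d2$, and invoking Proposition~\ref{prop:classical_ask}\ref{prop:classical_ask2} together with $\binom d2+1-d=\binom{d-1}2$. Your remarks on where freeness of the brackets and oddness of $q$ enter are also consistent with the paper's discussion following the proposition.
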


We note that one can construct a natural group scheme associated with any finitely
generated free class-$2$-nilpotent Lie algebra over $\ZZ$ (whose underlying $\ZZ$-module
is free of finite rank);
see \cite[\S 2.4]{SV14}. That is, in case of nilpotency class $2$, it is not
necessary to pass to the ring $\ZZ[1/2]$.
The preceding proposition extends to even residue characteristic.

\begin{thmabc}
  \label{thm:jacobi}
    Let $p\ge 5$ be a prime and let $q = p^f$.
    Let $\fO$ be a compact \DVR{} with residue cardinality $q$.
    Then:
  \begin{equation}
    \label{eq:F3d}
    \Zeta^{\cc}_{\sF_{3,d}\otimes \fO}(T) =
    \frac{
      \Bigl(1 - q^{\frac{(d-1)(d^2 + d-3)} 3}T\Bigr)
      \Bigl(1-q^{\frac{(d-2)d(d+2)} 3}T\Bigr)
    }
    {
      \Bigl(1 - q^{\frac{(d-1)d(d+1)} 3}T\Bigr)
      \Bigl(1 - q^{\frac{d^3-d+3} 3}T\Bigr)
      \Bigl(1 - q^{\frac{(2d^2+3d-11)d} 6}T\Bigr)
    }.
    \tag{\ensuremath{\ast}}
  \end{equation}
\end{thmabc}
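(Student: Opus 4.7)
The plan is to deduce Theorem~B from the relation-module machinery developed in this article (in particular Theorem~A, via its more general version Theorem~\ref{thm:template}) by first passing from class counting to an ask zeta function of the adjoint representation of $\mathfrak f_{3,d}\otimes\fO$, and then exhibiting the Jacobi identity as an admissible partial colouring.

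\textbf{Step 1 (Reduction to an ask zeta function).} Since $p\ge 5$, we have $3! \in \fO^\times$, so the Baker--Campbell--Hausdorff correspondence identifies $\sF_{3,d}(\fO/\fP^n)$ as a group with $\mathfrak f_{3,d}\otimes\fO/\fP^n$ equipped with the BCH product. Centralisers in $\sF_{3,d}(\fO/\fP^n)$ correspond to kernels of $\ad(y)$ in the Lie algebra, so the general principle recalled in \S\ref{ss:adjoint} gives
\[
  \Zeta^\cc_{\sF_{3,d}\otimes\fO}(T) \;=\; \Zeta^\ak_M(T)
\]
(up to a $T\mapsto q^k T$ rescaling to be tracked at the end), where $M\subset\End(\mathfrak f_{3,d}\otimes\fO)$ is the image of $\ad$.

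\textbf{Step 2 (Explicit description of $M$).} Use the lower central grading $\mathfrak f_{3,d} = V_1\oplus V_2\oplus V_3$, with $V_3 = Z(\mathfrak f_{3,d})$ and $\dim V_1 = d$, $\dim V_2 = \binom d 2$, $\dim V_3 = \frac{(d-1)d(d+1)}{3}$ (Witt's formula). For $y = y_1 + y_2$ (the $V_3$-component is central and hence irrelevant), the matrix $\ad(y)$ is strictly block lower-triangular with blocks $A(y_1)\colon V_1\to V_2$, $C(y_1)\colon V_2\to V_3$, and $B(y_2)\colon V_1\to V_3$, whose entries are linear in the coordinates of $y_1,y_2$. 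The block $A$ is the generic alternating bracket, while the delicate content lies in $B$ and $C$ because of the Jacobi identity.

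\textbf{Step 3 (Jacobi as an admissible partial colouring).} Replace the Hall basis of $V_3$ by the larger tensor basis $V_3^{\mathrm{exp}}$ indexed by $[d]\times\binom{[d]}{2}$, whose quotient by the $\binom{d}{3}$ Jacobi relations recovers $V_3$. In this expanded basis the blocks $B$ and $C$ decompose into direct sums of generic rectangular building blocks with independent parameters, so the enlarged module of adjoint matrices is manifestly tame. The original module $M$ is obtained from the enlarged one by imposing, as linear constraints on matrix entries, the Jacobi relations. These constraints define a partial colouring of a grid whose cells record positions in the $B$- and $C$-blocks, with one colour per element of $\binom{[d]}{3}$ and three coloured cells per triple. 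The core combinatorial claim is that this colouring is admissible in the sense of Theorem~\ref{thm:template} (or its symmetric/alternating variant), since each Jacobi triple spreads its coloured cells across three distinct $V_2$-slots and three distinct $V_1$-slots, forcing every colour-closed subgrid to contain a blank cell.

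\textbf{Step 4 (Evaluation and main obstacle).} Granted admissibility, Theorem~\ref{thm:template} shows that $\Zeta^\ak_M(T)$ equals the ask zeta function of the enlarged module. The latter assembles from alternating and generic rectangular pieces via a block-triangular structure; its ask zeta function factorises by the multiplicativity principle of \cite[Prop.\ 5.15]{ask}, and each factor is computed from Proposition~\ref{prop:classical_ask}. Substituting the dimensions from Witt's formula and tracking the shift from Step~1 then yields~\eqref{eq:F3d}. The principal obstacle is the admissibility verification in Step~3: packaging the Jacobi identity as a partial colouring in a way compatible with the ambient alternating/symmetric structure, and confirming the colour-closed-subgrid condition, is where the combinatorics of Hall bases and $3$-subsets of $[d]$ do real work; once this is in place, the remaining steps amount to bookkeeping within the established machinery.
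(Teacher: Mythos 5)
Your overall architecture (class counting $\to$ ask zeta function of the adjoint representation $\to$ discard the Jacobi identity $\to$ evaluate) matches the paper's strategy, but the central step is not justified by the tool you invoke. Theorem~\ref{thm:template} and the admissibility machinery apply only when the ambient family of module representations is $(I,J)$-constant of some rank $\ell$ (Definition~\ref{d:IJconst}); the conclusion is then the constant-rank formula of Corollary~\ref{cor:crk_zeta}, with exactly two factors in the denominator. The ($\bullet$-dual of the) adjoint representation $\alpha_d$ of the Jacobi-free algebra $\mathfrak a_d$ is \emph{not} of constant rank --- its ask zeta function, which the paper computes by identifying $\alpha_d$ with the adjacency representation of the threshold graph $\DG_{\binom d2}\join\CG_d$ and quoting \cite[Thm~8.18]{cico} (Proposition~\ref{prop:kite}), has three denominator factors --- so no choice of combinatorial family and partial colouring can bring Theorem~\ref{thm:template} to bear. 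The paper instead proves that $\hat\alpha_d$ is an \emph{orbital subrepresentation} of $\alpha_d$ (Proposition~\ref{prop:jacobi_orbital}) by a different route: reduce to triples $\card I=3$ using Lemmas~\ref{lem:orbital_enlarge}--\ref{lem:orbital_sum}, then verify by an explicit Gr\"obner-basis computation that the two relevant orbit modules have identical Fitting ideals, which suffices by Corollary~\ref{cor:fiteq_orbit_same_zeta}. That this verification cannot be replaced by a soft ``each triple spreads its cells across distinct slots'' argument is underlined by Remark~\ref{rem:F42}: the exactly analogous statement fails for $(c,d)=(4,2)$, even though the universal Jacobi relations there also have disjoint supports and unit coefficients.

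Your Step 4 is also too optimistic. The blocks $V_1\to V_2$ and $V_1\to V_3$ of the adjoint representation share the source $V_1$, so kernels do not factor into independent pieces and the multiplicativity of \cite[Prop.\ 5.15]{ask} does not apply; the formula $W_d(X,T)$ is not a product of the classical expressions in Proposition~\ref{prop:classical_ask}. The evaluation genuinely requires the threshold-graph computation from \cite[\S 8.4]{cico}, after which the exponent bookkeeping (Witt's formula together with the shift $q^{2\binom{d+1}3}$ from Lemma~\ref{lem:adj_zeta_shift} and Corollary~\ref{cor:jacobi_via_alpha}) yields \eqref{eq:F3d}.
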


\begin{ex}
    For $d = 2$, \eqref{eq:F3d} becomes
    \[
      \Zeta^{\cc}_{\sF_{3,2}\otimes \fO}(T) = \frac{1-T}{(1-q^2T)(1-q^3T)},
    \]
    in accordance with \cite[\S 9.3, Table~1]{ask}.
\end{ex}

In the setting of Theorem~\ref{thm:jacobi},
we may rewrite \eqref{eq:F3d} more conveniently as
$\Zeta^\cc_{\sF_{3,d}\otimes \fO}(T) = W_d\bigl(q,q^{\frac{(d-2)d(d+2)}{3}}T\bigr)$,
where $W_d(X,T) =\frac{(1-T)(1-XT)}{\bigl(1-X^dT\bigr)\bigl(1-X^{d+1}T\bigr)\bigl(1-X^{\binom d
    2}T\bigr)}$.
In particular, 
\begin{equation}
  \label{eq:F3d_expansion}
  \Zeta^{\cc}_{\sF_{3,d}\otimes \fO}(T) =
  1 + q^{\frac{(d-2)d(d+2)} 3}\!\left(q^{\binom d 2} + q^{d+1} + q^d - q - 1\right)T + \mathcal O(T^2)
  \tag{\ensuremath{\dagger}}
\end{equation}
and the class number $\concnt(\sF_{3,d}(\FF_q))$ is the coefficient of $T$ in
\eqref{eq:F3d_expansion}.
O'Brien and Voll~\cite[Thm~2.6]{O'BV15} showed that for all $c$ and $d$, there
exists an explicit $f_{c,d}(X) \in \ZZ[X]$ such that
$\concnt(\sF_{c,d}(\FF_q)) = f_{c,d}(q)$
whenever $\gcd(q,c!) = 1$.
Our formula for~$\concnt(\sF_{3,d}(\FF_q))$
in \eqref{eq:F3d_expansion} agrees with theirs for $c = 3$.

\paragraph{Linear relations with disjoint supports and Theorem~\ref{thm:jacobi}.}
We will now sketch how Theorem~\ref{thm:jacobi} fits
into our study of linear relations with disjoint supports.
First, it turns out to be advantageous to attach ask zeta functions not merely
to modules of matrices but, more generally, to ``module representations'':
homomorphisms from abstract modules into Hom-spaces.
One major advantage of this point of view is that it provides a natural framework
for operations dubbed ``Knuth dualities'' in \cite{ask2}.
(We may think of these dualities as
analogues of the classical identity
$\concnt(H) = \#\operatorname{Irr}(H)$ for finite groups~$H$.)

Turning to Theorem~\ref{thm:jacobi},
let $\mathfrak a_d$ be the largest $\ZZ[1/6]$-algebra which is generated by
$d$ elements and which satisfies the identities $v^2 = 0$ and $v(w(xy)) = 0$
(``class-$3$-nilpotency'') for all $v,w,x,y$.
We may identify $\mathfrak f_{3,d}  = \mathfrak a_d / \mathfrak j_d$,
where $\mathfrak j_d$ is the ideal of $\mathfrak a_d$ corresponding
to the Jacobi identity.
Taking $\hat\alpha_d$ to be the ``$\bullet$-dual'' (see \S\ref{ss:mreps}) of
the adjoint representation of $\mathfrak f_{3,d}$, we may express the class
counting zeta function of $\sF_{3,d}\otimes \fO$ in Theorem~\ref{thm:jacobi}
in terms of the ask zeta function attached to $\hat\alpha_d$ over $\fO$.
We similarly define $\alpha_d$ as the $\bullet$-dual of the adjoint
representation of $\mathfrak a_d$.
Up to harmless transformations, we may think of
$\hat\alpha_d$ as the restriction of $\alpha_d$ to a submodule.
In terms of matrices, this submodule is obtained by imposing linear relations
with unit coefficients and disjoint supports, one for each unordered triple of
defining generators of $\mathfrak a_d$.
The machinery developed in this article then allows us to show that $\alpha_d$
and $\hat\alpha_d$ give rise to the same ask zeta functions.

It then only remains to determine the ask zeta functions associated
with $\alpha_d$.
We will see that this problem has already been solved in \cite{cico}.
Indeed, $\alpha_d$ turns out to (essentially) be an ``adjacency
representation'' of a threshold graph as in \cite[\S 8.4]{cico}, and this
observation will allow us to finish our proof of Theorem~\ref{thm:jacobi}.

\begin{rem}
  \label{rem:F42}
  We now briefly explain why in our work towards Theorem~\ref{thm:jacobi},
  we restricted attention to nilpotency class $3$.
  Our definition of $\mathfrak a_d$ naturally extends to higher nilpotency class
  $c$, giving rise to $\ZZ[1/c!]$-algebras $\mathfrak a_{c,d}$.
  We can then identify $\mathfrak f_{c,d} = \mathfrak a_{c,d}/\mathfrak
  j_{c,d}$, where $\mathfrak j_{c,d}$ encodes the Jacobi identity.
  Similarly, we can define $\alpha_{c,d}$ and $\hat\alpha_{c,d}$,
  extending the definitions of $\alpha_d$ and $\hat\alpha_d$ from above.

  As we will see in \S\ref{ss:jacobi_no_more}, what allows us to essentially
  view $\hat\alpha_{3,d}$ as a restriction of $\alpha_{3,d}$
  is the fact that $\mathfrak j_{3,d}$ is central in $\mathfrak a_{3,d}$.
  This condition is hardly ever satisfied in higher class.
  Indeed, we leave it to the reader to verify that for $c \ge 4$, the ideal $\mathfrak
  j_{c,d}$ is central in $\mathfrak a_{c,d}$ if and only if $d \le 2$.
  This leaves us with the case $(c,d) = (4,2)$ as the only interesting
  candidate for a direct extension of Theorem~\ref{thm:jacobi}.
  However, our successful strategy in class~$3$ fails here since
  $\alpha_{4,2}$ and $\hat\alpha_{4,2}$ turn out to give rise to
  different ask zeta functions.

  We note that using \textsf{Zeta}, we find that for almost all primes $p$ and
  all powers $q$ of $p$,
  if $\fO$ is a compact \DVR{} with residue cardinality $q$, then
  \[
    \Zeta^{\cc}_{\sF_{4,2}\otimes \fO}(T) =
    \frac{q^{7} T^{3} - q^{6} T^{2} - q^{5} T^{2} + q^{4} T^{2} + q^{3} T - q^{2} T - q T + 1}
    {(1 - q^7 T^2 )(1 - q^4T )^2}.
  \]
\end{rem}

%%%%%%%%%%%%%%%%%%%%%%%%%%%%%%%%%%%%%%%%%%%%%%%% 
\subsection{Examples, non-examples, and rank distributions---reprise}
\label{ss:intro_reprise}
%%%%%%%%%%%%%%%%%%%%%%%%%%%%%%%%%%%%%%%%%%%%%%%% 

As our final remark on rank distributions in the spirit of
\ref{motivation_rank} from \S\ref{ss:motivation},
we now briefly explain how the modules in
Examples~\ref{ex:adm_porc}--\ref{ex:nonporc} give rise to group actions with
simple orbit structures, but arithmetically interesting numbers of fixed
points.

\paragraph{Two perspectives on orbits.}
Let $G$ be a finite group acting on a finite set $X$.
By the Cauchy-Frobenius lemma,
we can express $\card{X/G}$ in terms of the numbers of elements
of $G$ with prescribed numbers of fixed points.
Alternatively, we may express $\card{X/G}$ in terms of the numbers
of elements of $X$ that belong to $G$-orbits of prescribed sizes.

\paragraph{Average sizes of kernels and linear orbits.}
Apart from conjugacy classes, average sizes of kernels
are also related to counting orbits of unipotent linear groups;
see \cite[\S 8]{ask}.
We briefly recall the elementary part of this connection.
For a module $N\subset \Mat_{d\times e}(R)$ over a ring $R$ and $x\in R^d$,
let $\cent_N(x) := \{ n\in N : xn = 0\}$.
Let $M\subset \Mat_{d\times e}(\ZZ)$ be a submodule.
Let $\sL_M$ be the group scheme with $\sL_M(R) = M\otimes R$
(additive group) for each commutative ring $R$.
The action  $(x,y) m = (x,xm+y)$
of $M$ on $\ZZ^{d+e} = \ZZ^d\oplus \ZZ^e$
naturally extends to an action of $\sL_M$ on~$\AA^{d+e}$.
Let $\dtimes_R\colon M \otimes R \to \Mat_{d\times e}(R)$ be the natural map
and let $M_R$ denote its image.
The set of fixed points of $m\in \sL_M(R)$ on $R^{d+e}$
is $\Ker(m_R) \oplus R^e$.
Let $R$ be finite.
Then, by the Cauchy-Frobenius lemma, $\sL_M(R)$ has
$\card{R}^e \ask{M_R}$ orbits on $R^{d+e}$; cf.\ \cite[\S 2.2]{ask}.
The $\sL_M(R)$-orbit of $(x,y)\in R^{d+e}$  has size
$\card{M_R/\!\cent_{M_R}(x)}$.
We conclude that
the elements of $\sL_M(\FF_q)$ with precisely $q^{e+i}$ fixed points on
$\FF_q^{d+e}$ are precisely those whose images in $M_{\FF_q}$ have rank $d-i$.
Similarly, the $\sL_M(\FF_q)$-orbit of $(x,y)\in \FF_q^{d+e}$ consists
of precisely $q^i$ elements if and only if $\dim_{\FF_q}(M_{\FF_q}/\cent_{M_{\FF_q}}(x)) = i$.
The latter condition can also be expressed in terms of the rank loci of a
matrix of linear forms; cf.\ \cite[\S 4.3.5]{ask}.

\paragraph{Non-polynomiality, orbits, and fixed points.}
Let $\cA$ be a partial colouring of $[d]\times [e]$.
By minor abuse of notation, write $\sL_\cA := \sL_{\Board_{d\times e}(\cA,\ZZ)}$.
It is easy to see that we may identify
$\sL_{\cA}(R) = \Board_{d\times e}(\cA,R)$ for each commutative ring; cf.\
Lemma~\ref{lem:ranger}.
In particular, we may identify $\sL_{\cA}$ and the subgroup scheme
$R\mapsto \left[\begin{smallmatrix} 1 & \Board_{d\times e}(\cA,R) \\ 0 & 1 \end{smallmatrix}\right]$
of $\GL_{d+e}$.

Let $q$ be a prime power.
Clearly, $\sL_{\cA}(\FF_q)$ fixes each element of $\{0\} \times \FF_q^e$.
Moreover, if $\cA$ is admissible, then it will follow from our proof of
Corollary~\ref{cor:rec} that the orbit of each $(x,y)\in \FF_q^{d+e}$ with
$x\not = 0$ has size $q^e$. (See Corollary~\ref{cor:Rho_blueprint} and Theorem~\ref{thm:crk}.)

Let $\cA$ be as in Example~\ref{ex:adm_porc}.
Then $\sL_{\cA}(\FF_q)$ fixes precisely $q^3$ elements of $\FF_q^6$,
and the remaining $q^6-q^3$ elements all have $\sL_{\cA}(\FF_q)$-orbits of
size $q^3$.
(Hence, $\card{\FF_q^6/\sL_{\cA}(\FF_q)} = q^3 + (q^6-q^3)/q^3 = 2q^3-1$.)
On the other hand, Example~\ref{ex:adm_porc} shows that the number of
elements of $\sL_{\cA}(\FF_q)$ with precisely $q^5$ fixed points on $\FF_q^6$
depends on $q$ modulo $8$.

Next, let $\cN$ be as in Example~\ref{ex:nonporc}.
Let $(x,y)\in \FF_q^6$ with $x\not= 0$.
We leave it to the reader to verify that
the $\sL_{\cN}(\FF_q)$-orbit of $(x,y)$ consists of precisely $q^3$ points,
unless $x_1 = x_2 = x_3$, in which cases this orbit consists of $q^2$ points.
(Hence, $\card{\FF_q^6/\sL_{\cN}(\FF_q)} = 2q^3 + q^2 - 2q$.)
On the other hand, Example~\ref{ex:nonporc} shows that the number of
elements of $\sL_{\cA}(\FF_q)$ with precisely $q^5$ fixed points on $\FF_q^6$
does \textit{not} depend quasi-polynomially on $q$.

%%%%%%%%%%%%%%%%%%%%%%%%%%%%%%%%%%%%%%%%%%%%%%%%
\subsection{Outline}
%%%%%%%%%%%%%%%%%%%%%%%%%%%%%%%%%%%%%%%%%%%%%%%%

In \S\ref{s:ask}, we recall basic facts on module representations and ask zeta
functions.
Our approach revolves around what we call \itemph{orbit modules}, a concept
based on a ``cokernel formalism'' developed in \cite[\S 2.5]{cico}.
In \S\ref{s:orbital}, we introduce \itemph{orbital subrepresentations} of
module representation.
These provide a sufficient condition for proving equality between ask
zeta functions in terms of Fitting ideals of orbit modules.
Reversing the order of our exposition from above,
in \S\ref{s:jacobi}, we then prove Theorem~\ref{thm:jacobi} by implementing the
strategy outlined in \S\ref{ss:adjoint}.
Our proof will motivate several techniques developed in later sections.

Our proof of Theorem~\ref{thm:embedded} is based on a recursion
involving the deletion of rows and columns of matrices.
In \S\ref{s:coh}, we develop an abstract formalism for studying the effects of
these operations on orbit modules within \itemph{coherent families of
module representations}.
In~\S\ref{s:rel}, we then use partial colourings to impose linear relations with
disjoint supports and unit coefficients on modules occurring in suitable
coherent families of module representations.
This, in particular, yields a proof of Corollary~\ref{cor:rec}.
The final \S\ref{s:board} is devoted to linear relations among entries of
alternating and symmetric matrices. 
The key ingredient is a general notion of admissibility for partial
colourings.
This concept takes the form of a ``board game'' played on partially coloured
grids.
A recursion inspired by our proof of Corollary~\ref{cor:rec} then yields
proofs of Corollaries~\ref{cor:asym}--\ref{cor:sym}.
Finally, we combine several of our results and deduce Theorem~\ref{thm:embedded}.

%%%%%%%%%%%%%%%%%%%%%%%%%%%%%%%%%%%%%%%%%%%%%%%%
\subsection*{Acknowledgements}
%%%%%%%%%%%%%%%%%%%%%%%%%%%%%%%%%%%%%%%%%%%%%%%%

We began working on the research described here 
while we both visited the \href{https://www.him.uni-bonn.de}{Hausdorff Research
  Institute for Mathematics} as part of the
Trimester Program
\href{https://www.him.uni-bonn.de/programs/past-programs/past-trimester-programs/logic-algorithms-groups/description/}{``Logic
  and Algorithms in Group Theory''}.
We are grateful to the organisers and to the institute
for providing a stimulating research environment.
AC acknowledges support from the
{Irish Research Council}
through grant no.\ GOIPD/2018/319.
We would like to thank Christopher Voll for valuable comments and discussions
on the work described here.
Finally, we are grateful to the anonymous referee for a careful reading of the
manuscript and for providing many helpful suggestions.

%%%%%%%%%%%%%%%%%%%%%%%%%%%%%%%%%%%%%%%%%%%%%%%%
\subsection*{\textit{Notation}}
%%%%%%%%%%%%%%%%%%%%%%%%%%%%%%%%%%%%%%%%%%%%%%%%

\paragraph{Sets and maps.}
We write $[d] := \{1,2,\dotsc,d\}$.
The symbol ``$\subset$'' indicates not necessarily proper inclusion.
Maps usually act on the right and are composed from left to right.
If $\alpha\colon A\to B$ and $B' \subset B$, then we denote the preimage of
$B'$ under $\beta$ by $B'\beta^-$.
We denote the set of $k$-element subsets of $A$ by $\binom A k$
and the set of all finite subsets of $A$ by $\Powf(A)$.

\paragraph{Rings and modules.}
All rings are assumed to be associative, commutative, and unital.
Throughout, $R$ is a ring and $\fO$ is a discrete valuation ring (\DVR) with
maximal ideal $\fP$.
When $\fO$ is compact, we write $q$ for the residue cardinality of $\fO$.
We denote the free $R$-module on a set $A$ by $RA$.
To avoid ambiguities, we often write $\std_a$ for the 
element of $RA$ corresponding to $a\in A$.
We identify $R^d = R[d]$.
For $x\in RA$, we write $x = \sum_{a\in A} x_a \std_a$ ($x_a\in R$).
For $\fa \normal R$, we also write $\fa A = \sum_{a\in A} \fa \std_a \subset RA$.
Let $I$ and $J$ be finite sets.
We regard the elements of $I$ and $J$ as the row and column indices of
the elements of $\Hom(RI,RJ)$, regarded as $I\times J$ matrices.
Let $i\in I$, and $j\in J$.
When the reference to $I$ is clear, we let $\std_i^*\in (RI)^*$ be the
functional with $\std_j \std_i^* = \delta_{ij}$ (``Kronecker delta'').
We write $\std_{ij} := \std_i^* \std_j \in \Hom(RI,RJ)$ for an ``elementary
matrix''.
We let $X_I := (X_i)_{i\in I}$ consist of algebraically independent elements
over $R$.

\paragraph{Further notation}

\noindent
{\begin{longtable}{r|l|r}
  Notation\phantom{$1_1$} & comment & reference \\
  \hline
  $\inc$, $\ret$, $\infl$, $\resn$ & inclusion, retraction, inflation, restriction & \S\S\ref{ss:mreps}, \ref{ss:cohdef}\\
  $\theta^S$ & extension of scalars & \S\ref{ss:mreps} \\
  $\bullet$, $\circ$ & Knuth duals & \S\ref{ss:mreps} \\
  $\ask\theta$, $\Zeta^{\ak}_{\theta}(T)$ & average size of kernel, ask zeta function & \S\ref{ss:orbit_modules} \\
  $\XX(m,\theta)$, $\orbit(\theta)$, $\Orbit(\theta)$ & orbit modules & \S\ref{ss:orbit_modules} \\
  $S_x$, $M_x$ & specialisation of a ring or module & Remark~\ref{rem:orbit}\\
  $\Fit_i(M)$ & Fitting ideal & \S\ref{ss:Fit} \\
  $\Omega(I,J)$, $\Omega^\times(I,J)$ & $\Orbit(\theta(I,J))$, $\Omega(I,J)\otimes_{R[X_I]} R[X_I^{\pm 1}]$ & \S\ref{ss:crk} \\
  $\bRho$, $\rho(I,J)$ & generic matrices & Example~\ref{ex:rec} \\
  $\sE(I,J)$, $\bGamma$, $\gamma(I,J)$ & alternating matrices & Example~\ref{ex:graph} \\
  $\sS(I,J)$, $\bSigma$, $\sigma(I,J)$ & symmetric matrices &
  Example~\ref{ex:Sigma}\\
  $\beta[B]$ & colours contained entirely in $B$ & \S\ref{ss:relmod}\\
  $\Board(B \sslash\beta)$ & relation module & Definition~\ref{d:relmod} \\
  $\theta\sslash\beta$, $\bTheta\sslash\beta$
  & restrictions to relation modules & \S\ref{ss:relmod} \\
  $\beta(I,J)$&induced partial colouring &Definition~\ref{d:betaIJ}\\
  $\cG(I,J)$ & grid w.r.t\ $\bTheta$ & \S\ref{ss:combfam} \\
  $\xto[\bTheta,\beta]{}$ & move & Definition~\ref{d:reduce} \\
  $\beta^\top$ & transpose partial colouring & \S\ref{ss:rec_board} \\
  $\hat\beta$ & ``(anti)symmetrisation'' of $\beta$ & Definition~\ref{d:betahat}
\end{longtable}
}

%%%%%%%%%%%%%%%%%%%%%%%%%%%%%%%%%%%%%%%%%%%%%%%%%%%%%%%%%%%%%%%%%%%%%%%% 
\section{Average sizes of kernels and orbit modules}
\label{s:ask}
%%%%%%%%%%%%%%%%%%%%%%%%%%%%%%%%%%%%%%%%%%%%%%%%%%%%%%%%%%%%%%%%%%%%%%%% 

We summarise the basics of ``ask zeta functions'' using the formalism
from \cite{ask2} and \cite{cico}.

%%%%%%%%%%%%%%%%%%%%%%%%%%%%%%%%%%%%%%%%%%%%%%%%
\subsection{Module representations}
\label{ss:mreps}
%%%%%%%%%%%%%%%%%%%%%%%%%%%%%%%%%%%%%%%%%%%%%%%% 

For more about most of the following, see \cite[\S\S 2, 4]{ask2}.

\paragraph{Basics.}
A \emph{module representation} over $R$ is a linear map $\theta\colon M\to
\Hom(N,O)$, where $M$, $N$, and $O$ are $R$-modules.
For an $R$-algebra $S$, we let $\theta^S\colon M\otimes S \to \Hom(N\otimes
S,O\otimes S)$ be the induced \emph{extension of scalars} of $\theta$ given by
$(m\otimes s) \theta^S = m\theta \otimes s$ ($m\in M$, $s\in S$).
Let $\theta'\colon M'\to\Hom(N',O')$ be another module representation over
$R$.
A \emph{homotopy} $\theta\to \theta'$ is a triple of linear maps
$(\mu\colon M\to M', \phi\colon N\to N',\psi\colon O \to O')$ such that that
for all $m\in M$, $(m\theta) \psi = \phi ((m\mu)\theta')$.
Homotopies can be composed as expected.
An \emph{isotopy} is an invertible homotopy.
We say that $\theta$ and $\theta'$ are \emph{isotopic} if an isotopy
$\theta\to\theta'$ exists.
We will often only be interested in module representations up to isotopy.

\paragraph{Matrices I.}
Each module of matrices $M\subset \Mat_{d\times e}(R)$ gives rise to a module
representation $M\incl \Mat_{d\times e}(R) = \Hom(R^d,R^e)$.
In this article, our main focus will be on module representations
$\theta\colon M\to
\Hom(RI,RJ)$,
where $I$ and $J$ are finite subsets of $\NN$ and $M$ is free of finite rank
(but perhaps without a canonical basis).
Under such a $\theta$, each element of $M$ is sent to a matrix with rows indexed
by $I$ and columns indexed by $J$.
This setup allows us to conveniently add or delete rows or columns without
relabelling indices.

Let $\theta\colon R B \to \Hom(R I, RJ)$ be a module representation involving
free modules with given bases.
Let $X_B = (X_b)_{b\in B}$ be algebraically independent over $R$.
Then $\theta$ gives rise to an $I\times J$ matrix $\sA(X_B) = [ \sum_{b\in B}
X_b a_{bij} ]_{i\in I, j\in J}$ such that for each $b\in B$, $[a_{bij}]_{i\in
  I,j\in J}$ is the matrix of $\std_b\theta$ with respect to the given bases
$I$ and $J$.
Conversely, any $I\times J$ matrix $\sA'(X_B)$ whose entries are homogeneous
linear forms from $R[X_B]$ gives rise to a module representation $R B\to
\Hom(RI,RJ)$ by specialisation $x\mapsto \sA'(x)$. Up to isotopy, these
constructions are mutually inverse.

\paragraph{Knuth duality.} (See \cite[\S 4]{ask2}.)
Let $(\dtimes)^* = \Hom(\dtimes, R)$ denote the dual of $R$-modules.
Each module representation $\theta\colon M \to \Hom(N,O)$ over $R$ gives rise
to its \emph{Knuth duals}
\[
  \begin{aligned}
    \theta^\circ&\colon  N  &\xto{\phantom{XYZ}}& \,\Hom(M,O), \quad &
    n \mapsto\,& (m\mapsto n(m\theta)) \quad\text{and} \\
    \theta^\bullet&\colon  O^* &\xto{\phantom{XYZ}}& \,\Hom(N,M^*), \quad & \omega\mapsto\,& (n\mapsto (m\mapsto (n(m\theta))\omega)).
  \end{aligned}
\]
Suppose that $M = RB$, $N = RI$, and $O = RJ$, where $B$, $I$, and $J$ are
finite.
Let $\sA(X_B) = [\sum_{b\in B} X_b a_{bij}]_{i\in I,j\in J}$ be the matrix of linear forms
associated with $\theta$ as above.
Then $\sC(X_I) := [\sum_{i \in I} X_i a_{bij}]_{b\in B, j\in J}$ is the matrix of
linear forms associated with $\theta^\circ$.
Using dual bases, we may regard $\theta^\bullet$ as a module representation
$RJ \to \Hom(RI,RB)$.
The matrix of linear forms associated with $\theta^\bullet$ is then
$\sB(X_J) := [\sum_{j\in J} a_{bij}]_{i\in I,b\in B}$.

\paragraph{Restriction and inflation.}
For sets $A\subset B$, let $\inc = \inc_{A,B}$ denote the inclusion $RA \incl
R B$.
The \emph{retraction} map $\ret = \ret_{B,A}\colon RB \onto R A$ fixes $A$
elementwise and vanishes on $B\setminus A$.
Let $I\subset \tilde I$ and $J\subset \tilde J$.
Let $\theta\colon M\to\Hom(RI,RJ)$ and $\tilde\theta\colon \tilde M\to\Hom(R\tilde
I,R\tilde J)$ be module representations.
The \emph{$(I,J)$-restriction} of $\tilde\theta$ is the composite
\[
  \xymatrix@C+2em{
  \resn^{\tilde I,\tilde J}_{I,J}(\tilde\theta)\colon \tilde M \ar[r]^{\tilde\theta} & \Hom(R\tilde I,R\tilde J) \ar[r]^{\Hom(\inc,\ret)}& \Hom(R I, RJ)
}
\]
In terms of matrices, 
for $\tilde m\in\tilde M$, the matrix of
$m \,\resn^{\tilde I,\tilde J}_{I,J}(\tilde\theta)$ is obtained from that
of $\tilde m\tilde \theta$ by deleting all rows except those in $I$ and all
columns except those in $J$.
Dually, the \emph{$(\tilde I,\tilde J)$-inflation} of $\theta$ is
the composite
\[
  \xymatrix@C+2em{
    \infl^{\tilde I,\tilde J}_{I,J}(\theta)\colon
    M\ar[r]^{\theta} & \Hom(RI,RJ) \ar[r]^{\Hom(\ret,\inc)} & \Hom(R \tilde I,R\tilde J)
  }
\]
If $\tilde I = \tilde J$ and $I = J$, we also simply write $\inf^{\tilde
  I}_I(\theta) = \inf^{\tilde I,\tilde J}_{I,J}(\theta)$.

\paragraph{Matrices II.}
Let $\theta$ and $\tilde \theta$ be as above with $M = RB$ and $\tilde M =
R\tilde B$.
Write $X = (X_b)_{b\in B}$ and $\tilde X = (X_{\tilde b})_{\tilde b \in \tilde B}$.
Let $A(X)$ and $\tilde A(\tilde X)$ be the matrices of linear forms associated
with $\theta$ and $\tilde\theta$. 
Then the matrix of linear forms associated with $\resn^{\tilde I,\tilde
  J}_{I,J}(\tilde\theta)$ is obtained from $\tilde A(\tilde X)$ by deleting
all rows indexed by $\tilde I\setminus I$ and all columns indexed by $\tilde
J\setminus J$.
Dually, the matrix of linear forms associated with $\infl^{\tilde I,\tilde
  J}_{I,J}(\theta)$ coincides with $A(X)$ in positions indexed by $I\times J$
and has zero entries elsewhere.

%%%%%%%%%%%%%%%%%%%%%%%%%%%%%%%%%%%%%%%%%%%%%%%% 
\subsection{Orbit modules and average sizes of kernels}
\label{ss:orbit_modules}
%%%%%%%%%%%%%%%%%%%%%%%%%%%%%%%%%%%%%%%%%%%%%%%% 

The following collects and combines material from
\cite[\S\S 3--4]{ask}, \cite[\S\S 3, 5]{ask2}, and \cite[\S 2]{cico}.

\paragraph{Average sizes of kernels.}
Let $\theta\colon M\to \Hom(N,O)$ be a module representation.
If $M$ and $N$ are both finite as sets, we define
\[
  \ask\theta := \frac 1 {\card M}\sum_{m\in M} \card{\Ker(m\theta)}
\]
to be the average size of the kernels of the elements of $M$ acting on $N$ via
$\theta$.
Note that $\ask\theta$ only depends on the image $M \theta\subset \Hom(N,O)$.

\paragraph{Orbit modules.}
Let $I$ and $J$ be finite sets.
Let $\theta\colon M \to \Hom(RI,RJ)$ be a module representation.
For $x\in RI$,
$x(M\theta) = \{ x(m\theta) : m \in M \} \subset RJ$
is the \emph{additive orbit} of $x$ under $M$ acting via $\theta$.
Let $X_I := (X_i)_{i\in I}$ consist of independent variables over $R$.
Write
\begin{align}
  \label{eq:Xatheta}
  \XX(m,\theta)
  & :=
    \sum_{i \in I} X_i \std_i \Bigl(m\theta^{R[X_I]}\Bigr)
    \in R[X_I] J
  & (m\in M),
\end{align}
where we identified $M\subset M\otimes R[X_I]$ via the natural embedding.
Let
$\orbit(\theta) :=
\left\langle
  \XX(m,\theta)
  : m \in M\right\rangle
\le R[X_I] J$.

\begin{defn}
  The \emph{orbit module} of $\theta$ is $\Orbit(\theta) := {R[X_I] J}/{\orbit(\theta)}$.
\end{defn}

Equivalently, $\Orbit(\theta)$ is the cokernel of the map $M\otimes R[X_I] \to
R[X_I] J$ induced by $M\to R[X_I]J, \,m\mapsto \XX(m,\theta)$.

\begin{rem}
  \label{rem:orbit}
  \quad
  \begin{enumerate}[label=(\roman{*})]
  \item
    \label{rem:orbit1}
    Strictly speaking, $\orbit(\theta)$ and $\Orbit(\theta)$ not only depend
    on $\theta$ but also on the basis~$I$.
    Moreover, $\Orbit(\theta)$ only depends on $M\theta$, not on $\theta$
    itself.
  \item
    \label{rem:orbit2}
    $\Orbit(\theta)$ specialises to quotients by additive orbits
    as follows.
    Let $S$ be an $R$-algebra and
    $x\in SI$.
    Let $S_x$ denote $S$ regarded as an $R[X_I]$-module via
    $X_i s = x_i s$ ($s\in S$).
    For an $R[X_I]$-module $M$, write $M_x = M\otimes_{R[X_I]} S_x$.
    Then $\Orbit(\theta)_x \approx \frac{S J}{x ((M\otimes S)
      \theta^S)}$.
    (Similarly to the second proof of \cite[Lemma 2.1]{ask}, one may then relate
    orbit modules to orbits of linear group actions as in \S\ref{ss:intro_reprise}.)
  \item
    \label{rem:orbit3}
    If $R\to S$ is a ring map, then we may identify
    $\Orbit(\theta^S) = \Orbit(\theta)\otimes_{R[X_I]} S[X_I]$.
  \end{enumerate}
\end{rem}

\begin{lemma}[{Cf.\ \cite[\S 2.2]{cico}}]
  \label{lem:Cmatrix}
  Let $\theta\colon RB \to \Hom(RI,RJ)$ be a module representation, where $B$,
  $I$, and $J$ are finite.
  Let $\sC(X_I)$ be the matrix of linear forms associated with $\theta^\circ$
  (w.r.t.\ the given bases) as in \S\ref{ss:mreps}.
  Then $\Orbit(\theta) = \Coker(\sC(X_I))$.
\end{lemma}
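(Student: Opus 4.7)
The plan is to unwind definitions and identify $\orbit(\theta)$ with the row space (equivalently, image as a map on row vectors) of the matrix $\sC(X_I)$. Since $\Orbit(\theta) = R[X_I]J/\orbit(\theta)$ is defined as the cokernel of the map induced by $m \mapsto \XX(m,\theta)$, and since $M = RB$ is free on $B$, it suffices to compute $\XX(\std_b,\theta)$ for $b \in B$ and compare with the rows of $\sC(X_I)$.

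First I would recall that the matrix of $\theta$ with respect to the bases $B, I, J$ is $\sA(X_B) = [\sum_{b\in B} X_b a_{bij}]_{i \in I, j \in J}$, so for a single basis element $\std_b \in M$, the matrix of $\std_b\theta \in \Hom(RI,RJ)$ is $[a_{bij}]_{i\in I, j\in J}$. Consequently $\std_i(\std_b\theta) = \sum_{j \in J} a_{bij}\std_j$, and therefore
\[
  \XX(\std_b,\theta) \;=\; \sum_{i \in I} X_i\,\std_i(\std_b\theta^{R[X_I]}) \;=\; \sum_{j \in J}\Bigl(\sum_{i \in I} X_i a_{bij}\Bigr)\std_j.
\]
By the recipe for $\theta^\circ$ given in \S\ref{ss:mreps}, the coefficient $\sum_{i \in I} X_i a_{bij}$ is exactly the $(b,j)$-entry of $\sC(X_I)$. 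Hence $\XX(\std_b,\theta)$ is precisely the $b$-th row of $\sC(X_I)$, viewed as an element of $R[X_I]J$.

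Since the elements $\std_b$ ($b \in B$) span $M$ and the construction $m \mapsto \XX(m,\theta)$ is $R$-linear (indeed, $R[X_I]$-linear after extending scalars), $\orbit(\theta)$ coincides with the $R[X_I]$-submodule of $R[X_I]J$ generated by the rows of $\sC(X_I)$, i.e.\ the image of the map $R[X_I]B \to R[X_I]J$, $v \mapsto v\sC(X_I)$. Taking the quotient then gives $\Orbit(\theta) = R[X_I]J/\orbit(\theta) = \Coker(\sC(X_I))$.

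There is no real obstacle here: the statement is a bookkeeping identity that reconciles the ``intrinsic'' definition of $\Orbit(\theta)$ via additive orbits with a concrete presentation by the Knuth-dual matrix. The only care needed is to keep straight the two row/column conventions (rows indexed by $B$ for $\sC$, columns indexed by $J$) and to note that the substitution $X_i \mapsto X_i$ inside $\theta^{R[X_I]}$ is precisely what produces the entries of $\sC(X_I)$ rather than those of $\sA(X_B)$.
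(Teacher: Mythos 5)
Your proof is correct and is exactly the definition-unwinding the paper intends: the paper itself gives no proof of this lemma, deferring to \cite[\S 2.2]{cico}, and the argument there is precisely the computation you carry out, namely that $\XX(\std_b,\theta)$ is the $b$-th row of $\sC(X_I)$, so $\orbit(\theta)$ is the image of the map $R[X_I]B\to R[X_I]J$ given by $\sC(X_I)$. Your attention to the right-action/row-index conventions is the only point where one could slip, and you handle it correctly.
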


The role of orbit modules in the study of average sizes of kernels is due
to the following.

\begin{lemma}[{Cf.\ \cite[\S 2.5]{cico}}]
  \label{lem:ask_orbit}
  Let $S$ be an $R$-algebra which is finite as a set.
  Then
  \[
    \ask{\theta^S} = \frac 1{\card{S J}}  \sum_{x\in SI}
    \card{\Orbit(\theta)_x}.
  \]
\end{lemma}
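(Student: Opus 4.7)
The plan is to combine the explicit description of the specialisations $\Orbit(\theta)_x$ in Remark~\ref{rem:orbit}\ref{rem:orbit2} with a standard double-counting argument (``Fubini for incidence'') in order to reduce the right-hand side to a sum of kernel sizes.

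First, fix a finite $R$-algebra $S$ and, for each $x\in SI$, apply Remark~\ref{rem:orbit}\ref{rem:orbit2} to identify
\[
\Orbit(\theta)_x \approx \frac{SJ}{x\bigl((M\otimes S)\theta^S\bigr)}.
\]
Since all sets in sight are finite, this yields $\card{\Orbit(\theta)_x} = \card{SJ} / \card{x((M\otimes S)\theta^S)}$. The additive orbit in the denominator is the image of the $S$-module map $M\otimes S \to SJ$, $m\mapsto x(m\theta^S)$, whose kernel is precisely $\{m\in M\otimes S : x(m\theta^S) = 0\}$. Hence by the first isomorphism theorem (applied to the additive group),
\[
\card{x\bigl((M\otimes S)\theta^S\bigr)} = \frac{\card{M\otimes S}}{\card{\{m\in M\otimes S : x(m\theta^S) = 0\}}}.
\]

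Substituting this back and summing over $x\in SI$, I would get
\[
\sum_{x\in SI} \card{\Orbit(\theta)_x} = \frac{\card{SJ}}{\card{M\otimes S}} \sum_{x\in SI} \card{\{m\in M\otimes S : x(m\theta^S) = 0\}}.
\]
Now I swap the order of summation. The set $\{(x,m) \in SI \times (M\otimes S) : x(m\theta^S) = 0\}$ can be enumerated by first fixing $m$ and counting those $x\in SI$ with $x(m\theta^S) = 0$, which by definition gives $\card{\Ker(m\theta^S)}$. Therefore
\[
\sum_{x\in SI} \card{\{m : x(m\theta^S) = 0\}} = \sum_{m\in M\otimes S} \card{\Ker(m\theta^S)} = \card{M\otimes S} \cdot \ask{\theta^S}.
\]

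Combining the two displays gives $\sum_{x\in SI} \card{\Orbit(\theta)_x} = \card{SJ}\cdot \ask{\theta^S}$, which rearranges to the desired identity. There is no real obstacle here; the only subtle point is that Remark~\ref{rem:orbit}\ref{rem:orbit2} is exactly the bridge between the abstract cokernel $\Orbit(\theta)_x$ and the concrete additive orbit, and once that bridge is in place the argument reduces to orbit-stabiliser plus swapping the order of summation.
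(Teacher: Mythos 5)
Your proof is correct and is essentially the argument behind the citation the paper gives to \cite[\S 2.5]{cico}: identify $\Orbit(\theta)_x$ with the quotient of $SJ$ by the additive orbit of $x$ via Remark~\ref{rem:orbit}\ref{rem:orbit2}, convert orbit sizes to stabiliser (kernel) sizes by the first isomorphism theorem, and swap the order of summation over the incidence set $\{(x,m): x(m\theta^S)=0\}$. No gaps; the only implicit hypothesis (finiteness of $M\otimes S$, needed for the cardinality counts) is already required for $\ask{\theta^S}$ to be defined.
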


\paragraph{Ask zeta functions.}
Let $\fO$ be a compact \DVR{} with residue cardinality $q$ and maximal ideal
$\fP$.
Let $\theta\colon M \to \Hom(N,O)$ be a module representation over $\fO$,
where $M$ and $N$ are finitely generated.
\begin{defn}
The \emph{ask zeta function} of $\theta$ is
the formal power series
\[
  \Zeta^{\ak}_{\theta}(T) := \sum_{n=0}^\infty \ask{\theta^{\fO/\fP^n}}T^n.
\]
\end{defn}

Denoting the inclusion of a submodule $M\incl \Hom(N,O)$ simply by $M$, this
notation is consistent with that from the introduction.
Lemma~\ref{lem:ask_orbit} has the following analogue.

\begin{prop}
  \label{prop:orbint}
  Let $\theta\colon M\to\Hom(\fO I,\fO J)$ be a module representation over $\fO$, where
  $I$ and $J$ are finite sets and $M$ is finitely generated.
  Let $d := \card I$ and $e := \card J$.
  Then for all $s\in \CC$ with $\Real(s) \gg 0$,
  \[
    \bigl(1-q^{-s}\bigr) \, \Zeta^{\ak}_{\theta}(q^{-s}) =
    1 + \bigl(1-q^{-1}\bigr)^{-1} \int\limits_{(\fO I\setminus \fP I)\times \fP} \abs{y}^{s-d+e-1}
    \, \card{\Orbit(\theta)_x\otimes \fO/y}\,\dd\mu(x,y),
  \]
  where $\mu$ denotes the normalised Haar measure on $\fO I \times \fO$.
\end{prop}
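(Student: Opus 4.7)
The plan is to reduce to a local computation at each $x\in\fO I$ via Lemma~\ref{lem:ask_orbit}, convert the resulting sum over $n$ into a $p$-adic integral via the standard stratification of $\fP$, and finally isolate the prefactor $(1-q^{-s})$ through a substitution on the $x$-variable.

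First, combining Lemma~\ref{lem:ask_orbit} with Remark~\ref{rem:orbit}(iii), the orbit module of $\theta^{\fO/\fP^n}$ at the reduction $\bar x\in(\fO/\fP^n)I$ of any lift $x\in\fO I$ is $\Orbit(\theta)_x\otimes_\fO \fO/\fP^n$. Since its cardinality depends only on the class of $x$ modulo $\fP^n$, replacing the sum over $(\fO/\fP^n)I$ by the Haar integral over $\fO I$ yields
\[
  \ask{\theta^{\fO/\fP^n}} = q^{n(d-e)}\int_{\fO I}\card{\Orbit(\theta)_x\otimes\fO/\fP^n}\dd\mu(x).
\]
Summing over $n\ge 0$ gives $\Zeta^{\ak}_\theta(T) = 1 + \int_{\fO I}F(x)\dd\mu(x)$ with $F(x) := \sum_{n\ge 1}q^{n(d-e)}T^n\card{\Orbit(\theta)_x\otimes\fO/\fP^n}$. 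Since $\card{\Orbit(\theta)_x\otimes\fO/y}$ depends on $y\in\fP\setminus\{0\}$ only through the valuation $v(y)$, the stratification $\fP = \bigsqcup_{n\ge 1}(\fP^n\setminus\fP^{n+1})$ together with $\mu(\fP^n\setminus\fP^{n+1}) = (1-q^{-1})q^{-n}$ transforms $F(x)$ into
\[
  F(x) = (1-q^{-1})^{-1}\int_\fP\abs{y}^{s-d+e-1}\card{\Orbit(\theta)_x\otimes\fO/y}\dd\mu(y)
\]
when $T = q^{-s}$.

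The remaining and main step is to reduce the $x$-integration from $\fO I$ to $\fO I\setminus\fP I$, which is what produces the prefactor $(1-q^{-s}) = (1-T)$. Splitting $\int_{\fO I} = \int_{\fO I\setminus\fP I} + \int_{\fP I}$ and substituting $x = \pi x'$ in the second integral (with $\pi$ a uniformiser of $\fO$) hinges on the scaling identity
\[
  \card{\Orbit(\theta)_{\pi x'}\otimes\fO/\fP^n} = q^e\cdot\card{\Orbit(\theta)_{x'}\otimes\fO/\fP^{n-1}}\qquad(n\ge 1).
\]
This is the technical heart of the argument. By Lemma~\ref{lem:Cmatrix}, $\Orbit(\theta)_x = \Coker(\sC(x))$ for a matrix $\sC(X_I)$ whose entries are linear forms in $X_I$, so $\sC(\pi x') = \pi\,\sC(x')$; the scaling identity then follows from comparing the elementary divisors (Smith normal forms) of $C$ and $\pi C$, with the factor $q^e$ arising jointly from the torsion summands (contributing $q^{\rank C}$) and the free summands (contributing $q^{e-\rank C}$). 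Translating this into the functional equation $F(\pi x') = q^d T(1+F(x'))$ and invoking $\mu(\fP I) = q^{-d}$ yields $\int_{\fP I}F\dd\mu = T + T\int_{\fO I}F\dd\mu$; rearranging produces $(1-T)\Zeta^{\ak}_\theta(T) = 1 + \int_{\fO I\setminus\fP I}F\dd\mu$, which upon substituting the integral formula for $F$ gives the claim. The main obstacle is the scaling identity above; convergence for $\Real(s)\gg 0$, needed to justify the interchange of summation and integration, follows from the uniform bound $\card{\Orbit(\theta)_x\otimes\fO/\fP^n}\le q^{ne}$.
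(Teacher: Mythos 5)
Your proposal is correct. The paper itself offers no argument beyond the citation ``combine \cite[Prop.\ 4.17]{ask} and \cite[Cor.~2.10]{cico}'', and what you have written is a sound self-contained reconstruction of exactly what that combination delivers: Lemma~\ref{lem:ask_orbit} plus Remark~\ref{rem:orbit}(iii) give the finite-level formula, the stratification of $\fP$ by valuation converts the $n$-sum into the $y$-integral, and the factor $(1-q^{-s})$ comes from the substitution $x=\pi x'$ on $\fP I$. The technical heart --- the identity $\card{\Orbit(\theta)_{\pi x'}\otimes\fO/\fP^n}=q^e\,\card{\Orbit(\theta)_{x'}\otimes\fO/\fP^{n-1}}$ --- checks out: writing $\Orbit(\theta)_{x}=\Coker(\sC(x))$ with $\sC$ linear in $X_I$ (Lemma~\ref{lem:Cmatrix}, after replacing $M$ by a free cover, which is harmless since the orbit module depends only on $M\theta$), each elementary divisor of $\pi\sC(x')$ exceeds that of $\sC(x')$ by one, and $\min(a_i+1,n)-\min(a_i,n-1)=1$ in all cases, so the torsion part contributes $q^{\rank}$ and the free part $q^{e-\rank}$, as you say. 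The resulting functional equation $F(\pi x')=q^dT(1+F(x'))$ and the measure factor $q^{-d}$ then yield $(1-T)\Zeta^{\ak}_\theta(T)=1+\int_{\fO I\setminus\fP I}F\,\dd\mu$ as claimed, and your convergence bound $\card{\Orbit(\theta)_x\otimes\fO/\fP^n}\le q^{ne}$ justifies the manipulations for $\Real(s)\gg 0$.
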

\begin{proof}
  Combine \cite[Prop.\ 4.17]{ask} and
  \cite[Cor.~2.10]{cico}.
\end{proof}

The computation of the integral in Proposition~\ref{prop:orbint} is particularly
simple whenever the isomorphism type of $\Orbit(\theta)_x$ is independent of $x$.
\begin{cor}[{\cite[\S 5.1]{ask}, \cite[\S 3.6]{ask2}}]
  \label{cor:crkzeta}
  Let the notation be as in Proposition~\ref{prop:orbint}.
  Suppose that there exists $\ell \ge 0$ such that for all $x\in \fO I
  \setminus \fP I$ outside of a set of measure zero,
  we have $\Orbit(\theta)_x \approx \fO^\ell$.
  Then
  $
    \Zeta^{\ak}_{\theta}(T) =
    \frac{1-q^{\ell-e}T}{(1-T)(1-q^{\ell+d-e}T)}$. \qed
\end{cor}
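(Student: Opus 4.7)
The plan is to apply Proposition~\ref{prop:orbint} directly and evaluate the resulting integral. By hypothesis, outside a set of measure zero on $\fO I \setminus \fP I$, we have $\Orbit(\theta)_x \approx \fO^\ell$ as $\fO$-modules, so $\Orbit(\theta)_x \otimes \fO/y \approx (\fO/y)^\ell$ for each non-zero $y \in \fP$. Using the normalisation $\abs y^{-1} = q^{v(y)}$, this gives $\card{\Orbit(\theta)_x \otimes \fO/y} = \abs y^{-\ell}$ for almost all $x$ and all non-zero $y \in \fP$, so the integrand on the right-hand side of Proposition~\ref{prop:orbint} equals $\abs y^{s-d+e-1-\ell}$ on a set of full measure.

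Since this integrand no longer depends on $x$, the integral factors as $\mu(\fO I\setminus \fP I)\cdot \int_\fP \abs y^{s-d+e-1-\ell}\,\dd\mu(y) = (1-q^{-d})\int_\fP \abs y^{s-d+e-1-\ell}\,\dd\mu(y)$. The one-variable integral is standard: decomposing $\fP = \bigsqcup_{n\ge 1}(\fP^n\setminus \fP^{n+1})$ and using $\mu(\fP^n\setminus \fP^{n+1}) = q^{-n}(1-q^{-1})$ yields
\[
  \int_\fP \abs y^{t}\,\dd\mu(y) = (1-q^{-1})\sum_{n=1}^\infty q^{-n(t+1)} = (1-q^{-1})\,\frac{q^{-(t+1)}}{1-q^{-(t+1)}},
\]
for $t = s-d+e-1-\ell$, valid for $\Real(s)\gg 0$.

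Substituting this back into Proposition~\ref{prop:orbint} and writing $T = q^{-s}$ (so that $q^{-(s-d+e-\ell)} = q^{\ell+d-e}T$), the right-hand side becomes
\[
  1 + (1-q^{-d})\,\frac{q^{\ell+d-e}T}{1-q^{\ell+d-e}T}
  = \frac{1-q^{-d}q^{\ell+d-e}T}{1-q^{\ell+d-e}T}
  = \frac{1-q^{\ell-e}T}{1-q^{\ell+d-e}T}.
\]
Dividing by $(1-q^{-s}) = 1-T$ and invoking uniqueness of power series coefficients to remove the restriction $\Real(s)\gg 0$ yields the claimed formula.

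There is no substantive obstacle here: once the hypothesis is translated into the explicit exponent $-\ell$ for $\card{\Orbit(\theta)_x\otimes \fO/y}$, everything reduces to bookkeeping around a standard geometric-series evaluation of a $p$-adic integral. The only mild care required is (i) correctly tracking the normalisation of $\abs\cdot$ and $\mu$ so that the exponents balance, and (ii) noting that the measure-zero exceptional set in the hypothesis does not affect the value of the integral.
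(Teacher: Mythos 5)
Your computation is correct and is exactly the derivation the paper intends: it states Corollary~\ref{cor:crkzeta} immediately after Proposition~\ref{prop:orbint} with the remark that the integral becomes ``particularly simple'' under the constant-isomorphism-type hypothesis, and then cites \cite{ask,ask2} rather than writing out the geometric-series evaluation you supply. The bookkeeping (cancellation of the $(1-q^{-1})^{\pm1}$ factors, $\card{\Orbit(\theta)_x\otimes\fO/y}=\abs{y}^{-\ell}$, and the harmlessness of the measure-zero sets, including $y=0$) all checks out.
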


The bulk of the present article is devoted to showing that large and
interesting classes of module representations satisfy the assumptions in
Corollary~\ref{cor:crkzeta} for $\ell = 0$ or $\ell = 1$.

%%%%%%%%%%%%%%%%%%%%%%%%%%%%%%%%%%%%%%%%%%%%%%%%%%%%%%%%%%%%%%%%%%%%%%%%
\section{Orbital subrepresentations of module representations}
\label{s:orbital}
%%%%%%%%%%%%%%%%%%%%%%%%%%%%%%%%%%%%%%%%%%%%%%%%%%%%%%%%%%%%%%%%%%%%%%%% 

As a key ingredient of Theorem~\ref{thm:embedded} and Theorem~\ref{thm:jacobi},
we formulate a sufficient condition which ensures that restricting a module
representation $M\to\Hom(RI,RJ)$ to a submodule $M'\subset M$ preserves
associated ask zeta functions. 

Throughout this section, we assume that all modules are finitely generated and 
all sets $I$, $\tilde I$, $J$, and $\tilde J$ are finite.

%%%%%%%%%%%%%%%%%%%%%%%%%%%%%%%%%%%%%%%%%%%%%%%%
\subsection{Orbital subrepresentations}
%%%%%%%%%%%%%%%%%%%%%%%%%%%%%%%%%%%%%%%%%%%%%%%%

Let $\theta\colon M\to\Hom(RI,RJ)$ be a module representation. 
If $M'\subset M$ is a submodule and~$\theta'$ denotes the restriction of
$\theta$ to $M'$, then $\orbit(\theta')\subset\orbit(\theta)$.
We therefore obtain a natural $R[X_I]$-module epimorphism $\Orbit(\theta')
\onto \Orbit(\theta)$.

\begin{defn}
  \label{d:orbital}
  $\theta'$ is an \emph{orbital subrepresentation} of $\theta$ if
  for each $R$-algebra $\fO$ which is a
  \DVR{} and each $x\in \fO I\setminus \fP I$ with $\prod x\not= 0$, 
  the natural map $\Orbit(\theta') \onto \Orbit(\theta)$ induces an
  isomorphism $\Orbit(\theta')_x \approx \Orbit(\theta)_x$
  of $\fO$-modules by specialisation.
\end{defn}

Proposition~\ref{prop:orbint} has the following immediate consequence.

\begin{lemma}
  \label{lem:orbital_same_zeta}
  Let $\theta'$ be an orbital subrepresentation of $\theta$.
  Let $\fO$ be an $R$-algebra which is a compact \DVR{}.
  Then $\Zeta^{\ak}_{\theta^\fO}(T) = \Zeta^{\ak}_{(\theta')^\fO}(T)$.
  \qed
\end{lemma}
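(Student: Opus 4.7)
The plan is to feed both $\theta^\fO$ and $(\theta')^\fO$ into the integral formula of Proposition~\ref{prop:orbint} and argue that the resulting integrals coincide. The orbital subrepresentation hypothesis, applied to the $R$-algebra $\fO$ itself, tells me that for every $x \in \fO I \setminus \fP I$ with $\prod_{i\in I} x_i \ne 0$, the canonical surjection $\Orbit(\theta')_x \onto \Orbit(\theta)_x$ is an isomorphism of $\fO$-modules. Tensoring with $\fO/y$ is right exact, so it carries this isomorphism to $\Orbit(\theta')_x \otimes \fO/y \approx \Orbit(\theta)_x \otimes \fO/y$ for every $y \in \fP$, and in particular the cardinalities agree. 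Hence the integrands attached to $\theta$ and $\theta'$ by Proposition~\ref{prop:orbint} coincide on the locus where $\prod_i x_i \ne 0$; crucially, the shared exponent $\abs{y}^{s-d+e-1}$ causes no trouble because $d = \card I$ and $e = \card J$ depend only on the common codomain $\Hom(\fO I, \fO J)$ of both representations.

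Next I would check that the complement of this good locus inside $(\fO I \setminus \fP I) \times \fP$ is Haar null. Since $\fO$ is a compact \DVR{} it is uncountable and the singleton $\{0\} \subset \fO$ has normalised Haar measure zero; hence each coordinate hyperplane $\{x \in \fO I : x_i = 0\}$ is null, so is the finite union $\bigcup_i \{x : x_i = 0\}$, and so is its intersection with $(\fO I \setminus \fP I) \times \fP$. The two integrands in Proposition~\ref{prop:orbint} therefore agree almost everywhere and the integrals are equal.

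Combining these observations gives $(1 - q^{-s}) \Zeta^{\ak}_{\theta^\fO}(q^{-s}) = (1 - q^{-s}) \Zeta^{\ak}_{(\theta')^\fO}(q^{-s})$ for all $s \in \CC$ with $\Real(s) \gg 0$. Dividing by the nonvanishing factor $1-q^{-s}$ and invoking that both $\Zeta^{\ak}_{\theta^\fO}(T)$ and $\Zeta^{\ak}_{(\theta')^\fO}(T)$ are power series in $T$ whose coefficients have at most exponential growth (so that they are analytic in a neighbourhood of the origin), agreement on the accumulating sequence $\{q^{-s} : \Real(s) \gg 0\}$ forces the coefficients to match term by term, yielding the claimed equality of formal power series. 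I do not anticipate any serious obstacle: the definition of an orbital subrepresentation is tailored precisely to the integrand of Proposition~\ref{prop:orbint}, and the only technical point is the routine measure-zero verification sketched above.
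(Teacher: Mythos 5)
Your proof is correct and follows exactly the route the paper intends: the paper records this lemma as an ``immediate consequence'' of Proposition~\ref{prop:orbint}, and your write-up simply supplies the details (agreement of the integrands off the null set $\prod_i x_i = 0$, and recovery of the formal power series from the analytic identity). No gaps.
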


We will now derive a number of equivalent characterisations of orbital
subrepresentations that we will use in this paper.
Recall that a module $M$ is \emph{hopfian} if each epimorphism of $M$ onto
itself is an automorphism.

\begin{prop}[{\stacks{05G8}}]
  \label{prop:hopf}
  Finitely generated modules over (commutative) rings are hopfian.
\end{prop}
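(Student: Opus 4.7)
The plan is to deduce the proposition from the \emph{determinant trick} form of the Cayley--Hamilton theorem, namely: if $N$ is a finitely generated module over a ring $S$ and $I\normal S$ is an ideal with $IN = N$, then there exists $r\in I$ such that $(1-r)N = 0$.

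Let $M$ be a finitely generated $R$-module and let $\phi\colon M\onto M$ be a surjective endomorphism. First, I would promote $M$ to an $R[X]$-module by letting $X$ act as $\phi$; any generating set of $M$ over $R$ also generates $M$ over $R[X]$, so $M$ is finitely generated as an $R[X]$-module. The surjectivity of $\phi$ translates into the identity $(X)\dtimes M = M$ of $R[X]$-submodules of $M$. Applying the determinant trick to the ring $S = R[X]$ and the ideal $I = (X)$ yields some $g(X)\in R[X]$ with $(1-Xg(X))\dtimes M = 0$, i.e.\ $m = g(\phi)(\phi(m))$ for every $m\in M$. Specialising to $m\in \Ker(\phi)$ then forces $m=0$, so $\phi$ is injective and therefore an automorphism.

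The only nontrivial input is the determinant trick, which is standard: write $m_1,\dots,m_n$ for generators of $M$ over $R$, express each $m_i = \phi(\sum_j a_{ij} m_j)$ with $a_{ij}\in R$ using $XM = M$ in the $R[X]$-module structure, and apply the usual adjugate argument to the matrix $\delta_{ij} - X a_{ij}$ over $R[X]$ to produce the required element $1-Xg(X)$ annihilating $M$. The main (and essentially only) point to be careful about is that one cannot invoke classical Nakayama directly, because the ideal $(X)\subset R[X]$ is not contained in the Jacobson radical; the determinant-trick version is precisely what circumvents this issue. Everything else is formal bookkeeping.
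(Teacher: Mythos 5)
Your argument is correct and is essentially the proof given in the cited reference (Stacks Project, Tag 05G8): endow $M$ with an $R[X]$-module structure via $X\mapsto\phi$, observe $(X)M=M$, and apply the Cayley--Hamilton/determinant-trick form of Nakayama to produce $g$ with $m=g(\phi)(\phi(m))$ for all $m$. The paper itself offers no proof beyond this citation, so there is nothing further to compare.
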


We may thus relax Definition~\ref{d:orbital} as follows.

\begin{cor}
  \label{cor:orbital_iso_epi}
  Let $\theta$ and $\theta'$ be as above.
  Then $\theta'$ is an orbital subrepresentation of~$\theta$ if and only
  if $\Orbit(\theta)_x\approx \Orbit(\theta')_x$
  for each $R$-algebra $\fO$ which is a \DVR{} and all $x\in\fO I\setminus
  \fP I$ with $\prod x\not= 0$.
\end{cor}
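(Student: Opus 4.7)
The forward implication is immediate: if the natural map induces an isomorphism $\Orbit(\theta')_x \approx \Orbit(\theta)_x$, then in particular these $\fO$-modules are abstractly isomorphic. The work is in the converse, and the plan is to reduce it to the hopfian property recorded in Proposition~\ref{prop:hopf}.

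First I would note that since $M' \subset M$, we have $\orbit(\theta') \subset \orbit(\theta)$ inside $R[X_I]J$, so the natural map $\pi\colon \Orbit(\theta') \onto \Orbit(\theta)$ is an epimorphism of $R[X_I]$-modules. Given an $R$-algebra $\fO$ that is a \DVR{} and $x \in \fO I\setminus \fP I$ with $\prod x \neq 0$, tensoring with $\fO_x$ over $R[X_I]$ is right exact, so the specialised map $\pi_x\colon \Orbit(\theta')_x \onto \Orbit(\theta)_x$ remains an epimorphism. The content of Definition~\ref{d:orbital} is precisely that each such $\pi_x$ is an isomorphism.

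Now suppose that an abstract isomorphism $\iota\colon \Orbit(\theta)_x \xto{\sim} \Orbit(\theta')_x$ of $\fO$-modules exists. Composing gives a surjective $\fO$-module homomorphism
\[
  \iota\circ\pi_x\colon \Orbit(\theta')_x \onto \Orbit(\theta')_x.
\]
The key point is that $\Orbit(\theta')_x$ is finitely generated as an $\fO$-module: indeed $R[X_I]J$ is finitely generated over $R[X_I]$ since $J$ is finite, so $\Orbit(\theta')$ is a finitely generated $R[X_I]$-module, and its specialisation along the map $R[X_I] \to \fO$ sending $X_i \mapsto x_i$ is therefore finitely generated over $\fO$. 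By Proposition~\ref{prop:hopf}, $\Orbit(\theta')_x$ is hopfian, so $\iota\circ\pi_x$ is an automorphism. In particular $\pi_x$ is injective, and combined with surjectivity it is an isomorphism, as required.

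I do not anticipate a genuine obstacle here; the only subtlety is to make sure the finite generation hypothesis needed for Proposition~\ref{prop:hopf} is indeed available in the setting of the section (where all modules are assumed finitely generated and $I$, $J$ are finite), and to invoke right exactness of tensor product at the correct step to transport the epimorphism $\pi$ to its specialisation $\pi_x$.
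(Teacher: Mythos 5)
Your argument is correct and is essentially the paper's proof: the paper likewise observes that the natural specialised map $\Orbit(\theta')_x \onto \Orbit(\theta)_x$ is an epimorphism and invokes Proposition~\ref{prop:hopf} to conclude that an abstract isomorphism forces it to be an isomorphism. Your additional remarks on right exactness and finite generation are the right sanity checks but are exactly the implicit content of the paper's one-line proof.
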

\begin{proof}
  Proposition~\ref{prop:hopf} implies that if
  $\Orbit(\theta')_x\approx\Orbit(\theta)_x$, then the natural epimorphism
  from the first onto the second of these modules is an isomorphism.
\end{proof}

Moreover, the restriction to $x\in \fO I$ with $x\not\in\fP I$ in
Definition~\ref{d:orbital} is also unnecessary.

\begin{lemma}
  \label{lem:orbital_general_x}
  Let $\theta'$ be an orbital subrepresentation of $\theta \colon M\to\Hom(RI,RJ)$.
  Let $\fO$ be an $R$-algebra which is a~\DVR{} and let $x\in \fO I$ with $\prod x\not= 0$.
  Then the natural epimorphism $\Orbit(\theta') \onto
  \Orbit(\theta)$ induces an isomorphism $\Orbit(\theta')_x \approx
  \Orbit(\theta)_x$.
\end{lemma}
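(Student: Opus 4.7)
The obstacle to a direct application of Definition~\ref{d:orbital} is that $x$ may well lie in $\fP I$ (for instance, every coordinate of $x$ could have strictly positive valuation). The idea is to absorb the common valuation into a scalar. Let $\pi$ be a uniformiser of $\fO$, write $v$ for the associated valuation, and set $k := \min_{i\in I} v(x_i)$, which is finite since $\prod x\ne 0$. Then $y := \pi^{-k}x \in \fO I \setminus \fP I$ and still satisfies $\prod y\ne 0$, so Definition~\ref{d:orbital} applied at $y$ yields that the natural epimorphism $\Orbit(\theta')_y \onto \Orbit(\theta)_y$ is an isomorphism.

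Next, I would identify the specialisations as concrete submodules of $\fO J$. Applying $\dtimes\otimes_{R[X_I]}\fO_x$ (right exact) to the defining exact sequence $0 \to \orbit(\theta) \to R[X_I] J \to \Orbit(\theta) \to 0$ and using the explicit formula \eqref{eq:Xatheta} for the generators of $\orbit(\theta)$, one finds $\Orbit(\theta)_x = \fO J / S_x$, where $S_x \subset \fO J$ is the $\fO$-submodule generated by the vectors $x(m\theta^\fO)$ for $m \in M\otimes\fO$. Define $S_x'$, $S_y$, $S_y'$ analogously (replacing $M$ by $M'$, or $x$ by $y$, as appropriate). Under these identifications, the natural epimorphism $\Orbit(\theta')_x\onto\Orbit(\theta)_x$ becomes the quotient map $\fO J/S_x' \onto \fO J/S_x$, whose kernel is $S_x/S_x'$; the situation at $y$ is analogous.

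Since $\theta^\fO$ is $\fO$-linear, we have $x(m\theta^\fO) = \pi^k y(m\theta^\fO)$ for every $m$, giving $S_x = \pi^k S_y$ and $S_x' = \pi^k S_y'$ as submodules of $\fO J$. The orbital hypothesis at $y$, combined with Corollary~\ref{cor:orbital_iso_epi}, is equivalent to the equality $S_y = S_y'$; multiplying through by $\pi^k$ yields $S_x = S_x'$. Hence the kernel of the natural map $\Orbit(\theta')_x \onto \Orbit(\theta)_x$ vanishes, as required.

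The argument is essentially a bookkeeping exercise and there is no serious obstacle. The only point that requires a little care is the identification $\orbit(\theta)_x = S_x$ inside $\fO J$, which combines right-exactness of tensor product with the explicit description \eqref{eq:Xatheta} of the generators of $\orbit(\theta)$; everything else follows from the linearity of $\theta^\fO$ and the fact that $\pi^k$ is a non-zero-divisor in $\fO$.
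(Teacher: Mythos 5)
Your proof is correct and follows essentially the same route as the paper's: write $x = \pi^k y$ with $y\in\fO I\setminus\fP I$, use the orbital hypothesis at $y$ to conclude that the images of $\orbit(\theta)$ and $\orbit(\theta')$ in $\fO J$ coincide there, and transfer this equality to $x$ by multiplying by the non-zero-divisor $\pi^k$. The paper phrases this in terms of the kernels $U=\Ker(\omega_y)$ and $U'=\Ker(\omega'_y)$ rather than the submodules $S_y$, $S_y'$, but the argument is the same.
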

\begin{proof}
  Let $\omega\colon R[X_I] J \onto \Orbit(\theta)$ and
  $\omega'\colon R[X_I] J \onto \Orbit(\theta')$ be the natural maps.
  Write $x = r y$ for $r\in \fO\setminus\{0\}$ and $y\in \fO
  I\setminus \fP I$ with $\prod y\not= 0$. 
  Let $U := \Ker(\omega_y)$ and $U' := \Ker(\omega_y')$ so
  that $U'\subset U \subset \fO J$.
  By Proposition~\ref{prop:hopf}
  and as $\theta'$ is an orbital subrepresentation of $\theta$, 
  $U = U'$.
  By the definition of $\orbit(\theta)$, $\Ker(\omega_x) = r U$ and analogously
  $\Ker(\omega_x') = r U' = rU$.
  Thus, $\Orbit(\theta')_x \approx\Orbit(\theta)_x$ and the claim follows from
  Corollary~\ref{cor:orbital_iso_epi}.
\end{proof}

We may also characterise orbital subrepresentations using
$\orbit(\dtimes)$ instead of $\Orbit(\dtimes)$.

\begin{lemma}
  \label{lem:orbital_same_orbit}
  Let $\theta'$ be the restriction of $\theta\colon M\to\Hom(RI,RJ)$ to a (finitely
  generated) submodule of $M$.
  Then $\theta'$ is an orbital subrepresentation of $\theta$ if and only if
  the following condition is satisfied:
  for each $R$-algebra $\fO$ which is a \DVR{} and
  each $x\in \fO I$ with $\prod x \not= 0$,
  the images of $\orbit(\theta) \otimes_{R[X_I]} \fO_x$ and $\orbit(\theta')
  \otimes_{R[X_I]} \fO_x$ in $\fO J$ coincide.
\end{lemma}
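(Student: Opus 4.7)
The plan is to unpack specialisation at a point $x$ and observe that the natural epimorphism $\Orbit(\theta') \onto \Orbit(\theta)$ specialises to a quotient map of $\fO J$ by two nested $\fO$-submodules, reducing the equivalence to a comparison of those two submodules.

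First I would fix an $R$-algebra $\fO$ which is a \DVR{} and $x \in \fO I$ with $\prod x \ne 0$, and set
\[
  V := \Img\!\bigl(\orbit(\theta) \otimes_{R[X_I]} \fO_x \to \fO J\bigr),
  \qquad
  V' := \Img\!\bigl(\orbit(\theta') \otimes_{R[X_I]} \fO_x \to \fO J\bigr).
\]
Since $\theta'$ is obtained by restriction to a submodule of $M$, we have $\orbit(\theta') \subset \orbit(\theta)$, hence $V' \subset V$. Applying right-exactness of $\dtimes \otimes_{R[X_I]} \fO_x$ to the defining presentations of $\Orbit(\theta)$ and $\Orbit(\theta')$ gives canonical identifications $\Orbit(\theta)_x = \fO J / V$ and $\Orbit(\theta')_x = \fO J / V'$, under which the natural epimorphism $\Orbit(\theta')_x \onto \Orbit(\theta)_x$ is the canonical surjection $\fO J / V' \onto \fO J / V$, with kernel $V/V'$. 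Consequently, this natural map is an isomorphism if and only if $V = V'$.

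From this equivalence both directions are immediate. For the ``if'' implication, assume $V = V'$ for all $x \in \fO I$ with $\prod x \ne 0$; restricting to $x \in \fO I \setminus \fP I$ with $\prod x \ne 0$ yields exactly the defining condition for $\theta'$ to be an orbital subrepresentation of $\theta$. For the ``only if'' implication, assume $\theta'$ is an orbital subrepresentation; by Lemma~\ref{lem:orbital_general_x}, the natural map $\Orbit(\theta')_x \to \Orbit(\theta)_x$ is an isomorphism for every $x \in \fO I$ with $\prod x \ne 0$, and by the equivalence just established this forces $V = V'$ for all such $x$. There is no real obstacle here: the only nontrivial ingredient is Lemma~\ref{lem:orbital_general_x}, which has already been proved, and the rest is a direct unwinding of the definition of specialisation together with right-exactness of the tensor product.
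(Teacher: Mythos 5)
Your proposal is correct and follows essentially the same route as the paper: sufficiency is immediate from the identification $\Orbit(\theta)_x = \fO J/V$ and $\Orbit(\theta')_x = \fO J/V'$, and necessity is deduced from Lemma~\ref{lem:orbital_general_x} applied to the nested submodules $V' \subset V$. The only cosmetic difference is that you conclude $V = V'$ directly from the natural epimorphism being an isomorphism (so its kernel $V/V'$ vanishes), whereas the paper invokes the hopfian property via Proposition~\ref{prop:hopf}; both are valid.
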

\begin{proof}
  Sufficiency of the condition is clear.
  Let $\theta'$ be an orbital subrepresentation
  of~$\theta$.
  Let $U$ and $U'$ be the images of $\orbit(\theta) \otimes_{R[X_I]} \fO_x$ and $\orbit(\theta')
  \otimes_{R[X_I]} \fO_x$ in $\fO J$, respectively.
  As $U'\subset U$ and $\fO J/U\approx \fO J/U'$
  by Lemma~\ref{lem:orbital_general_x},
  Proposition~\ref{prop:hopf} shows that $U = U'$.
\end{proof}

%%%%%%%%%%%%%%%%%%%%%%%%%%%%%%%%%%%%%%%%%%%%%%%%
\subsection{Fitting ideals}
\label{ss:Fit}
%%%%%%%%%%%%%%%%%%%%%%%%%%%%%%%%%%%%%%%%%%%%%%%%

We recall properties of Fitting ideals and provide an indication of
their relevance here.

Let $M$ be a finitely presented $R$-module.
Choose $A\in \Mat_{m\times n}(R)$ with $M \approx \Coker(A)$.
The $i$th \emph{Fitting ideal} $\Fit_i(M)$ of $M$ is the ideal of
$R$ defined as follows.
For $i = 0,\dotsc,n$, $\Fit_i(M)$ is generated by the
$(n-i)\times(n-i)$ minors of $A$;
for $i \ge n$, $\Fit_i(M) := R$.
For more about Fitting ideals and a proof that they are independent of
the chosen presentation, see \cite[\S 1]{Fit36}, \cite[\S 20.2]{Eis95},
\cite[\S 3.1]{Nor76}, or \stacks{07Z8}.

\begin{ex}[{\stacks{07ZB}}]
  \label{ex:Fit_free}
  $\Fit_i(R^n) = 0$ for $i < n$ and $\Fit_i(R^n) = R$ for $i\ge n$.
\end{ex}

\clearpage

\begin{prop}[{\stacks{07ZA}}]
  \label{prop:fitbasic}
  \quad
  \begin{enumerate}[label=(\roman{*})]
  \item
    \label{prop:fitbasic1}
    If $M$ can be generated by $n$ or fewer elements, then $\Fit_i(R) =
    R$ for $i \ge n$.
  \item
    \label{prop:fitbasic2}
    If $Q$ is a quotient of $M$, then $\Fit_i(M)\subset \Fit_i(Q)$.
  \item
    \label{prop:fitbasic3}
    If $S$ is an $R$-algebra, then $\Fit_i(M\otimes S) \normal S$ is generated
    by the image of  $\Fit_i(M)$. 
  \end{enumerate}
\end{prop}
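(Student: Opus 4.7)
The plan is to derive each part directly from the definition of Fitting ideals in terms of the $(n-i)\times(n-i)$ minors of a presentation matrix, taking well-definedness (independence of presentation) for granted by citing \stacks{07Z8}. Throughout, for each relevant module $M$ I would fix a finite presentation $A\in\Mat_{m\times n}(R)$ with $M\approx\Coker(A)$, and then simply read the claim off from the minors of $A$ and basic properties of cokernels.

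For \ref{prop:fitbasic1} (which I read as stating $\Fit_i(M)=R$ for $i\ge n$, the displayed $\Fit_i(R)$ being a typo), I would use that $M$ generated by $n$ or fewer elements admits a presentation $A\in\Mat_{m\times n}(R)$ whose number of columns is at most $n$. Then for $i\ge n$, the relevant minors are of non-positive size, and the definition assigns $R$ to $\Fit_i(M)$ in that regime. For \ref{prop:fitbasic2}, writing $Q=M/N$ and choosing lifts $n_1,\dots,n_k\in R^n$ of a finite generating set of $N$, the matrix $A'\in\Mat_{(m+k)\times n}(R)$ obtained by appending the rows $n_1,\dots,n_k$ to $A$ satisfies $Q\approx\Coker(A')$. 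Every $(n-i)\times(n-i)$ minor of $A$ appears as a minor of $A'$ (selecting rows only from the top block), so the containment $\Fit_i(M)\subset\Fit_i(Q)$ is immediate. For \ref{prop:fitbasic3}, right-exactness of $\dtimes\otimes_R S$ gives $M\otimes S\approx\Coker(A_S)$ where $A_S\in\Mat_{m\times n}(S)$ is the entrywise image of $A$. The $(n-i)\times(n-i)$ minors of $A_S$ are precisely the images of those of $A$ under $R\to S$, so they generate in $S$ exactly the ideal generated by the image of $\Fit_i(M)$.

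The one substantive point is the independence of Fitting ideals from the chosen presentation; this is the sole obstacle to a genuinely self-contained proof, and it is the reason for citing \stacks{07Z8}. Once that is granted, each item is a one-line verification combining the definition with a standard compatibility of cokernels (appending rows for quotients, base change for tensor products), and no further argument is needed.
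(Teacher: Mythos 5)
Your proposal is correct and matches the paper's treatment: the paper offers no proof of its own, simply citing \stacks{07ZA}, and your argument (read the claims off the minors of a presentation matrix, appending rows for quotients and applying right-exactness for base change, with well-definedness outsourced to \stacks{07Z8}) is precisely the standard argument underlying that citation. Your reading of the typo $\Fit_i(R)$ for $\Fit_i(M)$ in part (i) is also the intended one.
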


\begin{defn}
  We say that finitely presented $R$-modules $M$ and $N$ are
  \emph{Fitting equivalent} if $\Fit_i(M) = \Fit_i(N)$ for all $i\ge 0$.
\end{defn}

\begin{prop}[{Cf.\ \cite[Satz 10]{Fit36}}]
  \label{prop:Fit_iso}
  Let $R$ be a \PID.
  Let $M$ and $N$ be finitely generated $R$-modules.
  Then $M$ and $N$ are isomorphic if and only if they are Fitting equivalent.
\end{prop}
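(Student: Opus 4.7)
The ``only if'' direction is essentially tautological, as Fitting ideals are invariants of the isomorphism class of a finitely presented module (cf.\ Proposition~\ref{prop:fitbasic}\ref{prop:fitbasic3} applied with the identity $R \to R$). For the substantive ``if'' direction, the plan is to compute the Fitting ideals of any finitely generated module over $R$ explicitly in terms of its structure-theorem invariants, and then to invoke the uniqueness part of the structure theorem for \PID{}s. By the latter, I can write
\begin{equation*}
M \approx R^{r} \oplus \bigoplus_{i=1}^{s} R/(d_i), \qquad N \approx R^{r'} \oplus \bigoplus_{j=1}^{s'} R/(e_j),
\end{equation*}
with $d_1 \mid d_2 \mid \dotsb \mid d_s$ and $e_1 \mid \dotsb \mid e_{s'}$ all non-units.

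To compute $\Fit_i(M)$, I would take the canonical presentation $R^s \to R^{r+s} \to M \to 0$ whose matrix $B$ is the $s \times (r+s)$ block $\bigl(0 \mid \diag(d_1, \dotsc, d_s)\bigr)$. Every $t \times t$ minor of $B$ with $t > s$ vanishes, so $\Fit_i(M) = 0$ for all $i < r$. For $t = s - k$ with $0 \le k \le s$, a non-vanishing $t$-minor must pick $t$ of the final $s$ columns and the matching rows of the diagonal block, giving a product $d_{j_1} \dotsm d_{j_{s-k}}$ with $1 \le j_1 < \dotsb < j_{s-k} \le s$. The divisibility chain $d_1 \mid \dotsb \mid d_s$, combined with the bound $j_\ell \ge \ell$, then forces $d_1 d_2 \dotsm d_{s-k}$ to divide every such product, so that
\begin{equation*}
\Fit_{r+k}(M) = (d_1 d_2 \dotsm d_{s-k}) \qquad (0 \le k \le s),
\end{equation*}
with the empty product interpreted as $1$.

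From this formula the invariants $(r; d_1, \dotsc, d_s)$ are recovered transparently from the Fitting ideals: $r$ is the number of zero Fitting ideals at the start; $s$ is pinned down by the first $k$ for which $\Fit_{r+k}(M) = R$; and writing $\Fit_{r+k}(M) = (a_k)$, each elementary divisor $d_i$ equals, up to units, the ratio $a_{s-i}/a_{s-i+1}$. Running the same computation for $N$ and invoking Fitting equivalence forces $r = r'$, $s = s'$, and $(d_i) = (e_i)$ for each $i$; the uniqueness part of the structure theorem then yields $M \approx N$.

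The only step that is not purely mechanical—and the closest thing to a genuine obstacle—is the principal-ideal identity
\begin{equation*}
\sum_{1 \le j_1 < \dotsb < j_{s-k} \le s} (d_{j_1} \dotsm d_{j_{s-k}}) = (d_1 d_2 \dotsm d_{s-k}),
\end{equation*}
which rests crucially on $R$ being a \PID{}, on the divisibility chain among the $d_i$, and on $j_\ell \ge \ell$; everything else is routine bookkeeping and an appeal to the classical structure theorem.
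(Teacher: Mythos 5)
Your proof is correct. The paper offers no proof of its own here—it simply cites Fitting's Satz 10—and your argument (computing $\Fit_{r+k}(M)=(d_1\dotsm d_{s-k})$ from the diagonal presentation supplied by the structure theorem, using the divisibility chain and the bound $j_\ell\ge\ell$ to see that the minor ideal is principal, and then reading the invariants $r$, $s$, $(d_i)$ back off the Fitting ideals) is exactly the classical Smith-normal-form argument behind that citation. All steps check out, including the key identity for the ideal of $(s-k)$-minors.
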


Our proof of Theorem~\ref{thm:jacobi} will rely on the following.

\begin{cor}
  \label{cor:fiteq_orbit_same_zeta}
  Let $\theta\colon M\to\Hom(RI,RJ)$ be a module representation.
  Let $M'$ be a finitely generated submodule of $M$ and let $\theta'$ denote
  the restriction of $\theta$ to $M'$.
  If $\Orbit(\theta)$ and $\Orbit(\theta')$ are Fitting equivalent,
  then $\theta'$ is an orbital subrepresentation of $\theta$.
\end{cor}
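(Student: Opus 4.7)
The plan is to reduce the claim to a one-line application of Proposition~\ref{prop:Fit_iso} after specialising. First I would invoke Corollary~\ref{cor:orbital_iso_epi} to replace the verification of orbital-ness by the weaker task of producing an abstract isomorphism $\Orbit(\theta')_x \approx \Orbit(\theta)_x$ for each $R$-algebra $\fO$ that is a \DVR{} and each $x \in \fO I \setminus \fP I$ with $\prod x \neq 0$; the surjection $\Orbit(\theta') \onto \Orbit(\theta)$ and the hopfian argument built into Proposition~\ref{prop:hopf} then upgrade this automatically.

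Fix such $\fO$ and $x$. The assignment $X_i \mapsto x_i$ makes $\fO = \fO_x$ an $R[X_I]$-algebra. Since $\orbit(\theta')\subset \orbit(\theta) \subset R[X_I]J$, both $\Orbit(\theta)$ and $\Orbit(\theta')$ are finitely presented $R[X_I]$-modules (quotients of the free module of rank $\card J$ by the finitely generated submodules $\orbit(\theta)$, $\orbit(\theta')$; recall $M'$ and hence $M\theta$, $M'\theta'$ are finitely generated by hypothesis, so the same holds of $\orbit(\theta), \orbit(\theta')$). Their specialisations $\Orbit(\theta)_x$ and $\Orbit(\theta')_x$ are therefore finitely generated $\fO$-modules.

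Next I would apply Proposition~\ref{prop:fitbasic}\ref{prop:fitbasic3} to the ring map $R[X_I] \to \fO_x$: for every $i \ge 0$,
\[
  \Fit_i\!\bigl(\Orbit(\theta)_x\bigr) = \Fit_i\!\bigl(\Orbit(\theta)\bigr)\fO \quad\text{and}\quad \Fit_i\!\bigl(\Orbit(\theta')_x\bigr) = \Fit_i\!\bigl(\Orbit(\theta')\bigr)\fO.
\]
The Fitting equivalence $\Fit_i(\Orbit(\theta)) = \Fit_i(\Orbit(\theta'))$ assumed in the statement therefore descends to $\Fit_i(\Orbit(\theta)_x) = \Fit_i(\Orbit(\theta')_x)$ for all $i$, i.e.\ $\Orbit(\theta)_x$ and $\Orbit(\theta')_x$ are Fitting equivalent as $\fO$-modules.

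Finally, since $\fO$ is a \DVR{} and hence a \PID{}, Proposition~\ref{prop:Fit_iso} applied to the two finitely generated $\fO$-modules above yields $\Orbit(\theta)_x \approx \Orbit(\theta')_x$. Combined with Corollary~\ref{cor:orbital_iso_epi}, this is precisely the conclusion that $\theta'$ is an orbital subrepresentation of $\theta$. There is no real obstacle: the only subtlety is making sure the hopfian/surjection argument is done via Corollary~\ref{cor:orbital_iso_epi} rather than trying to track the natural map through the Fitting equivalence by hand.
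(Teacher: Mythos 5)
Your proposal is correct and follows essentially the same route as the paper, whose proof is precisely to combine Corollary~\ref{cor:orbital_iso_epi}, Proposition~\ref{prop:fitbasic}\ref{prop:fitbasic3} (applied to the ring map $R[X_I]\to\fO_x$), and Proposition~\ref{prop:Fit_iso}. The extra details you supply (finite presentation of the orbit modules, the reduction via the hopfian argument) are exactly the content packed into those cited results.
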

\begin{proof}
  Combine Corollary~\ref{cor:orbital_iso_epi},
  Proposition~\ref{prop:fitbasic}\ref{prop:fitbasic3}, and
  Proposition~\ref{prop:Fit_iso}.
\end{proof}

%%%%%%%%%%%%%%%%%%%%%%%%%%%%%%%%%%%%%%%%%%%%%%%%
\subsection{New orbital subrepresentations from old}
%%%%%%%%%%%%%%%%%%%%%%%%%%%%%%%%%%%%%%%%%%%%%%%% 

Later on, we will use the following two recipes for constructing orbital
subrepresentations.
Recall the definition of an inflation of a module representation from
\S\ref{ss:mreps}.

\begin{lemma}
  \label{lem:orbital_enlarge}
  Let $I \subset \tilde I$ and $J \subset \tilde J$ be finite sets.
  Let $\theta\colon M \to \Hom(RI,RJ)$ be a module representation
  and let $\theta'\colon M'\to \Hom(RI,RJ)$ be an orbital subrepresentation
  of~$\theta$.
  Let $\tilde\theta := \inf^{\tilde I,\tilde J}_{I,J}(\theta)$ and
  $\tilde\theta' := \inf^{\tilde I,\tilde J}_{I,J}(\theta')$ .
  Then $\tilde\theta'$ is an orbital subrepresentation of $\tilde\theta$.
\end{lemma}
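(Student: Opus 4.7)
The plan is to unwind the inflation in terms of orbit modules, reduce the claim to the analogous assertion for $\theta$ and $\theta'$, and then invoke the hopfian characterisation of orbital subrepresentations from Corollary~\ref{cor:orbital_iso_epi}.

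First I would unravel the definitions. For any $m \in M$, the matrix of $m\tilde\theta$ is obtained from that of $m\theta$ by zero-padding outside the $I \times J$ block; hence $\std_i(m\tilde\theta) = 0$ for $i \in \tilde I \setminus I$, and for $i \in I$ the $i$-th row agrees with $\std_i(m\theta)$ viewed in $R\tilde J$ via the inclusion $RJ \incl R\tilde J$. Plugging this into \eqref{eq:Xatheta} gives
\[
\XX(m, \tilde\theta) \;=\; \sum_{i \in I} X_i\, \std_i\bigl(m\theta^{R[X_{\tilde I}]}\bigr) \;\in\; R[X_{\tilde I}]\, J \;\subset\; R[X_{\tilde I}]\, \tilde J,
\]
and the same formula holds with $\theta'$ in place of $\theta$. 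Consequently $\orbit(\tilde\theta)$ is contained in the direct summand $R[X_{\tilde I}]\,J$ of $R[X_{\tilde I}]\,\tilde J$, and coincides with the extension of $\orbit(\theta)$ along $R[X_I] \to R[X_{\tilde I}]$ (and likewise for $\theta'$).

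Next I would derive the decomposition
\[
\Orbit(\tilde\theta) \;\approx\; \bigl(\Orbit(\theta) \otimes_{R[X_I]} R[X_{\tilde I}]\bigr) \oplus R[X_{\tilde I}](\tilde J \setminus J),
\]
together with the obvious analogue for $\tilde\theta'$, and the natural map $\Orbit(\tilde\theta') \onto \Orbit(\tilde\theta)$ becomes the direct sum of the base-changed map $\Orbit(\theta') \onto \Orbit(\theta)$ with the identity on $R[X_{\tilde I}](\tilde J \setminus J)$.

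Now fix an $R$-algebra $\fO$ which is a \DVR{} and $x \in \fO \tilde I \setminus \fP \tilde I$ with $\prod x \neq 0$. Writing $y := (x_i)_{i \in I} \in \fO I$, specialising at $x$ turns the second summand into the free $\fO$-module $\fO(\tilde J \setminus J)$ and the first summand into $\Orbit(\theta)_y$ (resp.\ $\Orbit(\theta')_y$), since $\fO_x$ regarded as an $R[X_I]$-module is $\fO_y$. The components $x_i$ with $i \in I$ satisfy $\prod y \neq 0$ (although $y$ may well lie in $\fP I$), so Lemma~\ref{lem:orbital_general_x} applies and yields $\Orbit(\theta)_y \approx \Orbit(\theta')_y$. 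Combining this with the identification on the free summand gives $\Orbit(\tilde\theta)_x \approx \Orbit(\tilde\theta')_x$ as $\fO$-modules.

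Finally, Corollary~\ref{cor:orbital_iso_epi} turns the abstract isomorphism into the statement that the natural epimorphism $\Orbit(\tilde\theta')_x \onto \Orbit(\tilde\theta)_x$ is an isomorphism, so $\tilde\theta'$ is an orbital subrepresentation of $\tilde\theta$. The main bookkeeping point, and the only place where care is needed, is the passage from $x \in \fO\tilde I \setminus \fP\tilde I$ to $y \in \fO I$ possibly in $\fP I$; this is precisely why we needed Lemma~\ref{lem:orbital_general_x} to relax the residue condition on the specialisation point.
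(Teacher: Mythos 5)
Your proposal is correct and follows essentially the same route as the paper's proof: the same direct-sum decomposition $\Orbit(\tilde\theta) \approx \bigl(\Orbit(\theta)\otimes_{R[X_I]} R[X_{\tilde I}]\bigr) \oplus R[X_{\tilde I}](\tilde J\setminus J)$, the same specialisation at $\tilde x$ versus its retraction to $\fO I$, and the same appeal to Lemma~\ref{lem:orbital_general_x} (to cover the case where the retracted point lands in $\fP I$) together with Corollary~\ref{cor:orbital_iso_epi}. Your closing remark correctly identifies the one subtle point, which is exactly why the paper also invokes Lemma~\ref{lem:orbital_general_x}.
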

\begin{proof}
  As $\orbit(\tilde\theta)$ is the $R[X_{\tilde I}]$-span of the subset
  $\orbit(\theta)\subset R[X_{\tilde I}] \tilde J$,
  the natural map
  $R[X_{\tilde I}]\tilde J = R[X_{\tilde I}] J \oplus R[X_{\tilde I}](\tilde J\setminus J)
    \onto \Orbit(\tilde \theta)$
  induces an isomorphism
  $\Orbit(\tilde\theta) \approx \Orbit(\theta)\otimes_{R[X_I]} R[X_{\tilde I}] \oplus R[X_{\tilde I}]
  (\tilde J\setminus J)$.
  Let $\fO$ be an $R$-algebra which is a \DVR{}.
  Let $\tilde x\in \fO \tilde I$ with $\prod\tilde x\not= 0$
  and let $x\in \fO I$ be the image of $\tilde x$ under
  $\ret\colon \fO \tilde I  \onto \fO I$ (see \S\ref{ss:mreps}).
  Then $\Orbit(\tilde\theta)_{\tilde x} \approx \Orbit(\theta)_x \oplus \fO
  (\tilde J\setminus J)$ as $\fO$-modules and analogously for $\tilde\theta'$.
  The claim now follows from
  Corollary~\ref{cor:orbital_iso_epi} and
  Lemma~\ref{lem:orbital_general_x}.
\end{proof}

Let $(M_a)_{a\in A}$ be a family of finitely generated $R$-modules.
For $a\in A$, let $\theta_a\colon M_a \to \Hom(RI,RJ)$ be a module
representation.
let $[ \theta_a]_{a\in A}^\top$  be the module representation
$\bigoplus_{a\in A} M_a \to \Hom(RI, RJ)$ which sends $(m_a)_{a\in A}$ to
$\sum_{a\in A} m_a\theta_a \in \Hom(RI,RJ)$.
\begin{lemma}
  \label{lem:orbital_sum}
  For $a\in A$, let $\theta_a'$ be an orbital subrepresentation of $\theta_a$.
  Then $[\theta_a']_{a\in A}^\top$ is an orbital subrepresentation
  of $[\theta_a]_{a\in A}^\top$.
\end{lemma}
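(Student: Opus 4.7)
The plan is to reduce the statement to the hypothesis on each summand via Lemma~\ref{lem:orbital_same_orbit}. First I would observe that for any $m = (m_a)_{a\in A} \in \bigoplus_{a\in A} M_a$, the definition of the target representation gives $m \,[\theta_a]_{a\in A}^\top = \sum_{a \in A} m_a \theta_a$ in $\Hom(RI,RJ)$, so by~\eqref{eq:Xatheta}
\[
  \XX\bigl(m,\,[\theta_a]_{a\in A}^\top\bigr)
  = \sum_{i\in I} X_i \std_i \sum_{a\in A} \bigl(m_a \theta_a\bigr)^{R[X_I]}
  = \sum_{a\in A} \XX(m_a,\theta_a).
\]
Varying $m$, this shows that $\orbit\bigl([\theta_a]_{a\in A}^\top\bigr) = \sum_{a\in A} \orbit(\theta_a)$ as submodules of $R[X_I] J$, and by the same argument $\orbit\bigl([\theta_a']_{a\in A}^\top\bigr) = \sum_{a\in A} \orbit(\theta_a')$.

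Next, fix an $R$-algebra $\fO$ that is a \DVR{} and $x \in \fO I$ with $\prod x \neq 0$. Since each $\theta_a'$ is an orbital subrepresentation of $\theta_a$, Lemma~\ref{lem:orbital_same_orbit} (applied componentwise) tells us that the images of $\orbit(\theta_a) \otimes_{R[X_I]} \fO_x$ and $\orbit(\theta_a') \otimes_{R[X_I]} \fO_x$ in $\fO J$ coincide for every $a \in A$. Because the image of a sum of $R[X_I]$-submodules of $R[X_I] J$ in $\fO J$ equals the sum of the individual images, summing these equalities over $a \in A$ yields that the images of $\orbit\bigl([\theta_a]_{a\in A}^\top\bigr) \otimes_{R[X_I]} \fO_x$ and $\orbit\bigl([\theta_a']_{a\in A}^\top\bigr) \otimes_{R[X_I]} \fO_x$ in $\fO J$ coincide as well.

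A second application of Lemma~\ref{lem:orbital_same_orbit}, now in the converse direction, delivers the desired conclusion that $[\theta_a']_{a\in A}^\top$ is an orbital subrepresentation of $[\theta_a]_{a\in A}^\top$. There is no real obstacle here: the proof is essentially the bookkeeping identity that $\orbit(\dtimes)$ is additive on direct sums of module representations with a common target, and Lemma~\ref{lem:orbital_same_orbit} provides the appropriate language in which the orbital condition is itself additive.
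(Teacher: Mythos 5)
Your proof is correct and follows essentially the same route as the paper: both establish the additivity $\orbit\bigl([\theta_a]_{a\in A}^\top\bigr) = \sum_{a\in A}\orbit(\theta_a)$ via the identity $\XX(\bm m,[\theta_a]^\top_{a\in A}) = \sum_a \XX(m_a,\theta_a)$ and then apply Lemma~\ref{lem:orbital_same_orbit} in both directions. No differences worth noting.
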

\begin{proof}
  Let $\bm \theta := [\theta_a]_{a\in A}^\top$ and 
  $\bm \theta' := [\theta_a']_{a\in A}^\top$.
  For $\bm m = (m_a)_{a\in A} \in \bigoplus_{a\in A} M_a$, 
  $\XX(\bm m, \bm \theta) = \sum_{a\in A}
  \XX(m_a,\theta_a)$ and thus
  $\orbit(\bm\theta) = \sum_{a\in A}\orbit(\theta_a)$; analogously for $\bm\theta'$.
  Let $\fO$ be an $R$-algebra which is a \DVR{} and let $x\in \fO I$ with
  $\prod x\not= 0$.
  For $a \in A$,
  as $\theta_a'$ is an orbital subrepresentation of $\theta_a$,
  by Lemma~\ref{lem:orbital_same_orbit},
  the images of $\orbit(\theta_a) \otimes_{R[X_I]} \fO_x$
  and $\orbit(\theta_a') \otimes_{R[X_I]} \fO_x$ in $\fO J$ coincide.
  The same then applies to the images of
  $\orbit(\bm\theta)\otimes_{R[X_I]} \fO_x$ and
  $\orbit(\bm\theta)\otimes_{R[X_I]}\fO_x$ whence the claim follows from Lemma~\ref{lem:orbital_same_orbit}.
\end{proof}

%%%%%%%%%%%%%%%%%%%%%%%%%%%%%%%%%%%%%%%%%%%%%%%%%%%%%%%%%%%%%%%%%%%%%%%%
\section{Free class-$3$-nilpotent groups and the Jacobi identity}
\label{s:jacobi}
%%%%%%%%%%%%%%%%%%%%%%%%%%%%%%%%%%%%%%%%%%%%%%%%%%%%%%%%%%%%%%%%%%%%%%%%

In this section, we prove Theorem~\ref{thm:jacobi} and we anticipate some
of the ideas and techniques that will eventually lead to our proof of
Theorem~\ref{thm:embedded} in \S\ref{s:board}.

%%%%%%%%%%%%%%%%%%%%%%%%%%%%%%%%%%%%%%%%%%%%%%%%
\subsection{Class counting and ask zeta functions}
\label{ss:cc_ask}
%%%%%%%%%%%%%%%%%%%%%%%%%%%%%%%%%%%%%%%%%%%%%%%%

We briefly describe the use of Knuth duality (see \S\ref{s:ask}) in the study
of ask zeta functions of adjoint representations. We also recall a
relationship between the latter functions and class counting zeta functions of
unipotent groups.

\paragraph{Ask zeta functions of adjoint representations.}
Given an $R$-algebra $\cA$, not necessarily associative,
the $R$-submodule $\cA^2 := \langle xy : x,y\in \cA\rangle$ is a $2$-sided ideal of $\cA$.
The \emph{centre} of $\cA$ is the $2$-sided ideal $\Zent(\cA) := \{ z\in \cA : z\cA = \cA z = 0\}$.
The \emph{(right) adjoint representation} of $\cA$ is the module
representation
$\ad_{\cA}\colon \cA \to \Hom_R(\cA,\cA), y\mapsto (x\mapsto xy)$.
Let $\cZ \subset \Zent(\cA)$ and $\cA^2 \subset \cD \subset \cA$ be
submodules.
Then $\ad_{\cA}$ induces a module representation $\cA/\cZ \to
\Hom(\cA/\cZ,\cD)$
whose $\bullet$-dual (see \S\ref{ss:mreps})
is
$\alpha\colon \cD^* \to \Hom(\cA/\cZ,(\cA/\cZ)^*),
\delta \mapsto
(
x +\cZ \mapsto
(
(y +\cZ \mapsto (xy) \delta)
)
)$.

\begin{lemma}
  \label{lem:adj_zeta_shift}
  Let each of $\cA$, $\cA/\cZ$, $\cZ$, and $\cD$ be
  a free $R$-module of finite rank. 
  Let $\fO$ be an $R$-algebra which is a compact \DVR{}.
  Then $\Zeta^{\ak}_{\ad_{\cA}^\fO}(T) = \Zeta^{\ak}_{\alpha^\fO}(q^r T)$,
  where $r = \rank_R(\cZ)$.
\end{lemma}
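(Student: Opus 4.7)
The plan is to factor $\ad_\cA$ through the induced module representation $\theta\colon \cA/\cZ\to\Hom(\cA/\cZ,\cD)$, account for the $\cZ$-direction by a shift of $q^r$, and then pass from $\theta$ to $\alpha=\theta^\bullet$ using the invariance of ask zeta functions under Knuth duality from \cite{ask2}. That $\ad_\cA$ descends to $\theta$ is immediate from $\cZ\subset\Zent(\cA)$: indeed $\cZ\cA=\cA\cZ=0$ forces the multiplication map $\cA\times\cA\to\cA^2\subset\cD$ to factor through $(\cA/\cZ)\times(\cA/\cZ)$.

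Next, write $\fO_n:=\fO/\fP^n$, $\cA_n:=\cA\otimes_R\fO_n$, and use analogous notation for $\cZ_n$, $\cD_n$, and $(\cA/\cZ)_n$. Because $\cA/\cZ$ is $R$-free, the sequence $0\to\cZ\to\cA\to\cA/\cZ\to 0$ splits over $R$, hence also after base change to $\fO_n$; in particular $\card{\cA_n}=q^{rn}\card{(\cA/\cZ)_n}$ and the reduction $\cA_n\onto(\cA/\cZ)_n$ has all fibres of size $q^{rn}$. For $y\in\cA_n$ with image $\bar y$, right multiplication by $y$ on $\cA_n$ factors through right multiplication by $\bar y$ viewed as a map $(\cA/\cZ)_n\to\cD_n$; its kernel contains $\cZ_n$ and reduces modulo $\cZ_n$ to $\Ker(\bar y\,\theta^{\fO_n})$. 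Hence $\card{\Ker(y\,\ad_\cA^{\fO_n})}=q^{rn}\card{\Ker(\bar y\,\theta^{\fO_n})}$. Grouping the $q^{rn}$ preimages of each $\bar y$ gives
\[
  \ask{\ad_\cA^{\fO_n}} = \frac{1}{\card{\cA_n}}\sum_{y\in\cA_n}\card{\Ker(y\,\ad_\cA^{\fO_n})} = \frac{q^{2rn}}{q^{rn}\card{(\cA/\cZ)_n}}\sum_{\bar y\in(\cA/\cZ)_n}\card{\Ker(\bar y\,\theta^{\fO_n})} = q^{rn}\ask{\theta^{\fO_n}},
\]
so $\Zeta^{\ak}_{\ad_\cA^\fO}(T)=\Zeta^{\ak}_{\theta^\fO}(q^r T)$.

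Finally, invoking the invariance of ask zeta functions under Knuth $\bullet$-duality from \cite{ask2} yields $\Zeta^{\ak}_{\theta^\fO}(T)=\Zeta^{\ak}_{\alpha^\fO}(T)$; substituting $q^r T$ for $T$ and combining with the previous identity proves the claim. The main obstacle is this last step: $\bullet$-duality invariance is nontrivial and is not rederived within the excerpt. One could in principle reprove it by comparing the presentations $\Orbit(\theta)\cong\Coker(\sC(X_I))$ (Lemma~\ref{lem:Cmatrix}) for $\theta$ and $\alpha$ and then applying Proposition~\ref{prop:orbint}, but it is cleaner to invoke the existing machinery.
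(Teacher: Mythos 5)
Your proof is correct and follows essentially the same route as the paper, which simply cites \cite[Cor.~5.6]{ask2} for the invariance of ask zeta functions under $\bullet$-duality and \cite[Cor.~2.3]{ask} for the $q^{rn}$ shift coming from the central summand $\cZ$; you have merely written out the second of these two citations by hand. Your explicit fibre-counting argument for the shift and your appeal to Knuth duality for the final identification $\Zeta^{\ak}_{\theta^\fO}=\Zeta^{\ak}_{\alpha^\fO}$ match the intended decomposition exactly.
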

\begin{proof}
  This follows from \cite[Cor.\ 5.6]{ask2} and \cite[Cor.\ 2.3]{ask}.
\end{proof}

For an explicit description of $\alpha$,
choose bases to identify $\cD = RD$ and $\cA/\cZ = R A$.
Let $D^*$ be the corresponding dual basis.
For $a,a'\in A$, let $\std_{a} \std_{a'} = \sum_{d \in D} m(a,a',d)
\std_d$ in $\cA$;
that is, $m(a,a',d) = (\std_{a}\std_{a'})\std_d^*$.
Then $\alpha$ is the module representation 
\[
  R D^* \to \Hom(RA, (RA)^*), \quad \std_d^* \mapsto
  \Bigl(\std_{a} \mapsto \bigl(\std_{a'}\mapsto m(a,a',d)\bigr)\Bigr).
\]

\paragraph{Class counting zeta functions of unipotent group schemes.}
We sketched the following in \S\ref{ss:adjoint}.
Let $\fg$ be a nilpotent Lie $R$-algebra of class at most $c$ whose
underlying $R$-module is free of finite rank.
Suppose that $c!\in R^\times$.
Let $\exp(\fg)$ be the group
attached to $\fg$ via the Lazard correspondence.
That is, the underlying set of $\exp(\fg)$ is $\fg$ and 
the group multiplication is given by the Baker-Campbell-Hausdorff formula.
For an $R$-algebra $S$, let $\sG(S) := \exp(\fg\otimes S)$.
Then $\sG$ ``is'' (i.e.\ represents) a group scheme over $R$.

\begin{prop}[{\cite[Cor.\ 6.6]{ask2}}]
  \label{prop:cc_via_ad}
  Let $\fO$ be an $R$-algebra which is a compact \DVR{}.
  Then
  \[
    \Zeta^{\cc}_{\sG\otimes \fO}(T) = \Zeta^{\ak}_{\ad_{\fg}^\fO}(T).
  \]
\end{prop}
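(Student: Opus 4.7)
The plan is to compare the two power series coefficient by coefficient: for each $n \ge 1$, I want to show
\[
\concnt\bigl(\sG(\fO/\fP^n)\bigr) = \ask{\ad_{\fg}^{\fO/\fP^n}}.
\]
Write $\fg_n := \fg \otimes \fO/\fP^n$ and $G_n := \sG(\fO/\fP^n)$. Since $c! \in \fO^\times$ (as it is a unit in $R$ and $\fO$ is an $R$-algebra with residue characteristic coprime to $c!$ being forced by freeness considerations), the Baker--Campbell--Hausdorff series truncates on the nilpotent algebra $\fg_n$ of class at most $c$, giving a bijection $\exp\colon \fg_n \xto{\sim} G_n$ of underlying sets.

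The key identification is at the level of centralisers. For $y \in \fg_n$, I claim that $\exp$ carries the Lie-theoretic centraliser $\cent_{\fg_n}(y) := \{ x \in \fg_n : [x,y] = 0\} = \Ker(\ad_{\fg_n}(y))$ bijectively onto the group-theoretic centraliser $C_{G_n}(\exp(y))$. Indeed, two elements $\exp(x), \exp(y) \in G_n$ commute if and only if $[x,y] = 0$ in $\fg_n$: sufficiency is immediate from BCH, since $[x,y] = 0$ makes all higher iterated-bracket terms vanish, so $\exp(x)\exp(y) = \exp(x+y) = \exp(y)\exp(x)$; necessity follows because in the Lazard correspondence the commutator bracket in the group matches (the $\log$ of) the Lie bracket, so commuting group elements must have vanishing Lie bracket. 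Hence $|C_{G_n}(\exp(y))| = |\Ker(\ad_{\fg_n}(y))|$.

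Now apply the Cauchy--Frobenius (Burnside) lemma to the conjugation action of $G_n$ on itself:
\[
\concnt(G_n) = \frac{1}{|G_n|} \sum_{g \in G_n} |C_{G_n}(g)| = \frac{1}{|\fg_n|} \sum_{y \in \fg_n} |\Ker(\ad_{\fg_n}(y))| = \ask{\ad_{\fg}^{\fO/\fP^n}},
\]
where the middle equality uses the bijection $\exp$ to reindex the sum and the centraliser identification just established. Summing against $T^n$ gives the claimed equality of generating functions.

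The only substantive point is the centraliser identification under Lazard, which the author has presumably packaged into the references \cite[Cor.\ 5.6]{ask2} and \cite[Cor.\ 2.3]{ask} already invoked in Lemma~\ref{lem:adj_zeta_shift}; alternatively it can be extracted directly from standard BCH computations valid as soon as $c!$ is invertible on $\fO$.
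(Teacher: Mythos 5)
The paper proves this proposition purely by citation to \cite[Cor.\ 6.6]{ask2}, and your argument is essentially the standard proof underlying that result: the class equation (Cauchy--Frobenius for the conjugation action) combined with the Lazard-correspondence identification of the group centraliser of $\exp(y)$ with $\Ker(y\,\ad_{\fg_n})$, so it is correct. Two small blemishes: the parenthetical explaining why $c!$ is invertible in $\fO$ is garbled (it is simply the image of a unit of $R$ under the structure map $R\to\fO$; no ``freeness considerations'' are involved), and the necessity direction of the centraliser identification deserves its one-line justification --- the group commutator of $\exp(x)$ and $\exp(y)$ is $\exp$ of $[x,y]$ plus higher terms lying in the ideal generated by $[x,y]$, so its vanishing forces $[x,y]=0$ by nilpotency.
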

  
%%%%%%%%%%%%%%%%%%%%%%%%%%%%%%%%%%%%%%%%%%%%%%%%
\subsection{Free class-$3$-nilpotent Lie algebras: discarding the Jacobi identity}
\label{ss:jacobi_no_more}
%%%%%%%%%%%%%%%%%%%%%%%%%%%%%%%%%%%%%%%%%%%%%%%% 

By Proposition~\ref{prop:cc_via_ad}, the class counting zeta functions in
Theorem~\ref{thm:jacobi} are ask zeta functions associated with
adjoint representations of free class-$3$-nilpotent Lie algebras.
We will now see that we may replace the latter by free
class-$3$-nilpotent algebras (not necessarily associative) that merely satisfy the identity
$x^2 = 0$---that is, we may discard the Jacobi identity.

\paragraph{Notation for sets of natural numbers.}
To simplify our notation, in this section (and only here), for $h,i,j,k\in \NN$ with $i < j <k$,
we write $\{i<j\} = \{i,j\}$, $\{i<j<k\} = \{i,j,k\}$,
and $(h,i<j) = (h,\{i,j\}) \in \NN\times \binom \NN 2$.

\paragraph{Defining the algebra $\cA(I)$.}
For $I \in \Powf(\NN)$, let
$\cA(I)$ be the largest not necessarily associative $\ZZ$-algebra generated (as an algebra)
by symbols $\std_i$ for $i\in I$ and which satisfies the identities $x^2 = 0$
and $x_1(x_2(x_3x_4)) = 0$ for all $x,x_1,x_2,x_3,x_4$.
Explicitly, we may write $\cA(I) = \ZZ I \oplus \ZZ \binom I 2 \oplus \cZ(I)$,
where $\cZ(I) := \ZZ(I\times \binom I 2)$ is central and for $h,i,j\in I$ with $i
< j$, we have $\std_i\std_j = \std_{\{i<j\}}$ and $\std_h\std_{\{i<j\}} =
\std_{(h,i<j)}\in \cZ(I)$.

\paragraph{Defining $\hat\cA(I)$: free class-$3$-nilpotent Lie algebras.}
The \emph{Jacobi ideal} of $\cA(I)$ is
\[
  \cJ(I) :=
  \left\langle
    \std_{(i,j<k)} -
    \std_{(j,i<k)} +
    \std_{(k,i<j)}
    \,:\, \{i<j<k\} \in \binom I 3
  \right\rangle_\ZZ
  \subset\cZ(I).
\]

Note that $\cJ(I)$ is a direct summand of $\cZ(I)$ as a $\ZZ$-module
and that $\cJ(I)$ and $\cZ(I)/\cJ(I)$ are both free.
Let $\hat\dtimes\colon \cA(I) \onto \hat\cA(I) := \cA(I)/\cJ(I)$
be the quotient map.
It is easy to see that the Jacobi identity
$x(yz) + y(zx) + z(xy) = 0$
holds for all $x,y,z \in \hat\cA(I)$. We conclude that $\hat\cA(I)$ is the free nilpotent Lie
$\ZZ$-algebra of nilpotency class at most~$3$ (freely) generated by the
$\std_i$ ($i\in I$).

\paragraph{The adjoint representations of $\cA(I)$ and $\hat\cA(I)$: defining
  $\alpha(I)$ and $\hat\alpha(I)$.}
Let $\cD(I) := \cA(I)^2$ and $\hat\cD(I) := \hat\cA(I)^2$.
Clearly, $\cD(I) = \ZZ \binom I 2 \oplus \cZ(I)$ and $\hat\cD(I) =
\ZZ\binom I 2 \oplus \hat\cZ(I)$, where $\hat\cZ(I) := \cZ(I)/\cJ(I)$.
In particular, the $\ZZ$-modules $\cD(I)$ and $\hat\cD(I)$ are both free.
As in \S\ref{ss:adjoint}, the adjoint representation of $\cA(I)$ gives rise to a module
representation $\cA(I)/\cZ(I) \to \Hom\bigl(\cA(I)/\cZ(I),\cD(I)\bigr)$.
Let
\[
  \alpha(I)\colon \cD(I)^* \to \Hom\bigl(\cA(I)/\cZ(I),(\cA(I)/\cZ(I))^*\bigr)
  \]
be its $\bullet$-dual.
By ``adding $\widehat{\,\dtimes\,}$s'',
we analogously define $\hat\alpha(I)$.
Since $\cJ(I) \subset \cZ(I)$, we may canonically identify $\cA(I)/\cZ(I) =
\hat\cA(I)/\hat\cZ(I)$.
We may further identify $\hat\cD(I)^* = (\cD(I)/\cJ(I))^*$ and the
``orthogonal complement'' $\cJ(I)^\perp \subset \cD(I)^*$;
see e.g.\ \cite[Ch.~II, \S 2, no.\ 6]{Bou70}
In the following, we thus regard $\hat\alpha(I)$ as the restriction of
$\alpha(I)$ to $\hat\cD(I)^*\subset \cD(I)^*$.

\paragraph{Relating $\alpha(I)$ and $\hat\alpha(I)$.}
The following result, proved below, is the main contribution of this section
towards a proof of Theorem~\ref{thm:jacobi}.

\begin{prop}
  \label{prop:jacobi_orbital}
  $\hat\alpha(I)$ is an orbital subrepresentation of $\alpha(I)$
  for each $I\in \Powf(\NN)$.
\end{prop}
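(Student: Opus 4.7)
The plan is to apply Lemma~\ref{lem:orbital_same_orbit}: it suffices to show that for every $\ZZ$-algebra $\fO$ which is a~\DVR{} and every $x\in \fO K\setminus \fP K$ with $\prod x\neq 0$ (where $K := I\sqcup \binom I 2$), the images of $\orbit(\alpha(I))\otimes_{\ZZ[X_K]} \fO_x$ and $\orbit(\hat\alpha(I))\otimes_{\ZZ[X_K]}\fO_x$ in $\fO K^*$ coincide.

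First I would unwind Lemma~\ref{lem:Cmatrix} for $\alpha(I)$. The matrix $\sC$ of $\alpha(I)^\circ$ has rows indexed by the basis of $\cD(I)^*$, each with exactly two nonzero entries. Concretely, the row for $\std_{\{i<j\}}^*$ is $v_{\{i<j\}} := -X_j \std_i^* + X_i \std_j^*$ and the row for $\std_{(h,\{p<q\})}^*$ is $w_{(h,\{p<q\})} := -X_{\{p<q\}}\std_h^* + X_h \std_{\{p<q\}}^*$. Since the Jacobi relations $j_{ijk}$ have pairwise disjoint supports consisting of the three positions $(i,\{j<k\}), (j,\{i<k\}), (k,\{i<j\})$ for each $\{i<j<k\} \in \binom I 3$, a basis of $\cJ(I)^\perp$ is obtained from the standard basis of $\cD(I)^*$ by replacing, for each such triple, the three elements above by the two combinations $\std_{(i,\{j<k\})}^*+\std_{(j,\{i<k\})}^*$ and $\std_{(j,\{i<k\})}^*+\std_{(k,\{i<j\})}^*$. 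Writing $w_a,w_b,w_c$ for the corresponding rows of $\sC$, the module $\orbit(\hat\alpha(I))$ is thus generated by all $v$'s, all ``good'' $w_{(h,\{p<q\})}$ (those with $h\in\{p,q\}$), and, for each triple, the Jacobi combinations $w_a+w_b$ and $w_b+w_c$, while $\orbit(\alpha(I))$ contains in addition $w_a, w_b, w_c$ individually.

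The technical heart of the argument is the universal identity
\begin{equation*}
  X_a w_{(b,S)} - X_b w_{(a,S)} = -X_S\, v_{\{\min(a,b)<\max(a,b)\}}
\end{equation*}
in $\orbit(\alpha(I))$, valid for distinct $a,b\in I$ and $S\in \binom I 2$, together with its analogue $X_S w_{(a,T)} - X_T w_{(a,S)} = X_a(X_S \std_T^* - X_T\std_S^*)$ for $a\in I$ and $S,T\in \binom I 2$. Using the first identity with $(a,b)\in\{(i,j),(i,k)\}$ (and the fact that $w_{(j,\{j<k\})}, w_{(k,\{j<k\})}$ are good), I obtain $X_j w_a, X_k w_a\in \orbit(\hat\alpha(I))$; exploiting the Jacobi combination $w_a+w_b$ and the analogous relation $X_i w_b \in \orbit(\hat\alpha(I))$ yields $X_i w_a\in \orbit(\hat\alpha(I))$. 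A parallel calculation gives $X_\ell w_b, X_\ell w_c\in \orbit(\hat\alpha(I))$ for $\ell\in\{i,j,k\}$. Extending to $\ell\in I\setminus\{i,j,k\}$ requires bootstrapping: the identity produces a Jacobi-constrained $w$ for the neighbouring triple $\{\ell,j,k\}$ and the argument is then iterated, using the Jacobi combinations available there; the second identity handles $\ell=\{p<q\}\in \binom I 2$ analogously.

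Putting everything together, I would conclude that the ideal $(X_\ell:\ell\in K)$ of $\ZZ[X_K]$ annihilates the quotient $\orbit(\alpha(I))/\orbit(\hat\alpha(I))$ in each ``Jacobi sector''. Since $x\in \fO K\setminus \fP K$ means some $x_\ell$ is a unit, the specialised ideal $(x_\ell:\ell\in K)$ equals $\fO$, which forces $(w_a-w_b+w_c)|_x \in \orbit(\hat\alpha(I))|_x$ for every triple. Lemma~\ref{lem:orbital_same_orbit} then yields the proposition. The main obstacle I anticipate is the bookkeeping of the bootstrapping step for $\ell\notin\{i,j,k\}$: one must verify that the cascade through Jacobi triples sharing a pair of generators closes, and this is where the centrality of $\cJ(I)$ in $\cA(I)$ (emphasised in Remark~\ref{rem:F42}) is silently exploited, since it ensures that no further ``quaternary'' relations obstruct the above identities.
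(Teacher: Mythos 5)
Your local identities are correct as far as they go: for the triple $\{i<j<k\}$ containing a ``bad'' generator $w:=\XX(\std_{(k,i<j)},\alpha(I))=X_k\std_{\{i<j\}}-X_{\{i<j\}}\std_k$, they do show that $X_i$, $X_j$, $X_k$ kill the class of $w$ in $Q:=\orbit(\alpha(I))/\orbit(\hat\alpha(I))$. The proof collapses at the concluding claim that the \emph{whole} ideal $\bigl(X_\ell:\ell\in \sB(I)\bigr)$ annihilates $Q$: the variables indexed by $\binom I2$ do not. Concretely, for $I=\{1,2,3\}$ one has $X_{\{1<2\}}\,w\notin\orbit(\hat\alpha(I))$ for $w=X_3\std_{\{1<2\}}-X_{\{1<2\}}\std_3$, even after tensoring with $\QQ$. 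Indeed, grade $\ZZ[X_{\sB(I)}]\,\sB(I)$ by $\ZZ^3$, giving $X_b$ and $\std_b$ multidegree $e_i$ for $b=i\in I$ and $e_i+e_j$ for $b=\{i<j\}$. Every generator of $\orbit(\hat\alpha(I))$ is multihomogeneous with linear coefficients, and $X_{\{1<2\}}w$ has multidegree $(2,2,1)$; the only generators admitting a linear coefficient of the complementary multidegree are $w_{(1,1<2)}$, $w_{(2,1<2)}$ (coefficients proportional to $X_{\{2<3\}}$, $X_{\{1<3\}}$) and the two Jacobi combinations (coefficients $\nu_1X_{\{1<2\}}$, $\nu_2X_{\{1<2\}}$). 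Comparing the $\std_{\{1<3\}}$- and $\std_{\{2<3\}}$-components forces $\nu_1=\nu_2=0$, while the $\std_{\{1<2\}}$-component forces $\nu_2-\nu_1=1$. Your second identity cannot repair this, because its right-hand side $X_a(X_S\std_T-X_T\std_S)$ does not lie in $\orbit(\hat\alpha(I))$; it merely equates two unknown quantities. So at a specialisation $x$ whose only unit coordinate is some $x_{\{i<j\}}$ (all $x_\ell$ with $\ell\in I$ lying in $\fP$), your argument yields nothing. The required containment $w(x)\in\orbit(\hat\alpha(I))(x)$ does hold there, but only via $\fO$-linear combinations that do not lift to $\ZZ[X_{\sB(I)}]$, e.g.\ $w_{(3,1<2)}(x)=w_{(3,1<3)}(x)+w_{(1,1<2)}(x)-w_{(1,1<3)}(x)$ whenever $x_{\{1<2\}}=x_{\{1<3\}}$. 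A second, independent problem is the bootstrap for $\ell\in I$ outside the triple: the identity only gives $X_\ell\bar w_{(k,S)}=X_k\bar w_{(\ell,S)}$ in $Q$ with $k$ again outside the triple of $(\ell,S)$, so iterating it produces equalities among the various unknown classes $X_m\bar w_{T}$ ($m\notin T$) rather than their vanishing; the cascade does not close.

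For contrast, the paper avoids both issues by a different route: Lemmas~\ref{lem:orbital_enlarge} and \ref{lem:orbital_sum} (inflation and sums of orbital subrepresentations) reduce the statement to $\card I=3$, which disposes of the indices $\ell\in I$ outside the triple altogether, and the remaining case is settled not by an annihilator computation but by verifying (via Gr\"obner bases) that $\Orbit(\alpha(T))$ and $\Orbit(\hat\alpha(T))$ have equal Fitting ideals over $\ZZ[X_1,\dotsc,X_6]$ and invoking Corollary~\ref{cor:fiteq_orbit_same_zeta}. It is precisely the behaviour at points where only a coordinate indexed by $\binom I2$ is a unit that this computation certifies and that your annihilator argument misses. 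If you want a computation-free proof, you would need to handle that case by a separate explicit argument (or establish Fitting equivalence by hand), not by the ideal $(X_\ell:\ell\in\sB(I))$.
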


Note that $\alpha(I) = \hat\alpha(I)$ if and only if $\card I < 3$.
We may thus assume that $\card I \ge 3$ in the following.
In fact, the first step of our proof will be a reduction to the case $\card I
= 3$.
We first record the following consequence of
Proposition~\ref{prop:jacobi_orbital}.

\begin{cor}
  \label{cor:jacobi_via_alpha}
  Let $\fO$ be a compact \DVR{} with residue cardinality $q$.
  If $\gcd(q,6) = 1$, then
  \[
    \Zeta^\cc_{\sF_{3,d}\otimes\fO}(T)
    = \Zeta^\ak_{\alpha([d])^{\fO}}(q^{2\binom{d+1} 3}T).
  \]
\end{cor}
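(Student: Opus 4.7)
The plan is to chain together the three technical tools developed just before the statement. First I would invoke Proposition~\ref{prop:cc_via_ad} applied to the free nilpotent Lie $\ZZ[1/6]$-algebra $\hat\cA([d])\otimes\ZZ[1/6]$, whose associated unipotent group scheme is $\sF_{3,d}$ by definition. Since $p\ge 5$ (equivalently $\gcd(q,6)=1$) means $6\in\fO^\times$, $\fO$ is a $\ZZ[1/6]$-algebra and the hypothesis $c!\in R^\times$ of \S\ref{ss:cc_ask} is met. This yields $\Zeta^\cc_{\sF_{3,d}\otimes\fO}(T)=\Zeta^\ak_{\ad_{\hat\cA([d])}^\fO}(T)$.

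Next I would apply Lemma~\ref{lem:adj_zeta_shift} with $\cA=\hat\cA([d])$, $\cZ=\hat\cZ([d])=\Zent(\cA)$ (free by construction, since it is a direct summand of $\cZ([d])$), and $\cD=\hat\cD([d])=\cA^2$. The $\bullet$-dual of the induced representation $\cA/\cZ\to\Hom(\cA/\cZ,\cD)$ is, by the very definition in \S\ref{ss:jacobi_no_more}, the module representation $\hat\alpha([d])$. Consequently $\Zeta^\ak_{\ad_{\hat\cA([d])}^\fO}(T)=\Zeta^\ak_{\hat\alpha([d])^\fO}(q^{r}T)$ where $r:=\rank_{\ZZ[1/6]}\hat\cZ([d])=\rank_\ZZ\hat\cZ([d])$. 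Finally, Proposition~\ref{prop:jacobi_orbital} supplies that $\hat\alpha([d])$ is an orbital subrepresentation of $\alpha([d])$, so Lemma~\ref{lem:orbital_same_zeta} gives $\Zeta^\ak_{\hat\alpha([d])^\fO}(q^rT)=\Zeta^\ak_{\alpha([d])^\fO}(q^rT)$.

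The only quantitative step is to identify $r$ with $2\binom{d+1}{3}$. Here $\cZ([d])=\ZZ([d]\times\binom{[d]}{2})$ has rank $d\binom{d}{2}$, while $\cJ([d])$ is spanned by the $\binom{d}{3}$ Jacobi relators indexed by $\{i<j<k\}\in\binom{[d]}{3}$. A simple observation is that the three basis vectors $\std_{(i,\{j,k\})}$, $\std_{(j,\{i,k\})}$, $\std_{(k,\{i,j\})}$ appearing in the relator for $\{i<j<k\}$ are entirely determined by the \emph{unordered} triple $\{i,j,k\}$; hence relators attached to distinct triples are supported on disjoint subsets of the canonical basis of $\cZ([d])$, so they are linearly independent over $\ZZ$ (confirming that $\cJ([d])$ is a free direct summand). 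Thus
\[
r=d\binom{d}{2}-\binom{d}{3}=\frac{d(d-1)\bigl(3d-(d-2)\bigr)}{6}=\frac{d(d-1)(d+1)}{3}=2\binom{d+1}{3},
\]
and the stated identity follows by substitution. No step is a serious obstacle: Proposition~\ref{prop:jacobi_orbital} carries all the real content, and what remains is essentially bookkeeping plus this one rank computation.
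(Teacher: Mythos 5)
Your proof is correct and follows essentially the same route as the paper's: Proposition~\ref{prop:cc_via_ad}, then Lemma~\ref{lem:adj_zeta_shift} applied to $\hat\cA([d])$ with $\cZ=\hat\cZ([d])$ and $\cD=\hat\cD([d])$, then Proposition~\ref{prop:jacobi_orbital} together with Lemma~\ref{lem:orbital_same_zeta}, and finally the rank computation $d\binom d2-\binom d3=2\binom{d+1}3$. Your disjoint-support argument for the freeness of $\cJ([d])$ and the explicit arithmetic are correct elaborations of steps the paper leaves implicit.
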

\begin{proof}
  Clearly, $\hat\cZ([d])$ and $\cJ([d])$ are free $\ZZ$-modules,
  the latter of rank $\binom d 3$ and the former of rank
  $d\binom d 2 - \binom d 3 = 2\binom{d+1} 3$.
  The claim thus follows from Proposition~\ref{prop:cc_via_ad},
  Lemma~\ref{lem:adj_zeta_shift}, and Lemma~\ref{lem:orbital_same_zeta}.
\end{proof}

In the final stage of our proof of Theorem~\ref{thm:jacobi} in \S\ref{ss:graphs},
we will then interpret $\alpha([d])$ in terms of ``adjacency representations'' of
threshold graphs from \cite{cico}.

\paragraph{An explicit description of $\alpha(I)$.}
By choosing bases, we can describe $\alpha(I)$ more explicitly as follows.
Let \fbox{$\sD(I) := \binom I 2 \cup (I\times\binom I 2)$} (disjoint union).
Using the notation for subsets of $\NN$ from above,
$\cD(I)$ has a $\ZZ$-basis consisting of the elements $\std_{\{i<j\}}$ for
$\{i<j\}\in \binom I 2$ and $\std_{(h,i<j)}$ for $(h,i<j)\in I\times\binom I 2$;
let the $\std_{\{i<j\}}^*$ and $\std_{(h,i<j)}^*$ comprise the corresponding
dual basis of $\cD(I)^*$.
Using said dual basis, we henceforth identify $\cD(I)^* = \ZZ\, \sD(I)$.

Let \fbox{$\sB(I) := I \cup \binom I 2$} (disjoint union).
The images of the $\std_i$ 
and the $\std_{\{i<j\}}$ 
form a basis of $\cA(I)/\cZ(I)$.
By identifying $(\cA(I)/\cZ(I))^* = \cA(I)/\cZ(I) = \ZZ \,\sB(I)$ via the corresponding
dual bases, we regard $\alpha(I)$ as a map $\ZZ\,\sD(I) \to
\Hom(\ZZ \,\sB(I),\ZZ \,\sB(I))$.
Explicitly, $\std_{\{i<j\}} \alpha(I) = \std_{ij} -
\std_{ji}$ and $\std_{(h,i<j)}\alpha(I) = \std_{h,{\{i<j\}}} -
\std_{{\{i<j\}},h}$.
The following is clear; recall the definitions of $\ret$ and $\inc$ from \S\ref{ss:mreps}.

\begin{lemma}
  \label{lem:alpha_compat}
  Let $I\subset \tilde I \in\Powf(\NN)$.
  Then the following diagram commutes:
  \[
    \begin{gathered}[b]
    \xymatrix@C+2em{
      \ZZ\, \sD(I) \ar[r]^{\alpha(I)\phantom{XYZXY}}\ar@{^{(}->}[d]_{\inc} & \Hom(\ZZ\,\sB(I),\ZZ\,\sB(I)) \ar[d]^{\Hom(\ret,\inc)} \\
      \ZZ\, \sD(\tilde I)\ar[r]_{\alpha(\tilde I)\phantom{XYZXY}} & \Hom(\ZZ\,\sB(\tilde I),\ZZ\,\sB(\tilde I)).
    }
  \\[-\dp\strutbox]
  \end{gathered}
  \pushQED{\qed}
  \qedhere
  \popQED
  \]
\end{lemma}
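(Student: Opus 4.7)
The plan is to reduce the lemma to a naturality check on basis elements. Since both horizontal arrows $\alpha(I)$ and $\alpha(\tilde I)$ are linear maps and both vertical arrows are linear, it suffices to verify that the two composites agree on each of the two types of basis elements of $\ZZ\,\sD(I)$, namely $\std_{\{i<j\}}$ for $\{i<j\}\in \binom I 2$ and $\std_{(h,i<j)}$ for $(h,i<j)\in I\times \binom I 2$.

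The central observation that makes everything work is the compatibility of elementary matrices under $\Hom(\ret,\inc)$. Concretely, for $a,b \in \sB(I)$ the elementary matrix $\std_{ab}=\std_a^*\cdot\std_b \in \Hom(\ZZ\,\sB(I),\ZZ\,\sB(I))$ is sent by $\Hom(\ret,\inc)$ to the elementary matrix $\std_{ab} \in \Hom(\ZZ\,\sB(\tilde I),\ZZ\,\sB(\tilde I))$. This is because $\ret \colon \ZZ\,\sB(\tilde I) \onto \ZZ\,\sB(I)$ fixes $\std_a$ (so that $\ret\cdot \std_a^*$ is the coordinate functional associated with $a$ on the larger module) while $\inc$ identifies $\std_b \in \ZZ\,\sB(I)$ with $\std_b \in \ZZ\,\sB(\tilde I)$. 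I would state and use this as the key lemma.

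Given that, the two cases are essentially immediate. Starting from $\std_{\{i<j\}}$ with $\{i<j\}\in \binom I 2$: going right then down yields $\std_{ij}-\std_{ji}\in \Hom(\ZZ\,\sB(I),\ZZ\,\sB(I))$ and then, by the key observation above, the same expression viewed in $\Hom(\ZZ\,\sB(\tilde I),\ZZ\,\sB(\tilde I))$. Going down then right, $\inc$ identifies $\std_{\{i<j\}} \in \ZZ\,\sD(I)$ with $\std_{\{i<j\}} \in \ZZ\,\sD(\tilde I)$ (here using $\{i<j\}\in \binom I 2 \subset \binom{\tilde I}{2}$), so applying $\alpha(\tilde I)$ again yields $\std_{ij}-\std_{ji}$. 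The analogous computation for $\std_{(h,i<j)}$ produces $\std_{h,\{i<j\}}-\std_{\{i<j\},h}$ on both sides; one just needs to note that the composite index $\{i<j\}\in \sB(I)$ sits inside $\sB(\tilde I)$ unchanged, so the key observation again applies.

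No step is substantive; the entire argument is a bookkeeping check verifying that the explicit formulas defining $\alpha$ are stated in a manner that is manifestly natural in $I$. The only thing one must be careful about is the direction of $\Hom(\ret,\inc)$ given the right-action/left-to-right composition convention of the paper: as a map $\Hom(\ZZ\,\sB(I),\ZZ\,\sB(I)) \to \Hom(\ZZ\,\sB(\tilde I),\ZZ\,\sB(\tilde I))$, it sends $f$ to $\ret\cdot f\cdot \inc$, which is what makes the identification of elementary matrices above work out.
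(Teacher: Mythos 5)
Your proof is correct and is exactly the routine basis-element verification the paper has in mind (the paper offers no proof, declaring the lemma ``clear'' after giving the explicit formulas $\std_{\{i<j\}}\alpha(I)=\std_{ij}-\std_{ji}$ and $\std_{(h,i<j)}\alpha(I)=\std_{h,\{i<j\}}-\std_{\{i<j\},h}$). Your key observation that $\Hom(\ret,\inc)$ carries the elementary matrix $\std_{ab}$ for $a,b\in\sB(I)$ to the same elementary matrix in $\Hom(\ZZ\,\sB(\tilde I),\ZZ\,\sB(\tilde I))$ is the right bookkeeping point, and you have the direction of $\Hom(\ret,\inc)$ consistent with the paper's right-action conventions.
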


\paragraph{An explicit description of $\hat\alpha(I)$.}
Prior to describing $\hat\alpha(I)$ (similarly to $\alpha(I)$ from above),
we first investigate $\hat\cD(I)^*$.
To that end, we will use the following simple observation.

\begin{lemma}
  \label{lem:ranger}
  Let $a = [a_{ij}]\in \Mat_{d\times e}(R)$.
  For $j=1,\dotsc,e$, let $\Sigma_j := \{ i : 1\le i\le
  d, a_{ij} \not= 0\}$ be the support of
  the $j$th column of $a$.
  Suppose that $\Sigma_j\cap \Sigma_k = \emptyset$ for $j\not= k$.
  Further suppose that each non-zero entry of $a$ is a unit of $R$.
  \begin{enumerate}[label=(\roman{*})]
  \item \label{lem:ranger1}
    Let $J := \{ j : \Sigma_j \not= \emptyset\}$.
    For each $j\in I$, choose $\sigma(j) \in \Sigma_j$.
    Then the following elements of $R^d$ comprise a basis of $\Ker(a)$:

    \begin{align*}
      \std_i - \frac{a_{ij}}{a_{\sigma(j)j}} \std_{\sigma(j)} \quad
      &
        (j \in J, i\in \Sigma_j, i\not= \sigma(j)),\\
      \std_i \quad & (1\le i\le d, i\not\in \Sigma_1\cup\dotsb\cup\Sigma_e).
    \end{align*}
    Moreover, $R^d = \Ker(a) \oplus \langle \std_{\sigma(j)} : j\in J\rangle$.
  \item \label{lem:ranger2}
    Let $\cB$ be a basis of $\Ker(a)$ as in \ref{lem:ranger1}.
    Let $S$ be an $R$-algebra.
    Then the natural map $\Ker(a) \otimes S \to \Ker(a\otimes S)$ (induced by
    $\Ker(a) \incl R^d
    \to S^d$) is an $S$-module isomorphism
    and the images of the elements of $\cB$ in $S^d$ form 
    an $S$-basis of $\Ker(a\otimes S)$.
  \end{enumerate}
\end{lemma}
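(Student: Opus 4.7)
\textbf{Proof plan for Lemma~\ref{lem:ranger}.}
The plan is to exploit the disjoint-support hypothesis to decouple the
defining equations of $\Ker(a)$ into $e$ independent one-column problems,
each trivial because of the unit-coefficient assumption. For part~\ref{lem:ranger1},
since the $\Sigma_j$ are pairwise disjoint, the condition $xa = 0$ on
$x = (x_1,\dotsc,x_d)\in R^d$ is equivalent to the $e$ separate linear relations
$\sum_{i\in\Sigma_j} x_i a_{ij} = 0$, one for each $j\in[e]$. For $j\in J$
the coefficient $a_{\sigma(j)j}$ is a unit, so this relation solves uniquely
for $x_{\sigma(j)}$ in terms of the $x_i$ ($i\in\Sigma_j\setminus\{\sigma(j)\}$),
which may take arbitrary values; coordinates $x_i$ with $i\notin\bigcup_j\Sigma_j$
are free. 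Reading this off directly yields the displayed generators of
$\Ker(a)$ and shows they are $R$-linearly independent.

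For the decomposition $R^d = \Ker(a)\oplus\langle\std_{\sigma(j)}:j\in J\rangle$,
I would simply count and triangulate. The two types of proposed kernel
generators number
$\sum_{j\in J}(\card{\Sigma_j}-1) + \bigl(d - \sum_{j\in J}\card{\Sigma_j}\bigr)
= d-\card J$,
so together with the $\card J$ elements $\std_{\sigma(j)}$ we have $d$ vectors.
They span $R^d$ because each $\std_i$ is recovered: if $i = \sigma(j)$ it is
among the second family; if $i\in\Sigma_j\setminus\{\sigma(j)\}$ it equals
the listed generator plus $\frac{a_{ij}}{a_{\sigma(j)j}}\std_{\sigma(j)}$;
otherwise $\std_i$ is itself a listed generator. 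The transition matrix to
the standard basis is unitriangular in any ordering refining
``$\std_{\sigma(j)}$ after everything else'', so the $d$ vectors are a basis
and the sum is direct.

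For part~\ref{lem:ranger2}, I would observe that the disjoint-support hypothesis
makes the image of $a$ equally transparent: each row $\sum_j a_{ij}\std_j$ of
$a$ is supported on the unique column $j$ (if any) with $i\in\Sigma_j$, and for
$i=\sigma(j)$ the coefficient $a_{\sigma(j)j}$ is a unit. Hence
$a(R^d)=\bigoplus_{j\in J} R\std_j$, which is a free direct summand of $R^e$.
Therefore the short exact sequence $0\to\Ker(a)\to R^d\to a(R^d)\to 0$ splits,
exhibiting $\Ker(a)$ as a free direct summand of $R^d$ with basis $\cB$.
Splittings are preserved by $\otimes_R S$, so
$\Ker(a)\otimes S\to S^d$ is injective with image exactly $\Ker(a\otimes S)$,
and the images of the elements of $\cB$ form an $S$-basis of $\Ker(a\otimes S)$.

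I do not anticipate a serious obstacle: once the disjoint-support hypothesis
is used to decouple the column equations, both the explicit basis and the
splitting of $\Ker(a)$ off $R^d$ follow mechanically, and the
tensor-product statement is immediate from the splitting.
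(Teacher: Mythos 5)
Your proof is correct. For part~\ref{lem:ranger1} you simply supply the details that the paper dismisses as ``clear'': the disjoint supports decouple $xa=0$ into one relation per column, each solvable for $x_{\sigma(j)}$ because $a_{\sigma(j)j}$ is a unit, and the unitriangular change of basis gives both the basis property and the direct sum decomposition. For part~\ref{lem:ranger2} your route differs from the paper's: the paper reapplies part~\ref{lem:ranger1} to $a\otimes S$ over $S$ (the hypotheses are inherited, since units map to units and supports can only shrink) and observes that the resulting $S$-basis of $\Ker(a\otimes S)$ is exactly the image of $\cB$; you instead note that $a(R^d)=\bigoplus_{j\in J}R\std_j$ is a free direct summand of $R^e$, split the exact sequence $0\to\Ker(a)\to R^d\to a(R^d)\to 0$, and base-change the splitting. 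Both arguments are sound and short; the paper's is marginally more self-contained (it reuses the explicit basis formulas and needs no splitting), while yours makes transparent \emph{why} kernel formation commutes with base change here, namely that both $\Ker(a)$ and $\mathrm{im}(a)$ are free direct summands --- the one point you rightly do not omit is the injectivity of $\mathrm{im}(a)\otimes S\to S^e$, which is needed to identify $\Ker(a\otimes S)$ with $\Ker(S^d\to \mathrm{im}(a)\otimes S)$.
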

\begin{proof}
  Part \ref{lem:ranger1} is clear.
  Let $\cB$ be a basis as defined there.
  As $\cB$ is an $R$-basis of $\Ker(a)$,
  $\cB\otimes S$ is an $S$-basis of $\Ker(a)\otimes S$.
  By applying \ref{lem:ranger1} to $a\otimes S$ over $S$,
  we see that the natural map $\Ker(a)\otimes S \to S^d$
  maps $\cB\otimes S$ onto an $S$-basis of $\Ker(a\otimes S)$.
\end{proof}

Lemma~\ref{lem:ranger} shows that $\hat\cD(I)^* = \cJ(I)^\perp\subset
\cD(I)^*$ (see above) has a basis consisting of the following elements of
$\cD(I)^* = \ZZ\, \sD(I)$:
\begin{align}
  \label{eq:Dhat_basis}
  \begin{split}
  \std_{\{i<j\}}, \quad
  \std_{(i,i<j)}, \quad
  \std_{(j,i<j)},
  \qquad
  & \left(\{i<j\} \in \binom I 2\right),
  \\
  \std_{(i,j<k)} - \std_{(k,i<j)},
  \quad
  \std_{(j,i<k)} + \std_{(k,i<j)}
  \qquad
  & \left(\{i<j<k\} \in \binom I 3\right).
  \end{split}
\end{align}
Let \fbox{$\hat\sD(I) := (\binom I 2 \times [3]) \cup (\binom I 3 \times [2])$} and
identify $\hat\cD(I)^* = \ZZ\,\hat\sD(I)$
using \eqref{eq:Dhat_basis}
and the order suggested by our notation.
(For example, $(\{i<j\},2)\in \binom I 2\times[3]$ corresponds to
$\std_{(i,i<j)}$.)
Each element of $\hat\sD(I)$ corresponds to one of the elements
of $\cD(I)^* = \ZZ\, \sD(I)$ in~\eqref{eq:Dhat_basis}.
This gives rise to an injection $\eta(I)\colon \ZZ\, \hat\sD(I) \into \ZZ\,
\sD(I)$ with $\hat\alpha(I) = \eta(I) \alpha(I)$.
By construction, for $I\subset \tilde I\in\Powf(\NN)$, the following diagram commutes:
\[
  \xymatrix@C+1em@H+0.5em{
    \ZZ\, \hat\sD(I) \phantom{.} \ar@{>->}[r]^{\eta(I)} \ar@{^{(}->}[d]^{\inc} & \ZZ \, \sD(I) \ar@{^{(}->}[d]^{\inc} \\
    \ZZ\, \hat\sD(\tilde I) \phantom{.} \ar@{>->}[r]_{\eta(\tilde I)} &\ZZ \, \sD(\tilde I).
  }
\]

The following is thus a consequence of Lemma~\ref{lem:alpha_compat}.

\begin{lemma}
  \label{lem:alphahat_compat}
  Let $I\subset \tilde I \in\Powf(\NN)$.
  Then the following diagram commutes:
  \[
    \begin{gathered}[b]
    \xymatrix@C+2em{
      \ZZ\, \hat\sD(I) \ar[r]^{\hat\alpha(I)\phantom{XYZXY}}\ar@{^{(}->}[d]_{\inc} & \Hom(\ZZ\,\sB(I),\ZZ\,\sB(I)) \ar[d]^{\Hom(\ret,\inc)} \\
      \ZZ\, \hat\sD(\tilde I)\ar[r]_{\hat\alpha(\tilde I)\phantom{XYZXY}} & \Hom(\ZZ\,\sB(\tilde I),\ZZ\,\sB(\tilde I)).
    }
  \\[-\dp\strutbox]
  \end{gathered}
  \pushQED{\qed}
  \qedhere
  \popQED
  \]
\end{lemma}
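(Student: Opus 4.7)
The plan is to deduce the claim directly from Lemma~\ref{lem:alpha_compat} via the factorisation $\hat\alpha(I) = \eta(I)\alpha(I)$ (and analogously for $\tilde I$), together with the small commutative square $\eta(I)\,\inc_{\hat\sD(I),\sD(I)\phantom{\tilde{\,}}} \circ \inc_{\sD(I),\sD(\tilde I)} = \inc_{\hat\sD(I),\hat\sD(\tilde I)}\,\eta(\tilde I)$ recorded in the unnumbered diagram immediately preceding the statement. No genuinely new computation should be required: the argument is a two-step diagram chase, and all substantive content is already contained either in the $\alpha$-version of the statement or in the explicit construction of $\eta(I)$ from the basis displayed in~\eqref{eq:Dhat_basis}.

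Concretely, I would chase an element $\xi\in\ZZ\,\hat\sD(I)$ around the outer square. Going top-then-right sends $\xi$ to
\[
\xi\,\hat\alpha(I)\,\Hom(\ret,\inc) \;=\; \xi\,\eta(I)\,\alpha(I)\,\Hom(\ret,\inc),
\]
and Lemma~\ref{lem:alpha_compat} applied to $\xi\,\eta(I)\in\ZZ\,\sD(I)$ rewrites this as $\xi\,\eta(I)\,\inc_{\sD(I),\sD(\tilde I)}\,\alpha(\tilde I)$. Invoking the $\eta$-square then turns it into $\xi\,\inc_{\hat\sD(I),\hat\sD(\tilde I)}\,\eta(\tilde I)\,\alpha(\tilde I) = \xi\,\inc\,\hat\alpha(\tilde I)$, which is precisely the image of $\xi$ along the left-then-bottom route. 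The diagram therefore commutes.

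The main (very mild) obstacle was in fact discharged in the paragraph preceding the lemma, where the basis in~\eqref{eq:Dhat_basis} of $\hat\cD(I)^*$ was selected so as to be compatible with the basis inclusions $\hat\sD(I)\subset\hat\sD(\tilde I)$ and $\sD(I)\subset\sD(\tilde I)$; this is what produces the $\eta$-square. Once this compatibility is granted, no information about the Jacobi ideal $\cJ(I)$ beyond what is already encoded in the definition of $\eta(I)$ is needed, and the present lemma reduces to a formal consequence of Lemma~\ref{lem:alpha_compat}.
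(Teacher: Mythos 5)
Your proposal is correct and is exactly the paper's argument: the paper records the $\eta$-square as commuting "by construction" and then states the lemma as "a consequence of Lemma~\ref{lem:alpha_compat}", which is precisely the two-step diagram chase you spell out. No further comment is needed.
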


\paragraph{Reduction of Proposition~\ref{prop:jacobi_orbital} to the case $\card I = 3$.}
Let $I\in \Powf(\NN)$ with $\card I \ge 3$.
Clearly, $\sD(I) = \bigcup_{T\in \binom I 3} \sD(T)$ and
$\hat\sD(I) = \bigcup_{T\in \binom I 3} \hat\sD(T)$.
Let $\pi(I)\colon \bigoplus_{T\in \binom I 3}\ZZ\,\sD(T) \onto \ZZ \,\sD(I)$ be
induced by the inclusions $\sD(T) \incl \sD(I)$. 
Define $\hat\pi(I) \colon \bigoplus_{T\in \binom I 3}\ZZ\,\hat\sD(T) \onto \ZZ
\,\hat\sD(I)$ analogously.
Recall the definition of an inflation of a module representation from
\S\ref{ss:mreps}.
For $T \in \binom I 3$,
Lemma~\ref{lem:alpha_compat} shows that
the restriction of $\alpha(I)$ to $\ZZ\,\sD(T)$ coincides with
${\inf^{\sB(I)}_{\sB(T)}(\alpha(T))}$.
We conclude that
$\pi(I)\alpha(I) = \bigl[\inf^{\sD(I)}_{\sD(T)}(\alpha(T))\bigr]_{T\in \binom I 3}^\top$
and, using Lemma~\ref{lem:alphahat_compat},
analogously for~$\hat\pi(I)\hat\alpha(I)$.
Note that we may regard $\hat\pi(I)\hat\alpha(I)$ as the restriction of
$\pi(I)\alpha(I)$ to a submodule.
Since an orbit module $\Orbit(\theta)$ only depends on the image of $\theta$,
Proposition~\ref{prop:jacobi_orbital} is equivalent to
$\hat\pi(I)\hat\alpha(I)$ being an orbital subrepresentation of $\pi(I)\alpha(I)$.
Lemmas~\ref{lem:orbital_enlarge}--\ref{lem:orbital_sum} now reduce the latter property to the case $\card I = 3$.

\paragraph{Final step towards Proposition~\ref{prop:jacobi_orbital}: the case $\card I = 3$.}
\begin{lemma}
  Let $T \in \binom \NN 3$.
  Then $\hat\alpha(T)$ is an orbital subrepresentation of $\alpha(T)$.
\end{lemma}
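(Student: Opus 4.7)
The plan is to verify orbitality for $T = \{i < j < k\}$ by analysing the cyclic kernel of the natural surjection $\Orbit(\hat\alpha(T)) \onto \Orbit(\alpha(T))$ using the adjoint-algebra identities in $\cA(T)$. First, I would make $\alpha(T)^\circ$ and $\hat\alpha(T)^\circ$ explicit as matrices of linear forms over $R[X_{\sB(T)}]$ via Lemma~\ref{lem:Cmatrix}. The presentation matrix of $\alpha(T)$ has $12$ columns, indexed by $\sD(T)$: three ``$v_{rs}$''-columns coming from $\{r<s\}\in\binom T 2$, six ``diagonal'' $p_{h,rs}$-columns (with $h \in \{r,s\}$), and the three ``off-diagonal'' $p_{i,jk}, p_{j,ik}, p_{k,ij}$. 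The matrix for $\hat\alpha(T)$ has the same first nine columns and replaces the last three by $u_1 = p_{i,jk} - p_{k,ij}$ and $u_2 = p_{j,ik} + p_{k,ij}$.

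Next, I would perform an invertible column operation over $\ZZ$ to see that the cokernel of the $\alpha(T)^\circ$-matrix equals the cokernel of $[\hat C \mid p_{k,ij}]$, where $\hat C$ is the matrix for $\hat\alpha(T)^\circ$. Consequently, the kernel of the natural epimorphism $\Orbit(\hat\alpha(T)) \onto \Orbit(\alpha(T))$ is the cyclic $R[X_{\sB(T)}]$-module generated by the class $[p_{k,ij}]$, and the relations $u_1, u_2 \in \orbit(\hat\alpha(T))$ give $[p_{k,ij}] \equiv [p_{i,jk}] \equiv -[p_{j,ik}]$ in this kernel.

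The key computation is then a set of ``mixed commutator'' identities: using only the shape of the adjoint action, one verifies
\begin{align*}
  X_i \cdot p_{k,ij} &= X_k \cdot p_{i,ij} - X_{ij} \cdot v_{ik}, \\
  X_j \cdot p_{k,ij} &= X_k \cdot p_{j,ij} - X_{ij} \cdot v_{jk}, \\
  X_k \cdot p_{k,ij} &= X_i \cdot p_{k,ik} + X_{jk}\cdot v_{ik} - X_k \cdot u_1,
\end{align*}
with each right-hand side in $\orbit(\hat\alpha(T))$. Symmetric identities apply with $p_{k,ij}$ replaced by $p_{i,jk}$ or $p_{j,ik}$. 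Hence $(X_i,X_j,X_k)$ annihilates $[p_{k,ij}]$ in $\Orbit(\hat\alpha(T))$, and consequently annihilates the entire kernel of $\Orbit(\hat\alpha(T)) \onto \Orbit(\alpha(T))$.

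To close the argument, I would use these identities at the level of Fitting ideals: substituting the displayed relations into cofactor expansions of minors of $[\hat C \mid p_{k,ij}]$ along the extra column yields $(X_i,X_j,X_k) \cdot \Fit_n(\Orbit(\alpha(T))) \subset \Fit_n(\Orbit(\hat\alpha(T)))$ for every $n$. Combined with the reverse inclusion $\Fit_n(\Orbit(\hat\alpha(T))) \subset \Fit_n(\Orbit(\alpha(T)))$, this will give equality of Fitting ideals after specialisation at any $x \in \fO\sB(T)$ with $\prod x \neq 0$ and $x \notin \fP\sB(T)$; then Proposition~\ref{prop:Fit_iso} gives $\Orbit(\alpha(T))_x \approx \Orbit(\hat\alpha(T))_x$, and Corollary~\ref{cor:orbital_iso_epi} (together with Proposition~\ref{prop:hopf}) concludes the proof. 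The main obstacle I anticipate is the final step of upgrading the ideal inclusion $(X_i,X_j,X_k) \Fit_n(\cdot) \subset \Fit_n(\cdot)$ to a genuine equality after specialisation when all of $x_i, x_j, x_k$ lie in the maximal ideal $\fP$; here one must exploit the further equivalences $p_{k,ij} \equiv p_{i,jk} \equiv -p_{j,ik}$ together with symmetric versions of the annihilation identities in order to produce enough elements of the Fitting ideals that become units at $x$.
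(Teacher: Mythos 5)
Your opening reductions are sound and take a genuinely different route from the paper: the paper simply verifies by a Gr\"obner basis computation that $\Orbit(\alpha(T))$ and $\Orbit(\hat\alpha(T))$ are Fitting equivalent over $\ZZ[X_1,\dotsc,X_6]$ and invokes Corollary~\ref{cor:fiteq_orbit_same_zeta}, whereas you attempt a hand proof. Your identification of $\orbit(\alpha(T))$ as $\orbit(\hat\alpha(T))$ plus the single extra generator $p_{k,ij}:=\XX(\std_{(k,i<j)},\alpha(T))$ is correct; your first two annihilation identities check out, and the third holds after correcting a typo ($p_{k,ik}$ should read $p_{k,jk}$: one has $X_k\,p_{k,ij}=X_i\,p_{k,jk}+X_{\{j<k\}}\,v_{ik}-X_k\,u_1$). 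The resulting inclusion $(X_i,X_j,X_k)\Fit_n(\Orbit(\alpha(T)))\subset\Fit_n(\Orbit(\hat\alpha(T)))\subset\Fit_n(\Orbit(\alpha(T)))$ is also valid.

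However, the gap you flag at the end is real, and your proposed remedy cannot close it. Here $I=\sB(T)$ has six elements, so Definition~\ref{d:orbital} must be verified for $x$ whose unique unit coordinate is one of $x_{\{i<j\}},x_{\{i<k\}},x_{\{j<k\}}$ while $x_i,x_j,x_k$ all lie in $\fP\setminus\{0\}$. In that regime your inclusion only pins down $\Fit_n(\Orbit(\hat\alpha(T))_x)$ between $\Fit_n(\Orbit(\alpha(T))_x)$ and $\pi^{a}\Fit_n(\Orbit(\alpha(T))_x)$ with $a=\min(v(x_i),v(x_j),v(x_k))>0$, which does not force equality. Worse, the analogous annihilation identities for the remaining three variables are false: $X_{\{i<j\}}\,[p_{k,ij}]\neq 0$ in $\Orbit(\hat\alpha(T))$. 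Indeed, if $X_{\{i<j\}}\,p_{k,ij}=\sum_g c_g\,g$ over the eleven generators of $\orbit(\hat\alpha(T))$, then the $\std_{\{j<k\}}$-coordinate forces $c_{u_1}\in(X_j,X_k)$, the $\std_{\{i<k\}}$-coordinate forces $c_{u_2}\in(X_i,X_k)$, and the $\std_{\{i<j\}}$-coordinate then reads $c_{p_{i,ij}}X_i+c_{p_{j,ij}}X_j+(c_{u_2}-c_{u_1})X_k=X_kX_{\{i<j\}}$, which modulo $(X_i,X_j)$ forces $X_{\{i<j\}}\in(X_i,X_j,X_k)$, a contradiction. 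So the kernel of $\Orbit(\hat\alpha(T))\onto\Orbit(\alpha(T))$ is not annihilated by the full irrelevant ideal, your colon-ideal mechanism genuinely stops at $(X_i,X_j,X_k)$, and the case $x_i,x_j,x_k\in\fP$ remains unproved. Establishing the required equality of determinantal ideals of $\sC$ and $\hat\sC$ there needs a different argument --- which is precisely what the paper's direct computation supplies.
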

\begin{proof}
  We may assume that $T = \{ 1,2,3\}$.
  Using a suitable computer algebra system, one may verify that
  $\Orbit(\hat\alpha(T))$ and $\Orbit(\alpha(T))$ are Fitting equivalent 
  whence the claim follows by Corollary~\ref{cor:fiteq_orbit_same_zeta}.
  In the following, we include explicit details to allow the reader to repeat
  this calculation.
  Order the elements of $\sB(T)$ and $\sD(T)$ lexicographically as
  $(1,2,3,\{1<2\},\{1<3\},\{2<3\})$ and
  $(\{1<2\},\{1<3\},\{2<3\}, (1, 1<2), (1,1<3),(1,2<3), \dotsc, (3,1<2),
  (3,1<3), (3,2<3))$, respectively.
  We see that $\alpha(T)$ is isotopic to the module representation associated
  with the matrix  of linear forms

  {\small
    \[
      \sA(X_1,\dotsc,X_{12}) = 
      \begin{bmatrix}
        0 & X_1 & X_2 & X_4 & X_5 & X_6 \\
        -X_1 & 0 & X_3 & X_7 & X_8 & X_9 \\
        -X_2 & -X_3 & 0 & X_{10}& X_{11} & X_{12} \\
        -X_4 & -X_7 & -X_{10} & 0 & 0 & 0 \\
        -X_5 & -X_8 & -X_{11} & 0 & 0 & 0 \\
        -X_6 & -X_9 & -X_{12} & 0 & 0 & 0
      \end{bmatrix}.
    \]
  }

  Regarding $\hat\alpha(T)$,
  by ordering the basis \eqref{eq:Dhat_basis} as

  {\small
    \begin{align*}
      \Bigl(
      \std_{\{1<2\}},\,
      \std_{\{1<3\}},\,
      \std_{\{2<3\}},\,
      \std_{(1,1<2)},\,
      \std_{(1,1<3)},\
      &\std_{(1,2<3)} - \std_{(3,1<2)},\,
        \std_{(2,1<2)},\\
      &\std_{(2,1<3)} + \std_{(3,1<2)},\,
        \std_{(2,2<3)},\,
        \std_{(3,1<3)},\,
        \std_{(3,2<3)}
        \Bigr),
    \end{align*}
  }

    we find that $\hat\alpha(T)$ is isotopic to the module representation
    associated with

    {\small
      \[
        \hat \sA(X_1,\dotsc,X_{11})= 
        \begin{bmatrix}
          0 & X_1 & X_2 & X_4 & X_5 & X_6 \\
          -X_1 & 0 & X_3 & X_7 & X_8 & X_9 \\
          - X_2 & -X_3 & 0 & -X_6 +X_8 & X_{10} & X_{11} \\
          -X_4 & -X_7 & +X_6 - X_8 & 0 & 0 & 0 \\
          -X_5 & -X_8 & -X_{10} & 0 & 0 & 0 \\
          -X_6 & -X_9 & -X_{11} & 0 & 0 & 0
        \end{bmatrix}.
      \]
    }
    
  As in Lemma~\ref{lem:Cmatrix},
  let $\sC = \sC(X_1,\dotsc,X_6)$ and $\hat \sC = \hat \sC(X_1,\dotsc,X_6)$ be the
   $\circ$-dual matrices (see \S\ref{ss:mreps})
  associated with $\sA(X_1,\dotsc,X_{12})$ and $\hat \sA(X_1,\dotsc,X_{11})$,
  respectively.
  Explicitly,
  
  {\scriptsize
  \begin{align*}
    \sC &=
    \begin{bmatrix}
      -X_{2} & X_{1} & 0 & 0 & 0 & 0 \\
      -X_{3} & 0 & X_{1} & 0 & 0 & 0 \\
      0 & -X_{3} & X_{2} & 0 & 0 & 0 \\
      -X_{4} & 0 & 0 & X_{1} & 0 & 0 \\
      -X_{5} & 0 & 0 & 0 & X_{1} & 0 \\
      -X_{6} & 0 & 0 & 0 & 0 & X_{1} \\
      0 & -X_{4} & 0 & X_{2} & 0 & 0 \\
      0 & -X_{5} & 0 & 0 & X_{2} & 0 \\
      0 & -X_{6} & 0 & 0 & 0 & X_{2} \\
      0 & 0 & -X_{4} & X_{3} & 0 & 0 \\
      0 & 0 & -X_{5} & 0 & X_{3} & 0 \\
      0 & 0 & -X_{6} & 0 & 0 & X_{3}
    \end{bmatrix} \text{ and }
    \hat \sC =
    \begin{bmatrix}
      -X_{2} & X_{1} & 0 & 0 & 0 & 0 \\
      -X_{3} & 0 & X_{1} & 0 & 0 & 0 \\
      0 & -X_{3} & X_{2} & 0 & 0 & 0 \\
      -X_{4} & 0 & 0 & X_{1} & 0 & 0 \\
      -X_{5} & 0 & 0 & 0 & X_{1} & 0 \\
      -X_{6} & 0 & X_{4} & -X_{3} & 0 & X_{1} \\
      0 & -X_{4} & 0 & X_{2} & 0 & 0 \\
      0 & -X_{5} & -X_{4} & X_{3} & X_{2} & 0 \\
      0 & -X_{6} & 0 & 0 & 0 & X_{2} \\
      0 & 0 & -X_{5} & 0 & X_{3} & 0 \\
      0 & 0 & -X_{6} & 0 & 0 & X_{3}
    \end{bmatrix}.
  \end{align*}
}

  By Gr\"obner bases calculations using
  Macaulay2~\cite{M2} or SageMath~\cite{SageMath} (which uses Singular~\cite{Singular}),
  the matrices $\sC$ and $\hat \sC$ have the same ideals of $i\times i$ minors within
  $\ZZ[X_1,\dotsc,X_6]$ for all $i$.
  The claim thus follows by combining Lemma~\ref{lem:Cmatrix} and
  Corollary~\ref{cor:fiteq_orbit_same_zeta}.
\end{proof}

This completes the proof of Proposition~\ref{prop:jacobi_orbital}.

\begin{rem}
  O'Brien and Voll~\cite[Prop.\ 5.9]{O'BV15} determined the ``character
  vector'' of~$\sF_{3,3}(\FF_q)$ (for $\gcd(q,6) = 1$) by studying the rank
  loci of a suitable ``commutator matrix''~$B(\bf Y)$.
  (Character vectors specialise to class numbers via the well-known identity
  $\concnt(G) = \#\operatorname{Irr}(G)$ for a finite group $G$.)
  Up to harmless transformations and an (equally harmless) sign error, the matrix $B(\bf Y)$
  in \cite[Prop.\ 5.9]{O'BV15} coincides with our
  $\hat \sA(X_1,\dotsc,X_{11})$ in the preceding proof.
  We note that the interplay between the enumeration of conjugacy classes and
  characters of unipotent groups in \cite{O'BV15} can be expressed in terms of
  the duality operation $\bullet$ for module representations;
  see \cite[\S 6.2]{ask2}.
\end{rem}

\begin{rem}[Universal Jacobi identities]
  The strategy underlying our proof of Proposition~\ref{prop:jacobi_orbital}
  admits the following generalisation in the spirit of
  Theorem~\ref{thm:embedded}.
  Let $R := \ZZ[ \acute u_{hij}^{\pm 1} : \{h,i,j\} \in \binom \NN 3, i < j]$,
  where the $\acute u_{hij}$ are algebraically independent over $\ZZ$.
  For $I\in \Powf(\NN)$,
  we then obtain a ``universal Jacobi ideal'' with unit coefficients
  \[
    \cJ_u(I) :=
    \left\langle
      \acute u_{ijk} \,\std_{(i,j<k)} +
    \acute u_{jik} \,\std_{(j,i<k)} +
    \acute u_{kij} \,\std_{(k,i<j)}
    : \{i<j<k\} \in \binom I 3
  \right\rangle_{\!R}
  \subset \cA(I)\otimes R.
  \]
  Let $\hat\cA_u(I) := (\cA(I)\otimes R)/\cJ_u(I)$ and define a module
  representation $\hat\alpha_u(I)$ over $R$ analogously to the construction of
  $\hat\alpha$.
  A suitable specialisation $R\to \ZZ$ then provides identifications
  $\hat\cA(I) = \hat\cA_u(I)\otimes \ZZ$ and $\hat\alpha = \hat\alpha_u^\ZZ$,
  the latter of which is based on Lemma~\ref{lem:ranger}\ref{lem:ranger2}.
  Following the same strategy as above and using Macaulay2~\cite{M2} to perform
  Gr\"obner bases calculations over (finitely generated subrings of) $R$, we
  find that $\hat\alpha_u$ is an orbital subrepresentation of $\alpha^R$.
  Hence, if $\fO$ is a compact \DVR{} endowed with a ring map $R\to \fO$,
  then $\Zeta^{\ak}_{\hat\alpha(I)_u^\fO}(T) =
  \Zeta^{\ak}_{\alpha(I)^\fO}(T)$, irrespective of the specific choice of
  units of $\fO$ that defines the map $R \to \fO$.
  In \S\S\ref{s:rel}--\ref{s:board},
  we will make very similar use of ``large'' Laurent polynomial rings over $\ZZ$ to
  model universal linear relations with unit coefficients as in
  Theorem~\ref{thm:embedded}. 
\end{rem}

%%%%%%%%%%%%%%%%%%%%%%%%%%%%%%%%%%%%%%%%%%%%%%%%
\subsection{Graphs and a proof of Theorem~\ref{thm:jacobi}}
\label{ss:graphs}
%%%%%%%%%%%%%%%%%%%%%%%%%%%%%%%%%%%%%%%%%%%%%%%%

Having established Proposition~\ref{prop:jacobi_orbital} (and thus
Corollary~\ref{cor:jacobi_via_alpha}), 
the final step in our proof of Theorem~\ref{thm:jacobi} is to determine the
ask zeta functions associated with the module representations~$\alpha(I)$.
As we will now explain, the latter goal has been achieved in~\cite{cico}.

\paragraph{Adjacency representations of graphs.}
Let $G = (V,E)$ be a simple graph, where $V$ is finite and $E\subset \binom V
2$.
Following \cite{cico}, the \emph{(negative) adjacency representation}
associated with $G$ is the module representation
\[
  \gamma\colon \ZZ E \to \Hom(\ZZ V, \ZZ V),
  \quad
  \std_{\{ v < w \}} \mapsto \std_{vw} - \std_{wv},
\]
where $<$ is an arbitrary total order on $V$ and, as above,
we write $\{ v < w\} = \{v,w\}$ for~$v < w$.
Up to isotopy, our definition of $\gamma$ is independent of $<$.
Indeed, an alternative, intrinsic construction of adjacency representations is
provided in \cite[\S 3.3]{cico}.
An isotopy between the two constructions can be found in the proof of \cite[Prop.\ 3.7]{cico}.
The following is one of the main results of \cite{cico}.

\begin{thm}[{\cite[Thm~A(ii)]{cico}}]
  \label{thm:cico_uniformity}
  Let $G$ be a finite simple graph with adjacency representation $\gamma$ as above.
  Then there exists $W_G(X,T)\in \QQ(X,T)$
  such that for each compact \DVR{} $\fO$ with residue cardinality $q$, we have
  $\Zeta^{\ak}_{\gamma^{\fO}}(T) = W_G(q,T)$.
\end{thm}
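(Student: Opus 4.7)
The plan is to apply Proposition~\ref{prop:orbint}, which expresses $\Zeta^{\ak}_{\gamma^\fO}(T)$ as a $\fP$-adic integral over $\fO V \times \fP$ whose integrand is governed entirely by the isomorphism class of the specialised orbit modules $\Orbit(\gamma)_x$ as $x$ ranges over $\fO V$. By Lemma~\ref{lem:Cmatrix}, $\Orbit(\gamma)$ is the cokernel of the matrix of linear forms $\sC(X_V)$ associated with $\gamma^\circ$, and from the definition of $\gamma$ this matrix is combinatorially determined by $G$: its $(e,w)$-entry is $\pm X_u$ when $w$ is an endpoint of $e = \{u,w\} \in E$ and zero otherwise. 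In particular, the minors of $\sC(X_V)$---and hence the Fitting ideals $\Fit_i(\Orbit(\gamma)) \normal \ZZ[X_V]$---are $\ZZ$-defined and depend only on $G$, not on $\fO$.

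Next I would stratify $\fO V$ by the tuple of $\fP$-adic valuations of the specialised Fitting ideals $\Fit_i(\Orbit(\gamma))_x$. By Proposition~\ref{prop:Fit_iso} together with Proposition~\ref{prop:fitbasic}\ref{prop:fitbasic3}, this tuple determines the elementary divisor type of $\Orbit(\gamma)_x$ over $\fO$, so the integrand of Proposition~\ref{prop:orbint} becomes an explicit monomial in $q$ and $|y|$ on each stratum. Each stratum is pulled back from a $\ZZ$-defined, locally closed subvariety of $\AA^V_\ZZ$ via valuative conditions on $G$-defined polynomials. The integral then decomposes as a finite sum of integrals indexed by the stratification, each of which is amenable to the standard $\fP$-adic integration machinery (e.g.\ Denef's formula after a log-resolution of the Fitting subschemes over $\ZZ[1/N]$), yielding a rational function of $q$ and $T$.

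The main obstacle is ensuring that the outcome is uniform in $q$ rather than depending wildly on $p$: the rank loci of $\sC(X_V)$ for an arbitrary graph can, a priori, have $\FF_q$-point counts that are not polynomial in $q$ (compare the Belkale--Brosnan phenomenon recalled in \S\ref{ss:motivation}), so one must explain why such wildness fails to reach the integral. The key point is that only the averaged kernel sizes enter Proposition~\ref{prop:orbint}, and the skew-symmetric sparsity pattern of $\sC(X_V)$ is sufficiently rigid---ranks come in pairs, and incidence structure forces cancellations---that contributions from components with non-polynomial point counts are washed out by the weighting $|y|^{s-d+e-1}$ and the integration over $\fO V$.

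A clean way to make this cancellation explicit, and the route actually pursued in \cite{cico}, is to bypass the resolution step altogether: use the cokernel formalism of \S\ref{ss:orbit_modules} (cf.\ also Remark~\ref{rem:orbit}\ref{rem:orbit2}) to rewrite the integral as a sum indexed by combinatorial data intrinsic to $G$, such as orbit decompositions of $\FF_q^V$ under the additive group scheme $\sL_{\gamma(\ZZ E)}$ from \S\ref{ss:intro_reprise}, each contribution being a manifestly rational function of $q$. Assembling these contributions yields a single $W_G(X,T) \in \QQ(X,T)$ with $W_G(q,T) = \Zeta^{\ak}_{\gamma^\fO}(T)$ for every compact $\fO$ of residue cardinality $q$, as required.
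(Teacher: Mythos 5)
First, note that this paper does not prove the statement at all: Theorem~\ref{thm:cico_uniformity} is quoted verbatim from \cite[Thm~A(ii)]{cico}, so there is no internal argument to compare against, and a genuine proof would have to reproduce the work of that reference. Your proposal sets up a sensible framework (Proposition~\ref{prop:orbint} together with Lemma~\ref{lem:Cmatrix}, reducing everything to the specialisations $\Orbit(\gamma)_x$ of a $\ZZ$-defined cokernel), but it does not close the one step that actually constitutes the theorem, namely uniformity in $q$. Two concrete problems. First, the stratification-plus-resolution route cannot deliver the conclusion as stated: Denef-type arguments yield, for each residue characteristic, a rational function in $T$ whose coefficients involve numbers of $\FF_q$-points of resolution strata, and by the Belkale--Brosnan phenomenon you yourself recall, those point counts need not be polynomial or even quasi-polynomial in $q$. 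So the ``standard $\fP$-adic integration machinery'' gives rationality for each fixed $\fO$, not a single $W_G(X,T)\in\QQ(X,T)$ valid for every residue cardinality. (Relatedly, loci cut out by valuative conditions on the Fitting ideals are definable $p$-adic sets, not pullbacks of locally closed subvarieties of $\AA^V_{\ZZ}$.) Second, the passage meant to dispose of this obstacle --- ``ranks come in pairs, and incidence structure forces cancellations \dots washed out by the weighting'' --- is an assertion, not an argument; skew-symmetry and sparsity of $\sC(X_V)$ do not by themselves force the fibrewise structure of $\Orbit(\gamma)_x$ to be governed by combinatorial data uniformly in $q$. Your closing paragraph then defers to ``the route actually pursued in \cite{cico}'', which is circular as a proof attempt.

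What is actually required, and what \cite{cico} supplies, is a direct combinatorial evaluation of the integral: partition $\fO V$ according to which coordinates attain the minimal valuation, show that on each piece the module $\Orbit(\gamma)_x$ (and its reductions modulo $y$) is determined by graph-theoretic data of the associated induced subgraphs, iterate, and sum the resulting monomial contributions into an explicit element of $\QQ(X,T)$. Without carrying out that analysis, or an equivalent one, the proposal remains an outline with the decisive step missing.
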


\paragraph{Viewing $\alpha(I)$ as an adjacency representation.}
Let $\CG_n$ and $\DG_n$ denote the complete graph and discrete graph on $n$
vertices, respectively.
(The latter graph has no edges and is also referred to as a ``null graph'' or an
``empty graph'' in the literature.)
Given graphs $G = (V,E)$ and $G'=(V',E')$, their \emph{join} $G\join G'$ is
the graph constructed as follows.
The vertex set of $G\join G'$ is the disjoint union of $V$ and $V'$.
Two vertices of $G\join G'$ are adjacent if and only if either (a) they both
belong to $V$ (resp.~$V'$) and are adjacent in $G$ (resp.~$G'$) or
(b) one of them belongs to $V$ and the other to $V'$.

Let $I\in \Powf(\NN)$ with $d = \card I$.
The explicit description of $\alpha(I)$ that precedes
Lemma~\ref{lem:alpha_compat} shows that $\alpha(I)$ is isotopic to the
adjacency representation of $\DG_{\binom d 2} \join \CG_d$.
Using the notation from \cite[\S 8.4]{cico}, $\Delta_m \join \CG_n$ is the
threshold graph $\mathsf{Thr}(m,n)$.
The following is now an immediate consequence of \cite[Thm~8.18]{cico}.

\begin{prop}
  \label{prop:kite}
  $\displaystyle
  W_{\DG_m \join \CG_n}(X,T) =
  \frac{ (1 - X^{1-n}T)(1-X^{-n}T)}{(1-T)(1-XT)(1-X^{m-n}T)}.
  $
\end{prop}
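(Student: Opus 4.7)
The plan is to deduce this identity as a direct specialisation of \cite[Thm~8.18]{cico}. First, I invoke the identification already recorded just before the statement, namely that $\DG_m \join \CG_n$ coincides with the threshold graph $\mathsf{Thr}(m,n)$ in the notation of \cite[\S 8.4]{cico}. By Theorem~\ref{thm:cico_uniformity}, the ask zeta function $\Zeta^{\ak}_{\gamma^\fO}(T)$ associated with its adjacency representation depends on the compact \DVR{} $\fO$ only through the residue cardinality $q$, and is given by a fixed rational function $W_{\mathsf{Thr}(m,n)}(q,T)$.

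Next, I would substitute the pair $(m,n)$ into the closed-form expression for $W_{\mathsf{Thr}(m,n)}(X,T)$ furnished by \cite[Thm~8.18]{cico}. The only genuine task is bookkeeping: to confirm which of the two parameters in \cite{cico} corresponds to the discrete (edgeless) part $\DG_m$ and which to the clique part $\CG_n$, and then to verify that unwinding the definition produces exactly the exponents $1-n$, $-n$, and $m-n$ in the positions shown, with numerator $(1-X^{1-n}T)(1-X^{-n}T)$ and denominator $(1-T)(1-XT)(1-X^{m-n}T)$.

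The main (and essentially only) obstacle is this matching of conventions together with a careful reading of how threshold graphs are parametrised in \cite[\S 8.4]{cico}; no additional combinatorial, homological, or $p$-adic input is required once that formula is taken at face value. As a sanity check, one could instead proceed directly: analyse the adjacency representation $\gamma$ using the block structure induced by the partition $V = V(\DG_m) \sqcup V(\CG_n)$, compute the orbit module $\Orbit(\gamma)$ via Lemma~\ref{lem:Cmatrix}, and then apply Proposition~\ref{prop:orbint} (or, if the isomorphism type of $\Orbit(\gamma)_x$ turns out to be generically constant, Corollary~\ref{cor:crkzeta}) to extract the rational function. However, this direct route merely re-derives a particular case of the general threshold-graph computation already established in \cite{cico}, so the one-line appeal to \cite[Thm~8.18]{cico} is clearly the preferable approach.
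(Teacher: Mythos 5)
Your proposal matches the paper's own argument: the statement is recorded there as an immediate consequence of \cite[Thm~8.18]{cico}, after noting (as you do) that $\DG_m\join\CG_n$ is the threshold graph $\mathsf{Thr}(m,n)$ in the notation of \cite[\S 8.4]{cico}. The only content is the convention-matching you describe, so the one-line appeal to the cited theorem is exactly what the paper does.
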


Theorem~\ref{thm:jacobi} follows from Corollary~\ref{cor:jacobi_via_alpha} and
Proposition~\ref{prop:kite} (with $(m,n) = \bigl(\binom d 2,d\bigr)$).

%%%%%%%%%%%%%%%%%%%%%%%%%%%%%%%%%%%%%%%%%%%%%%%%%%%%%%%%%%%%%%%%%%%%%%%% 
\section{Coherent families of module representations}
\label{s:coh}
%%%%%%%%%%%%%%%%%%%%%%%%%%%%%%%%%%%%%%%%%%%%%%%%%%%%%%%%%%%%%%%%%%%%%%%%

In this section, we describe the effects of the
operations of deleting rows or columns on orbit modules associated with a
given module representation.
This will constitute a key ingredient of our recursive proofs of
Theorem~\ref{thm:embedded} and Corollaries~\ref{cor:rec}--\ref{cor:sym}.

%%%%%%%%%%%%%%%%%%%%%%%%%%%%%%%%%%%%%%%%%%%%%%%%
\subsection{Definitions}
\label{ss:cohdef}
%%%%%%%%%%%%%%%%%%%%%%%%%%%%%%%%%%%%%%%%%%%%%%%%

Let $A\subset B$ be sets.
Analogously to the notation from \S\ref{ss:mreps},
we denote the canonical \emph{retraction} $R[X_B] \onto R[X_B]/\langle
X_{B\setminus A}\rangle \approx R[X_A]$ by $\ret = \ret_{B,A}$.
We tacitly regard each $R[X_A]$-module as an $R[X_B]$-module by restriction of
scalars via $\ret$.

\begin{defn}
  \label{d:cohfam}
  A \emph{coherent family of module representations}
  \[\bTheta =
  \Bigl(\theta(I,J);\varphi^{\tilde I,\tilde J}_{I,J}\Bigr)_{I\subset \tilde
    I,J\subset \tilde J\in \Powf(\NN)}\]
  over $R$ consists of the following data:
  \begin{enumerate}[label=(\alph*)]
  \item
    \label{d:natfama}
    For all $I,J\in \Powf(\NN)$, 
    a finitely generated $R$-module $M(I,J)$ and a module representation
    $\theta(I,J)\colon M(I,J) \to \Hom(RI,RJ)$.
  \item
    \label{d:natfamb}
    For all $I \subset \tilde I \in \Powf(\NN)$ and $J\subset \tilde J\in \Powf(\NN)$,
    a \emph{transition map}
    ${\varphi_{I,J}^{\tilde I,\tilde J}}\colon M(\tilde  I,\tilde J) \to M(I,J)$
    such that
    $\varphi^{\tilde I,\tilde J}_{I,J} \dtimes \theta(I,J) = \resn^{\tilde
      I,\tilde J}_{I,J}(\theta(\tilde I,\tilde J))$ (see \S\ref{ss:mreps}).
  \end{enumerate}
  For notation simplicity, we usually simply write $\bTheta =
  \bigl(\theta(I,J)\bigr)_{I,J\in \Powf(\NN)}$ in the following.
\end{defn}

\begin{defn}
  \label{d:basfam}
  Let $\bTheta$ be as in Definition~\ref{d:cohfam}.
  We say that $\bTheta$ is a \emph{basic family of module
    representations} if, in addition to \ref{d:natfama}--\ref{d:natfamb} in
  Definition~\ref{d:cohfam}, the following conditions are satisfied: 
    \begin{enumerate}[start=3]
    \item
      For $I,J\in \Powf(\NN)$, the module
      $M(I,J)$ is free of the form $M(I,J) = R \,\sB(I,J)$ for a (designated) finite set
      $\sB(I,J)$,
    \item
      $\sB(I,J) \subset \sB(\tilde I,\tilde J)$
      for all $I \subset \tilde I \in \Powf(\NN)$ and $J\subset \tilde J\in
      \Powf(\NN)$.
    \item
      All transition maps are retractions (see \S\ref{ss:mreps}):
      $\varphi^{\tilde I,\tilde J}_{I,J} = \ret\colon R \, \sB(\tilde I,\tilde J) \onto
      R \, \sB(I,J)$.
    \end{enumerate}
    We regard the datum $(\sB(I,J))_{I,J\in\Powf(\NN)}$
    as part of a basic family of module representations.
\end{defn}

\begin{rem}
  Let $\bTheta$ be a basic family of module representations as above.
  Let~$\sA_{I,J}$ be the $I\times J$ matrix of linear forms in $R[X_{\sB(I,J)}]$
  associated with $\theta(I,J)$ as in~\S\ref{ss:mreps}.
  For $I\subset \tilde I$ and $J\subset \tilde J$, 
  $\sA_{I,J}$ is then obtained from $\sA_{\tilde I,\tilde J}$ by
  deleting all rows indexed by $\tilde I\setminus I$ and
  all columns indexed by $\tilde J\setminus J$.
  Note that,
  by construction, no variable~$X_{\tilde b}$ for $\tilde b\in \sB(\tilde I,\tilde J)\setminus\sB(I,J)$
  features in the submatrix of $\sA_{\tilde I,\tilde J}$ indexed by $I\times J$.
\end{rem}

\begin{rem}
  Although we shall not pursue this further in the present article, we note
  that there are various natural ways of rephrasing the preceding notions
  in categorical language.
  In particular, a coherent family of module representations
  gives rise to a $\Powf(\NN)^2$-indexed direct system in the category
  $\mathsf{mod}_{\downarrow\uparrow\downarrow}(R)$ from \cite[Defn~2.2]{ask2}.
  Much about such families could then be expressed in terms of limits.
\end{rem}

%%%%%%%%%%%%%%%%%%%%%%%%%%%%%%%%%%%%%%%%%%%%%%%%
\subsection{Main examples}
\label{ss:basfam_exs}
%%%%%%%%%%%%%%%%%%%%%%%%%%%%%%%%%%%%%%%%%%%%%%%%

We construct basic families of module
representations $\bRho$, $\bGamma$, and $\bSigma$ related to
Theorem~\ref{thm:embedded}.
For each of these, in the setting of Definition~\ref{d:basfam},
we specify $\sB(I,J)$ and $\theta(I,J)$ and leave the verification
of the transition conditions in Definition~\ref{d:cohfam}\ref{d:natfamb} to the reader.
We also describe the orbit modules associated with each $\theta(I,J)$ for later
use.

\begin{ex}[Generic rectangular matrices]
  \label{ex:rec}
  We define a basic family
  \[
    \bRho = \Bigl(\rho(I,J)\Bigr)_{I,J\in\Powf(\NN)}
  \]
  of module representations over $R$ as follows.
  Let $\sB(I,J) := I\times J$ and
  define the map ${\rho(I,J)}\colon R \,\sB(I,J)\to\Hom(R I,R J)$ via
  $(i,j)\, \rho(I,J) = \std_i^* \std_j = \std_{ij}$.
  The $I\times J$ matrix associated with $\rho(I,J)$ (see~\S\ref{ss:mreps})
  is the generic matrix $[ X_{(i,j)} ]_{i\in I,j\in J}$.
  Hence, $\rho(I,J)$ is isotopic to the identity on $\Mat_{\card I\times \card J}(R)$.
  For $(i,j)\in I\times J$, we have
  $\XX( (i,j), \rho(I,J)) = X_i\std_j$
  whence
  $\Orbit(\rho(I,J)) = 
  R[X_\emptyset] J = R J$,
  an $R[X_I]$-module annihilated by each $X_i$ ($i\in I$).
\end{ex}

For a set $A$, we let $\binom A k$ be the set of $k$-element subsets of $A$.

\begin{ex}[Generic alternating matrices]
  \label{ex:graph}
  We define a basic family
  \[
    \bGamma = \bigl(\gamma(I,J)\bigr)_{I,J\in\Powf(\NN)}
  \]
  of module representations over $R$ as follows.
  First, let
  \[
    \sE(I,J) := \left\{
      A\in \binom {I\cup J} 2 : A \cap I \not= \emptyset \not= A \cap J
    \right\}
    \overset ! =
    \Bigl\{
      \{i,j\} : i\in I, j\in J, i\not= j
    \Bigr\}.
  \]
  Define ${\gamma(I,J)}\colon R\, \sE(I,J) \to\Hom(R I,R J)$ as follows.
  For $\{u,v\} \in \sE(I,J)$ with $u < v$,
  \[
    \{u,v\}\,\gamma(I,J) :=
    \begin{cases}
      \std_{uv} - \std_{vu}, & \text{if } u,v\in I\cap J,\\
      +\std_{uv}, & \text{if } u\in I \text{ and } v\in J, \text{ but } v\not\in I \text{ or }u\not\in J,\\
      -\std_{vu},& \text{if } u\in J \text{ and } v\in I, \text{ but } u\not\in I \text{ or } v\not\in J.
    \end{cases}
  \]
  For $\{ u,v\}\in \sE(I,J)$ with $u < v$,
  the matrix associated with $\gamma(I,J)$ has an entry $X_{\{u,v\}}$ in
  position $(u,v)$ if $(u,v)\in I\times J$ and an entry $-X_{\{u,v\}}$ in
  position $(v,u)$ if $(v,u)\in I\times J$; all other entries vanish.
  In particular, $\gamma(I,I)$ is isotopic to the inclusion $\Alt_{\card I}(R) \incl
  \Mat_{\card I}(R)$.
  
  Next, for $i\in I$ and $j\in J$ with $i\not= j$,
  \[
    \XX(\{i,j\}, \gamma(I,J)) =
    \begin{cases}
      \pm X_i \std_j \mp X_j\std_i, & \text{if } i,j\in I \cap J,\\
      \pm X_i \std_j, & \text{if }
      i \not\in J \text{ or } j\not\in I.
    \end{cases}
  \]
  For instance,
  \[
    \Orbit(\gamma(I,I)) = \frac{R[X_I] I}{\bigl\langle X_i\std_j - X_j\std_i : i,j\in I
      \text{ with } i < j\bigr\rangle}
  \]
  is the (negative) adjacency module of the complete graph with vertex set $I$ in
  the sense of \cite[\S 3.3]{cico}; cf.\ \S\ref{ss:graphs}.
  On the other hand, if $I\cap J = \emptyset$, then
  $\Orbit(\gamma(I,J)) = \Orbit(\rho(I,J))$.
\end{ex}

\begin{ex}[Generic symmetric matrices]
  \label{ex:Sigma}
  We define a basic family
  \[
    \bSigma = \bigl(\sigma(I,J)\bigr)_{I,J\in\Powf(\NN)}
  \]
  of module representations over $R$ as follows.
  First, define
  \[
    \sS(I,J) := \left\{
      A\in \binom {I\cup J} 1 \cup \binom {I\cup J} 2 : A \cap I \not= \emptyset \not= A \cap J
    \right\}
    =
    \Bigl\{
    \{i,j\} : i\in I, j\in J
    \Bigr\}.
  \]
  Define ${\sigma(I,J)}\colon R \,\sS(I,J) \to\Hom(R I,R J)$ as follows.
  For $i\in I$ and $j\in J$, let
  \[
    \{i, j\}\,\sigma(I,J) :=
    \begin{cases}
      \std_{ii} , & \text{if }
      i = j,\\ 
      \std_{ij} + \std_{ji}, & \text{if } i,j\in I\cap J \text{ and } i\not= j,\\
      \std_{ij}, & \text{if }
      i \not\in J \text{ or } j \not\in I.
    \end{cases}
  \]
  Thus, the matrix associated with $\sigma(I,J)$ has an entry $X_{\{i,j\}}$ in
  position $(i,j)$.
  In particular, $\sigma(I,I)$ is isotopic to the inclusion $\Sym_{\card I}(R)
  \incl \Mat_{\card I}(R)$.
  Next,
  \[
    \XX(\{i,j\}, \sigma(I,J)) =
    \begin{cases}
      X_i \std_i, & \text{if } i = j,\\
      X_i \std_j + X_j\std_i, & \text{if } i,j\in I \cap J \text{ and }i\not= j,\\
      X_i \std_j, & \text{if }
      i \not\in J \text{ or } j\not\in I.
    \end{cases}
  \]
  Similar to Example~\ref{ex:graph},
  $\Orbit(\sigma(I,I))$ is the (positive) adjacency module associated with the
  graph $\Bigl(I, \binom I 1 \cup \binom I 2\Bigr)$ as in \cite[\S
  3.3]{cico}.
  For $I\cap J = \emptyset$, $\Orbit(\sigma(I,J)) = \Orbit(\rho(I,J))$.
\end{ex}

%%%%%%%%%%%%%%%%%%%%%%%%%%%%%%%%%%%%%%%%%%%%%%%%
\subsection{The constant rank theorem}
\label{ss:crk}
%%%%%%%%%%%%%%%%%%%%%%%%%%%%%%%%%%%%%%%%%%%%%%%% 

As before, we regard $R = R[X_\emptyset]$ as an $R[X_I]$-module annihilated by
each $X_i$.
In this section, we devise a sufficient criterion for recognising
when $\Orbit(\theta(I,J))$ is  ``akin'' to $R[X_I] G \oplus R(J\setminus
G)$ for some $G\subset J$ in a suitable sense involving Fitting ideals
(see \S\ref{ss:Fit}).

\begin{defn}
  \label{d:surjfam}
  Let $\bTheta$ be as in Definition~\ref{d:cohfam}.
  We say that $\bTheta$ is \emph{surjective} if each
  transition map $\varphi^{\tilde I,\tilde J}_{I,J}$ is surjective.
\end{defn}

Let $\bTheta$ be a coherent family of module representations as in
Definition~\ref{d:cohfam}.
For $I,J\in \Powf(\NN)$, write $\Omega(I,J) := \Orbit(\theta(I,J))$.
Further let $\Omega^\times(I,J) := \Omega(I,J)\otimes_{R[X_I]} R[X_I^{\pm 1}]$.

\begin{defn}
  \label{d:IJconst}
  Let $\bTheta$ be surjective, $I,J\in \Powf(\NN)$, and $\ell \ge 0$.
  Define $\Omega(I,J)$ and $\Omega^\times(I,J)$ as above.
  We say that $\bTheta$ is \emph{$(I,J)$-constant of rank $\ell$} if 
    \begin{enumerate}[label=(\alph{*})]
  \item $\Fit_i(\Omega(I,J)) = \langle 0\rangle$ for $i=0,\dotsc,\ell-1$ and
  \item $\Fit_\ell(\Omega^\times(H,J)) = \langle 1\rangle$
    for all non-empty $H\subset I$.
  \end{enumerate}
\end{defn}

Our terminology is motivated by Theorem~\ref{thm:crk} below
and Remarks~\ref{rem:crkmot}--\ref{rem:vbundles}.

\begin{ex}
  \label{ex:Rho_crk}
  Let $I,J\in \Powf(\NN)$.
  Then $\bRho$ from Example~\ref{ex:rec} is $(I,J)$-constant of rank~$0$.
  Indeed, for $\emptyset\not= H\subset I$,
  we have $\Omega(H,J) \approx R[X_\emptyset] J = RJ$ whence $\Omega^\times(H,J) = 0$.
\end{ex}

\begin{ex}
  \label{ex:Sigma_crk}
  Let $I,J\in\Powf(\NN)$.
  Then $\bSigma$ from Example~\ref{ex:Sigma} is $(I,J)$-constant of rank~$0$.
  To see this, let $\emptyset \not= H\subset I$ and $h\in H$.
  If $h\not\in J$, then
  $\XX(\{h,j\},\sigma(H,J)) = X_h\std_j$ for all $j\in J$ whence
  $X_h \Omega(H,J) = 0$ and thus
  $\Omega^\times(H,J) = 0$.
  On the other hand,
  if $h\in J$, then 
  \[
    \XX(\{h,j\},\sigma(H,J)) =
    \begin{cases}
      X_h \std_j + X_j \std_h, & \text{if } j \in H,\\
      X_h \std_j, & \text{if }j\not\in H
    \end{cases}
  \]
  for $j\in J\setminus\{h\}$.
  As $\XX(\{h\},\sigma(H,J)) = X_h\std_h$, we conclude that
  $\Omega^\times(H,J) = 0$.
\end{ex}

The case $\bTheta = \bGamma$ is more interesting.
First, 
if $I \cap J = \emptyset$, then $\bGamma$ is $(I,J)$-constant of rank
$0$ for the same reason as $\bRho$.
(Recall that $\gamma(I,J)$ and $\rho(I,J)$ have identical orbit modules when
$I\cap J = \emptyset$.)
On the other hand, we will see in Corollary~\ref{cor:Gamma_crk}
that $\bGamma$ is $(I,I)$-constant of rank $1$;
this is essentially an algebraic version of \cite[Prop.\ 5.11]{ask}. 

\begin{thm}[Constant rank theorem]
  \label{thm:crk}
  Let $R$ be a ring.
  Let $\bTheta$ be a surjective coherent family of module representations over $R$.
  Let $I,J\in \Powf(\NN)$ and $\ell \ge 0$.
  Suppose that~$\bTheta$ is $(I,J)$-constant of rank $\ell$.
  Let $\fO$ be an $R$-algebra which is a \DVR{}
  with maximal ideal~$\fP$.
  Then
  $\Omega(I,J)\otimes_{R[X_I]} \fO_x \approx \fO^\ell$
  for all $x\in \fO I\setminus \fP I$.
\end{thm}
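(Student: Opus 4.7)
The plan is to match the Fitting invariants of $M := \Omega(I,J) \otimes_{R[X_I]} \fO_x$ as an $\fO$-module with those of $\fO^\ell$ (Example~\ref{ex:Fit_free}) and then invoke Proposition~\ref{prop:Fit_iso}, which is available since $\fO$ is a PID. Hypotheses (a) and (b) of Definition~\ref{d:IJconst} are precisely engineered to yield $\Fit_i(M) = 0$ for $i < \ell$ and $\Fit_\ell(M) = \fO$, respectively.

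First I would set $H := \{i \in I : x_i \in \fO^\times\}$, which is non-empty because $x \notin \fP I$. A key preliminary is the identification $\Omega(I,J) \otimes_{R[X_I]} R[X_H] \approx \Omega(H,J)$: using \eqref{eq:Xatheta} and the coherence identity $\varphi^{I,J}_{H,J} \dtimes \theta(H,J) = \resn^{I,J}_{H,J}(\theta(I,J))$, applying $\ret_{I,H}\colon R[X_I]\onto R[X_H]$ coefficient-wise sends each generator $\XX(m,\theta(I,J))$ of $\orbit(\theta(I,J))$ to $\XX(m\varphi^{I,J}_{H,J},\theta(H,J))$; surjectivity of the transition map makes these images exhaust $\orbit(\theta(H,J))$, whence the identification follows by right-exactness of tensor product. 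Inverting $X_H$ then gives $\Omega(I,J)\otimes_{R[X_I]} R[X_H^{\pm 1}] \approx \Omega^\times(H,J)$.

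For the lower Fitting invariants, Proposition~\ref{prop:fitbasic}\ref{prop:fitbasic3} combined with hypothesis (a) immediately gives $\Fit_i(M) = 0$ for $i<\ell$. For $\Fit_\ell(M) = \fO$, I would exploit that $\fO$ is local. The composite $R[X_I] \to \fO \onto \fO/\fP$, $X_i \mapsto x_i \mapsto \bar x_i$, kills $X_i$ for $i\notin H$ and sends $X_h$ to a unit for $h\in H$, so it factors through $R[X_H^{\pm 1}]$. Hence the image of $\Fit_\ell(\Omega(I,J))$ in $\fO/\fP$ coincides with the image of $\Fit_\ell(\Omega^\times(H,J))$, which equals the unit ideal by hypothesis (b). The finitely generated ideal $\Fit_\ell(M) \subset \fO$ therefore satisfies $\Fit_\ell(M) + \fP = \fO$; writing $1 = a+b$ with $a \in \Fit_\ell(M)$ and $b\in\fP$ forces $a = 1 - b$ to be a unit of the local ring $\fO$, so $\Fit_\ell(M) = \fO$.

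Since $M$ is finitely generated over the Noetherian ring $\fO$, hence finitely presented, combining the Fitting computations with Example~\ref{ex:Fit_free} and Proposition~\ref{prop:Fit_iso} yields $M\approx \fO^\ell$. The most delicate step I anticipate is the identification $\Omega(I,J)\otimes_{R[X_I]} R[X_H] \approx \Omega(H,J)$: it is here that surjectivity of transition maps and the coherence condition must be combined carefully, and where the correct book-keeping between $\theta(I,J)$ and $\theta(H,J)$ matters. The remainder reduces to formal base-change of Fitting ideals together with a standard local-ring argument.
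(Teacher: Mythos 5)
Your proposal is correct and follows essentially the same route as the paper's proof: vanishing of the lower Fitting ideals from hypothesis (a) via base change, reduction of $\Fit_\ell$ modulo $\fP$ to $\Fit_\ell(\Omega^\times(H,J))$ for $H=\{i : \bar x_i\neq 0\}$ together with the locality of $\fO$, and the conclusion via Proposition~\ref{prop:Fit_iso} and Example~\ref{ex:Fit_free}. The only cosmetic difference is that you re-derive the identification $\Omega(I,J)\otimes_{R[X_I]}R[X_H]\approx\Omega(H,J)$ from the coherence and surjectivity axioms, whereas the paper cites it directly as Corollary~\ref{cor:shrink_I} (a consequence of Proposition~\ref{prop:pushout}).
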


Note that the conclusion of Theorem~\ref{thm:crk} is vacuously true when $I = \emptyset$.
We will prove Theorem~\ref{thm:crk} in \S\ref{ss:relating_orbits}.
Using Corollary~\ref{cor:crkzeta}, we obtain the following consequence.

\begin{cor}
  \label{cor:crk_zeta}
  Let the notation and assumptions be as in Theorem~\ref{thm:crk}.
  Suppose that $\fO$ is compact.
  Let $d = \card{I}$, $e = \card{J}$,
  and $q = \card{\fO/\fP}$.
  Then
  $\Zeta^{\ak}_{\theta(I,J)^\fO}(T) =
    \frac{1-q^{\ell-e}T}{(1-T)(1-q^{\ell+d-e}T)}$. \qed
\end{cor}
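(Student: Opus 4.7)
The plan is to derive Corollary~\ref{cor:crk_zeta} as an essentially immediate consequence of Theorem~\ref{thm:crk} and Corollary~\ref{cor:crkzeta}. The former supplies exactly the pointwise control on the specialised orbit module that is the hypothesis of the latter, so no further analysis should be necessary.

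First, I would translate the conclusion of Theorem~\ref{thm:crk} into a statement about the orbit module of the scalar-extended representation $\theta(I,J)^\fO$. By Remark~\ref{rem:orbit}\ref{rem:orbit3} we have $\Orbit(\theta(I,J)^\fO) = \Omega(I,J)\otimes_{R[X_I]} \fO[X_I]$, and then Remark~\ref{rem:orbit}\ref{rem:orbit2} gives, for each $x \in \fO I$, the identification
\[
\Orbit(\theta(I,J)^\fO)_x \approx \Omega(I,J)\otimes_{R[X_I]} \fO_x.
\]
Theorem~\ref{thm:crk} identifies the right-hand side with $\fO^\ell$ whenever $x \in \fO I\setminus \fP I$.

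Second, since $\fP$ is a proper ideal of $\fO$, the set $\fP I$ has measure zero with respect to the normalised Haar measure on $\fO I$. Thus the hypothesis of Corollary~\ref{cor:crkzeta} is met by $\theta(I,J)^\fO$ with the given value of $\ell$, and applying that corollary with $d = \card I$ and $e = \card J$ yields
\[
\Zeta^{\ak}_{\theta(I,J)^\fO}(T) = \frac{1-q^{\ell-e}T}{(1-T)(1-q^{\ell+d-e}T)},
\]
as required.

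There is no real obstacle here: the substantive work lies in Theorem~\ref{thm:crk} (a Fitting-ideal/constant-rank statement) and in Corollary~\ref{cor:crkzeta} (the evaluation of the Haar integral in Proposition~\ref{prop:orbint} under the constant-rank assumption). The only point worth mentioning explicitly is the passage from the ``for all $x\in\fO I\setminus \fP I$'' conclusion of Theorem~\ref{thm:crk} to the ``outside a set of measure zero'' hypothesis of Corollary~\ref{cor:crkzeta}, and this is handled by the preceding observation about $\fP I$.
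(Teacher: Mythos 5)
Your proposal is correct and is exactly the paper's argument: the corollary is stated as an immediate consequence of Theorem~\ref{thm:crk} combined with Corollary~\ref{cor:crkzeta}, with the same translation via Remark~\ref{rem:orbit}. (Note only that the hypothesis of Corollary~\ref{cor:crkzeta} is already restricted to $x\in\fO I\setminus\fP I$, so Theorem~\ref{thm:crk} supplies it with an empty exceptional set and your measure-zero remark is not even needed.)
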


\begin{rem}
  \label{rem:crkmot}
  The conclusion of Theorem~\ref{thm:crk} clearly implies that
  $\Omega(I,J) \otimes \RF_x\approx \RF^\ell$ for each field $\RF$ and
  non-zero $x\in \RF I$.
  Moreover, one can show that the conclusion of Theorem~\ref{thm:crk} is equivalent to
  ``O-maximality'' of $\theta(I,J)^\fO$ in the sense of \cite[\S 5.1]{ask};
  cf.~\cite[Lemma 5.6]{ask} and \cite[Prop.\ 3.8]{ask2}.
  The connection between O-maximality and the ``constant rank spaces'' extensively
  studied in the literature is explained in \cite[\S 5.3]{ask}. 
\end{rem}

\begin{rem}
  \label{rem:vbundles}
  Although we shall not pursue this point of view here,
  Theorem~\ref{thm:crk} admits a geometric interpretation which we now briefly
  sketch.
  Suppose that the assumptions of Theorem~\ref{thm:crk} are satisfied.
  For the sake of simplicity, further suppose that $R = \CC$.
  Then $\Omega(I,J)$ defines a vector bundle (= locally free sheaf of modules)
  of rank $\ell$ on
  the projective space, $P$ say, of lines in $\CC I$.
  Indeed, let $\cF$ be the coherent sheaf on $P$ associated with the
  $\CC[X_I]$-module $\Omega(I,J)$.
  The conclusions of Theorem~\ref{thm:crk} show that the
  module of sections of $\cF$ over each affine chart $x_i \not= 0$ ($i\in I$) is
  projective;
  cf.~\cite[Exercise~20.13]{Eis95} or \stacks{00NV}.
  The problem of constructing
  vector bundles on
  projective spaces has a long and rich history; see \cite{OSS11} and
  references therein.
  The study of such vector bundles has also long been known to be related to
  the construction of spaces of matrices satisfying rank conditions;
  see e.g.\ \cite{EH88}.
\end{rem}

%%%%%%%%%%%%%%%%%%%%%%%%%%%%%%%%%%%%%%%%%%%%%%%%
\subsection{Reminder: pushouts of modules}
\label{ss:pushouts}
%%%%%%%%%%%%%%%%%%%%%%%%%%%%%%%%%%%%%%%%%%%%%%%%

We collect some basic facts on pushouts of modules.
Given module homomorphisms $A\xto{\beta_i} B_i$ and $A_i\xto{\alpha_i} B$ for
$i = 1,2$, we obtain module homomorphisms
\[\begin{aligned}
  A \,\xto{\small\begin{bmatrix}\beta_1& \beta_2\end{bmatrix}}\,
  &
    B_1\oplus B_2,
     &a \,\mapsto\,& (a\beta_1, a\beta_2)
    \text{ and}\\
  A_1\oplus A_2 \,\xto{\phantom{X}\small\begin{bmatrix}\alpha_1\\\alpha_2\end{bmatrix}\phantom{X}}\,
                                     & B,
  &(a_1,a_2)\,\mapsto\,& a_1\alpha_1 + a_2 \alpha_2.
\end{aligned}\]

\begin{prop}[{Cf.\ \stacks{08N2}}]
  \label{prop:pushout_char}
  A commutative square of modules 
  \begin{equation}
    \label{eq:module_po}
    \begin{gathered}
      \begin{xy}
        \xymatrix{
          A \ar[r]^\phi \ar[d]_\psi & B \ar[d]^{\psi'} \\
          A' \ar[r]_{\phi'} & B'
        }
      \end{xy}
    \end{gathered}
  \end{equation}
  is a pushout if and only if the following sequence is exact:
  \[
    \xymatrix@C+2.5em{
    A \ar[r]^{\small\begin{bmatrix}\psi& -\phi\end{bmatrix}\phantom{XY}} & A'\oplus B \ar[r]^{\small\begin{bmatrix}\phi'\\\psi'\end{bmatrix}}
    & B' \ar[r] & 0.
    }
  \]
\end{prop}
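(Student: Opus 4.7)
The plan is to use a concrete model of the pushout. Recall that one such model for the pushout of $\phi\colon A\to B$ and $\psi\colon A\to A'$ is $P := \Coker(\alpha)$, where $\alpha\colon A\to A'\oplus B$ is defined by $a\mapsto (a\psi, -a\phi)$, equipped with structure maps $\iota_{A'}\colon A'\to P$ and $\iota_B\colon B\to P$ induced by the coordinate inclusions. The exactness of the displayed sequence is, once unwound, precisely the assertion that $B'$ (together with $\phi'$ and $\psi'$) realises this cokernel: exactness at $B'$ is surjectivity of the second map, while exactness at $A'\oplus B$ states that its kernel equals the image of $\alpha$. Thus the proposition reduces to the standard identification $P\approx B'$.

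For the ``only if'' direction (pushout $\Rightarrow$ exact sequence), I would verify that $P$ itself satisfies the universal property of the pushout of $(\phi,\psi)$. Given any module $C$ and maps $f\colon A'\to C$, $g\colon B\to C$ with $\psi f = \phi g$, the combined map $h\colon A'\oplus B\to C$, $(a',b)\mapsto a'f + bg$, satisfies $\alpha \circ h = 0$ since $a\psi f - a\phi g = 0$ for all $a\in A$. Hence $h$ factors uniquely through $P$, giving $P$ the desired universal property; uniqueness of pushouts then yields a canonical isomorphism $P\approx B'$ identifying the coordinate inclusions with $(\phi',\psi')$, which exhibits the exact sequence.

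Conversely, given exactness, the same argument works directly for $B'$: a compatible pair $(f,g)$ produces a map $h\colon A'\oplus B\to C$ that annihilates $\operatorname{im}(\alpha)$ and hence factors through $B' = (A'\oplus B)/\operatorname{im}(\alpha)$, yielding the required unique map $B'\to C$. The only subtlety is the sign convention: the minus sign in $\alpha$ is precisely what turns the compatibility $\psi f = \phi g$ into the vanishing of $\alpha\circ h$. No real obstacle arises; the proof is a routine manipulation of universal properties.
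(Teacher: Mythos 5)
Your proof is correct. The paper gives no proof of this proposition at all --- it simply cites the Stacks Project (Tag 08N2) --- and your argument is the standard one: realise the pushout as the cokernel of $a\mapsto(a\psi,-a\phi)$ and observe that exactness of the displayed sequence is precisely the statement that $B'$, equipped with $\phi'$ and $\psi'$, is that cokernel. (Only a cosmetic remark: the paper composes maps left to right, so your ``$\alpha\circ h=0$'' should be read as the composite $A\to A'\oplus B\to C$ vanishing; this does not affect the argument.)
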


If $\psi$ is an epimorphism, then so is $\psi'$.
In the following, we will also freely use the fact that extension of scalars
(being left adjoint to restriction of scalars, see \stacks{05DQ}) preserves
pushouts and epimorphisms of modules. 

\begin{cor}
  \label{cor:pushout_kernel}
  If \eqref{eq:module_po} is a pushout,
  then $\Ker(\psi') = \Ker(\psi)\phi$. \qed
\end{cor}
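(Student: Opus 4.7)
The plan is to apply Proposition~\ref{prop:pushout_char} to translate the pushout hypothesis into the exactness of
\[
  A \xrightarrow{[\psi,\,-\phi]} A'\oplus B \xrightarrow{\begin{bmatrix}\phi'\\\psi'\end{bmatrix}} B' \to 0,
\]
and then verify the two containments by a direct diagram chase. The argument should occupy only a few lines, and no step looks obstructive; the only thing to be careful about is that maps act on the right.

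For the containment $\Ker(\psi)\phi\subset\Ker(\psi')$, I would use only the commutativity of the original square, which is encoded in the fact that the composite of the two maps in the displayed sequence vanishes: for any $a\in A$ we have $a\psi\phi' = a\phi\psi'$, so if $a\psi=0$ then $a\phi\psi' = 0$ and hence $a\phi\in\Ker(\psi')$.

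For the reverse containment $\Ker(\psi')\subset\Ker(\psi)\phi$, I would use exactness at $A'\oplus B$: given $b\in\Ker(\psi')$, the element $(0,b)\in A'\oplus B$ lies in the kernel of $\begin{bmatrix}\phi'\\\psi'\end{bmatrix}$, since $0\cdot\phi' + b\psi' = 0$. By exactness there exists $a\in A$ with $a[\psi,-\phi] = (0,b)$, i.e.\ $a\psi = 0$ and $-a\phi = b$. Thus $a\in\Ker(\psi)$ and $b = (-a)\phi\in\Ker(\psi)\phi$, finishing the proof.
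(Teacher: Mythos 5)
Your proof is correct and is exactly the argument the paper intends: the corollary is stated with an immediate \(\blacklozenge\) because it follows directly from the exact sequence in Proposition~\ref{prop:pushout_char}, and your two containments (commutativity for \(\Ker(\psi)\phi\subset\Ker(\psi')\), exactness at \(A'\oplus B\) applied to \((0,b)\) for the reverse) are precisely that deduction.
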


We will use the following simple observation in our proof of Corollary~\ref{cor:asym}.
\begin{lemma}
  \label{lem:Fit1}
  Let $\pi\colon R^n \onto M$ be a finite presentation of an $R$-module $M$.
  Let $\ell\le n$ and 
  let $\rho\colon R^n\onto R^{n-\ell}$ be the projection onto the first $n-\ell$
  coordinates.
  Form the pushout
  \[
    \begin{xy}
      \xymatrix{
        R^n \ar@{->>}[r]^\pi \ar@{->>}[d]_{\rho} & M \ar@{->>}[d]^{\rho'}\\
        R^{n-\ell} \ar@{->>}[r]_{\pi'} & M'.
      }
    \end{xy}
  \]
  If $M' = 0$, then $\Fit_\ell(M) = \langle 1\rangle$.
\end{lemma}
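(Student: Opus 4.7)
The plan is to show that $M'=0$ forces $M$ to be generated by at most $\ell$ elements, and then to quote Proposition~\ref{prop:fitbasic}\ref{prop:fitbasic1}.

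First I would observe that since $M'=0$, the map $\rho'\colon M \to M'$ is the zero map, so $\Ker(\rho') = M$. On the other hand, Corollary~\ref{cor:pushout_kernel} applied to the given pushout square yields $\Ker(\rho') = \Ker(\rho)\,\pi$. Combining these two identifications gives $M = \pi\bigl(\Ker(\rho)\bigr)$.

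Next, $\Ker(\rho)$ is exactly the submodule of $R^n$ consisting of vectors whose first $n-\ell$ coordinates vanish, hence it is free of rank $\ell$, generated by the last $\ell$ standard basis vectors $\std_{n-\ell+1},\dotsc,\std_n$. Consequently $M$ is generated by their $\pi$-images, and in particular can be generated by $\ell$ elements. Proposition~\ref{prop:fitbasic}\ref{prop:fitbasic1} then delivers $\Fit_\ell(M) = R = \langle 1 \rangle$.

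There is no real obstacle in this argument; everything follows directly from the pushout characterisation already packaged in Corollary~\ref{cor:pushout_kernel} and the stated properties of Fitting ideals. The only thing to be careful about is the direction of the maps in the pushout square (who is horizontal versus vertical), so that one invokes Corollary~\ref{cor:pushout_kernel} with the correct identification of $\Ker(\rho') = \Ker(\rho)\pi$ rather than a transposed version. Alternatively, one could argue directly from Proposition~\ref{prop:pushout_char}: the exact sequence
\[
  R^n \xto{\small\begin{bmatrix}\rho & -\pi\end{bmatrix}} R^{n-\ell}\oplus M \xto{\phantom{X}\small\begin{bmatrix}\pi'\\ \rho'\end{bmatrix}\phantom{X}} M' \to 0
\]
together with $M'=0$ shows that the map $[\rho,-\pi]$ is surjective onto $R^{n-\ell}\oplus M$, whence projecting to the second summand shows $M = \pi(\Ker(\rho))$ just as before.
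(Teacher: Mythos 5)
Your proof is correct, but it takes a genuinely different route from the paper's. The paper argues entirely in terms of presentation matrices: writing $M = \Coker(a)$ for some $a \in \Mat_{m\times n}(R)$, it identifies $M'$ with $\Coker(a')$ where $a'$ is $a$ with the last $\ell$ columns deleted, notes that $M' = 0$ forces the $(n-\ell)\times(n-\ell)$ minors of $a'$ to generate the unit ideal, and observes that these minors are among the generators of $\Fit_\ell(M)$. You instead stay at the level of modules: Corollary~\ref{cor:pushout_kernel} (or equivalently the exact sequence of Proposition~\ref{prop:pushout_char} with $M' = 0$) gives $M = \Ker(\rho)\pi$, and since $\Ker(\rho)$ is free of rank $\ell$ on the last $\ell$ standard basis vectors, $M$ is generated by $\ell$ elements and Proposition~\ref{prop:fitbasic}\ref{prop:fitbasic1} applies. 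Your version has the advantage of not needing to justify the identification $M' \approx \Coker(a')$ (which the paper asserts without comment, though it also follows from the pushout characterisation), and it isolates the genuinely used consequence of $M' = 0$, namely that $M$ needs few generators; the paper's version is more computational and stays closer to the definition of Fitting ideals via minors. Both are complete.
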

\begin{proof}
  We may assume that $M = \Coker(a)$ for
  $a\in \Mat_{m\times n}(R)$.
  Let $a'\in \Mat_{m\times(n-\ell)}(R)$ be obtained from $a$ by deleting the final
  $\ell$ columns.
  Then $M' \approx \Coker(a')$.
  As $\Fit_0(M') = \langle 1\rangle$,
  the $(n-\ell)\times (n-\ell)$ minors of $a'$ generate the unit ideal of $R$.
  Hence, $\Fit_\ell(M) = \langle 1\rangle$.
\end{proof}

%%%%%%%%%%%%%%%%%%%%%%%%%%%%%%%%%%%%%%%%%%%%%%%%
\subsection{Relating orbit modules and a proof of Theorem~\ref{thm:crk}}
\label{ss:relating_orbits}
%%%%%%%%%%%%%%%%%%%%%%%%%%%%%%%%%%%%%%%%%%%%%%%%

In this section, let $\bTheta$ be a fixed \itemph{surjective} coherent family of module
representations over~$R$ as in Definition~\ref{d:cohfam}.
As one of the main ingredients of our proof of Theorem~\ref{thm:crk},
we now relate the orbit modules $\Omega(I,J) = \Orbit(\theta(I,J))$ as $I$
and $J$ vary.
Let $\omega(I,J)$ denote the projection $R[X_I]J \onto \Omega(I,J)$.
For $I\subset \tilde I\in \Powf(\NN)$ and $J\subset \tilde J\in \Powf(\NN)$,
let ${\gamma^{\tilde I,\tilde J}_{I,J}}\colon R[X_{\tilde I}\tilde J] \to R[X_I]J$
be the diagonal of  the commutative diagram
\[
  \small
   \begin{xy}
     \xymatrix@C+1.5em@R+0.5em{
       R[X_{\tilde I}] \tilde J \ar@{->>}[r]^{\bigoplus\limits_{\tilde J} \ret} \ar@{->>}[d]_{\ret} \ar@{->>}[dr] &  R[X_I] \tilde J\ar@{->>}[d]^{\ret}\\
       R[X_{\tilde I}] J \ar@{->>}[r]_{\bigoplus\limits_J\ret} & R[X_I] J.
     }
   \end{xy}
 \]
In particular,
\[
  \left(X_i\std_j\right) \gamma^{\tilde I,\tilde J}_{I,J} = \begin{cases}
    X_i \std_j, & \text{if }i\in I\text{ and }j\in J,\\
    0, & \text{otherwise.}
  \end{cases}
\]

\begin{prop}
  \label{prop:pushout}
  Let $I \subset \tilde I \in \Powf(\NN)$ and $J \subset \tilde J \in \Powf(\NN)$.
    There exists a (unique) $R[X_{\tilde I}]$-module epimorphism
    ${\pi^{\tilde I,\tilde J}_{I,J}}\colon \Omega(\tilde I,\tilde J) \onto \Omega(I,J)$
    such that the diagram
    \begin{equation}
      \small
      \label{eq:Orbit_pushout}
      \begin{gathered}
        \begin{xy}
          \xymatrix@C+2.5em@R+1em{
            R[X_{\tilde I}]\tilde J
            \ar@{->>}[r]^{\omega(\tilde I,\tilde J)}
            \ar@{->>}[d]_{\gamma^{\tilde I,\tilde J}_{I,J}}
            & \Omega(\tilde I,\tilde J)
            \ar[d]^{\pi^{\tilde I,\tilde J}_{I,J}}\\
            R[X_I] J
            \ar@{->>}[r]_{\omega(I,J)}
            & \Omega(I,J) \\
          }
        \end{xy}
      \end{gathered}
    \end{equation}
    commutes.
    Moreover, \eqref{eq:Orbit_pushout} is a pushout of $R[X_{\tilde I}]$-modules.
\end{prop}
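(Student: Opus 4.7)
The plan is to build the map $\pi^{\tilde I, \tilde J}_{I,J}$ by showing that $\gamma^{\tilde I,\tilde J}_{I,J}$ carries $\orbit(\theta(\tilde I,\tilde J))$ onto $\orbit(\theta(I,J))$, and then to verify the pushout condition via the exactness criterion of Proposition~\ref{prop:pushout_char}. The key computation is on generators: given $\tilde m \in M(\tilde I,\tilde J)$, the element $\XX(\tilde m,\theta(\tilde I,\tilde J)) = \sum_{i\in\tilde I} X_i \std_i (\tilde m\,\theta(\tilde I,\tilde J)^{R[X_{\tilde I}]})$ is sent by $\gamma^{\tilde I,\tilde J}_{I,J}$ to $\sum_{i\in I} X_i \std_i (\tilde m\,\theta(\tilde I,\tilde J)^{R[X_I]})$ with all basis vectors $\std_j$ for $j \not\in J$ deleted. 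By the compatibility condition $\varphi^{\tilde I,\tilde J}_{I,J}\cdot\theta(I,J) = \resn^{\tilde I,\tilde J}_{I,J}(\theta(\tilde I,\tilde J))$ in Definition~\ref{d:cohfam}\ref{d:natfamb}, this expression equals $\XX(m,\theta(I,J))$ where $m := \tilde m\,\varphi^{\tilde I,\tilde J}_{I,J}$. Hence $\gamma^{\tilde I,\tilde J}_{I,J}$ carries $\orbit(\theta(\tilde I,\tilde J))$ into $\orbit(\theta(I,J))$, and the surjectivity hypothesis on $\bTheta$ (Definition~\ref{d:surjfam}) forces this inclusion to be an equality.

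From this, the existence and uniqueness of the map $\pi^{\tilde I,\tilde J}_{I,J}$ making \eqref{eq:Orbit_pushout} commute is immediate: the composite $\gamma^{\tilde I,\tilde J}_{I,J}\cdot\omega(I,J)$ vanishes on $\orbit(\theta(\tilde I,\tilde J)) = \Ker(\omega(\tilde I,\tilde J))$, so it factors uniquely through $\omega(\tilde I,\tilde J)$. Surjectivity of $\pi^{\tilde I,\tilde J}_{I,J}$ then follows because both $\gamma^{\tilde I,\tilde J}_{I,J}$ (a composition of retractions of the polynomial ring and of free modules) and $\omega(I,J)$ are surjective.

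For the pushout claim, by Proposition~\ref{prop:pushout_char} it suffices to show that the sequence
\[
R[X_{\tilde I}]\tilde J \xrightarrow{\;[\gamma^{\tilde I,\tilde J}_{I,J},\, -\omega(\tilde I,\tilde J)]\;} R[X_I] J \oplus \Omega(\tilde I,\tilde J) \xrightarrow{\;\bigl[\begin{smallmatrix}\omega(I,J)\\ \pi^{\tilde I,\tilde J}_{I,J}\end{smallmatrix}\bigr]\;} \Omega(I,J) \to 0
\]
is exact. The second map is surjective because $\omega(I,J)$ already is, and the composition vanishes by the commutativity of \eqref{eq:Orbit_pushout}. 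For the exactness in the middle, I would take $(a,b)$ with $a\,\omega(I,J) + b\,\pi^{\tilde I,\tilde J}_{I,J} = 0$, lift $-b$ to some $c_0 \in R[X_{\tilde I}]\tilde J$ under the surjection $\omega(\tilde I,\tilde J)$, and observe via a quick diagram chase through \eqref{eq:Orbit_pushout} that $c_0\gamma^{\tilde I,\tilde J}_{I,J} - a$ lies in $\orbit(\theta(I,J))$. Using the surjectivity of $\gamma^{\tilde I,\tilde J}_{I,J}\colon\orbit(\theta(\tilde I,\tilde J)) \onto \orbit(\theta(I,J))$ established in the first paragraph, I pick $c_1 \in \Ker(\omega(\tilde I,\tilde J))$ with $c_1\gamma^{\tilde I,\tilde J}_{I,J} = c_0\gamma^{\tilde I,\tilde J}_{I,J} - a$; then $c := c_0 - c_1$ satisfies $c\gamma^{\tilde I,\tilde J}_{I,J} = a$ and $c\,\omega(\tilde I,\tilde J) = -b$, completing the argument.

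No single step looks genuinely difficult; the only place where care is required is in recognising that the surjectivity of $\bTheta$ is used in two places—once to get equality (not just inclusion) in $\gamma^{\tilde I,\tilde J}_{I,J}(\orbit(\theta(\tilde I,\tilde J))) = \orbit(\theta(I,J))$, and once implicitly in the diagram chase when lifting from $\orbit(\theta(I,J))$ back to $\Ker(\omega(\tilde I,\tilde J))$.
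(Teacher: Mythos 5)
Your proposal is correct and follows essentially the same route as the paper: the identity $\XX(\tilde m,\theta(\tilde I,\tilde J))\,\gamma^{\tilde I,\tilde J}_{I,J}=\XX(\tilde m\,\varphi^{\tilde I,\tilde J}_{I,J},\theta(I,J))$ plus surjectivity of the transition maps gives that $\gamma^{\tilde I,\tilde J}_{I,J}$ carries $\orbit(\theta(\tilde I,\tilde J))$ onto $\orbit(\theta(I,J))$, whence $\pi^{\tilde I,\tilde J}_{I,J}$ exists, and the pushout property is checked against Proposition~\ref{prop:pushout_char}. The only cosmetic difference is that the paper packages the final step as a general fact about morphisms of right-exact rows with an epimorphism on the left, whereas you carry out that diagram chase explicitly; both are sound.
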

\begin{proof}
  A simple calculation shows that
  for $\tilde m \in M(\tilde I,\tilde J)$,
  we have
  $\XX\!\Bigl(\tilde m, \theta(\tilde I,\tilde J)\Bigr)
  \, \gamma^{\tilde I,\tilde J}_{I,J}
  = \XX\!\Bigl(\tilde m \,\varphi^{\tilde I,\tilde J}_{I,J},\,\theta(I,J)\Bigr)$.
  Hence, $\gamma^{\tilde I,\tilde J}_{I,J}$ maps $\orbit(\theta(\tilde
  I,\tilde J))$ onto $\orbit(\theta(I,J))$ and the first claim follows.
  The second claim follows since for any commutative diagram of modules
  \[
    \begin{xy}
      \xymatrix{
        A \ar[r]  \ar[d]^\alpha & B \ar[r]^\pi \ar[d]^\beta & C \ar[r] \ar[d]^\gamma & 0 \\
        A' \ar[r] & B' \ar[r]_{\pi'} & C' \ar[r] & 0, \\
      }
    \end{xy}
  \]
  if $\alpha$ is an epimorphism and the rows are exact,
  then the right square (with top left corner $B$) is a pushout---indeed, this
  e.g.\ follows from Proposition~\ref{prop:pushout_char} by diagram chasing. 
\end{proof}

\begin{rem}
  \label{rem:pi_product}
  Note that for $I'\subset I \subset \tilde I \in \Powf(\NN)$
  and $J'\subset J \subset \tilde J \in \Powf(\NN)$,
  we clearly have $\pi^{\tilde I,\tilde J}_{I',J'} = \pi^{\tilde I,\tilde
    J}_{I,J}\, \pi^{I,J}_{I',J'}$.
\end{rem}

It is well-known that if $M$ is an $R$-module and $\fa \normal R$, then
$M\otimes_R R/\fa \approx M/\fa M$ (naturally).
Together with Proposition~\ref{prop:pushout}, this simple fact
and the identification
$R[X_{\tilde I}] = R[X_I]/\bigl\langle X_{\tilde I\setminus I}\bigr\rangle$ now imply the
following.

\begin{cor}
  \label{cor:shrink_I}
  ${\pi^{\tilde I,J}_{I,J}} \colon \Omega(\tilde I,J) \onto\Omega(I,J)$ induces
  an $R[X_{\tilde I}]$-module isomorphism
  \[
    \Omega(\tilde I,J)\otimes_{R[X_{\tilde I}]} R[X_I] \approx \Omega(I,J).
    \pushQED{\qed}
    \qedhere
    \popQED
  \]
\end{cor}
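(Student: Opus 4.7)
The plan is to apply Proposition~\ref{prop:pushout} and then compute the resulting pushout essentially by inspection, using the observation that $R[X_I]$ is the quotient $R[X_{\tilde I}]/\langle X_{\tilde I\setminus I}\rangle$ so that base change along the retraction $R[X_{\tilde I}] \onto R[X_I]$ is precisely tensoring with this quotient. Under this identification, the left vertical arrow $\gamma^{\tilde I,J}_{I,J}$ in the pushout square \eqref{eq:Orbit_pushout} is the base change to $R[X_I]$ of the identity on $R[X_{\tilde I}]J$. In particular $\gamma^{\tilde I,J}_{I,J}$ is surjective with kernel $\langle X_{\tilde I\setminus I}\rangle R[X_{\tilde I}]J$.

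To finish, I would invoke Proposition~\ref{prop:pushout_char} to present $\Omega(I,J)$ as the cokernel of
\[
  \bigl[\gamma^{\tilde I,J}_{I,J},\,-\omega(\tilde I,J)\bigr]\colon
  R[X_{\tilde I}]J \to R[X_I]J \oplus \Omega(\tilde I,J).
\]
Since $\gamma^{\tilde I,J}_{I,J}$ is surjective, every class in this cokernel has a representative of the form $(0,\alpha)$, and two such representatives agree modulo the image of the displayed map if and only if their difference lies in the image under $\omega(\tilde I,J)$ of $\ker(\gamma^{\tilde I,J}_{I,J}) = \langle X_{\tilde I\setminus I}\rangle R[X_{\tilde I}]J$, which (as $\omega(\tilde I,J)$ is $R[X_{\tilde I}]$-linear and surjective) is $\langle X_{\tilde I\setminus I}\rangle \Omega(\tilde I,J)$. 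Hence $\Omega(I,J) \approx \Omega(\tilde I,J)/\langle X_{\tilde I\setminus I}\rangle\Omega(\tilde I,J)$, and the standard identification $M\otimes_R R/\fa \approx M/\fa M$ (applied with $R = R[X_{\tilde I}]$ and $\fa = \langle X_{\tilde I\setminus I}\rangle$) rewrites this as $\Omega(\tilde I,J)\otimes_{R[X_{\tilde I}]} R[X_I]$. Commutativity of \eqref{eq:Orbit_pushout} identifies this isomorphism as the one induced by $\pi^{\tilde I,J}_{I,J}$. There is no real obstacle; the content is entirely in Proposition~\ref{prop:pushout}, and everything after that is a routine manipulation of the pushout.
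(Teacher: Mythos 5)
Your argument is correct and is essentially the paper's own proof: the paper likewise deduces the corollary by combining Proposition~\ref{prop:pushout} with the identification $M\otimes_R R/\fa\approx M/\fa M$ and the presentation $R[X_I]=R[X_{\tilde I}]/\langle X_{\tilde I\setminus I}\rangle$. You have merely spelled out, via Proposition~\ref{prop:pushout_char}, the standard fact that pushing out along the surjection $\gamma^{\tilde I,J}_{I,J}$ with kernel $\langle X_{\tilde I\setminus I}\rangle R[X_{\tilde I}]J$ yields the quotient $\Omega(\tilde I,J)/\langle X_{\tilde I\setminus I}\rangle\Omega(\tilde I,J)$, which the paper leaves implicit.
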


As in \S\ref{ss:crk}, let $\Omega^\times(I,J) = \Omega(I,J)\otimes_{R[X_I]}
R[X_I^{\pm 1}]$.
Recall that extension of scalars preserves pushouts.
For $J\subset \tilde J$, the map $\pi^{I,\tilde J}_{I,J}$
induces a map $\Omega^\times(I,\tilde J)\onto \Omega^\times(I,J)$
which we also denote by $\pi^{I,\tilde J}_{I,J}$.

\begin{defn}
  \label{d:orbit_times}
  Let $\orbit^\times\!(\theta(I,J)) \subset R[X_I^{\pm 1}] J$ be the image of
  $\orbit(\theta(I,J))\otimes R[X_I^{\pm 1}]$.
\end{defn}
\begin{cor}
  \label{cor:colred}
  $\pi^{I,\tilde J}_{I,J}\colon \Omega^\times(I,\tilde J) \onto \Omega^\times(I,J)$
  is an isomorphism if and only if
  $\std_j \in \orbit^\times(\theta(I,\tilde J))$
  for all $j\in \tilde J\setminus J$.
\end{cor}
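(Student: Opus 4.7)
The plan is to read off the kernel of $\pi^{I,\tilde J}_{I,J}$ from the pushout square of Proposition~\ref{prop:pushout}, localise, and compare with the definition of $\orbit^\times$. More concretely, apply Corollary~\ref{cor:pushout_kernel} to the diagram
\[
\begin{xy}
\xymatrix@C+2em{
R[X_I]\tilde J \ar@{->>}[r]^{\omega(I,\tilde J)} \ar@{->>}[d]_{\gamma^{I,\tilde J}_{I,J}} & \Omega(I,\tilde J) \ar@{->>}[d]^{\pi^{I,\tilde J}_{I,J}} \\
R[X_I] J \ar@{->>}[r]_{\omega(I,J)} & \Omega(I,J).
}
\end{xy}
\]
By the explicit description of $\gamma^{I,\tilde J}_{I,J}$ given just before Proposition~\ref{prop:pushout}, its kernel is $R[X_I](\tilde J\setminus J)$. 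Hence $\Ker\bigl(\pi^{I,\tilde J}_{I,J}\bigr)$ equals the image of $R[X_I](\tilde J\setminus J)$ under $\omega(I,\tilde J)$, i.e.\ the $R[X_I]$-submodule of $\Omega(I,\tilde J)$ generated by the classes $\bar\std_j$ for $j\in\tilde J\setminus J$.

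Next I would apply extension of scalars $(\dtimes)\otimes_{R[X_I]} R[X_I^{\pm 1}]$, which is exact because $R[X_I^{\pm 1}]$ is a flat localisation of $R[X_I]$, and which preserves pushouts and epimorphisms (as noted in \S\ref{ss:pushouts}). This yields that $\Ker\bigl(\pi^{I,\tilde J}_{I,J}\colon \Omega^\times(I,\tilde J)\onto \Omega^\times(I,J)\bigr)$ is the $R[X_I^{\pm 1}]$-submodule of $\Omega^\times(I,\tilde J)$ generated by the images of $\std_j$ for $j\in\tilde J\setminus J$.

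Since $\pi^{I,\tilde J}_{I,J}$ is already surjective, it is an isomorphism on $\Omega^\times$ if and only if this kernel vanishes, i.e.\ if and only if each such $\std_j$ maps to zero in $\Omega^\times(I,\tilde J) = R[X_I^{\pm 1}]\tilde J/\orbit^\times(\theta(I,\tilde J))$. Unwinding Definition~\ref{d:orbit_times}, this last condition is precisely $\std_j\in\orbit^\times(\theta(I,\tilde J))$ for every $j\in\tilde J\setminus J$.

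There is no serious obstacle; the only point that requires a moment's care is checking that localisation commutes with the pushout (handled by flatness of $R[X_I]\to R[X_I^{\pm 1}]$), so that the kernel computation survives passing from $\Omega$ to $\Omega^\times$.
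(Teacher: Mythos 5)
Your proof is correct and follows exactly the route the paper takes: its proof reads ``Combine Corollary~\ref{cor:pushout_kernel}, extension of scalars $R[X_I]\to R[X_I^{\pm 1}]$, and Proposition~\ref{prop:pushout}'', and your argument simply fills in those three steps. The kernel computation, the use of preservation of pushouts under extension of scalars, and the final unwinding of Definition~\ref{d:orbit_times} are all as intended.
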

\begin{proof}
  Combine Corollary~\ref{cor:pushout_kernel}, extension of scalars $R[X_I]\to
  R[X_I^{\pm 1}]$, and Proposition~\ref{prop:pushout}.
\end{proof}

\begin{cor}
  \label{cor:shiftfit}
  Let $J' \subset J$.
  If $\Omega^\times(I,J') = 0$,
  then
  $\Fit_{\card{J\setminus J'}}\bigl(\Omega^\times(I,J)\bigr) = \langle 1\rangle$.
\end{cor}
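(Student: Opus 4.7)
The plan is to realize $\Omega^\times(I,J')$ as a pushout with $\Omega^\times(I,J)$ in one corner along a coordinate projection, and then to appeal to Lemma~\ref{lem:Fit1} to read off the Fitting ideal.

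First I would invoke Proposition~\ref{prop:pushout} with $\tilde I = I$, $\tilde J = J$, and inner sets $I$ and $J' \subset J$, obtaining a pushout square whose top row is $\omega(I,J)$, whose bottom row is $\omega(I,J')$, whose right vertical arrow is $\pi^{I,J}_{I,J'}$, and whose left vertical arrow is the coordinate projection $\gamma^{I,J}_{I,J'} \colon R[X_I] J \onto R[X_I] J'$ that kills precisely the $\card{J \setminus J'}$ basis vectors $\std_j$ for $j \in J \setminus J'$. Since extension of scalars is a left adjoint and hence preserves pushouts (and epimorphisms), tensoring this square over $R[X_I]$ with $R[X_I^{\pm 1}]$ yields another pushout square whose top-right corner is $\Omega^\times(I,J)$ and whose bottom-right corner is $\Omega^\times(I,J')$, which vanishes by hypothesis.

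At this point I am exactly in the setting of Lemma~\ref{lem:Fit1}, taken over the base ring $R[X_I^{\pm 1}]$: the finite presentation $\pi$ is $\omega^\times(I,J)\colon R[X_I^{\pm 1}] J \onto \Omega^\times(I,J)$, the projection $\rho$ is the localization of $\gamma^{I,J}_{I,J'}$, and $\ell = \card{J \setminus J'}$; the pushout $M'$ produced by the lemma then coincides with $\Omega^\times(I,J')$ and is zero. Lemma~\ref{lem:Fit1} therefore delivers $\Fit_{\card{J \setminus J'}}(\Omega^\times(I,J)) = \langle 1\rangle$, which is the desired conclusion. I do not anticipate any substantive obstacle; the only cosmetic point is that Lemma~\ref{lem:Fit1} is stated for projection onto the first $n-\ell$ coordinates while $\gamma^{I,J}_{I,J'}$ kills an arbitrary subset of coordinates, but the statement and proof of that lemma are manifestly insensitive to the relabeling.
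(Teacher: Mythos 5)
Your proposal is correct and is essentially identical to the paper's proof, which simply combines Proposition~\ref{prop:pushout} and Lemma~\ref{lem:Fit1}; you have merely spelled out the base change to $R[X_I^{\pm 1}]$ and the harmless relabelling of coordinates. No issues.
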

\begin{proof}
  Combine Lemma~\ref{lem:Fit1} and  Proposition~\ref{prop:pushout}.
\end{proof}

\begin{proof}[Proof of Theorem~\ref{thm:crk}]
  Let $x\in \fO I \setminus \fP I$.
  Let $\RF$ be the residue field of $\fO$.
  Let $\bar x\in \RF I$ be the image of $x$ and $H := \{ i\in I : \bar
  x_i\not= 0\}$.
  Let $\bar x(H) := \sum\limits_{h\in H} \bar x_h \std_h \in \RF H$ be the image of
  $\bar x$ under $\RF I \xonto{\ret}\RF H$.
  Let $M_x := \Omega(I,J)\otimes_{R[X_I]} \fO_x$.
  For $i < \ell$, $\Fit_i(M_x) = \langle 0\rangle$.
  Next,
  \begin{align*}
    M_x \otimes_{\fO} \RF
    & \approx 
      \Omega(I,J) \otimes_{R[X_I]} \RF_{\bar x} \\
    & \approx \left(\Omega(I,J) \otimes_{R[X_I]} R[X_H^{\pm 1}]\right)\otimes_{R[X_H^{\pm 1}]} \RF_{\bar x(H)} \\
    &\underset{(\dagger)} \approx
      \Omega^\times(H,J)\otimes_{R[X_H^{\pm 1}]} \RF_{\bar x(H)}
  \end{align*}
  where $(\dagger)$ is due to Corollary~\ref{cor:shrink_I}.
  Hence, $\Fit_\ell(M_x)$ maps onto the unit ideal
  of~$\RF$ so that in fact $\Fit_i(M_x) = \langle 1\rangle$.
  Thus, $M_x \approx \fO^\ell$
  by Proposition~\ref{prop:Fit_iso} and Example~\ref{ex:Fit_free}.
\end{proof}

The following application of Proposition~\ref{prop:pushout} will become
important in \S\ref{s:board}.
Recall the definition of $\bGamma = (\gamma(I,J))_{I,J\in\Powf(\NN)}$ from Example~\ref{ex:graph}.

\begin{lemma}
  \label{lem:perp}
  Let $I\subset J \in \Powf(\NN)$ with $J \not= \emptyset$.
  Then $\Fit_0(\Orbit(\gamma(I,J))) = \langle 0\rangle$.
\end{lemma}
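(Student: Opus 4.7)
The plan is to show that after inverting all the variables $X_i$ for $i\in I$, the localised orbit module $\Omega^\times(I,J) = \Omega(I,J)\otimes_{R[X_I]} R[X_I^{\pm 1}]$ becomes free of rank $1$, and then descend this information back to $R[X_I]$ using Proposition~\ref{prop:fitbasic}\ref{prop:fitbasic3} together with the injectivity of the localisation map $\varphi\colon R[X_I]\hookrightarrow R[X_I^{\pm 1}]$.

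First I would dispose of the degenerate case $I=\emptyset$: here $\sE(\emptyset,J) = \emptyset$, so $\orbit(\gamma(\emptyset,J)) = 0$ and $\Omega(\emptyset,J) = RJ$ is a free $R$-module of rank $|J|\ge 1$, giving $\Fit_0 = 0$ by Example~\ref{ex:Fit_free}.

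For the remaining case $I\ne\emptyset$, pick some $i_0\in I$. Using the explicit formulae from Example~\ref{ex:graph} and the fact that $I\cap J = I$, the generators of $\orbit^\times(\gamma(I,J))$ in $R[X_I^{\pm 1}]\,J$ come in two families: signed elements $\pm(X_i\std_j - X_j\std_i)$ for distinct $i,j\in I$, and signed elements $\pm X_i\std_j$ for $i\in I$, $j\in J\setminus I$. Using the invertibility of $X_{i_0}$, the relations involving $i_0$ let me rewrite $\std_j = \pm(X_j/X_{i_0})\std_{i_0}$ for each $j\in I$ and $\std_j = 0$ for each $j\in J\setminus I$. Substituting these expressions into the remaining relations (those not involving $i_0$) shows that they become identically satisfied, so the assignment $1\mapsto \std_{i_0}$ defines an $R[X_I^{\pm 1}]$-module isomorphism $R[X_I^{\pm 1}]\xto{\approx}\Omega^\times(I,J)$.

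With this in hand, Example~\ref{ex:Fit_free} yields $\Fit_0(\Omega^\times(I,J)) = 0$. By Proposition~\ref{prop:fitbasic}\ref{prop:fitbasic3}, the image $\varphi(\Fit_0(\Omega(I,J)))$ generates the ideal $\Fit_0(\Omega^\times(I,J)) = 0$, so this image vanishes. The map $\varphi$ is injective because $R[X_I]$ sits inside $R[X_I^{\pm 1}]$ as the $R$-submodule spanned by the monomials with non-negative exponents; hence $\Fit_0(\Omega(I,J)) = 0$. The only non-automatic step is the verification that the relations not used to eliminate the $\std_j$ for $j\ne i_0$ are genuinely redundant after substitution, but for both remaining families---the terms $\pm(X_i\std_j - X_j\std_i)$ with $i,j\in I\setminus\{i_0\}$ and the terms $\pm X_i\std_j$ with $i\in I\setminus\{i_0\}$, $j\in J\setminus I$---the check is a direct substitution.
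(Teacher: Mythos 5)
Your argument is correct, but it follows a genuinely different route from the paper's. The paper first reduces to $R=\ZZ$, then uses the pushout of Proposition~\ref{prop:pushout} together with Proposition~\ref{prop:fitbasic}\ref{prop:fitbasic2} to pass from $\Orbit(\gamma(I,J))$ to the quotient $\Orbit(\gamma(I,I))$, and finally observes that every $w\in\orbit(\gamma(I,I))$ satisfies the universal relation $\sum_{i\in I}X_iw_i=0$ (the algebraic shadow of $xax^\top=0$ for alternating $a$); hence $\Orbit(\gamma(I,I))\otimes\QQ(X_I)\neq 0$ and $\Fit_0$ vanishes by Example~\ref{ex:Fit_free}. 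You instead compute the localisation outright, showing $\Orbit(\gamma(I,J))\otimes_{R[X_I]}R[X_I^{\pm1}]\approx R[X_I^{\pm1}]$ by eliminating $\std_j$ for $j\neq i_0$ via the relations involving a fixed $i_0\in I$, and then descend using Proposition~\ref{prop:fitbasic}\ref{prop:fitbasic3} and the injectivity of $R[X_I]\hookrightarrow R[X_I^{\pm1}]$. Your elimination is sound (the change of basis $\std_j\mapsto\std_j-(X_j/X_{i_0})\std_{i_0}$ for $j\in I\setminus\{i_0\}$ identifies the localised orbit submodule with the free summand on the new basis vectors, and the remaining generators are visibly redundant), and the descent works because the variables are non-zero-divisors in $R[X_I]$. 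The trade-off: the paper's syzygy argument is shorter and conceptual, whereas yours avoids the reduction to $\ZZ$ (working over arbitrary $R$, since you never need a field of fractions) and proves the stronger statement that $\Omega^\times(I,J)$ is free of rank $1$ --- which is essentially the combined content of this lemma with Lemma~\ref{lem:allblank_Gamma} and Corollary~\ref{cor:Gamma_crk}, obtained there by the board-game machinery rather than by direct computation.
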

\begin{proof}
  It follows from Remark~\ref{rem:orbit}\ref{rem:orbit3} and
  Proposition~\ref{prop:fitbasic}\ref{prop:fitbasic3}
  that it suffices to prove the claim for $R = \ZZ$.
  Suppose that $I\not= \emptyset$.
  As Proposition~\ref{prop:pushout} provides
  an epimorphism $\Orbit(\gamma(I,J)) \onto \Orbit(\gamma(I,I))$,
  by Proposition~\ref{prop:fitbasic}\ref{prop:fitbasic2},
  it thus suffices to show
  that $\Fit_0(\Orbit(\gamma(I,I)) = 0$.
  Indeed, each $w \in \orbit(\gamma(I,I)) = \langle X_i \std_j - X_j \std_i :
  i,j\in I, i < j\rangle$ (cf.~Example~\ref{ex:graph})
  satisfies the non-trivial linear relation $\sum_{i\in I} X_i w_i = 0$.
  We conclude that
  $\Orbit(\gamma(I,I))\otimes \QQ(X_I) \not= 0$ whence
  $\Fit_0(\Orbit(\gamma(I,I))) = 0$
  (e.g.~by Example~\ref{ex:Fit_free}).
  Finally, if $I = \emptyset$, then $\Orbit(\gamma(I,J)) = RJ\not= 0$ and the
  claim follows from Example~\ref{ex:Fit_free}.
\end{proof}

%%%%%%%%%%%%%%%%%%%%%%%%%%%%%%%%%%%%%%%%%%%%%%%%%%%%%%%%%%%%%%%%%%%%%%%% 
\section{Linear relations with disjoint supports}
\label{s:rel}
%%%%%%%%%%%%%%%%%%%%%%%%%%%%%%%%%%%%%%%%%%%%%%%%%%%%%%%%%%%%%%%%%%%%%%%% 

In this section, we develop an abstract setting for studying ask zeta
functions associated with the modules
$\Board_{d\times e}(\cA,u,R)$ (see \S\ref{ss:intro_relations}) using the machinery
from \S\ref{s:coh}.
This will, in particular, allow us to prove Corollary~\ref{cor:rec}.

To simplify our exposition, we henceforth assume that $\UU$ is a
``sufficiently large'' infinite set.
For our purposes, it will be enough to assume that each of $\NN\times \NN$, $\binom
\NN 1$, and $\binom \NN 2$ is a subset of $\UU$.
For a ring $R$, let $\acute R := R[\acute u_x^{\pm 1} : x \in \UU]$,
where the $\acute u_x$ are algebraically independent over $R$.
We also let $\acute{\dtimes}$ denote extension of scalars $R \to \acute R$.
We further assume that $\Colours$ is an infinite set of \emph{colours} and that
the \emph{blank} symbol $\blank$ does not belong to $\Colours$.

%%%%%%%%%%%%%%%%%%%%%%%%%%%%%%%%%%%%%%%%%%%%%%%%
\subsection{Relation modules}
\label{ss:relmod}
%%%%%%%%%%%%%%%%%%%%%%%%%%%%%%%%%%%%%%%%%%%%%%%%

By a \emph{partial colouring} of a subset $U\subset \UU$, we mean a function
$\beta\colon U \to \Coloursb$ such that the $\beta$-fibre
$\{c\}\beta^{-}$ of each
element $c\in \Colours$ is finite;
we tacitly extend $\beta$ to a partial colouring of all of $\UU$ by setting
$x\beta := \blank$ for $x\in \UU\setminus U$.
Given $\beta$, we say that $x\in \UU$ is \emph{$\beta$-blank} if $x\beta = \blank$
and \emph{$\beta$-coloured} (with colour $x\beta$) otherwise.
When the reference to $\beta$ is clear, we also simply talk about $x$ being
blank or coloured, respectively.
For a set $B\subset \UU$, let $\beta[B] := \{ c\in \Colours :
\{c\}\beta^-\subset B\}$,
the set of colours confined entirely within $B$.

\begin{defn}
  \label{d:relmod}
  The \emph{$\beta$-relation module} associated with a set $B\subset \UU$
  (over $R$) is 
  the $\acute R$-module
  \[
    \Board(B \sslash\beta;R) := \left\{ x\in \acute RB :
      \forall c\in \beta[B]. \sum_{b\in \{c\}\beta^-} \acute u_b x_b = 0\right\}\le \acute R B.
  \]
\end{defn}

When the reference to $R$ is clear, we simply write
$\Board(B \sslash \beta) := \Board(B\sslash\beta;R)$.
Relation modules are well-behaved with respect to inclusions.

\begin{prop}
  \label{prop:betaquo}
  Let $B\subset\tilde B \subset \UU$.
  Then the retraction map $\ret\colon \acute R \, \tilde B  \onto \acute R \, B$
  (see \S\ref{ss:mreps})
  maps $\Board(\tilde B \sslash \beta)$ onto $\Board(B \sslash \beta)$.
\end{prop}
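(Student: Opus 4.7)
The plan is to verify the two assertions separately: that $\ret$ actually lands in $\Board(B\sslash\beta)$, and that this restriction is surjective. The first is essentially formal; the real content is constructing pre-images, which will exploit the fact that every coefficient $\acute u_b$ is a unit in $\acute R$.

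For the first assertion, I would pick $x\in\Board(\tilde B\sslash\beta)$ and any $c\in\beta[B]$. By definition of $\beta[B]$, the whole fibre $\{c\}\beta^-$ is contained in $B\subset \tilde B$, so $c\in\beta[\tilde B]$ as well, and hence the relation $\sum_{b\in\{c\}\beta^-}\acute u_b x_b=0$ holds. Because $\{c\}\beta^-\subset B$, the map $\ret$ does not disturb any of these coordinates, so the same relation holds for $\ret(x)$; this shows $\ret(x)\in\Board(B\sslash\beta)$.

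For surjectivity, given $y\in\Board(B\sslash\beta)$, I would construct a lift $x\in\acute R\tilde B$ by prescribing its coordinates in three groups. On $B$, set $x_b:=y_b$. For $b\in\tilde B\setminus B$ that is either blank or coloured with some colour $c\notin\beta[\tilde B]$, set $x_b:=0$ (no relation is imposed on such $b$ in $\Board(\tilde B\sslash\beta)$). The interesting case is $c\in\beta[\tilde B]\setminus\beta[B]$: here the fibre $\{c\}\beta^-$ is contained in $\tilde B$ but not in $B$, so I can choose a distinguished element $b_c\in\{c\}\beta^-\setminus B$; I set $x_b=0$ for every other $b\in\{c\}\beta^-\setminus(B\cup\{b_c\})$ and use $b_c$ as a free parameter to absorb the colour-$c$ relation by defining
\[
x_{b_c}\;:=\;-\acute u_{b_c}^{-1}\sum_{b\in\{c\}\beta^-\cap B}\acute u_b\,y_b,
\]
which is legal because $\acute u_{b_c}\in\acute R^{\times}$. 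Clearly $\ret(x)=y$, and I then verify the defining relations of $\Board(\tilde B\sslash\beta)$ colour by colour: for $c\in\beta[B]$ the relation reduces to the corresponding relation for $y$, which holds; for $c\in\beta[\tilde B]\setminus\beta[B]$ it holds by the very choice of $x_{b_c}$.

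The only mild subtlety is to be careful that the distinguished elements $b_c$ for different colours $c\in\beta[\tilde B]\setminus\beta[B]$ are automatically distinct, which follows from the fibres of $\beta$ being pairwise disjoint; otherwise the construction is entirely routine. I do not expect any serious obstacle here—the proposition is essentially the book-keeping step underpinning the use of $\beta$-relation modules inside coherent families in the sections that follow.
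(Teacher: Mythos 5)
Your proof is correct and follows essentially the same route as the paper's: the paper organises the argument via the direct sum decomposition of $\Board(\tilde B\sslash\beta)$ over colours and blanks, but the key mechanism — for each colour $c\in\beta[\tilde B]\setminus\beta[B]$, fixing a distinguished element of $\{c\}\beta^-\setminus B$ and solving for its coordinate using that $\acute u_{b_c}$ is a unit — is exactly your construction of the lift. No gaps.
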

\begin{proof}
  For $A\subset \UU$ and $c\in \Colours$, let
  $A_c := \{c\}\beta^-$ if $c\in \beta[A]$ and $A_c := \emptyset$ otherwise.
  Let $A_\blank := A \setminus \bigcup_{c\in \beta[A]} A_c$;
  equivalently,
  $A_\blank = \{ a\in A : a\beta = \blank \text{ or } a\beta =: c \in
  \Colours \text{ but } \{c\}\beta^- \not\subset A\}$.
    Note that
  $\Board(A_c \sslash \beta) = \bigl\{
  x\in \acute R A_c : \sum_{a\in A_c} \acute u_a x_a = 0
  \bigr\}$
  and
  $\Board(A \sslash \beta) = \acute R \, A_\blank \,\oplus\, \bigoplus_{c\in
    \beta[A]} \Board(A_c\sslash \beta)$.
  Clearly, $\beta[B]\subset\beta[\tilde B]$.
  If $c\in \beta[B]$, then $\tilde B_c = B_c$ and $\ret\colon \acute R \tilde
  B\onto\acute R B$ maps $\Board(\tilde B_c\sslash\beta)$
  (isomorphically) onto $\Board(B_c\sslash \beta)$.
  On the other hand, if $c\in \beta[\tilde B]\setminus \beta[B]$,
  then $B_c = \emptyset$,
  $\tilde B_c\not\subset B$, and
  $\ret$ maps $\Board(\tilde B_c \sslash\beta)$ onto $\acute R (\tilde B_c
  \cap B) \subset \acute R B_\blank$.
  Indeed, fix $w \in \tilde B_c\setminus B$.
  Then for $x \in \Board(\tilde B_c \sslash \beta) \subset \acute R \tilde B_c$,
  the coordinates $x_b\in \acute R$ with $b\not= w$ can take arbitrary values
  as we can solve for $x_w$ in $\acute R$.
  Finally, 
  $B_\blank = \left(
      \tilde B_\blank \cup\, \bigcup_{c\in \beta[\tilde B]\setminus \beta[B]} \tilde B_c
    \right) \cap B$
  whence the claim follows by applying $\ret$ to the decomposition
  \[
    \Board(\tilde B\sslash\beta) = \acute R \tilde B_\blank \,\,\oplus
    \bigoplus\limits_{c\in \beta[\tilde B]\setminus \beta[B]}\Board(\tilde
    B_c\sslash\beta) \,\oplus\, \bigoplus\limits_{c\in \beta[B]}\Board(\tilde B_c\sslash \beta).
    \qedhere
  \]
\end{proof}

Relation modules capture linear relations with disjoint support and
unit coefficients.

\begin{prop}
  \label{prop:ranger}
  Let $B\subset \UU$.
  \begin{enumerate}[label=(\roman{*})]
  \item
    \label{prop:ranger1}
    $\Board(B \sslash \beta)$ is a free $\acute R$-module.
    If $B$ is finite, then $\Board(B \sslash\beta)$ has rank $\card B - m$,
    where $m = \#\{ c\in \beta[B] : \{c\}\beta^-\not=\emptyset\}$.
  \item
    \label{prop:ranger2}
    Let $S$ be an $\acute R$-algebra.
    For $x\in \UU$, let $u_x\in S^\times$ denote the image of $\acute u_x\in
    \acute R^\times$.
    Then the natural map $\Board(B \sslash \beta)\otimes_{\acute R} S \to S
    B$ (induced by $\Board(B \sslash \beta)\incl \acute R B\to S B$)
    is injective with image 
    $\bigl\{
    x \in S B: \forall c\in \beta[B]. \sum\limits_{b \in \{c\}\beta^-} \!\!u_bx_b = 0 
    \bigr\}$.
    \end{enumerate}
\end{prop}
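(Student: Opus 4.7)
The plan is to reduce both parts to Lemma~\ref{lem:ranger}, which was designed for precisely this situation (linear relations with disjoint supports and unit coefficients), via the summand decomposition already appearing in the proof of Proposition~\ref{prop:betaquo}. Partition $B$ into blocks: the set $B_\blank$ collecting all elements of $B$ that participate in no defining relation (either because they are $\beta$-blank, or because their colour has a fibre not entirely contained in $B$), together with one block $\{c\}\beta^{-}$ for each $c\in \beta[B]$ with $\{c\}\beta^{-}\neq \emptyset$. Since the defining relations for $\Board(B\sslash\beta)$ are supported on these pairwise disjoint blocks, we obtain
\[
  \Board(B\sslash\beta) \;=\; \acute R\,B_\blank \,\oplus\, \bigoplus_{c} \Board(\{c\}\beta^{-}\sslash\beta).
\]

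The first step is to establish part~\ref{prop:ranger1} on each single-colour summand. Each fibre $\{c\}\beta^{-}$ is finite by the standing convention on partial colourings, and $\Board(\{c\}\beta^{-}\sslash \beta)$ is exactly the kernel of the single-column matrix whose only column has entries $\acute u_b$ indexed by $b\in \{c\}\beta^{-}$. The disjoint-support hypothesis of Lemma~\ref{lem:ranger} is then vacuous, and the non-zero entries are units, so Lemma~\ref{lem:ranger}\ref{lem:ranger1} yields a free $\acute R$-module of rank $\card{\{c\}\beta^{-}}-1$ with an explicit basis of the form $\std_b - (\acute u_b/\acute u_{\sigma(c)})\std_{\sigma(c)}$ for a chosen representative $\sigma(c)\in \{c\}\beta^{-}$. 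Freeness of $\Board(B\sslash\beta)$ then follows from the displayed decomposition, and summing ranks across the decomposition in the finite case gives
\[
  \card{B_\blank} + \sum_{c}\bigl(\card{\{c\}\beta^{-}}-1\bigr) = \card B - m,
\]
as required.

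For part~\ref{prop:ranger2}, I would apply Lemma~\ref{lem:ranger}\ref{lem:ranger2} to each single-colour summand to conclude that the natural map $\Board(\{c\}\beta^{-}\sslash\beta)\otimes_{\acute R} S \to S\{c\}\beta^{-}$ is injective with image equal to the subset of $S\{c\}\beta^{-}$ cut out by $\sum_{b\in \{c\}\beta^{-}} u_b x_b = 0$. Combining this with the trivial identification $\acute R B_\blank \otimes_{\acute R} S \approx SB_\blank$ and the matching block decomposition $SB = SB_\blank \oplus \bigoplus_{c} S\{c\}\beta^{-}$ then assembles the global statement. There is no real obstacle in the argument: once the decomposition above is in place, both parts reduce to single-column applications of Lemma~\ref{lem:ranger}. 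The only point requiring minor care is the treatment of elements $b\in B$ with $b\beta = c\in \Colours$ but $c\notin \beta[B]$, which must be placed in $B_\blank$ since they contribute no defining relation.
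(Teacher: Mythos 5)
Your proposal is correct and follows essentially the same route as the paper, whose entire proof is the remark that the statement is ``analogous to Lemma~\ref{lem:ranger} with $\acute R$ in place of $R$''; you have simply made the analogy explicit by decomposing $\Board(B\sslash\beta)$ into the blank summand and one single-colour block per $c\in\beta[B]$ (exactly the decomposition from the proof of Proposition~\ref{prop:betaquo}) and applying Lemma~\ref{lem:ranger} blockwise, which also correctly handles infinite $B$ since each fibre is finite. The rank count and the base-change argument in part~(ii) are both right, and your closing caveat about elements whose colour is not confined to $B$ matches the paper's treatment of $A_\blank$.
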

\begin{proof}
  This is analogous to Lemma~\ref{lem:ranger} with $\acute R$ in place of $R$.
\end{proof}

%%%%%%%%%%%%%%%%%%%%%%%%%%%%%%%%%%%%%%%%%%%%%%%%
\subsection{Restricting module representations to relation modules}
\label{ss:relmodfam}
%%%%%%%%%%%%%%%%%%%%%%%%%%%%%%%%%%%%%%%%%%%%%%%%

In line with notation from above, for a module representation $\theta$ over
$R$, we write $\acute \theta := \theta^{\acute R}$.

\paragraph{Individual module representations: defining $\theta\sslash\beta$.}
\begin{defn}
  \label{d:sslash}
  Let $\theta\colon RB \to \Hom(RI, RJ)$ be a module representation, where $B\subset
  \UU$ and $I,J\in\Powf(\NN)$.
  Let $\beta\colon B \to \Coloursb$ be a partial colouring.
  Define $\theta\sslash \beta$ to be the composite
  $\Board(B \sslash\beta) \incl \acute R B \xto{\acute\theta} \Hom(\acute R I,
  \acute R J)$,
  i.e.\ the restriction of $\acute\theta$ to $\Board(B \sslash\beta)$.
\end{defn}

In order to apply Theorem~\ref{thm:crk} later on, we will require the following
simple observation.

\begin{lemma}
  \label{lem:Fit0_beta}
  If $\Fit_i(\Orbit(\theta)) = 0$, then $\Fit_i(\Orbit(\theta\sslash\beta)) = 0$.
\end{lemma}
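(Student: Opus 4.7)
The plan is to chain together two observations about how $\Orbit(\theta\sslash\beta)$ relates to $\Orbit(\theta)$, going through the intermediate $\Orbit(\acute\theta)$.

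First I would observe that by Definition~\ref{d:sslash}, the generators of $\orbit(\theta\sslash\beta)$ are exactly the elements $\XX(m, \acute\theta)$ for $m \in \Board(B\sslash\beta)$. Since $\Board(B\sslash\beta) \subset \acute R B$, these all lie in $\orbit(\acute\theta)$, giving an inclusion $\orbit(\theta\sslash\beta) \subset \orbit(\acute\theta)$ of submodules of $\acute R[X_I] J$. Consequently, there is a natural epimorphism of $\acute R[X_I]$-modules
\[
  \Orbit(\theta\sslash\beta) = \acute R[X_I] J \big/ \orbit(\theta\sslash\beta) \onto \acute R[X_I] J \big/ \orbit(\acute\theta) = \Orbit(\acute\theta).
\]

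Next I would combine this with the two relevant parts of Proposition~\ref{prop:fitbasic}. Part \ref{prop:fitbasic2} applied to the above epimorphism gives
\[
  \Fit_i(\Orbit(\theta\sslash\beta)) \subset \Fit_i(\Orbit(\acute\theta)).
\]
By Remark~\ref{rem:orbit}\ref{rem:orbit3}, $\Orbit(\acute\theta) = \Orbit(\theta) \otimes_{R[X_I]} \acute R[X_I]$, and Proposition~\ref{prop:fitbasic}\ref{prop:fitbasic3} then identifies $\Fit_i(\Orbit(\acute\theta))$ with the ideal of $\acute R[X_I]$ generated by the image of $\Fit_i(\Orbit(\theta))$.

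Finally, the hypothesis $\Fit_i(\Orbit(\theta)) = 0$ forces this image to be zero, so $\Fit_i(\Orbit(\acute\theta)) = 0$, and the inclusion above yields $\Fit_i(\Orbit(\theta\sslash\beta)) = 0$. There is no serious obstacle here: the statement is essentially a compatibility between the two basic Fitting-ideal properties already collected in Proposition~\ref{prop:fitbasic} and the base-change description of orbit modules in Remark~\ref{rem:orbit}\ref{rem:orbit3}. The only point worth double-checking is the direction of the epimorphism, which goes from the ``more restricted'' $\Orbit(\theta\sslash\beta)$ onto $\Orbit(\acute\theta)$ because imposing fewer relations in the orbit submodule produces a larger quotient.
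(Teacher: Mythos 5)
Your argument is correct and is essentially the paper's own proof, just with the details spelled out: the paper likewise notes that $\Orbit(\acute\theta)$ is a quotient of $\Orbit(\theta\sslash\beta)$ and then combines Remark~\ref{rem:orbit}\ref{rem:orbit3} with Proposition~\ref{prop:fitbasic}\ref{prop:fitbasic2}--\ref{prop:fitbasic3}. Your closing remark about the direction of the epimorphism is the right point to be careful about, and you have it right.
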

\begin{proof}
  By construction, $\Orbit(\acute\theta)$ is a quotient of
  $\Orbit(\theta\sslash\beta)$.
  Now apply Remark~\ref{rem:orbit}\ref{rem:orbit3}
  and Proposition~\ref{prop:fitbasic}\ref{prop:fitbasic2}--\ref{prop:fitbasic3}.
\end{proof}

We may now interpret the modules $\Board_{d\times e}(\cA,u,R)$ from
\S\ref{ss:intro_relations} in the present setting.

\begin{ex}
  \label{ex:abstract_intro_relations}
  Let $I = [d]$, $J = [e]$, and let $\beta\colon [d]\times [e]\to \Coloursb$ be
  a partial colouring.
  Let $\cA = \bigl(\{c\}\beta^-\bigr)_{c\in \Colours}$ be the set of $\beta$-fibres of the
  elements of $\Colours$.
  It is clear that $\beta$ and~$\cA$ determine one another.
  Note that $\cA$ is a ``partial colouring of $[d]\times [e]$'' in the sense
  of~\S\ref{ss:intro_relations}.
  Let $S$ be an $R$-algebra.
  Let $u$ be a $d\times e$ matrix whose entries are units of $S$.
  Let $S(u)$ denote $S$ regarded as an $\acute R$-algebra via the
  ring map $\acute R \to S$ which sends $\acute u_{(i,j)}$ to
  $u_{ij}$ for $(i,j)\in [d]\times [e]$ and which sends all
  other $\acute u_x$ to $1$.
  Recall the definition of $\rho(I,J)$ from \S\ref{ex:rec}.
  Then Proposition~\ref{prop:ranger} allows us to identify
  $(\rho(I,J)\sslash\beta)^{S(u)}$ and $\Board_{d\times
    e}(\cA,u,S)\incl \Mat_{d\times e}(S)$.
\end{ex}

\paragraph{Families of module representations: defining $\bTheta\sslash\beta$.}
Let $\bTheta$ be a basic family of module representations as in
Definition~\ref{d:basfam}.
We assume that
$\sB_\infty := \bigcup\limits_{I,J\in \Powf(\NN)}\sB(I,J) \subset \UU$.
Let $\acute{\bTheta} := \bigl(\acute\theta(I,J)\bigr)_{I,J\in\Powf(\NN)}$
be obtained from $\bTheta$ by extension of scalars along $R \to \acute R$.

\begin{defn}
  \label{d:betaIJ}
Let $\beta\colon \sB_\infty\to \Coloursb$ be a partial colouring.
Define $\beta(I,J)\colon \sB(I,J) \to \Coloursb$ to be the partial
colouring of $\sB(I,J)$ given by
\[
  x\beta(I,J) =
  \begin{cases}
    c, & \text{if $c := x\beta\in \beta[\sB(I,J)]$},
    \\
    \blank, & \text{otherwise.}
  \end{cases}
\]
\end{defn}

That is, $x\beta(I,J) \not= \blank$ if and only if $c := x\beta\not= \blank$ and
$\{c\}\beta^-\subset \sB(I,J)$, in which case $x\beta(I,J) = c$.
Of course, the definition of $\beta(I,J)$ depends on $\bTheta$.
(We extend $\beta(I,J)$ to all of $\UU$ as in \S\ref{ss:relmod}.)
Note that if $I'\subset I$ and $J'\subset J$, then $(\beta(I,J))(I',J') = \beta(I',J')$.
Further note that, by construction, $\theta(I,J)\sslash \beta = \theta(I,J)
\sslash \beta(I,J)$.
By Proposition~\ref{prop:betaquo}, we obtain a surjective coherent family
of module representations
\[
  \bTheta \sslash \beta := \Bigl(\theta(I,J) \sslash \beta\Bigr)_{I,J\in\Powf(\NN)} = \Bigl(\theta(I,J)\sslash \beta(I,J)\Bigr)_{I,J\in\Powf(\NN)}
\]
with transition homomorphisms induced by retractions
$\acute R \, \sB(\tilde I,\tilde J)\to \acute R \, \sB(I,J)$.

In Example~\ref{ex:abstract_intro_relations}, we rephrased the setting of
Corollary~\ref{cor:rec} in terms of the ``$\dtimes \sslash \beta$'' operation defined
above.
In order to deduce results such as Theorem~\ref{thm:embedded}, we will combine the machinery from
\S\ref{s:orbital} and \S\ref{s:coh}.
A first step in this direction is the following.

\begin{lemma}
  \label{lem:orbital_same_crk}
  Suppose that $\bTheta$ and $\bTheta\sslash\beta$ are
  both $(I,J)$-constant of the same rank $\ell$ (see Definition~\ref{d:IJconst}).
  Then $\theta(I,J)\sslash\beta(I,J)$ is
  an orbital subrepresentation of $\acute \theta(I,J)$.
\end{lemma}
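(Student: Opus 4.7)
\medskip

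\noindent\textbf{Proof plan.}
The plan is to combine the orbital-subrepresentation criterion of Corollary~\ref{cor:orbital_iso_epi} with two applications of the constant rank theorem (Theorem~\ref{thm:crk}): one for the family $\bTheta$ over $R$, and one for the family $\bTheta\sslash\beta$ over $\acute R$.

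First I would check that the statement even makes sense, i.e.\ that $\theta(I,J)\sslash\beta$ really is the restriction of $\acute\theta(I,J)$ to a finitely generated submodule. By Definition~\ref{d:sslash}, $\theta(I,J)\sslash\beta$ is the composite of the inclusion $\Board(\sB(I,J)\sslash\beta)\incl\acute R\,\sB(I,J)$ with $\acute\theta(I,J)$, and $\Board(\sB(I,J)\sslash\beta)$ is free of finite rank over $\acute R$ by Proposition~\ref{prop:ranger}\ref{prop:ranger1}. Thus Corollary~\ref{cor:orbital_iso_epi} reduces the claim to showing $\Orbit(\theta(I,J)\sslash\beta)_x\approx\Orbit(\acute\theta(I,J))_x$ as $\fO$-modules for every $\acute R$-algebra $\fO$ which is a \DVR{} with maximal ideal $\fP$ and every $x\in\fO I\setminus\fP I$ with $\prod x\neq 0$.

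Fix such $\fO$ and $x$. Any such $\fO$ becomes an $R$-algebra via $R\to\acute R\to\fO$, so the $(I,J)$-constancy of rank $\ell$ for $\bTheta$ and Theorem~\ref{thm:crk} yield $\Orbit(\theta(I,J))\otimes_{R[X_I]}\fO_x\approx\fO^\ell$. Using Remark~\ref{rem:orbit}\ref{rem:orbit3} to identify $\Orbit(\acute\theta(I,J))=\Orbit(\theta(I,J))\otimes_{R[X_I]}\acute R[X_I]$, this reads $\Orbit(\acute\theta(I,J))_x\approx\fO^\ell$. Applying Theorem~\ref{thm:crk} a second time—now to the surjective coherent family $\bTheta\sslash\beta$ over $\acute R$, which by hypothesis is $(I,J)$-constant of the same rank $\ell$—we similarly obtain $\Orbit(\theta(I,J)\sslash\beta)_x\approx\fO^\ell$. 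Both specialisations are therefore isomorphic, and the natural epimorphism $\Orbit(\theta(I,J)\sslash\beta)_x\onto\Orbit(\acute\theta(I,J))_x$ is then automatically an isomorphism by Proposition~\ref{prop:hopf}.

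This is really a bookkeeping argument once Theorem~\ref{thm:crk} is in hand; the only point that requires a moment's care is tracking the two base rings $R$ and $\acute R$ and confirming that an $\acute R$-algebra $\fO$ qualifies as an $R$-algebra for the first invocation of Theorem~\ref{thm:crk}. No substantive obstacle arises.
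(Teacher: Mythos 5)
Your proof is correct and follows essentially the same route as the paper: both reduce via Corollary~\ref{cor:orbital_iso_epi} to comparing the two specialised orbit modules and identify each with $\fO^\ell$ by Theorem~\ref{thm:crk}. The only (immaterial) difference is that the paper first upgrades the hypothesis to ``$\acute\bTheta$ is $(I,J)$-constant of rank $\ell$'' via Proposition~\ref{prop:fitbasic}\ref{prop:fitbasic3} and Remark~\ref{rem:orbit}\ref{rem:orbit3} and then applies Theorem~\ref{thm:crk} over $\acute R$ to both families, whereas you apply it to $\bTheta$ over $R$ and base-change afterwards.
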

\begin{proof}
  Proposition~\ref{prop:fitbasic}\ref{prop:fitbasic3} and
  Remark~\ref{rem:orbit}\ref{rem:orbit3}  
  imply that $\acute\bTheta$ is $(I,J)$-constant of rank $\ell$.
  Now combine Theorem~\ref{thm:crk} and Corollary~\ref{cor:orbital_iso_epi}.
\end{proof}

%%%%%%%%%%%%%%%%%%%%%%%%%%%%%%%%%%%%%%%%%%%%%%%% 
\subsection{Closure, admissibility, and a proof of Corollary~\ref{cor:rec}}
\label{ss:proof_rec}
%%%%%%%%%%%%%%%%%%%%%%%%%%%%%%%%%%%%%%%%%%%%%%%%

A proof of Corollary~\ref{cor:rec} is now within easy reach.
Let $\beta\colon \NN\times \NN \to \Coloursb$ be a partial colouring;
recall that we assume that $\NN\times\NN \subset \UU$.
For $I,J\in\Powf(\NN)$, let $\beta(I,J)\colon I\times J\to \Coloursb$ as in
Definition~\ref{d:betaIJ}.

\begin{defn}
  \label{d:colclosed}
  Let $I,J\in \Powf(\NN)$,
  $I'\subset I$, and $J'\subset J$.
  We say that $(I',J')$ is \emph{$\beta$-closed} in $(I,J)$ if the
  following condition is satisfied: for
  each $c\in \Colours$, whenever $(i',j')\in I'\times J'$ has
  $\beta(I,J)$-colour~$c$, then the $\beta(I,J)$-fibre of $c$ is contained
  entirely within $I'\times J'$.
\end{defn}

In other words, 
$(I',J')$ is $\beta$-closed in $(I,J)$ if and only if for each
$\beta(I,J)$-coloured element of $I'\times J'$, all elements of the same
colour in $I\times J$ belong to $I'\times J'$. Equivalently:

\begin{lemma}
  \label{lem:closed_res}
  $(I',J')$ is $\beta$-closed in $(I,J)$ if and only if
  $\beta(I',J')\colon I'\times J'\to \Coloursb$ is
  the set-theoretic restriction of $\beta(I,J)\colon I\times J\to \Coloursb$.
  \qed
\end{lemma}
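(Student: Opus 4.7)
The plan is to unfold both sides of the claimed equivalence directly via Definitions~\ref{d:betaIJ} and \ref{d:colclosed} and track when, for a cell $x\in I'\times J'$, the values $\beta(I,J)(x)$ and $\beta(I',J')(x)$ can differ. By construction, each of these values is either $\blank$ or equals $x\beta$; the only way they can disagree is if $x\beta =: c \in \Colours$, with $\{c\}\beta^-\subset I\times J$ but $\{c\}\beta^-\not\subset I'\times J'$, in which case $\beta(I,J)(x) = c$ while $\beta(I',J')(x) = \blank$.

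For the ``only if'' direction, I would assume $(I',J')$ is $\beta$-closed in $(I,J)$ and show that the above bad case never occurs. Indeed, if $x\in I'\times J'$ satisfies $\beta(I,J)(x) = c \in \Colours$, then $c\in \beta[I\times J]$ and the $\beta(I,J)$-fibre of $c$ coincides with $\{c\}\beta^-$; closure then forces $\{c\}\beta^- \subset I'\times J'$, so $c\in\beta[I'\times J']$ and $\beta(I',J')(x) = c$. Combined with the immediate observation that $\beta(I,J)(x) = \blank$ implies $\beta(I',J')(x)=\blank$ (either $x\beta = \blank$ already, or $x\beta = c$ with $\{c\}\beta^-\not\subset I\times J\supset I'\times J'$), this proves $\beta(I',J') = \beta(I,J)|_{I'\times J'}$.

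For the converse, I would assume the restriction equality and verify the closure condition directly: given $(i',j') \in I'\times J'$ with $\beta(I,J)$-colour $c$, restriction equality gives $\beta(I',J')(i',j') = c$ as well, which by Definition~\ref{d:betaIJ} requires $\{c\}\beta^- \subset I'\times J'$, i.e.\ the $\beta(I,J)$-fibre of $c$ lies in $I'\times J'$. That is exactly the closure condition in Definition~\ref{d:colclosed}.

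There is no real obstacle here: the statement is essentially a bookkeeping lemma that repackages the closure condition as a compatibility of the colouring operator $\beta\mapsto\beta(-,-)$ with set-theoretic restriction. The only thing to be careful about is to remember that $\beta(I,J)$ is defined with reference to the ambient grid $I\times J$ (so a colour is ``forgotten'' whenever part of its $\beta$-fibre leaves that grid), and that consequently the $\beta(I,J)$-fibre of a colour $c$ appearing in $I\times J$ coincides with the full $\beta$-fibre $\{c\}\beta^-$.
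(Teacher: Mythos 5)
Your argument is correct and complete: it simply unfolds Definitions~\ref{d:betaIJ} and \ref{d:colclosed}, which is exactly why the paper states this lemma without proof (the \qed in the statement). The one point worth isolating — that the $\beta(I,J)$-fibre of a colour actually attained by $\beta(I,J)$ coincides with the full $\beta$-fibre $\{c\}\beta^-$ — is handled properly, so there is nothing to add.
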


\begin{defn}
  \label{d:Rho_adm}
  Let $I,J\in \Powf(\NN)$.
  We say that $\beta$ is \emph{$(I,J)$-admissible}
  if the following condition is satisfied:
  for all non-empty $I',J'\in \Powf(\NN)$ such that $(I',J')$ is
  $\beta$-closed in~$(I,J)$, the set $I'\times J'$ contains a
  $\beta(I,J)$-blank (and hence also $\beta(I',J')$-blank) element.
\end{defn}

When $I = [d]$ and $J = [e]$,
the preceding concept of $(I,J)$-admissibility agrees with admissibility 
as defined in~\S\ref{ss:intro_relations}.
For instance, Figure~\ref{fig:PW3} is an example of a $([3],[3])$-admissible
partial colouring.

\begin{lemma}
  \label{lem:ex_blank}
  Let $I,J\in \Powf(\NN)$ such that $\beta$ is $(I,J)$-admissible.
  Let $I'\subset I$ and $J'\subset J$ such that $I'\times J'\not= \emptyset$.
  Then $I'\times J'$ contains a $\beta(I',J')$-blank element.
\end{lemma}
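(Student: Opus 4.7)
I would argue by contradiction: assume that $I' \times J'$ contains no $\beta(I',J')$-blank cell and deduce that $(I',J')$ must itself be $\beta$-closed in $(I,J)$, which then contradicts $(I,J)$-admissibility. First, I would unfold Definition~\ref{d:betaIJ} applied to the partial colouring of $I' \times J'$: a cell $(i,j)\in I'\times J'$ fails to be $\beta(I',J')$-blank precisely when $(i,j)\beta = c$ for some $c\in\Colours$ with $\{c\}\beta^-\subset I'\times J'$. Under the contradiction hypothesis, this must hold for every $(i,j)\in I'\times J'$; since $I'\times J'\subset I\times J$, we have $\{c\}\beta^-\subset I\times J$ as well, so $c\in\beta[I\times J]$ and $(i,j)\beta(I,J)=c$. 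Consequently every cell of $I'\times J'$ is $\beta(I,J)$-coloured, and for each such colour $c$ the $\beta(I,J)$-fibre $\{c\}\beta(I,J)^-$ equals $\{c\}\beta^-$.

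Next, I would verify that $(I',J')$ is $\beta$-closed in $(I,J)$ in the sense of Definition~\ref{d:colclosed}: for any $(i,j)\in I'\times J'$ with $\beta(I,J)$-colour $c$, the first step yields $\{c\}\beta(I,J)^- = \{c\}\beta^- \subset I'\times J'$, which is exactly the condition required. Since $I'\times J'\neq\emptyset$, the hypothesis of $(I,J)$-admissibility (Definition~\ref{d:Rho_adm}) now supplies a $\beta(I,J)$-blank element of $I'\times J'$, directly contradicting the conclusion of the first step that every cell of $I'\times J'$ is $\beta(I,J)$-coloured. Hence the initial assumption fails and the required $\beta(I',J')$-blank cell exists.

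The argument is short once the definitions are aligned, so there is no genuine obstacle; the only care needed is to keep the three colourings $\beta$, $\beta(I,J)$, and $\beta(I',J')$ distinct. The decisive observation is that being $\beta(I',J')$-coloured is \emph{strictly stronger} than being $\beta(I,J)$-coloured---it requires the entire colour class to be confined within the smaller product $I'\times J'$---and it is exactly this strengthening that, when demanded of every cell of $I'\times J'$, turns $(I',J')$ into a $\beta$-closed pair and triggers the admissibility hypothesis.
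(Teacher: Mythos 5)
Your proof is correct and is essentially the paper's argument in contrapositive form: the paper splits into the cases where $(I',J')$ is or is not $\beta$-closed in $(I,J)$, while you assume no $\beta(I',J')$-blank cell exists, deduce closedness, and invoke admissibility for the contradiction. The key observation — that a cell remains coloured in $\beta(I',J')$ only if its entire colour class lies in $I'\times J'$ — is the same in both.
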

\begin{proof}
  By definition, if $(I',J')$ is $\beta$-closed in $(I,J)$, then there exists $(i',j')\in
  I'\times J'$ with $(i',j')\beta(I,J) = (i',j') \beta(I',J') = \blank$.
  Otherwise, there exists $c\in \Colours$ and $(i',j')\in I'\times J'$
  with $(i',j')\beta(I,J) = c$ such that
  $I\times J$ contains an element with $\beta(I,J)$-colour $c$ outside of
  $I'\times J'$. In that case, $(i',j') \beta(I',J') = \blank$ by the
  definition of $\beta(I',J')$.
\end{proof}

\begin{cor}
  \label{cor:adm_subsets}
  Let $I'\subset I\in \Powf(\NN)$ and $J'\subset J\in\Powf(\NN)$.
  If $\beta$ is $(I,J)$-admissible, then~$\beta$ is $(I',J')$-admissible.
\end{cor}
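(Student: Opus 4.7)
The plan is to reduce Corollary~\ref{cor:adm_subsets} directly to Lemma~\ref{lem:ex_blank}, with only a small amount of bookkeeping about how blanks transform when we pass between $\beta(I,J)$, $\beta(I',J')$, and $\beta(I'',J'')$. Suppose $(I'',J'')$ is non-empty and $\beta$-closed in $(I',J')$; we must find a $\beta(I',J')$-blank cell in $I''\times J''$.

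Since $I''\subset I'\subset I$ and $J''\subset J'\subset J$ and $\beta$ is $(I,J)$-admissible, Lemma~\ref{lem:ex_blank} (applied to the inclusions $I''\subset I$, $J''\subset J$) directly produces some $(i,j)\in I''\times J''$ with $(i,j)\beta(I'',J'') = \blank$. The whole content of the proof is then to upgrade this to the statement that $(i,j)$ is also $\beta(I',J')$-blank.

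The upgrade is by contradiction: assume $(i,j)\beta(I',J') = c \in \Colours$. Unfolding Definition~\ref{d:betaIJ}, this means the $\beta$-fibre $\{c\}\beta^-$ is contained entirely in $I'\times J'$, and in particular coincides with the $\beta(I',J')$-fibre of $c$. But $(I'',J'')$ is $\beta$-closed in $(I',J')$, so by Definition~\ref{d:colclosed} this fibre must lie inside $I''\times J''$. Hence $\{c\}\beta^-\subset \sB(I'',J'') := I''\times J''$, which forces $(i,j)\beta(I'',J'') = c\ne\blank$, contradicting our choice of $(i,j)$.

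No step here looks like it should present a genuine obstacle: the argument is really just tracing through the definitions of $(I',J')$-admissibility, $\beta$-closedness, and the induced colouring $\beta(I',J')$, so the proof is expected to be a few lines. The only point worth being careful about is not to confuse the three partial colourings $\beta(I,J)$, $\beta(I',J')$, and $\beta(I'',J'')$: a cell that is coloured in the finer colouring is automatically coloured (with the same colour) in the coarser one, while the converse direction is exactly what $\beta$-closedness of $(I'',J'')$ in $(I',J')$ supplies, and that is precisely the asymmetry exploited above.
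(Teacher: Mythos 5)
Your proof is correct and follows essentially the same route as the paper: apply Lemma~\ref{lem:ex_blank} to the pair $(I'',J'')$ inside the admissible pair $(I,J)$, then transfer blankness from $\beta(I'',J'')$ to $\beta(I',J')$ using $\beta$-closedness. The only difference is cosmetic: the paper invokes Lemma~\ref{lem:closed_res} for that transfer step, while you re-derive the needed direction of it by contradiction from Definitions~\ref{d:betaIJ} and~\ref{d:colclosed}.
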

\begin{proof}
  Let $I''\subset I'$  and $J''\subset J'$ both be non-empty and suppose that
  $(I'',J'')$ is $\beta$-closed in $(I',J')$.
  By Lemma~\ref{lem:ex_blank}, there exists $z \in I''\times J''$
  with $z\beta(I'',J'') = \blank$.
  By Lemma~\ref{lem:closed_res}, since $(I'',J'')$ is $\beta$-closed in
  $(I',J')$, we have $\blank = z \beta(I'',J'') = z\beta(I',J')$.
\end{proof}

Let $\bRho$ be the basic family of module representations from
Example~\ref{ex:rec}.
Define
$\bTheta = \bigl(\theta(I,J)\bigr)_{I,J\in\Powf(\NN)} :=
\bRho\sslash \beta = 
\bigl(\rho(I,J)\sslash \beta(I,J)\bigr)_{I,J\in\Powf(\NN)}$;
recall from \S\ref{ss:relmodfam} that $\rho(I,J) \sslash\beta(I,J)$ is the
restriction of $\acute\rho(I,J) := \rho(I,J)^{\acute R}$ to
$\Board((I\times J)\sslash \beta(I,J))$. 
Given~$\bTheta$, define $\Omega(I,J)$ and $\Omega^\times(I,J)$ as in
\S\ref{ss:crk}.
The following innocuous vanishing result is the key ingredient of our proof of
Corollary~\ref{cor:rec}.
\begin{lemma}
  \label{lem:Omega0}
  Let $I,J\in\Powf(\NN)$ and
  suppose that $\beta$ is $(I,J)$-admissible.
  Then
  \[
    \Omega^\times(I',J') = 0
  \]
  for all non-empty $I'\subset I$ and all $J'\subset J$.
\end{lemma}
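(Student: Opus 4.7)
The plan is to proceed by induction on $\card{J'}$. The base case $J' = \emptyset$ is immediate, since $\Omega(I',\emptyset) = \acute R[X_{I'}]\emptyset = 0$, so a fortiori $\Omega^\times(I',\emptyset) = 0$. For the inductive step, the driving idea is that admissibility produces a blank cell in $I'\times J'$, and a blank cell corresponds to an unconstrained matrix entry which, after inverting the relevant $X$-variable, lets us annihilate one of the basis vectors $\std_{j}$ in the orbit module. Then Corollary~\ref{cor:colred} permits us to shrink $J'$ by one element without changing $\Omega^\times$, and the induction closes.

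Concretely, assume $J'\ne\emptyset$, so $I'\times J'\ne\emptyset$ (as $I'\ne\emptyset$ by hypothesis). Apply Lemma~\ref{lem:ex_blank} to the $(I,J)$-admissible partial colouring $\beta$ and the pair $(I',J')$, to obtain a cell $(i^*,j^*)\in I'\times J'$ with $(i^*,j^*)\beta(I',J')=\blank$. Because this cell is blank, it is not involved in any of the defining relations of $\Board((I'\times J')\sslash\beta(I',J'))$ (see Definition~\ref{d:relmod}), hence $\std_{(i^*,j^*)}$ itself lies in this relation module. Writing $\theta := \theta(I',J') = \rho(I',J')\sslash\beta(I',J')$ and using the explicit computation from Example~\ref{ex:rec}, one has
\[
\XX\!\bigl(\std_{(i^*,j^*)},\,\theta\bigr) \;=\; X_{i^*}\std_{j^*} \;\in\; \orbit(\theta).
\]
Passing to $\acute R[X_{I'}^{\pm 1}]$, we conclude that $\std_{j^*}\in \orbit^\times(\theta)$.

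By Corollary~\ref{cor:colred} applied to the surjective coherent family $\bTheta = \bRho\sslash\beta$ with $\tilde J = J'$ and $J = J'\setminus\{j^*\}$, the natural epimorphism $\pi^{I',J'}_{I',J'\setminus\{j^*\}}\colon \Omega^\times(I',J') \twoheadrightarrow \Omega^\times(I',J'\setminus\{j^*\})$ is an isomorphism. Since $\card{J'\setminus\{j^*\}} < \card{J'}$ and $\beta$ remains $(I,J)$-admissible (so the induction hypothesis applies unchanged), we obtain $\Omega^\times(I',J'\setminus\{j^*\}) = 0$, and therefore $\Omega^\times(I',J') = 0$. There is no substantial obstacle here: the whole point of the admissibility hypothesis is precisely to guarantee, at every stage of such a recursion, the existence of a blank cell that one can convert into a relation of the form ``$X_{i^*}\std_{j^*}=0$ after inverting $X_{i^*}$''.
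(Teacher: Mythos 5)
Your proof is correct and follows essentially the same route as the paper's: induction on $\card{J'}$, using Lemma~\ref{lem:ex_blank} to produce a blank cell, observing that $\std_{(i^*,j^*)}$ lies in the relation module so that $X_{i^*}\std_{j^*}\in\orbit(\theta)$, and invoking Corollary~\ref{cor:colred} to delete the column $j^*$. No issues.
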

\begin{proof}
  If $J' = \emptyset$, then $\Omega^\times(I',J')$ is a quotient of $\acute
  R[X_{I'}^{\pm 1}] \emptyset = 0$.
  Suppose that $I' \not=\emptyset \not= J'$.
  By Lemma~\ref{lem:ex_blank}, there exists $(i',j') \in I'\times J'$ with
  $(i',j') \beta(I',J') = \blank$.
  Hence, $\std_{(i',j')} \in \Board((I'\times J') \sslash\beta(I',J'))$
  and thus $X_{i'} \std_{j'} \in \orbit^\times(\rho(I',J')
  \sslash\beta(I',J'))$
  (see Example~\ref{ex:rec} and Definition~\ref{d:orbit_times}).
  By Corollary~\ref{cor:colred}, $\Omega^\times(I',J') \approx
  \Omega^\times(I',J'\setminus\{j'\})$ whence the claim follows by induction on $\card{J'}$.
\end{proof}

\begin{cor}
  \label{cor:Rho_blueprint}
  Let the assumptions be as in Lemma~\ref{lem:Omega0}.
  Then:
  \begin{enumerate}[label=(\roman{*})]
  \item
    \label{cor:Rho_blueprint1}
    $\bRho \sslash\beta$ is $(I,J)$-constant of rank $0$ (see
    Definition~\ref{d:IJconst}).
  \item
    \label{cor:Rho_blueprint2}
    $\rho(I,J) \sslash \beta(I,J)$ is an orbital subrepresentation of $\acute\rho(I,J)$.
  \end{enumerate}
\end{cor}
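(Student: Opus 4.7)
The plan is to derive both parts from results already in hand, with part~(i) doing the substantive work and part~(ii) following formally.

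For part~(i), I unpack Definition~\ref{d:IJconst} at $\ell = 0$: condition~(a) (the vanishing of $\Fit_i(\Omega(I,J))$ for $i < 0$) is vacuous, so only condition~(b) remains. That is, I must show $\Fit_0(\Omega^\times(H,J)) = \langle 1 \rangle$ for every non-empty $H \subset I$. Since $\Fit_0$ of a finitely generated module is the unit ideal precisely when the module vanishes (cf.~Example~\ref{ex:Fit_free}), this reduces to proving $\Omega^\times(H,J) = 0$. The plan is to invoke Lemma~\ref{lem:Omega0} with $I' = H$ and $J' = J$, after first observing via Corollary~\ref{cor:adm_subsets} that $(I,J)$-admissibility of $\beta$ is inherited by $(H,J)$. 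This yields the vanishing immediately, establishing~(i).

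For part~(ii), the plan is simply to apply Lemma~\ref{lem:orbital_same_crk} with $\bTheta = \bRho$. Its hypothesis requires that both $\bRho$ and $\bRho \sslash \beta$ be $(I,J)$-constant of a common rank~$\ell$. Example~\ref{ex:Rho_crk} supplies the first at rank~$0$, and part~(i) just proved supplies the second at rank~$0$ as well. The conclusion of the lemma is exactly the statement that $\rho(I,J)\sslash\beta(I,J)$ is an orbital subrepresentation of $\acute\rho(I,J)$.

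There is no real obstacle to overcome here: all the combinatorial content sits in Lemma~\ref{lem:Omega0}, whose argument uses admissibility to produce a blank cell $(i',j') \in I' \times J'$ and then appeals to Corollary~\ref{cor:colred} to excise the column $j'$ and induct on $\card{J'}$. The only slight subtlety, which is handled by Corollary~\ref{cor:adm_subsets}, is that we need the \emph{uniform} vanishing $\Omega^\times(H,J) = 0$ as $H$ ranges over non-empty subsets of $I$; this is precisely what the descent of admissibility to subpairs guarantees.
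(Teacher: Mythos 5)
Your proof is correct and follows essentially the same route as the paper: part (i) is immediate from Lemma~\ref{lem:Omega0} (which already quantifies over all non-empty $H\subset I$, so the detour through Corollary~\ref{cor:adm_subsets} is harmless but unnecessary), and part (ii) combines (i) with Example~\ref{ex:Rho_crk} and Lemma~\ref{lem:orbital_same_crk} exactly as the paper does.
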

\begin{proof}
  Part \ref{cor:Rho_blueprint1} follows immediately from Lemma~\ref{lem:Omega0}.
  For \ref{cor:Rho_blueprint2}, combine \ref{cor:Rho_blueprint1},
  Example~\ref{ex:Rho_crk}, and Lemma~\ref{lem:orbital_same_crk}.
\end{proof}

\begin{cor}
  \label{cor:Rho_zeta}
  Let $I,J\in\Powf(\NN)$.
  Write $d = \card I$ and $e = \card J$.
  Let $\beta$ be an $(I,J)$-admissible partial colouring $\NN\times \NN\to
  \Coloursb$.
  Define $\bRho$ as in Example~\ref{ex:rec} for $R = \ZZ$.
  Let $\fO$ be a $\acute \ZZ$-algebra which is a compact \DVR{}.
  Then
  $\Zeta^{\ak}_{(\rho(I,J)\sslash\beta(I,J))^\fO}(T) = 
  \frac{1-q^{-e}T}{(1-T)(1-q^{d-e}T)}$.
\end{cor}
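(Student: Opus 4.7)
The plan is to deduce this statement directly by combining the $(I,J)$-constancy of the family $\bRho \sslash \beta$ with the constant rank theorem's consequence for ask zeta functions. Essentially all the real work has been done in the preceding results, and the task here is to check that the hypotheses of Corollary~\ref{cor:crk_zeta} are met.

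First I would note that $\bRho \sslash \beta = \bigl(\rho(I,J)\sslash\beta(I,J)\bigr)_{I,J\in\Powf(\NN)}$ is a surjective coherent family of module representations over $\acute \ZZ$: this is exactly what is established in \S\ref{ss:relmodfam}, where Proposition~\ref{prop:betaquo} provides the surjective transition maps. Next, by Corollary~\ref{cor:Rho_blueprint}\ref{cor:Rho_blueprint1}, the $(I,J)$-admissibility of $\beta$ guarantees that $\bRho \sslash \beta$ is $(I,J)$-constant of rank $0$. The base ring $\acute \ZZ$ does not intervene, only the compact \DVR{} $\fO$ to which we extend scalars.

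With both hypotheses in place, I would apply Corollary~\ref{cor:crk_zeta} to the family $\bRho\sslash \beta$ with $\ell = 0$, $d = \card I$, and $e = \card J$. This yields immediately
\[
  \Zeta^{\ak}_{(\rho(I,J)\sslash\beta(I,J))^\fO}(T) = \frac{1 - q^{0 - e}T}{(1-T)\bigl(1 - q^{0 + d - e}T\bigr)} = \frac{1 - q^{-e}T}{(1-T)(1-q^{d-e}T)},
\]
which is the claim. There is essentially no obstacle in this final step; the substantive content lies in Corollary~\ref{cor:Rho_blueprint}, which in turn rests on the vanishing result Lemma~\ref{lem:Omega0}, whose proof exploits the existence of blank cells guaranteed by admissibility (via Lemma~\ref{lem:ex_blank}) together with the column-reduction principle of Corollary~\ref{cor:colred}. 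The only minor point to mention explicitly in the writeup is that $\fO$ plays the role of the $R$-algebra from Corollary~\ref{cor:crk_zeta} with $R = \acute\ZZ$, so that Corollary~\ref{cor:Rho_blueprint}\ref{cor:Rho_blueprint1} applies verbatim after extension of scalars to $\fO$, using Proposition~\ref{prop:fitbasic}\ref{prop:fitbasic3} and Remark~\ref{rem:orbit}\ref{rem:orbit3} to propagate the constancy of rank through base change, exactly as is done in Lemma~\ref{lem:orbital_same_crk}.
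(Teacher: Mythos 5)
Your proposal is correct and matches the paper's own proof, which likewise just combines Corollary~\ref{cor:Rho_blueprint}\ref{cor:Rho_blueprint1} (admissibility implies $(I,J)$-constancy of rank $0$ for $\bRho\sslash\beta$) with Corollary~\ref{cor:crk_zeta} applied with $\ell=0$. The extra remarks you make about surjectivity of the family and propagating constancy through base change are consistent with how the paper sets things up and do not change the argument.
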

\begin{proof}
  Combine Corollary~\ref{cor:Rho_blueprint} and Corollary~\ref{cor:crk_zeta}.
\end{proof}

Corollary~\ref{cor:rec} follows by combining the preceding corollary and
Example~\ref{ex:abstract_intro_relations} (with $R = \ZZ$ and $S = \fO$).

%%%%%%%%%%%%%%%%%%%%%%%%%%%%%%%%%%%%%%%%%%%%%%%%%%%%%%%%%%%%%%%%%%%%%%%%%
\section{Board games}
\label{s:board}
%%%%%%%%%%%%%%%%%%%%%%%%%%%%%%%%%%%%%%%%%%%%%%%%%%%%%%%%%%%%%%%%%%%%%%%%%

In this section, we further develop the ideas from \S\ref{ss:proof_rec}
in order to prove Corollary~\ref{cor:asym}--\ref{cor:sym}
and Theorem~\ref{thm:embedded}.
Our narrative will revolve around moves applied to the cells of
grids associated with suitable families of module representations. 

%%%%%%%%%%%%%%%%%%%%%%%%%%%%%%%%%%%%%%%%%%%%%%%%
\subsection{Combinatorial families of module representations and isolated cells}
\label{ss:combfam}
%%%%%%%%%%%%%%%%%%%%%%%%%%%%%%%%%%%%%%%%%%%%%%%% 

We seek to generalise the recursive strategy that we used in our
proof of Corollary~\ref{cor:rec} (see Lemma~\ref{lem:Omega0})
in~\S\ref{ss:proof_rec}.
For the moment, let the notation be as in \S\ref{ss:proof_rec}.
In particular, $\beta\colon \NN\times \NN\to\Coloursb$ is a partial colouring
which gives rise to a partial colouring $\beta(I,J)\colon I\times J\to \Coloursb$
for all $I,J\in \Powf(\NN)$.
Apart from the machinery developed in previous sections,
our proof of Corollary~\ref{cor:rec} relied on the following ingredients:
\begin{itemize}
\item
  \textit{Deleting columns:}
  any $\beta(I,J)$-blank element $(i,j)\in I\times J$ yields
  an isomorphism \[
    \Omega^\times(I,J)\approx\Omega^\times(I,J\setminus\{j\}).
  \]
\item
  \textit{Admissibility:}
  combinatorial assumptions  ensure the existence of
  enough blanks to eventually delete all columns via repeated applications of
  the preceding step.
\end{itemize}

We will now begin to generalise both of these ingredients.
Henceforth, let $\bTheta$ be a basic family of module representations as in
Definition~\ref{d:basfam}.
Write $\sB_\infty = \bigcup_{I,J\in\Powf(\NN)}\sB(I,J)$.
We begin by  describing situations in which we may ``delete columns'' (via 
isomorphisms as above) using suitable elements of $\sB(I,J)$.

\paragraph{Grids and cells.}
Let $I,J\in \Powf(\NN)$.
Let $b\in \sB(I,J)$.
Let $\bigl[b^{IJ}_{ij}\bigr]_{i\in I,j\in J}$ be the matrix of
the map ${b\theta(I,J)}\colon RI \to RJ$ with
respect to the defining bases
and let $G_b(I,J) := \{ (i,j)\in I\times J : b^{IJ}_{ij} \not= 0\}$
be its support.
By abuse of notation, in the following, we often write $ij$ instead of $(i,j)$
for an element of $I\times J$.
The \emph{grid} of $(I,J)$ (w.r.t.\ $\bTheta$)
is $\cG(I,J) := \bigcup_{b\in \sB(I,J)}G_b(I,J)\subset I\times J$.
The elements of $\cG(I,J)$ are its \emph{cells}.
Given a cell $ij \in \cG(I,J)$, we refer to $i$ as its \emph{row} and to $j$
as its \emph{column}.

\begin{defn}
  \label{d:combfam}
  We say that $\bTheta$ is a \emph{combinatorial family of module
    representations} if the following conditions are satisfied in addition to
  those in Definition~\ref{d:basfam}:
  \begin{enumerate}
  \item
    The sets $G_b(I,J)$ for $b\in \sB(I,J)$ are pairwise disjoint and
    non-empty.
  \item
    For each $b\in \sB(I,J)$,
    each non-zero coefficient $b^{IJ}_{ij}$ ($i\in I$, $j\in J$) is a unit of
    $R$.
  \end{enumerate}
\end{defn}

If $\bTheta$ is combinatorial,
then we call the sets $G_b(I,J)$ the \emph{cell classes} of $\cG(I,J)$.

\begin{ex}
  \label{ex:basic3}
  Each of the basic families $\bRho$, $\bGamma$, and $\bSigma$ of module
  representations from~\S\ref{ss:basfam_exs} is combinatorial.
  \begin{enumerate}
  \item
    For $\bTheta = \bRho$, we have $\cG(I,J) = I\times J$.
    Cell classes are singletons.
  \item
    For $\bTheta = \bGamma$, we have $\cG(I,J) = I\odot J := \{ ij\in I\times J : i\not= j\}$. 
    Let $ij\in \cG(I,J)$.
    If $ji\in \cG(I,J)$, then $\{ ij,ji\}$ is a cell class; otherwise,
    $\{ij\}$ is a cell class.
  \item
    For $\bTheta = \bSigma$, we have $\cG(I,J) = I\times J$.
    Let $ij\in \cG(I,J)$.
    If
    $ji\in \cG(I,J)$, then $\{ ij,ji\}$ is a cell class (which might be a singleton); otherwise,
    $\{ij\}$ is a cell class.
  \end{enumerate}
\end{ex}

Henceforth, suppose that $\bTheta$ is combinatorial.

\begin{lemma}
  \label{lem:grid_facts}
  Let $I\subset \tilde I\in \Powf(\NN)$ and $J\subset\tilde J\in \Powf(\NN)$.
  \begin{enumerate}[label=(\roman{*})]
  \item \label{lem:grid_facts1}
    Let $\tilde b\in \sB(\tilde I,\tilde J)$.
    Then $\tilde b\in \sB(I,J)$ if and only if $G_{\tilde b}(\tilde I,\tilde J) \cap (I\times
    J)\not= \emptyset$.
  \item \label{lem:grid_facts2}
    $\cG(I,J) = \cG(\tilde I,\tilde J)\cap (I\times J)$.
  \end{enumerate}
\end{lemma}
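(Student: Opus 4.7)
The plan is to derive both parts from the compatibility condition for transition maps in a basic family, namely $\varphi^{\tilde I,\tilde J}_{I,J}\cdot\theta(I,J) = \resn^{\tilde I,\tilde J}_{I,J}(\theta(\tilde I,\tilde J))$, combined with the fact that the transition maps are the retractions $\ret\colon R\sB(\tilde I,\tilde J)\onto R\sB(I,J)$ that fix elements of $\sB(I,J)$ and send elements of $\sB(\tilde I,\tilde J)\setminus\sB(I,J)$ to zero. I would begin by establishing the auxiliary identity: for every $\tilde b\in\sB(I,J)$ (regarded as an element of $\sB(\tilde I,\tilde J)$ via the inclusion in Definition~\ref{d:basfam}), the matrix entries satisfy $\tilde b^{IJ}_{ij} = \tilde b^{\tilde I,\tilde J}_{ij}$ for all $(i,j)\in I\times J$. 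This is immediate from $\ret(\tilde b) = \tilde b$ together with the definition of $\resn^{\tilde I,\tilde J}_{I,J}$ as deletion of rows and columns. In particular, this gives $G_{\tilde b}(I,J) = G_{\tilde b}(\tilde I,\tilde J)\cap(I\times J)$.

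For (i), the ``only if'' direction then follows because a combinatorial family requires $G_{\tilde b}(I,J)\not=\emptyset$ whenever $\tilde b\in\sB(I,J)$, and we just showed this equals $G_{\tilde b}(\tilde I,\tilde J)\cap(I\times J)$. For the converse, I would argue contrapositively: if $\tilde b\in\sB(\tilde I,\tilde J)\setminus\sB(I,J)$, then $\ret(\tilde b)=0$, so the compatibility condition forces $\resn^{\tilde I,\tilde J}_{I,J}(\tilde b\,\theta(\tilde I,\tilde J)) = 0$; reading off matrix entries, this means $\tilde b^{\tilde I,\tilde J}_{ij}=0$ for all $(i,j)\in I\times J$, i.e.\ $G_{\tilde b}(\tilde I,\tilde J)\cap(I\times J) = \emptyset$.

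Part (ii) is then a formal consequence of (i) and the identity $G_{b}(I,J) = G_{b}(\tilde I,\tilde J)\cap(I\times J)$ for $b\in\sB(I,J)$ established above. Indeed,
\[
\cG(I,J) = \bigcup_{b\in\sB(I,J)} G_b(I,J) = \Bigl(\bigcup_{b\in\sB(I,J)} G_b(\tilde I,\tilde J)\Bigr)\cap(I\times J),
\]
and by (i) the terms indexed by $b\in\sB(\tilde I,\tilde J)\setminus\sB(I,J)$ contribute nothing to the intersection with $I\times J$, so the union may be extended to all of $\sB(\tilde I,\tilde J)$, giving $\cG(\tilde I,\tilde J)\cap(I\times J)$.

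There is no real obstacle here; the only subtlety is being careful about the distinction between $\tilde b\in\sB(\tilde I,\tilde J)$ acting via $\theta(\tilde I,\tilde J)$ versus via $\theta(I,J)$, which is precisely the content of the basic-family compatibility axiom. I would make sure the exposition keeps those two actions notationally distinct.
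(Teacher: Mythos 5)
Your proof is correct and follows essentially the same route as the paper: both rest on the basic-family compatibility condition (the commuting square relating $\theta(\tilde I,\tilde J)$ and $\theta(I,J)$ via $\ret$ and $\Hom(\inc,\ret)$), which yields $G_{\tilde b}(\tilde I,\tilde J)\cap(I\times J)=G_{\tilde b}(I,J)\not=\emptyset$ for $\tilde b\in\sB(I,J)$ and $G_{\tilde b}(\tilde I,\tilde J)\cap(I\times J)=\emptyset$ otherwise, exactly as in the paper's argument. Your write-up merely makes the entrywise bookkeeping more explicit.
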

\begin{proof}
  By the definition of a basic family of module representations,
  the following diagram commutes:
  \[
    \small
    \xymatrix@C+2em{
      R \,\sB(\tilde I,\tilde J) \ar[r]^{\theta(\tilde I,\tilde J)}\ar[d]^{\ret} &\Hom(R \tilde I,R \tilde J) \ar[d]^{\Hom(\inc,\ret)} \\
      R \,\sB(I,J) \ar[r]_{\theta(I,J)} & \Hom(R I,R J).
    }
  \]
  Hence, if $\tilde b\in \sB(I,J)$, then $G_{\tilde b}(\tilde I,\tilde J) \cap
  (I\times  J) \overset!= G_{\tilde b}(I,J) \not= \emptyset$.
  If, on the other hand, $\tilde b\in \sB(\tilde I,\tilde J)\setminus
  \sB(I,J)$, then $G_{\tilde b}(\tilde
  I,\tilde J) \cap (I\times J) = \emptyset$.
  Both parts follow immediately.
\end{proof}

\paragraph{Partial colourings of grids.}
Define a surjection ${\varepsilon(I,J)}\colon \cG(I,J) \onto \sB(I,J)$ by sending
$ij\in \cG(I,J)$ to the unique $b\in \sB(I,J)$ with $ij\in G_b(I,J)$.
Let $\beta\colon \sB_\infty\to \Coloursb$ be a partial colouring.
Define $\beta(I,J)\colon \sB(I,J) \to \Coloursb$ as in \S\ref{ss:relmodfam}.
The composite $\varepsilon(I,J) \beta(I,J)$ is then a partial colouring of
$\cG(I,J)$ which, by abuse of notation, we again simply denote by $\beta(I,J)$.
Conversely, every partial colouring of $\cG(I,J)$ that is constant on cell
classes induces a partial colouring of $\sB(I,J)$.
The effects of deleting rows or columns on partial colourings of grids are easily described.

\begin{lemma}
  \label{lem:gridcol}
  Let $I\subset \tilde I\in \Powf(\NN)$ and $J\subset\tilde J\in \Powf(\NN)$.
  Let $ij\in \cG(I,J)$.
  Then
  \[
    ij \, \beta(I,J) = \begin{cases}
      c, & \text{if $c := ij\,\beta(\tilde I,\tilde J)\in\Colours$ and every cell
        class $C$ of $\cG(\tilde I,\tilde J)$ with}\\&\text{$\beta(\tilde
        I,\tilde J)$-colour $c$ satisfies $C\cap (I\times J) \not= \emptyset$},\\
      \blank, & \text{otherwise}.
    \end{cases}
  \]
\end{lemma}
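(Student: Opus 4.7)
The plan is to unpack the definition of $\beta(I,J)$ on $\cG(I,J)$ and translate the condition ``$c \in \beta[\sB(I,J)]$'' into a statement about cell classes meeting $I \times J$, using Lemma~\ref{lem:grid_facts} as the bridge.

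First, I would identify the ``ambient'' cell class containing $ij$. Since $ij \in \cG(I,J)\subset \cG(\tilde I,\tilde J)$ by Lemma~\ref{lem:grid_facts}\ref{lem:grid_facts2}, there is a unique $\tilde b \in \sB(\tilde I,\tilde J)$ with $ij \in G_{\tilde b}(\tilde I,\tilde J)$. Because $G_{\tilde b}(\tilde I,\tilde J) \cap (I\times J)$ contains $ij$ and coincides with $G_{\tilde b}(I,J)$ whenever $\tilde b \in \sB(I,J)$, Lemma~\ref{lem:grid_facts}\ref{lem:grid_facts1} gives $\tilde b \in \sB(I,J)$, and by disjointness of cell classes we may set $b := ij\,\varepsilon(I,J) = \tilde b$. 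In particular $b\beta = \tilde b\beta$, so the only difference between $ij\,\beta(I,J)$ and $ij\,\beta(\tilde I,\tilde J)$ comes from whether $b\beta \in \Colours$ is forced to be replaced by $\blank$ under Definition~\ref{d:betaIJ}.

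Next, I would translate the condition ``$c \in \beta[\sB(I,J)]$'' into the cell-class condition. Fix $c \in \Colours$; the cell classes of $\cG(\tilde I,\tilde J)$ with $\beta(\tilde I,\tilde J)$-colour $c$ are exactly the sets $G_{b'}(\tilde I,\tilde J)$ for $b' \in \{c\}\beta^-\cap\sB(\tilde I,\tilde J)$. Using Lemma~\ref{lem:grid_facts}\ref{lem:grid_facts1} in the opposite direction, membership $b' \in \sB(I,J)$ is equivalent to $G_{b'}(\tilde I,\tilde J)\cap (I\times J) \neq \emptyset$. Therefore $\{c\}\beta^- \subset \sB(I,J)$ holds if and only if $\{c\}\beta^- \subset \sB(\tilde I,\tilde J)$ and every cell class of $\cG(\tilde I,\tilde J)$ of colour $c$ meets $I\times J$.

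With these two observations, the lemma follows by a short case split. If $ij\,\beta(I,J) = c \in \Colours$, then $b\beta = c$ and $\{c\}\beta^-\subset \sB(I,J) \subset \sB(\tilde I,\tilde J)$, so $ij\,\beta(\tilde I,\tilde J) = c$ and each cell class of colour $c$ meets $I\times J$ by the translation above. Conversely, if $ij\,\beta(\tilde I,\tilde J) = c$ with every cell class of colour $c$ meeting $I\times J$, then $\{c\}\beta^-\subset \sB(I,J)$ by the same translation, hence $ij\,\beta(I,J) = c$. The remaining case (either $\tilde b\beta = \blank$, or $\tilde b\beta = c$ but some colour-$c$ cell class misses $I\times J$, i.e.\ some $b' \in \{c\}\beta^-$ is not in $\sB(I,J)$) returns $ij\,\beta(I,J) = \blank$. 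I do not anticipate any real obstacle here; the argument is a bookkeeping exercise once Lemma~\ref{lem:grid_facts}\ref{lem:grid_facts1} is invoked to convert ``$b'\in \sB(I,J)$'' into ``its cell class meets $I\times J$''.
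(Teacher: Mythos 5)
Your proof is correct and follows essentially the same route as the paper: identify $b = ij\,\varepsilon(I,J)$ (which coincides with $ij\,\varepsilon(\tilde I,\tilde J)$), unwind the definition of the induced colouring, and invoke Lemma~\ref{lem:grid_facts}\ref{lem:grid_facts1} to translate ``$b'\in\sB(I,J)$'' into ``$G_{b'}(\tilde I,\tilde J)$ meets $I\times J$''. The paper compresses this into one line via the identity $\beta(I,J) = (\beta(\tilde I,\tilde J))(I,J)$, but the content is identical.
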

\begin{proof}
  Let $b := ij \, \varepsilon(I,J)\in \sB(I,J)$.
  As $\beta(I,J) = (\beta(\tilde I,\tilde J))(I,J)$,
  \[
    b\beta(I,J) = \begin{cases}
      c, & \text{if $c := b\beta(\tilde I,\tilde J)\in \Colours$ and $\{c\}\beta(\tilde
        I,\tilde J)^-\subset \sB(I,J)$,}\\
      \blank, & \text{otherwise}
    \end{cases}
  \]
  whence the claim follows from Lemma~\ref{lem:grid_facts}\ref{lem:grid_facts1}.
\end{proof}

\paragraph{Isolated cells.}
We say that $ij \in \cG(I,J)$ is \emph{$(I,J)$-isolated}
(or simply \emph{isolated} if $I$ and $J$ are clear from the context)
if $ij$ is the sole member of its cell class within $\cG(I,J)$.

\begin{ex}
  \label{ex:isolated3}
  We can easily describe isolated cells related to each
  of the basic families $\bRho$, $\bGamma$, and $\bSigma$ of module
  representations from~\S\ref{ss:basfam_exs}; cf.\ Example~\ref{ex:basic3}.
  \begin{itemize}
  \item
    For $\bTheta = \bRho$, each cell of $\cG(I,J) = I\times J$ is isolated.
  \item
    For $\bTheta = \bGamma$, a cell $ij \in \cG(I,J)$ is isolated if and only
    if $ji\not\in \cG(I,J)$.
  \item
    For $\bTheta = \bSigma$, a cell $ij \in \cG(I,J)$ is isolated if and only
    if $i = j$ or $ji \not\in \cG(I,J)$.
  \end{itemize}
\end{ex}

We now anticipate the deletion of columns in the following
subsections as follows.
Let $ij\in \cG(I,J)$ be an isolated cell.
Let $b := ij \, \varepsilon(I,J)$ be the corresponding element of~$\sB(I,J)$.
Then $b \theta(I,J) = u\std_{ij}$ for some $u\in R^\times$ and
therefore $\XX(b,\theta(I,J)) = u X_i\std_j$.
As in \S\ref{ss:proof_rec},
we thus obtain an isomorphism
$\Omega^\times(I,J) \approx \Omega^\times(I,J\setminus\{j\})$.

%%%%%%%%%%%%%%%%%%%%%%%%%%%%%%%%%%%%%%%%%%%%%%%%
\subsection{Admissible partial colourings of grids}
\label{ss:adm_grid}
%%%%%%%%%%%%%%%%%%%%%%%%%%%%%%%%%%%%%%%%%%%%%%%%

In this subsection,
we will derive a generalisation (Corollary~\ref{cor:adm_blueprint}) of Corollary~\ref{cor:Rho_blueprint}
for combinatorial families of module representations.
The crucial new ingredient is a suitably general notion of ``admissible'' colourings.

\paragraph{Setup.}
Let $\bTheta$ be a combinatorial family of module representations
as in \S\ref{ss:combfam}.
Write $\sB_\infty = \bigcup_{I,J\in\Powf(\NN)} \sB(I,J)$.
Let $\beta\colon \sB_\infty\to \Coloursb$ be a partial colouring.
For $I,J\in\Powf(\NN)$, define $\beta(I,J)\colon \sB(I,J) \to \Coloursb$ as in
\S\ref{ss:relmodfam}.
As in \S\ref{ss:combfam}, we also let $\beta(I,J)$ denote the induced partial
colouring of the grid $\cG(I,J)$.
Using \S\ref{ss:relmodfam},
we obtain a surjective coherent family
$\bTheta \sslash \beta = \bigl(
\theta(I,J)\sslash\beta(I,J)\bigr)_{I,J\in\Powf(\NN)}$
of module representations over $\acute R := R[\acute u_x^{\pm 1} : x\in \UU]$.
Let $\acute\Omega(I,J) := \Orbit( (\bTheta\sslash\beta)(I,J))$ and
$\acute\Omega^\times(I,J) := \acute\Omega(I,J) \otimes_{\acute R[X_I]} \acute R[X_I^{\pm 1}]$.
For $I \subset \tilde I \in \Powf(\NN)$ and $J \subset \tilde J \in
\Powf(\NN)$,
define ${\acute\pi^{\tilde I,\tilde J}_{I,J}}\colon \acute\Omega(\tilde
I,\tilde J) \onto \acute\Omega(I,J)$ via Proposition~\ref{prop:pushout}.

\vspace*{0.5em}

We seek to generalise the notion of $(I,J)$-admissibility from
Definition~\ref{d:Rho_adm} to more general combinatorial families of module
representations.
Our first step is to formalise the deletion of columns as outlined at the end
of \S\ref{ss:combfam}.

\begin{defn}[Moves]
  \label{d:reduce}
  Define a binary relation $\xto[\bTheta,\beta]{} \,\subset \Powf(\NN)^2$ (``{move}'') by letting
  \[
    (I,J) \xto[\bTheta,\beta]{} (I,J')
  \]
  if and only if there exists an $(I,J)$-isolated $\beta(I,J)$-blank cell
  $ij\in \cG(I,J)$ such that $J' = J\setminus\{j\}$.
  We let $\xto[\bTheta,\beta]*$ be the reflexive transitive closure of $\xto[\bTheta,\beta]{}$.
\end{defn}

Let $I,J,J_1,J_2\in \Powf(\NN)$.
Clearly, if $(I,J) \xto[\bTheta,\beta]* (I,J_1)$ and $ij\in \cG(I,J_1)
\subset \cG(I,J)$, then whenever $ij$ is $(I,J)$-isolated (resp.\
$\beta(I,J)$-blank), it is also $(I,J_1)$-isolated (resp.~$\beta(I,J_1)$-blank).
Therefore, if $(I,J) \xto[\bTheta,\beta]* (I,J_1)$ and $(I,J) \xto[\bTheta,\beta]* (I,J_2)$, then
$(I,J) \xto[\bTheta,\beta]* (I,J_1\cap J_2)$.
Hence, if $(I,J)\xto[\bTheta,\beta]* (I,\emptyset)$,
then we can construct a sequence $(I,J) = (I,J^{(0)}) \xto[\bTheta,\beta]{} (I,J') \xto[\bTheta,\beta]{} \dotsb
\xto[\bTheta,\beta]{} (I,J^{(k)}) = (I,\emptyset)$ by picking an arbitrary blank
isolated cell $i_uj_u \in \cG(I,J^{(u)})$ and setting $J^{(u+1)} :=
J^{(u)}\setminus\{j_u\}$ for $0\le u < k$.

The following lemma forms the heart of our recursive arguments.

\begin{lemma}
  \label{lem:rediso}
  If $(I,J) \xto[\bTheta,\beta]* (I,J')$,
  then $\acute\Omega^\times(I,J)\approx
  \acute\Omega^\times(I,J')$ via $\acute\pi^{I,J}_{I,J'}$.
\end{lemma}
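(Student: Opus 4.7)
\textbf{Proof plan for Lemma~\ref{lem:rediso}.}

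The plan is to reduce the statement to a single move and then unpack what an isolated blank cell contributes to the orbit module. First, using Remark~\ref{rem:pi_product} (the compatibility $\acute\pi^{\tilde I,\tilde J}_{I,J'} = \acute\pi^{\tilde I,\tilde J}_{I,J}\,\acute\pi^{I,J}_{I,J'}$) and induction on the length of a sequence of moves $(I,J) = (I,J^{(0)}) \xto[\bTheta,\beta]{} (I,J^{(1)}) \xto[\bTheta,\beta]{} \dotsb \xto[\bTheta,\beta]{} (I,J^{(k)}) = (I,J')$, it suffices to treat the case $k=1$, i.e.\ $(I,J) \xto[\bTheta,\beta]{} (I,J\setminus\{j\})$ witnessed by some $(I,J)$-isolated, $\beta(I,J)$-blank cell $ij \in \cG(I,J)$.

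For the single-step case, let $b := ij\,\varepsilon(I,J) \in \sB(I,J)$ be the element whose cell class contains $ij$. Since $ij$ is isolated, $G_b(I,J) = \{ij\}$, so by Definition~\ref{d:combfam} we have $b\,\theta(I,J) = u\,\std_{ij}$ for some unit $u = b^{IJ}_{ij} \in R^\times$. On the other hand, because $ij$ is $\beta(I,J)$-blank as a cell, the corresponding element $b \in \sB(I,J)$ is $\beta(I,J)$-blank (either $b\beta = \blank$, or $b\beta = c \in \Colours$ but $\{c\}\beta^- \not\subset \sB(I,J)$, so that $c \notin \beta(I,J)[\sB(I,J)]$ by the construction of $\beta(I,J)$ from $\beta$). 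Inspecting Definition~\ref{d:relmod}, no defining relation of $\Board(\sB(I,J)\sslash\beta(I,J))$ involves $\std_b$, so $\std_b \in \Board(\sB(I,J)\sslash\beta(I,J))$.

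Therefore $\std_b$ is a legitimate element of the domain of $\theta(I,J)\sslash\beta(I,J)$, and
\[
  \XX\bigl(\std_b,\,\theta(I,J)\sslash\beta(I,J)\bigr)
  = \sum_{k\in I} X_k \,\std_k\bigl(u\std_{ij}\bigr)
  = u\,X_i\,\std_j
  \in \orbit\bigl(\theta(I,J)\sslash\beta(I,J)\bigr).
\]
As $u$ is a unit of $\acute R$ and $X_i$ becomes a unit in $\acute R[X_I^{\pm 1}]$, this yields $\std_j \in \orbit^\times(\theta(I,J)\sslash\beta(I,J))$. Corollary~\ref{cor:colred} applied to the surjective coherent family $\bTheta\sslash\beta$ then gives that $\acute\pi^{I,J}_{I,J\setminus\{j\}}\colon \acute\Omega^\times(I,J) \onto \acute\Omega^\times(I,J\setminus\{j\})$ is an isomorphism, completing the single-step case and hence the lemma.

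The only mildly delicate point is the bookkeeping in the third step: one must confirm that a $\beta(I,J)$-blank cell indeed corresponds to an element of $\sB(I,J)$ that carries no relation in $\Board(\sB(I,J)\sslash\beta(I,J))$, which is precisely the reason for defining $\beta(I,J)$ (rather than $\beta$) via the ``entirely contained'' condition in Definition~\ref{d:betaIJ}. Once this identification is in place, the rest is a direct application of Corollary~\ref{cor:colred}.
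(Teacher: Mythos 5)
Your proposal is correct and follows essentially the same route as the paper's proof: reduce to a single move via Remark~\ref{rem:pi_product}, observe that an isolated blank cell $ij$ gives $\std_b\in\Board(\sB(I,J)\sslash\beta(I,J))$ with $\XX(\std_b,\cdot)=uX_i\std_j$ for a unit $u$, and conclude via Corollary~\ref{cor:colred}. Your extra care in verifying that a blank cell yields an element free of all defining relations of the relation module is a point the paper leaves implicit, but the argument is the same.
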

\begin{proof}
  By Remark~\ref{rem:pi_product},
  we may assume that $(I,J) \xto[\bTheta,\beta]{} (I,J')$ so that $J' =
  J\setminus\{j\}$, where  $ij\in \cG(I,J)$ is isolated and blank.
  Let $b = ij \, \varepsilon(I,J) \in \sB(I,J)$.
  Since $ij$ is blank,
  $\std_b \in \Board(\sB(I,J)\sslash\beta(I,J))$.
  As $ij$ is isolated, $\std_b \,\theta(I,J) = u\std_{ij}$ for $u \in
  R^\times$.
  Thus, $\XX\!\bigl(ij\, \varepsilon(I,J), \theta(I,J)\bigr) = u X_i \std_j$.
  Hence, $\std_j \in \orbit^\times\!((\bTheta\sslash\beta)(I,J))$ and
  the claim follows from Corollary~\ref{cor:colred}.
\end{proof}

\begin{defn}
  \label{d:adm}
  We say that $\beta$ is \emph{$\bTheta(I,J)$-admissible} of level $\ell \ge 0$
  if the following condition is satisfied:
  for all $\emptyset\not= H\subset I$, there exists $D(H) \subset J$ with $\card{D(H)}\le
  \ell$ such that $(H,J\setminus D(H)) \xto[\bTheta,\beta]* (H,\emptyset)$;
  when $\ell = 0$, we simply say that $\beta$ is \emph{$\bTheta(I,J)$-admissible}.
\end{defn}

This notion gives rise to the ``board game'' in the title of the present section:
$\beta$ is $\bTheta(I,J)$-admissible of level $\ell$ if and only if for each
non-empty set of rows $H\subset I$, it is possible to find a set $D(H) \subset J$ of at most 
$\ell$ columns such that some sequence of moves applied to the partially
coloured grid $\cG(H,J\setminus D(H))$ eventually deletes all of its columns.

\begin{ex}
  \label{ex:allblank_adm}
  Let $I,J\in\Powf(\NN)$.
  Then the ``all blank'' partial colouring $\blank$
  is both $\bRho(I,J)$-admissible and $\bSigma(I,J)$-admissible.
  The situation for $\bGamma$ is more complicated.
  If $I\cap J = \emptyset$, then $\blank$ is $\bGamma(I,J)$-admissible.
  On the other hand,
   if $I\not= \emptyset$, then no partial colouring is
  $\bGamma(I,I)$-admissible: the corresponding grid does not contain
  \textit{any} isolated cells.
\end{ex}

\begin{lemma}
  \label{lem:adm_beta}
  Let $\bTheta$ be a combinatorial family of module representations
  as above.
  Let~$\beta\colon\sB_\infty \to \Coloursb$ be a partial colouring.
  Let $I,J\in \Powf(\NN)$ and suppose that~$\beta$ is $\bTheta(I,J)$-admissible of
  level $\ell \ge 0$.
  Then $\Fit_\ell(\acute\Omega^\times(H,J)) = \langle 1\rangle$ for each non-empty
  $H\subset I$.
\end{lemma}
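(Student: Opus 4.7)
The plan is to combine the move-reduction isomorphism of Lemma~\ref{lem:rediso} with the Fitting-ideal bound of Corollary~\ref{cor:shiftfit}, applied to the surjective coherent family $\bTheta\sslash\beta$. First, I would fix an arbitrary non-empty subset $H\subset I$. By $\bTheta(I,J)$-admissibility of level $\ell$, there is some $D(H)\subset J$ with $\card{D(H)}\le \ell$ such that $(H,J\setminus D(H)) \xto[\bTheta,\beta]{*} (H,\emptyset)$.

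Iterating Lemma~\ref{lem:rediso} along a witnessing sequence of moves yields an isomorphism
\[
  \acute\Omega^\times(H,J\setminus D(H)) \approx \acute\Omega^\times(H,\emptyset),
\]
and the right-hand side vanishes since $\acute R[X_H^{\pm 1}]\emptyset = 0$ forces $\acute\Omega(H,\emptyset) = 0$. Thus $\acute\Omega^\times(H, J\setminus D(H)) = 0$.

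Next, I would apply Corollary~\ref{cor:shiftfit}, which is valid for any surjective coherent family and hence for $\bTheta\sslash\beta$, with $J' := J\setminus D(H)$. This yields
\[
  \Fit_{\card{D(H)}}\bigl(\acute\Omega^\times(H,J)\bigr) = \langle 1\rangle.
\]
Finally, since Fitting ideals of a module form an ascending chain in the index and $\card{D(H)}\le \ell$, this upgrades to $\Fit_\ell(\acute\Omega^\times(H,J)) = \langle 1\rangle$, as required.

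The argument is essentially routine once the move machinery of \S\ref{ss:combfam} and the pushout-based Fitting-ideal bound of \S\ref{ss:relating_orbits} are in place; the only bookkeeping point is that Lemma~\ref{lem:rediso} and Corollary~\ref{cor:shiftfit} remain applicable after the passage from $\bTheta$ to $\bTheta\sslash\beta$, which is immediate because the latter is again a surjective coherent family over $\acute R$. I do not anticipate any real obstacle here: the real work went into isolating the correct notion of an isolated blank cell and into setting up the definition of $\xto[\bTheta,\beta]{}$ so that a single move reduces $\acute\Omega^\times$ by discarding one column.
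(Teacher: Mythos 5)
Your proof is correct and follows the paper's own argument essentially verbatim: extract $D(H)$ from admissibility, kill $\acute\Omega^\times(H,J\setminus D(H))$ via Lemma~\ref{lem:rediso}, and apply Corollary~\ref{cor:shiftfit}. The only difference is that you make explicit the ascending-chain property of Fitting ideals needed to pass from $\Fit_{\card{D(H)}}$ to $\Fit_\ell$, which the paper leaves implicit.
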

\begin{proof}
  Let $D(H)\subset J$ with $\card{D(H)}\le \ell$ and
  $(H,J\setminus D(H))\xto[\beta]* (H,\emptyset)$.
  By Lemma~\ref{lem:rediso}, $\acute\Omega^\times(H,J\setminus D(H)) = 0$.
  By Corollary~\ref{cor:shiftfit}, 
  $\Fit_\ell(\acute\Omega^\times(H,J))= \langle 1\rangle$.
\end{proof}

\begin{cor}
  \label{cor:adm_blueprint}
  Let $I,J\in\Powf(\NN)$ and $\ell \ge 0$.
  Suppose that
  \begin{enumerate*}[label=(\alph{*})]
  \item
    $\bTheta$ is $(I,J)$-constant of rank $\ell$
    (see Definition~\ref{d:IJconst}) and
  \item $\beta\colon \sB_\infty\to\Coloursb$ is $\bTheta(I,J)$-admissible of level $\ell$.
  \end{enumerate*}
  Then:
  \begin{enumerate}[label=(\roman{*})]
  \item
    \label{cor:adm_blueprint1}
    $\bTheta\sslash\beta$ is $(I,J)$-constant of rank $\ell$.
  \item
    \label{cor:adm_blueprint2}
    $\theta(I,J) \sslash \beta(I,J)$ is an orbital subrepresentation of
    $\theta(I,J)^{\acute R}$. 
  \end{enumerate}
\end{cor}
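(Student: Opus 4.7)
The plan is to verify both conclusions directly from the lemmas already assembled in Section~\ref{s:board}, with very little additional work; the statement is essentially a packaging result combining the admissibility machinery with the Fitting-ideal criterion for orbital subrepresentations.

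First I would prove part \ref{cor:adm_blueprint1} by checking the two clauses of Definition~\ref{d:IJconst} for $\bTheta\sslash\beta$. For the vanishing clause, hypothesis (a) gives $\Fit_i(\Omega(I,J)) = \langle 0\rangle$ for $i < \ell$, and Lemma~\ref{lem:Fit0_beta} (applied after extension of scalars $R \to \acute R$, so that it relates $\Orbit(\acute\theta(I,J))$ and $\Orbit(\theta(I,J)\sslash \beta) = \acute\Omega(I,J)$) transfers this vanishing to $\acute\Omega(I,J)$. For the unit-ideal clause, the $\bTheta(I,J)$-admissibility of $\beta$ at level $\ell$ is precisely the hypothesis needed to invoke Lemma~\ref{lem:adm_beta}, which yields $\Fit_\ell(\acute\Omega^\times(H,J)) = \langle 1\rangle$ for every non-empty $H \subset I$. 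Together these give the required $(I,J)$-constancy of rank~$\ell$ for $\bTheta\sslash\beta$.

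For part \ref{cor:adm_blueprint2}, the work is already done: with hypothesis (a) supplying $(I,J)$-constancy of $\bTheta$ of rank $\ell$ and part \ref{cor:adm_blueprint1} supplying the same for $\bTheta\sslash\beta$, Lemma~\ref{lem:orbital_same_crk} immediately yields that $\theta(I,J)\sslash\beta(I,J)$ is an orbital subrepresentation of $\acute\theta(I,J) = \theta(I,J)^{\acute R}$. This mirrors the passage from Corollary~\ref{cor:Rho_blueprint}\ref{cor:Rho_blueprint1} to \ref{cor:Rho_blueprint2} in the special case $\bTheta = \bRho$.

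There is no real obstacle here beyond ensuring that the admissibility hypothesis is invoked in the correct form; the only subtlety worth flagging is that Lemma~\ref{lem:adm_beta} requires the admissibility condition for every non-empty $H \subset I$ (not merely for $H = I$), but this is built into Definition~\ref{d:adm}. The proof is thus a short synthesis: reduce both Fitting-ideal conditions to previously established lemmas, then invoke Lemma~\ref{lem:orbital_same_crk}.
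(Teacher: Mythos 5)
Your proposal is correct and follows the paper's own proof essentially verbatim: Lemma~\ref{lem:Fit0_beta} for the vanishing of $\Fit_i$ with $i<\ell$, Lemma~\ref{lem:adm_beta} for the unit-ideal condition at level $\ell$, and Lemma~\ref{lem:orbital_same_crk} for part~\ref{cor:adm_blueprint2}. The subtlety you flag about the extension of scalars is already absorbed into the statement of Lemma~\ref{lem:Fit0_beta}, so nothing further is needed.
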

\begin{proof}
  For $i < \ell$,
  as $\Fit_i(\Orbit(\theta(I,J))) = 0$,
  by Lemma~\ref{lem:Fit0_beta},
  $\Fit_i(\Orbit(\theta(I,J)\sslash\beta) = 0$.
  Part~\ref{cor:adm_blueprint1} thus follows from
  Lemma~\ref{lem:adm_beta}.
  For \ref{cor:adm_blueprint2}, combine \ref{cor:adm_blueprint1}
  and Lemma~\ref{lem:orbital_same_crk}.
\end{proof}

The ask zeta functions associated with
$(\bTheta\sslash\beta)(I,J)$ are thus given by Corollary~\ref{cor:crk_zeta}.

%%%%%%%%%%%%%%%%%%%%%%%%%%%%%%%%%%%%%%%%%%%%%%%%
\subsection{Rectangular board games}
\label{ss:rec_board}
%%%%%%%%%%%%%%%%%%%%%%%%%%%%%%%%%%%%%%%%%%%%%%%%

For $\bTheta = \bRho$, Definition~\ref{d:adm} is consistent with our
previous notion of $(I,J)$-admissibility.

\begin{prop}
  \label{prop:two_adm}
  Let $\beta\colon \NN \times \NN \to \Coloursb$ be a partial colouring.
  Let $I,J\in\Powf(\NN)$.
  Then $\beta$ is $(I,J)$-admissible (in the sense of
  Definition~\ref{d:Rho_adm}) if and only if $\beta$ is
  $\bRho(I,J)$-admissible
  (in the sense of Definition~\ref{d:adm}).
\end{prop}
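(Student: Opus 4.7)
The plan is to prove both implications directly, using Corollary~\ref{cor:adm_subsets} for one direction and Lemma~\ref{lem:gridcol} (to translate between blank-ness at different stages) together with the definition of $\beta$-closedness for the other. Throughout, I will use the description from Example~\ref{ex:basic3} that for $\bTheta = \bRho$ we have $\sB(I,J)=\cG(I,J)=I\times J$ with every cell isolated, so the move relation $(H,J')\xto[\bRho,\beta]{}(H,J'\setminus\{j\})$ is simply the existence of a $\beta(H,J')$-blank cell in column $j$ of the grid $H\times J'$.

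For the direction $(\Rightarrow)$, assume $\beta$ is $(I,J)$-admissible and fix non-empty $H\subset I$. I will show by induction on $\card{J'}$ that $(H,J')\xto[\bRho,\beta]*(H,\emptyset)$ for every $J'\subset J$. The base case $J'=\emptyset$ is trivial. For $J'\neq\emptyset$, Corollary~\ref{cor:adm_subsets} gives that $\beta$ is $(H,J')$-admissible, and since $(H,J')$ is trivially $\beta$-closed in itself, there is some $\beta(H,J')$-blank cell $ij\in H\times J'$. Since for $\bRho$ this cell is isolated, we can move $(H,J')\xto[\bRho,\beta]{}(H,J'\setminus\{j\})$ and invoke the inductive hypothesis.

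For the direction $(\Leftarrow)$, assume $\beta$ is $\bRho(I,J)$-admissible and fix non-empty $I'\subset I$ and $J'\subset J$ such that $(I',J')$ is $\beta$-closed in $(I,J)$. By hypothesis, pick a sequence of moves $(I',J)=(I',J^{(0)})\xto[\bRho,\beta]{}(I',J^{(1)})\xto[\bRho,\beta]{}\cdots\xto[\bRho,\beta]{}(I',J^{(k)})=(I',\emptyset)$. Since $J'\subset J^{(0)}$ and $J'\not\subset J^{(k)}$, there is a largest $u$ with $J'\subset J^{(u)}$, and the next move must delete some column $j\in J'$ witnessed by a $\beta(I',J^{(u)})$-blank cell $ij$ with $i\in I'$. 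Then $ij\in I'\times J'$, and the main remaining step is to show $ij$ is actually $\beta(I,J)$-blank.

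The heart of the argument is the case analysis of this last step, which I expect to be the only slightly delicate part. If $ij\beta=\blank$, then automatically $ij\beta(I,J)=\blank$. Otherwise, set $c:=ij\beta\in\Colours$; the assumption $ij\beta(I',J^{(u)})=\blank$ forces (via Lemma~\ref{lem:gridcol} or directly from Definition~\ref{d:betaIJ}) that the $\beta$-fibre $\{c\}\beta^-$ is not contained in $I'\times J^{(u)}$. If $\{c\}\beta^-\not\subset I\times J$, then already $ij\beta(I,J)=\blank$ and we are done. In the remaining case $\{c\}\beta^-\subset I\times J$ we have $ij\beta(I,J)=c$, and then $\beta$-closedness of $(I',J')$ in $(I,J)$ forces $\{c\}\beta^-\subset I'\times J'\subset I'\times J^{(u)}$, contradicting the previous sentence. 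Hence this case does not occur and $ij$ is indeed $\beta(I,J)$-blank, completing the proof.
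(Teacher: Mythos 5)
Your proof is correct and follows essentially the same route as the paper: the forward direction is the same induction on the number of columns, with your use of Corollary~\ref{cor:adm_subsets} plus the trivially closed subgrid standing in for the paper's Lemma~\ref{lem:ex_blank}, and the backward direction is the paper's argument (there phrased as a contradiction) of tracking a deletion sequence to the first move that removes a column of the closed subgrid. Your explicit case analysis showing the witnessing cell is $\beta(I,J)$-blank is a hands-on version of what the paper extracts from Lemma~\ref{lem:closed_res}, namely that on a closed subgrid the induced colouring is the restriction of $\beta(I,J)$.
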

\begin{proof}
  Suppose that $\beta$ is $(I,J)$-admissible.
  We show that $\beta$ is $\bRho(I,J)$-admissible by induction on $\card J$.
  We may clearly assume that $J\not= \emptyset$.
  Let $\emptyset \not= H\subset I$.
  By Lemma~\ref{lem:ex_blank}, there exists $hj\in H\times J$ with $hj\,
  \beta(H,J) = \blank$.
  Hence, $(H,J) \xto[\bRho,\beta]{} (H,J\setminus\{j\})$.
  By Corollary~\ref{cor:adm_subsets}, $\beta$ is $(I,J\setminus\{j\})$-admissible
  whence $(H,J\setminus\{j\}) \xto[\bRho,\beta]{*} (H,\emptyset)$ by induction.

  Conversely,
  suppose that $\beta$ is $\bRho(I,J)$-admissible
  but that $\beta$ is not $(I,J)$-admissible.
  Then $I\not= \emptyset \not= J$ and there exist non-empty $H \subset I$ and
  $\bar J\subset J$ such that (a) $(H,\bar J)$ is $\beta$-closed in $(I,J)$ but (b)
  $H\times \bar J$ does not contain any
  $\beta(I,J)$-blank elements.
  By Lemma~\ref{lem:closed_res}, $H\times\bar J$ then does not contain
  any $\beta(H,\bar J)$-blank elements either.
  As $\beta$ is $\bRho(I,J)$-admissible, $(H,J) \xto[\bRho,\beta]*
  (H,\emptyset)$.
  Choose a sequence
  \[
    (H,J) = (H,J^{(0)}) \xto[\bRho,\beta]{} (H,J^{(1)}) \xto[\bRho,\beta]{} \dotsb
    \xto[\bRho,\beta]{} (H,J^{(k)}) = (H,\emptyset)
  \]
  in which $J^{(u+1)} = J^{(u)}\setminus\{h_uj_u\}$ for a blank
  and isolated cell $h_uj_u \in H \times J^{(u)}$.
  Let $u$ be minimal with $j_u\in \bar J$.
  Hence, $\bar J \subset J^{(u)}$.
  Moreover, $h_uj_u$ is $\beta(H,J^{(u)})$-blank and hence $\beta(H,\bar
  J)$-blank.
  This contradicts the fact that
  no cell in $H\times \bar J$ is $\beta(H,\bar J)$-blank.
\end{proof}

Thanks to Proposition~\ref{prop:two_adm}, we see that
Corollary~\ref{cor:adm_blueprint}
generalises Corollary~\ref{cor:Rho_blueprint}.

\paragraph{Transpose colourings.}
Given a partial colouring $\beta\colon \NN\times \NN\to \Coloursb$,
define its \emph{transpose} to be $\beta^\top\colon \NN\times \NN \to \Coloursb$ via $(i,j)\beta^\top =
(j,i)\beta$.

\begin{lemma}
  \label{lem:transpose_adm}
  $\beta$ is $\bRho(I,J)$-admissible if and only if $\beta^\top$ is
  $\bRho(J,I)$-admissible.
\end{lemma}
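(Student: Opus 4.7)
The plan is to reduce the claim to the visibly symmetric combinatorial admissibility from Definition~\ref{d:Rho_adm} by invoking Proposition~\ref{prop:two_adm}, which identifies $\bRho(I,J)$-admissibility of $\beta$ with $(I,J)$-admissibility of $\beta$. Once this translation is made, what remains to prove is the statement that $\beta$ is $(I,J)$-admissible if and only if $\beta^\top$ is $(J,I)$-admissible, and in this form no reference to the asymmetric ``column-deletion'' move $\xto[\bRho,\beta]{}$ is needed.

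To establish this equivalence, I would introduce the coordinate swap $\tau\colon I\times J \to J\times I$, $(i,j)\mapsto (j,i)$, and observe that it transports the colouring $\beta$ on $I\times J$ to $\beta^\top$ on $J\times I$ by definition. First, I would check that subgrids correspond: a pair $(I',J')$ is $\beta$-closed in $(I,J)$ in the sense of Definition~\ref{d:colclosed} if and only if $(J',I')$ is $\beta^\top$-closed in $(J,I)$, since both conditions say that every colour appearing inside the chosen subgrid has its entire fibre inside it, and the two subgrids are $\tau$-images of each other. Second, I would observe that $\tau$ sends $\beta(I,J)$-blank cells to $\beta^\top(J,I)$-blank cells, and, in view of Lemma~\ref{lem:closed_res}, also $\beta(I',J')$-blank cells on closed subgrids to $\beta^\top(J',I')$-blank cells.

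With both correspondences in hand, Definition~\ref{d:Rho_adm} transports verbatim under $\tau$: the statement ``every non-empty $\beta$-closed subgrid of $(I,J)$ contains a $\beta(I,J)$-blank cell'' is synonymous with ``every non-empty $\beta^\top$-closed subgrid of $(J,I)$ contains a $\beta^\top(J,I)$-blank cell''. Combining this with Proposition~\ref{prop:two_adm} applied to both $(I,J)$ and $(J,I)$ yields the result. The hard part, if anything, is purely bookkeeping: I must keep careful track of the difference between the induced colouring $\beta(I,J)$ from Definition~\ref{d:betaIJ} and the further restriction $\beta(I',J')$ appearing in Definition~\ref{d:colclosed}, but Lemma~\ref{lem:closed_res} ensures that on closed subgrids these restrictions transport cleanly through $\tau$, so I do not anticipate any substantive obstacle.
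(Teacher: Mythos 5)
Your proposal is correct and follows exactly the paper's route: the paper also reduces via Proposition~\ref{prop:two_adm} to $(I,J)$-admissibility in the sense of Definition~\ref{d:Rho_adm} and then notes (as a one-line ``clearly'') that this symmetric condition transports under the coordinate swap to $(J,I)$-admissibility of $\beta^\top$. Your additional bookkeeping with $\tau$, closed subgrids, and Lemma~\ref{lem:closed_res} simply makes explicit what the paper leaves to the reader.
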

\begin{proof}
  Clearly, $\beta$ is $(I,J)$-admissible in the sense of
  Definition~\ref{d:Rho_adm} if and only if $\beta^\top$ is $(J,I)$-admissible.
  The claim thus follows from Proposition~\ref{prop:two_adm}.
\end{proof}

%%%%%%%%%%%%%%%%%%%%%%%%%%%%%%%%%%%%%%%%%%%%%%%%
\subsection{Symmetric board games}
%%%%%%%%%%%%%%%%%%%%%%%%%%%%%%%%%%%%%%%%%%%%%%%%

The following is a symmetric counterpart of Corollary~\ref{cor:Rho_zeta}.

\begin{cor}
  \label{cor:Sigma_zeta}
  Let $I,J\in \Powf(\NN)$. 
  Write $d = \card I$ and $e = \card J$.
  Define $\bSigma$ as in Example~\ref{ex:Sigma} for $R = \ZZ$.
  Define $\sS_\infty = \bigcup_{I,J\in \Powf(\NN)}\sS(I,J)$.
  Let $\beta\colon \sS_\infty \to \Coloursb$ be a $\bSigma(I,J)$-admissible partial colouring.
  Let~$\fO$ be a $\acute \ZZ$-algebra which is a compact \DVR{}.
  Then
  \[
    \Zeta^\ak_{(\sigma(I,J)\sslash\beta(I,J))^\fO}(T) = \frac{1-q^{-e}T}{(1-T)(1-q^{d-e}T)}.
  \]
\end{cor}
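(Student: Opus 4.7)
The plan is to mirror, verbatim, the three-step argument used to establish Corollary~\ref{cor:Rho_zeta} in \S\ref{ss:proof_rec}, substituting $\bSigma$ for $\bRho$ throughout. The symmetric setting adds no new difficulties because the two ingredients needed—constant rank of the underlying basic family and admissibility of the partial colouring—have already been set up in full generality.

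First, I would invoke Example~\ref{ex:Sigma_crk}, which shows that $\bSigma$ is $(I,J)$-constant of rank $0$ for all $I,J\in\Powf(\NN)$; the key point there is that, for any non-empty $H\subset I$ and any $h\in H$, one of the two generating families $\XX(\{h,j\},\sigma(H,J)) = X_h\std_j$ (when $h\notin J$) or $\XX(\{h\},\sigma(H,J)) = X_h\std_h$ (when $h\in J$) forces $\Omega^\times(H,J)=0$. Second, since $\beta$ is by hypothesis $\bSigma(I,J)$-admissible, which is Definition~\ref{d:adm} with level $\ell = 0$, I would apply Corollary~\ref{cor:adm_blueprint} to conclude that the restricted family $\bSigma\sslash\beta$ is also $(I,J)$-constant of rank $0$ over~$\acute\ZZ$, and that $\sigma(I,J)\sslash\beta(I,J)$ is an orbital subrepresentation of $\acute\sigma(I,J)$.

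Third, I would feed this into Corollary~\ref{cor:crk_zeta}, taking $\ell = 0$, $d=\card I$, $e=\card J$, and using that $\fO$ is a compact $\acute\ZZ$-algebra which is a \DVR{}. The formula
\[
\Zeta^{\ak}_{(\sigma(I,J)\sslash\beta(I,J))^\fO}(T) = \frac{1-q^{-e}T}{(1-T)(1-q^{d-e}T)}
\]
drops out immediately, completing the proof.

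There is essentially no obstacle: all the real work has been done in establishing Theorem~\ref{thm:crk}, Corollary~\ref{cor:adm_blueprint}, and Example~\ref{ex:Sigma_crk}. The only point worth a sentence of care is checking that the coefficient ring has been tracked correctly; namely, that Corollary~\ref{cor:adm_blueprint} delivers constancy of rank over $\acute R = \acute\ZZ$ (rather than $\ZZ$), so that the appeal to Corollary~\ref{cor:crk_zeta} is legitimate for the $\acute\ZZ$-algebra $\fO$. Apart from this bookkeeping, the proof is a direct concatenation of previously established results and does not require any new combinatorial input specific to the symmetric family $\bSigma$.
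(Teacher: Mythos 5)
Your proof is correct and follows exactly the paper's argument: the paper's own proof is the one-line combination of Example~\ref{ex:Sigma_crk}, Corollary~\ref{cor:adm_blueprint}, and Corollary~\ref{cor:crk_zeta}, which is precisely your three-step chain. The remark about tracking the base ring $\acute\ZZ$ is a sensible sanity check but introduces nothing beyond what those results already handle.
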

\begin{proof}
  Combine Example~\ref{ex:Sigma_crk}, 
  Corollary~\ref{cor:adm_blueprint}, 
  and Corollary~\ref{cor:crk_zeta}.
\end{proof}

\clearpage

\begin{ex}
  \quad
  \begin{enumerate}
  \item 
  Let $I = J = [4]$ and let $\mathsf{blue} \in \Colours$.
  let $\beta$ be the partial colouring of $\sS([4],[4])$ with
  $\{ 1,2\} \beta = \{ 2,3\} \beta = \{3,4\}\beta = \mathsf{blue}$ and such
  that all other points of $\sS([4],[4])$ are blank.
  The induced colouring of the grid $\cG([4],[4]) = [4]\times [4]$ is
  \[
    \begin{tikzpicture}
      [box/.style={rectangle,draw=black,thick, minimum size=1cm},
      scale=0.5, every node/.style={scale=0.5}
      ]
      \foreach \x in {0,1,2,3}{
        \foreach \y in {0,1,2,3}
        \node[box,fill=white] at (\x,\y){};
      }

      \node[box,fill=BlueViolet] at (1,3){};
      \node[box,fill=BlueViolet] at (2,2){};  
      \node[box,fill=BlueViolet] at (3,1){};  

      \node[box,fill=BlueViolet] at (0,2){};  
      \node[box,fill=BlueViolet] at (1,1){};  
      \node[box,fill=BlueViolet] at (2,0){};

      \node at (-1,0) {4};
      \node at (-1,1) {3};
      \node at (-1,2) {2};
      \node at (-1,3) {1};

      \node at (0,4) {1};
      \node at (1,4) {2};
      \node at (2,4) {3};
      \node at (3,4) {4};
    \end{tikzpicture}.
  \]
  We claim that this partial colouring is
  $\bSigma([4],[4])$-admissible.
  (For a substantial generalisation, see Example~\ref{ex:snakes}.)
  For example, since the diagonal cells are blank and isolated, 
  $([4],[4]) \xto[\bSigma,\beta]* ([4],\emptyset)$.
  Next, consider the case $H = \{2,3\}$ in Definition~\ref{d:adm}.
  Note that every cell class of $[4] \times [4]$ intersects
  $\cG(\{2,3\},[4]) = \{2,3\} \times [4]$.
  Hence, the induced colouring on $\{2,3\} \times [4]$ is
  \[
    \begin{tikzpicture}
      [box/.style={rectangle,draw=black,thick, minimum size=1cm},
      scale=0.5, every node/.style={scale=0.5}
      ]
      \foreach \x in {0,1,2,3}{
        \foreach \y in {0,1}
        \node[box,fill=white] at (\x,\y){};
      }
      \node[box,fill=BlueViolet] at (0,1){};  
      \node[box,fill=BlueViolet] at (1,0){};  
      \node[box,fill=BlueViolet] at (2,1){};  
      \node[box,fill=BlueViolet] at (3,0){};

      \node at (-1,1) {2};
      \node at (-1,0) {3};

      \node at (0,2) {1};
      \node at (1,2) {2};
      \node at (2,2) {3};
      \node at (3,2) {4};
    \end{tikzpicture}.
  \]
  
  Note that $(3,4)$ is an isolated \itemph{blue} cell.
  Hence, deleting the $4$th
  column using the isolated \itemph{blank} cell $(2,4)$
  results in the all-blank partial
  colouring on $\cG(\{2,3\},\{1,2,3\})$.
  Thus, $(\{2,3\},[4]) \xto[\bSigma,\beta]* (\{2,3\},\emptyset)$.
  We leave it to the reader to verify that $(H,J)\xto[\bSigma,\beta]*
  (H,\emptyset)$ for the remaining cases of $\emptyset \not= H\subset I$.
  Let $\fO$ be a compact \DVR{} and
  let $M = \{ x\in \Sym_4(\fO) : x_{12} + x_{23} + x_{34} = 0\}$.
  Then $\bSigma([4],[4])$-admissibility of $\beta$ and
  Corollary~\ref{cor:Sigma_zeta} e.g.\ imply that $\Zeta^\ak_M(T) =
  \frac{1-q^{-4}T}{(1-T)^2}$.
\item
  The partial colouring $\beta$ of $\sS([3],[3])$ whose associated grid is
  \[
    \begin{tikzpicture}
      [box/.style={rectangle,draw=black,thick, minimum size=1cm},
      scale=0.5, every node/.style={scale=0.5}
      ]
      \foreach \x in {0,1,2}{
        \foreach \y in {0,1,2}
        \node[box,fill=white] at (\x,\y){};
      }

      \node[box,fill=BlueViolet] at (0,1){};  
      \node[box,fill=BlueViolet] at (1,0){};  
      \node[box,fill=BlueViolet] at (1,2){};  
      \node[box,fill=BlueViolet] at (2,1){};  

      \node at (-1,0) {3};
      \node at (-1,1) {2};
      \node at (-1,2) {1};

      \node at (0,3) {1};
      \node at (1,3) {2};
      \node at (2,3) {3};
    \end{tikzpicture}.
  \]
  is \itemph{not} $\bSigma([3],[3])$-admissible. (Consider the case $H =
  \{2\}$.)
  The conclusion of Corollary~\ref{cor:Sigma_zeta} does not hold either.
  Indeed, letting $N := \{ x\in \Sym_3(\fO) : x_{12} + x_{23} = 0\}$,
  using \textsf{Zeta}, we find that if $\fO$ has sufficiently large residue
  characteristic, then
  \[
    \Zeta^\ak_N(T) = 
    \frac{1 + q^{-1} T - 4 q^{-2} T + q^{-3} T + q^{-4} T^{2}}{(1 - q^{-1}T)(1 - T)^2}.
  \]
\end{enumerate}
\end{ex}

%%%%%%%%%%%%%%%%%%%%%%%%%%%%%%%%%%%%%%%%%%%%%%%%
\subsection{Antisymmetric board games}
%%%%%%%%%%%%%%%%%%%%%%%%%%%%%%%%%%%%%%%%%%%%%%%%

As we already indicated in Example~\ref{ex:allblank_adm}, 
even the ``all blank'' partial colouring may or may not be
$\bGamma(I,J)$-admissible, depending on $I$ and $J$.
It turns out that this subtlety disappears if we consider admissibility of
level $1$.
Recall that $I\odot J = \{ ij \in I\times J : i\not= j\}$.

\begin{lemma}
  \label{lem:allblank_Gamma}
  Let $I,J\in\Powf(\NN)$. 
  Then the partial colouring $\blank\colon I\odot J\to\Coloursb$ with constant value
  $\blank$ is $\bGamma(I,J)$-admissible of level $1$.
\end{lemma}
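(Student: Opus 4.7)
The plan is to split on whether the row index set $H$ is contained in $J$. Recall from Example~\ref{ex:basic3} and Example~\ref{ex:isolated3} that for $\bGamma$, the grid is $\cG(H,J') = H \odot J' = \{hj : h \in H, j \in J', h \neq j\}$, and a cell $hj \in \cG(H,J')$ is isolated precisely when $jh \notin \cG(H,J')$, i.e.\ when $j \notin H$ or $h \notin J'$. Since the colouring is all-blank, every cell is automatically blank, so the moves $\xto[\bGamma,\blank]{}$ are controlled solely by the presence of isolated cells.

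For the first case, suppose $H \not\subset J$ and pick $h^* \in H \setminus J$. Then for every $J' \subset J$ with $J' \neq \emptyset$ and every $j \in J'$, we have $h^* \neq j$ (since $h^* \notin J$), so $h^* j \in \cG(H,J')$; moreover $h^* \notin J \supset J'$ makes $h^* j$ isolated. Hence we may delete the columns of $J$ one by one using cells in row $h^*$, showing $(H,J) \xto[\bGamma,\blank]{*} (H,\emptyset)$, and $D(H) = \emptyset$ suffices.

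For the second case, suppose $H \subset J$ and pick any $h^* \in H$; set $D(H) := \{h^*\}$. Then for every $J' \subset J \setminus \{h^*\}$ with $J' \neq \emptyset$ and every $j \in J'$, again $h^* \neq j$ (since $h^* \notin J'$), so $h^* j \in \cG(H,J')$, and $h^* \notin J'$ gives that $h^* j$ is isolated. Iteratively removing columns of $J \setminus \{h^*\}$ via cells in row $h^*$ yields $(H, J \setminus \{h^*\}) \xto[\bGamma,\blank]{*} (H,\emptyset)$, with $\card{D(H)} = 1$.

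Combining the two cases, $\card{D(H)} \le 1$ for every non-empty $H \subset I$, which is exactly $\bGamma(I,J)$-admissibility of level $1$. There is no real obstacle here, but the reason we cannot do better than level $1$ is precisely illustrated by the case $H \subset J$: the subgrid $\cG(H,H)$ contains no isolated cells at all, so one preliminary column must be sacrificed before reduction can begin.
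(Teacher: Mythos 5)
Your proof is correct and uses essentially the same mechanism as the paper's: find a row $h^*$ lying outside the remaining column set, observe that every cell of that row is isolated (and automatically blank), and sweep out all columns with it, sacrificing at most one column beforehand to create such a row. The only difference is the case split --- you distinguish $H\subset J$ from $H\not\subset J$ (so $D(H)=\emptyset$ whenever some $h^*\in H\setminus J$ exists), whereas the paper distinguishes $H\cap J=\emptyset$ from $H\cap J\neq\emptyset$ and delegates the first case to Example~\ref{ex:allblank_adm}; both are valid since only $\card{D(H)}\le 1$ is required.
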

\begin{proof}
  Let $H\subset I$ be non-empty.
  If $H \cap J= \emptyset$, then $(H,J)\xto[\bGamma,\blank]* (H,\emptyset)$
  by Example~\ref{ex:allblank_adm} and
  since $\xto[\bGamma,\blank]*$ and $\xto[\bRho,\blank]*$ agree for pairs of
  disjoint sets.
  (Cf.\ Lemma~\ref{lem:sym_vs_rec} below.)
  Let $j\in H\cap J$.
  Then every cell in the $j$th row of $H\odot (J\setminus \{j\})$ is isolated
  (and blank) whence $(H,J\setminus \{j\}) \xto[\bGamma,\blank]* (H,\emptyset)$.
\end{proof}

We already alluded to the following right before Theorem~\ref{thm:crk} in \S\ref{ss:crk}.

\begin{cor}
  \label{cor:Gamma_crk}
  Let $I\subset J\in \Powf(\NN)$ with $J \not= \emptyset$.
  Then $\bGamma$ is $(I,J)$-constant of rank $1$.
\end{cor}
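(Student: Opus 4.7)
The plan is to verify the two conditions of Definition~\ref{d:IJconst} for $\ell=1$: namely, that $\Fit_0(\Omega(I,J)) = \langle 0\rangle$ and that $\Fit_1(\Omega^\times(H,J)) = \langle 1\rangle$ for every non-empty $H \subset I$, where $\Omega(I,J) := \Orbit(\gamma(I,J))$ and $\Omega^\times(I,J) := \Omega(I,J)\otimes_{R[X_I]}R[X_I^{\pm 1}]$. The first of these is immediate: since $I\subset J$ and $J\neq\emptyset$, Lemma~\ref{lem:perp} applies verbatim and delivers $\Fit_0(\Omega(I,J)) = \langle 0\rangle$.

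For the second condition, I will mimic the argument of Lemma~\ref{lem:allblank_Gamma}, replaying it directly for $\bGamma$ over $R$ (rather than over the enlarged ring $\acute R$). Fix non-empty $H\subset I$. Since $H\subset I\subset J$, we have $H\cap J = H \neq \emptyset$, so we may pick $j\in H$. Consider $\cG(H,J\setminus\{j\}) = H\odot(J\setminus\{j\})$. For each $k\in J\setminus\{j\}$ with $jk\in\cG(H,J\setminus\{j\})$, the only possible cell class partner is $kj$; but $kj\notin H\odot(J\setminus\{j\})$ because $j\notin J\setminus\{j\}$. Hence every cell $(j,k)$ in row $j$ is $(H,J\setminus\{j\})$-isolated, and this status is preserved under further column deletions.

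From here I invoke Corollary~\ref{cor:colred} iteratively: for each such isolated cell, the corresponding basis vector $\{j,k\}\in\sE(H,J\setminus\{j\})$ produces $\XX(\{j,k\},\gamma(H,J\setminus\{j\})) = \pm X_j\std_k$ in $\orbit(\gamma(H,J\setminus\{j\}))$, and invertibility of $X_j$ in $R[X_H^{\pm 1}]$ (which holds because $j\in H$) places $\std_k$ inside $\orbit^\times(\gamma(H,J\setminus\{j\}))$. Corollary~\ref{cor:colred} then yields $\Omega^\times(H,J\setminus\{j\})\approx\Omega^\times(H,J\setminus\{j,k\})$. Doing this for every $k\in J\setminus\{j\}$ (with the preservation of isolatedness noted above) gives $\Omega^\times(H,J\setminus\{j\}) \approx \Omega^\times(H,\emptyset) = 0$. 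Finally, applying Corollary~\ref{cor:shiftfit} with $J' = J\setminus\{j\}$ (so $\card{J\setminus J'}=1$) gives $\Fit_1(\Omega^\times(H,J)) = \langle 1\rangle$, as required.

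The only subtlety I expect is conceptual rather than technical: the machinery of \S\ref{ss:adm_grid} (moves, admissibility, Lemmas~\ref{lem:rediso}, \ref{lem:adm_beta}, \ref{lem:allblank_Gamma}) is formulated over $\acute R$ because it is designed to accommodate non-trivial partial colourings. Invoking that machinery directly would give the conclusion for $\acute\Omega^\times$ rather than $\Omega^\times$, and one would then need to descend via the ring section $\acute R \to R$ sending $\acute u_x \mapsto 1$. The shortcut taken above is to observe that when $\beta = \blank$ the relation modules are trivially the full free modules, so that $\bGamma\sslash\blank$ is just $\bGamma$ base-changed to $\acute R$; consequently the whole argument of Lemmas~\ref{lem:rediso}--\ref{lem:allblank_Gamma} can be rerun word-for-word over the original ring $R$, using only that $\bGamma$ is itself a surjective coherent family so that Corollary~\ref{cor:colred} and Corollary~\ref{cor:shiftfit} apply without modification.
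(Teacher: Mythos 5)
Your proof is correct and follows essentially the same route as the paper's, which simply combines Lemma~\ref{lem:perp} (for the vanishing of $\Fit_0$), Lemma~\ref{lem:allblank_Gamma} (whose second case, $j\in H\cap J$, is exactly your observation that row $j$ of $H\odot(J\setminus\{j\})$ consists of isolated blank cells), and Lemma~\ref{lem:adm_beta} (which packages your iterated use of Corollary~\ref{cor:colred} together with Corollary~\ref{cor:shiftfit}). Your closing remark about working over $R$ rather than $\acute R$ is a legitimate and slightly more careful way to handle the fact that the admissibility machinery is stated for $\acute\Omega^\times$ while Definition~\ref{d:IJconst} for $\bGamma$ concerns $\Omega^\times$; the alternative descent via Proposition~\ref{prop:fitbasic}\ref{prop:fitbasic3} and the retraction $\acute u_x\mapsto 1$ works equally well.
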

\begin{proof}
  Combine Lemmas~\ref{lem:perp}, \ref{lem:allblank_Gamma}, and \ref{lem:adm_beta}.
\end{proof}

The following is thus an antisymmetric counterpart of
Corollary~\ref{cor:Rho_zeta} and Corollary~\ref{cor:Sigma_zeta}.

\begin{cor}
  \label{cor:Gamma_zeta}
  Define $\sE_\infty = \bigcup_{I,J\in \Powf(\NN)}\sE(I,J)$.
  Let $\beta\colon \sE_\infty \to \Coloursb$ be a partial colouring.
  Let $I,J\in \Powf(\NN)$ with $I\subset J$. 
  Suppose that $\beta$ is $\bGamma(I,J)$-admissible of level $1$.
  Write $d = \card I$ and $e = \card J$.
  Let $\fO$ be a $\acute \ZZ$-algebra which is a compact \DVR{}.
  Then
  \[
    \Zeta^\ak_{(\gamma(I,J)\sslash\beta)^\fO}(T) = \frac{1-q^{1-e}T}{(1-T)(1-q^{d+1-e}T)}.
  \]
\end{cor}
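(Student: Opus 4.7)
The strategy mirrors the proofs of Corollaries~\ref{cor:Rho_zeta} and~\ref{cor:Sigma_zeta}, except that here the governing ``rank'' is $\ell = 1$ rather than $\ell = 0$. The plan is to combine three results already at our disposal: (i) the constant-rank statement for $\bGamma$ from Corollary~\ref{cor:Gamma_crk}, (ii) the ``admissibility promotes to orbitality'' mechanism of Corollary~\ref{cor:adm_blueprint}, and (iii) the explicit formula coming from Corollary~\ref{cor:crk_zeta}.

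In detail, I would first assume $J \neq \emptyset$ (the case $I \subset J = \emptyset$ gives $d = e = 0$ and the module representation is trivial, so $\Zeta^\ak = \frac{1}{1-T}$, which agrees with the right-hand side after the cancellation $\frac{1-qT}{(1-T)(1-qT)}$). With $J\neq\emptyset$ and $I\subset J$, Corollary~\ref{cor:Gamma_crk} tells us that $\bGamma$ is $(I,J)$-constant of rank $1$. Combined with the assumed $\bGamma(I,J)$-admissibility of $\beta$ of level $1$, Corollary~\ref{cor:adm_blueprint}(i) then yields that $\bGamma \sslash \beta$ is itself $(I,J)$-constant of rank $1$; moreover, by part~(ii) of the same corollary, $\gamma(I,J) \sslash \beta(I,J)$ is an orbital subrepresentation of $\gamma(I,J)^{\acute R}$.

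Finally, applying Corollary~\ref{cor:crk_zeta} with $\ell = 1$, $d = \card I$, $e = \card J$ to the coherent family $\bGamma \sslash \beta$ over the compact \DVR{} $\fO$ gives
\[
\Zeta^\ak_{(\gamma(I,J)\sslash\beta(I,J))^\fO}(T) = \frac{1 - q^{1-e}T}{(1-T)(1-q^{1+d-e}T)},
\]
which is exactly the claimed formula. No substantial obstacle arises: everything has been set up so that this corollary is a direct specialisation, in exact parallel to how Corollary~\ref{cor:Sigma_zeta} was obtained from Example~\ref{ex:Sigma_crk} together with Corollaries~\ref{cor:adm_blueprint} and~\ref{cor:crk_zeta}. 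The only minor wrinkle is checking the trivial boundary case $J = \emptyset$ and verifying that the hypothesis $I \subset J$ (as opposed to arbitrary $I, J$) is precisely what is needed to invoke Corollary~\ref{cor:Gamma_crk}, since this is where the rank-$1$ phenomenon for $\bGamma$ (coming ultimately from Lemma~\ref{lem:perp}) lives.
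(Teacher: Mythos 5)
Your proof is correct and follows exactly the paper's own route: for $J\neq\emptyset$ it combines Corollary~\ref{cor:Gamma_crk} (rank-$1$ constancy), Corollary~\ref{cor:adm_blueprint} (admissibility of level $1$ promotes this to $\bGamma\sslash\beta$), and Corollary~\ref{cor:crk_zeta} with $\ell=1$, while the case $I=J=\emptyset$ is handled trivially. Nothing further is needed.
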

\begin{proof}
  For $J\not= \emptyset$,
  combine Corollary~\ref{cor:Gamma_crk}, 
  Corollary~\ref{cor:adm_blueprint}, 
  and \ref{cor:crk_zeta}.
  The case $I = J = \emptyset$ is trivial.
\end{proof}

\begin{ex}[Rainbows]
  \label{ex:snakes}
  Fix a sequence $c_1,c_2,\dotsc\in \Colours$ of different colours.
  For $0\le b< d$, let $\chi_{b,d}$ be the partial colouring of
  $\sE([d],[d])$ (see Example~\ref{ex:graph}) such that
  $\{ i, i+a\} \chi_{b,d} = c_a$  for $a,i\ge 1$ with $a\le b$ and $i+a \le
  d$.
  Hence, $b$ is precisely the number of different colours used.
  See Figure~\ref{fig:rainbow} for an illustration of these partial colourings.

  \begin{figure}[h]
    \centering
    \begin{subfigure}[t]{0.4\textwidth}
      \centering
      \begin{tikzpicture}
        [box/.style={rectangle,draw=black,thick, minimum size=1cm},
        scale=0.3, every node/.style={scale=0.3}
        ]
        \foreach \x in {0,1,2,3,4,5,6}{
          \foreach \y in {0,1,2,3,4,5,6}
          \node[box,fill=white] at (\x,\y){};
        }

        \node[thick,cross out,draw] at (0,6){};
        \node[thick,cross out,draw] at (1,5){};
        \node[thick,cross out,draw] at (2,4){};
        \node[thick,cross out,draw] at (3,3){};
        \node[thick,cross out,draw] at (4,2){};
        \node[thick,cross out,draw] at (5,1){};
        \node[thick,cross out,draw] at (6,0){};

        \node[box,fill=Blue] at (0,5){};
        \node[box,fill=Blue] at (1,4){};
        \node[box,fill=Blue] at (2,3){};
        \node[box,fill=Blue] at (3,2){};
        \node[box,fill=Blue] at (4,1){};
        \node[box,fill=Blue] at (5,0){};
        \node[box,fill=Blue] at (1,6){};
        \node[box,fill=Blue] at (2,5){};
        \node[box,fill=Blue] at (3,4){};
        \node[box,fill=Blue] at (4,3){};
        \node[box,fill=Blue] at (5,2){};
        \node[box,fill=Blue] at (6,1){};

        \node[box,fill=Cerulean] at (0,4){};
        \node[box,fill=Cerulean] at (1,3){};
        \node[box,fill=Cerulean] at (2,2){};
        \node[box,fill=Cerulean] at (3,1){};
        \node[box,fill=Cerulean] at (4,0){};
        \node[box,fill=Cerulean] at (2,6){};
        \node[box,fill=Cerulean] at (3,5){};
        \node[box,fill=Cerulean] at (4,4){};
        \node[box,fill=Cerulean] at (5,3){};
        \node[box,fill=Cerulean] at (6,2){};

        \node[box,fill=LimeGreen] at (0,3){};
        \node[box,fill=LimeGreen] at (1,2){};
        \node[box,fill=LimeGreen] at (2,1){};
        \node[box,fill=LimeGreen] at (3,0){};
        \node[box,fill=LimeGreen] at (3,6){};
        \node[box,fill=LimeGreen] at (4,5){};
        \node[box,fill=LimeGreen] at (5,4){};
        \node[box,fill=LimeGreen] at (6,3){};

        \node[box,fill=Yellow] at (0,2){};
        \node[box,fill=Yellow] at (1,1){};
        \node[box,fill=Yellow] at (2,0){};
        \node[box,fill=Yellow] at (4,6){};
        \node[box,fill=Yellow] at (5,5){};
        \node[box,fill=Yellow] at (6,4){};
      \end{tikzpicture}.
      \caption{$\chi_{4,7}$ (admissible of level $1$)}
    \end{subfigure}
    \begin{subfigure}[t]{0.4\textwidth}
      \centering
      \begin{tikzpicture}
        [box/.style={rectangle,draw=black,thick, minimum size=1cm},
        scale=0.3, every node/.style={scale=0.3}
        ]
        \foreach \x in {0,1,2,3,4,5,6}{
          \foreach \y in {0,1,2,3,4,5,6}
          \node[box,fill=white] at (\x,\y){};
        }

        \node[thick,cross out,draw] at (0,6){};
        \node[thick,cross out,draw] at (1,5){};
        \node[thick,cross out,draw] at (2,4){};
        \node[thick,cross out,draw] at (3,3){};
        \node[thick,cross out,draw] at (4,2){};
        \node[thick,cross out,draw] at (5,1){};
        \node[thick,cross out,draw] at (6,0){};

        \node[box,fill=Blue] at (0,5){};
        \node[box,fill=Blue] at (1,4){};
        \node[box,fill=Blue] at (2,3){};
        \node[box,fill=Blue] at (3,2){};
        \node[box,fill=Blue] at (4,1){};
        \node[box,fill=Blue] at (5,0){};
        \node[box,fill=Blue] at (1,6){};
        \node[box,fill=Blue] at (2,5){};
        \node[box,fill=Blue] at (3,4){};
        \node[box,fill=Blue] at (4,3){};
        \node[box,fill=Blue] at (5,2){};
        \node[box,fill=Blue] at (6,1){};

        \node[box,fill=Cerulean] at (0,4){};
        \node[box,fill=Cerulean] at (1,3){};
        \node[box,fill=Cerulean] at (2,2){};
        \node[box,fill=Cerulean] at (3,1){};
        \node[box,fill=Cerulean] at (4,0){};
        \node[box,fill=Cerulean] at (2,6){};
        \node[box,fill=Cerulean] at (3,5){};
        \node[box,fill=Cerulean] at (4,4){};
        \node[box,fill=Cerulean] at (5,3){};
        \node[box,fill=Cerulean] at (6,2){};

        \node[box,fill=LimeGreen] at (0,3){};
        \node[box,fill=LimeGreen] at (1,2){};
        \node[box,fill=LimeGreen] at (2,1){};
        \node[box,fill=LimeGreen] at (3,0){};
        \node[box,fill=LimeGreen] at (3,6){};
        \node[box,fill=LimeGreen] at (4,5){};
        \node[box,fill=LimeGreen] at (5,4){};
        \node[box,fill=LimeGreen] at (6,3){};

        \node[box,fill=Yellow] at (0,2){};
        \node[box,fill=Yellow] at (1,1){};
        \node[box,fill=Yellow] at (2,0){};
        \node[box,fill=Yellow] at (4,6){};
        \node[box,fill=Yellow] at (5,5){};
        \node[box,fill=Yellow] at (6,4){};

        \node[box,fill=BurntOrange] at (5,6){};
        \node[box,fill=BurntOrange] at (6,5){};
        \node[box,fill=BurntOrange] at (0,1){};
        \node[box,fill=BurntOrange] at (1,0){};

        \node[box,fill=Red] at (6,6){};
        \node[box,fill=Red] at (0,0){};
      \end{tikzpicture}
      \caption{$\chi_{6,7}$ (non-admissible of level $1$)}
    \end{subfigure}
    \caption{Examples of ``rainbow grids''}
    \label{fig:rainbow}
  \end{figure}
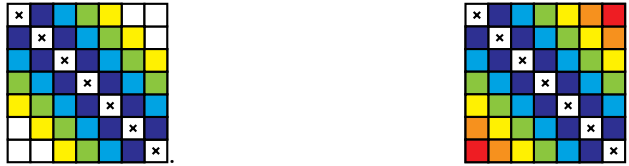
  
  Let $b \le d-3$.
  In the following, we outline a proof that $\chi_{b,d}$ is
  $\bGamma([d],[d])$-admissible of level $1$.
  We proceed by reduction to all-blank partial colourings from
  Lemma~\ref{lem:allblank_Gamma}.
  First, it clearly suffices to consider the case $b = d-3$.
  For $d=3$, $\chi_{b,d} = \chi_{0,3}$ is the all-blank partial colouring.
  Let $d \ge 4$. Let $H\subset [d]$ be non-empty.
  We consider various cases depending on the value of $i := \min(H)$.
  The crucial observation here is that each of the $d-3$ colours appears in
  rows and columns $1$, $2$, $d-1$, and $d$.
  \begin{itemize}
  \item
    Suppose that $i = 1$ or $i = 2$.
    If some $(u,v) \in \mathsf{Corner} := \{ (1,d-1),(1,d),(2,d)\}$ is an isolated
    (necessarily blank) cell of $\cG(H,[d])$,
    then each cell of $\cG(H,[d]\setminus\{v\})$ is blank and the proof of
    Lemma~\ref{lem:allblank_Gamma} finishes this case.
    Otherwise, $d\in H$ and we choose $D(H) := \{ d\}$.
    Both $(d,1)$ and $(d,2)$ are isolated blank cells of $\cG(H,[d-1])$.
    Thus, $(H,[d]\setminus D(H)) \xto[\bGamma,\chi_{d-3,d}]* (H,\{3,\dotsc,d-1\})$.
    Clearly, all cells with colour $c_1$ in $\cG([d],[d])$ are blank in
    $\cG(H,\{3,\dotsc,d-1\})$.
    We conclude that one of $(1,d-1)$ and $(2,3)$ is an isolated blank cell of
    $\cG(H,\{3,\dotsc,d-1\})$.
    By repeatedly using such isolated blank cells,
    it now easily follows that $(H,\{3,\dotsc,d-1\}) \xto[\bGamma,\chi_{d-3,d}]* (H,\emptyset)$.
  \item
    Suppose that $i\ge 3$.
    Then all cells with one of the colours $c_1,\dotsc,c_{i-2}$ in
    $\cG([d],[d])$ are blank in $\cG(H,[d])$.
    Hence, $(i,i-1)$ is an isolated blank cell of $\cG(H,[d])$
    and $c_{i-1}$ is absent from $\cG(H,[d]\setminus\{i-1\})$.
    Continuing in this fashion, we obtain $(H,[d])
    \xto[\bGamma,\chi_{d-3,d}]* (H,\{i,\dotsc,d\})$ and
    all cells of $\cG(H,\{i,\dotsc,d\})$ are blank.
    We may thus again proceed as in the proof of Lemma~\ref{lem:allblank_Gamma}.
\end{itemize}
\end{ex}

\begin{rem}
  For $\bSigma$, let $\chi'_{b,d}$ be the partial colouring of $\sS([d],[d])$
  (see Example~\ref{ex:Sigma})
  which assigns the same colours as $\chi_{b,d}$ to off-diagonal entries of
  the grid $[d]\times [d]$ and which is blank along the diagonal.
  Using Example~\ref{ex:allblank_adm} in place of
  Lemma~\ref{lem:allblank_Gamma}, a variation of our arguments from above
  shows that $\chi'_{b,d}$ is $\bSigma([d],[d])$-admissible whenever $b
  \le d-3$.
\end{rem}

%%%%%%%%%%%%%%%%%%%%%%%%%%%%%%%%%%%%%%%%%%%%%%%%
\subsection{Proofs of Corollary~\ref{cor:asym}--\ref{cor:sym}}
%%%%%%%%%%%%%%%%%%%%%%%%%%%%%%%%%%%%%%%%%%%%%%%% 

\paragraph{Generalities.}
Our proofs of Corollary~\ref{cor:asym}--\ref{cor:sym} share several common
steps. We will therefore initially consider both cases at once.
Recall the definition of $\bGamma$ and $\bSigma$ from
Examples~\ref{ex:graph}--\ref{ex:Sigma}, including the definitions of the sets
$\sS(I,J)$ and $\sE(I,J)$.
Further recall the descriptions of the associated grids from
Example~\ref{ex:basic3} and of isolated cells from
Example~\ref{ex:isolated3}.

\begin{lemma}
  \label{lem:sym_vs_rec}
  Let $U,W \in \Powf(\NN)$ with $U\cap W = \emptyset$.
  Let $\hat\beta\colon \sS(U,W) \to \Coloursb$ (resp.~$\hat\beta\colon
  \sE(U,W)\to\Coloursb$) be a partial colouring.
  Let
  $\beta\colon U\times W \to \Coloursb$
  (resp.~$\beta\colon U \odot W \to \Coloursb$)
  be the induced partial colouring on
  the associated grid w.r.t.\ $\bSigma$ (resp.~$\bGamma$).
  Let $W'\subset W$.
  Then $(U,W) \xto[\bRho,\beta]* (U,W')$ if and only if
  $(U,W) \xto[\bSigma,\hat\beta]* (U,W')$
  (resp.~$(U,W) \xto[\bGamma,\hat\beta]* (U,W')$).
\end{lemma}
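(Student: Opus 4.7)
\medskip

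\noindent\textbf{Proof plan.} The idea is that the hypothesis $U \cap W = \emptyset$ collapses all distinctions between the three families along the relevant grids, so that the moves available in each setting are literally the same. Concretely, I would first verify that for every $W'' \subset W$ the three grids $\cG(U,W'')$ taken with respect to $\bRho$, $\bSigma$, and $\bGamma$ all coincide with $U \times W''$ as sets. For $\bRho$ and $\bSigma$ this is Example~\ref{ex:basic3}, and for $\bGamma$ it is because $U \odot W'' = U \times W''$ whenever $U \cap W'' = \emptyset$. Moreover, I would observe that every cell $(u,w) \in U \times W''$ is $(U,W'')$-isolated in each of the three families: the potential partner $(w,u)$ that could enlarge the cell class of $(u,w)$ for $\bSigma$ or $\bGamma$ would require $w \in U$ and $u \in W$, which is ruled out by $U \cap W = \emptyset$. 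Consequently, each cell class is a singleton, exactly as for $\bRho$.

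The second step would be to show that the induced grid colourings agree. Since $U \cap W = \emptyset$, the set $\sS(U,W'')$ (resp.\ $\sE(U,W'')$) consists precisely of the two-element sets $\{u,w\}$ with $u \in U$ and $w \in W''$, so the map $\varepsilon(U,W'') \colon \cG(U,W'') \to \sS(U,W'')$ (resp.\ $\sE(U,W'')$) is the bijection $(u,w) \mapsto \{u,w\}$. Under this bijection the $\hat\beta$-fibre of any colour $c$ corresponds to the $\beta$-fibre of $c$, so $\{c\}\hat\beta^- \subset \sS(U,W'')$ if and only if $\{c\}\beta^- \subset U \times W''$. By Definition~\ref{d:betaIJ}, this means that the induced partial colourings $\beta(U,W'')$ and $\hat\beta(U,W'')$, regarded as colourings of the common grid $U \times W''$, coincide.

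Combining these two steps, a cell $(u,w) \in U \times W''$ is $(U,W'')$-isolated and $\beta(U,W'')$-blank if and only if it is $(U,W'')$-isolated and $\hat\beta(U,W'')$-blank. By Definition~\ref{d:reduce}, the elementary move relations $\xto[\bRho,\beta]{}$ and $\xto[\bSigma,\hat\beta]{}$ (resp.\ $\xto[\bGamma,\hat\beta]{}$) therefore coincide when restricted to pairs $(U,W'')$ with $W'' \subset W$. Passing to reflexive transitive closures and restricting to sequences that begin at $(U,W)$ (which necessarily stay inside this set of pairs, since moves only remove columns), I obtain the claimed equivalence $(U,W) \xto[\bRho,\beta]{*} (U,W')$ iff $(U,W) \xto[\bSigma,\hat\beta]{*} (U,W')$ (resp.\ for $\bGamma$). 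There is no real obstacle here; the only point requiring care is the bookkeeping to ensure that the defining condition ``$\beta$-fibre contained in $\sB(U,W'')$'' translates correctly between the two setups, which the bijection $(u,w) \leftrightarrow \{u,w\}$ makes transparent.
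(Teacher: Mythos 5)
Your proposal is correct and follows essentially the same route as the paper's proof: observe that disjointness of $U$ and $W$ forces every cell of the relevant grid to be isolated and makes the induced colourings agree (the paper cites Lemma~\ref{lem:gridcol} where you give the explicit fibre bijection $(u,w)\leftrightarrow\{u,w\}$), so the elementary move relations coincide and the claim follows by passing to transitive closures. No gaps.
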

\begin{proof}
  As $U\cap W = \emptyset$, each cell of $U\times W$ (resp.\ $U\odot W$)
  is isolated w.r.t.\ $\bSigma$ (resp.~$\bGamma$).
  The claim follows since
  by Lemma~\ref{lem:gridcol}, $\xto[\bSigma,\hat\beta]{}$
  (resp.\ $\xto[\bGamma,\hat\beta]{}$) and $\xto[\bRho,\beta]{}$ coincide when
  restricted to pairs of disjoint sets.
\end{proof}

\begin{defn}
  \label{d:betahat}
Let $I,J\in \Powf(\NN)$ with $I\cap J = \emptyset$.
Write $V := I\cup J$.
Let $\beta\colon I\times J \to \Coloursb$ be a partial colouring.
Define $\hat\beta\colon \sS(V,V) \to \Coloursb$
(resp.~$\hat\beta\colon \sE(V,V) \to \Coloursb$)
as follows:
\[
  x \hat\beta = \begin{cases}
    (i,j)\beta, & \text{if $x= \{ i,j\}$ for $i\in I$ and $j\in J$},\\
    \blank, & \text{otherwise}.
  \end{cases}
\]
As before, we also let $\hat\beta$ denote the induced partial colouring of
the associated grid $\cG(V,V) = V\times V$ (resp.\ $\cG(V,V) = V\odot V$)
w.r.t.\ $\bSigma$ (resp.\ $\bGamma$).
That is, $(i,j) \hat\beta = (j,i)\hat\beta = (i,j) \beta$ for $i\in I$ and
$j\in J$ and $x\hat\beta = \blank$ for $x\not\in (I \times J) \cup (J\times
I)$.
\end{defn}

\begin{ex}
  Let $I = \{1,2\}$, $J = \{3,4,5\}$, and let $\beta$ be the
  $\bRho(I,J)$-admissible partial colouring of $I\times J$ given by
  \[
    \begin{tikzpicture}
      [box/.style={rectangle,draw=black,thick, minimum size=1cm},
      scale=0.4, every node/.style={scale=0.4}
      ]
      \foreach \x in {0,1,2}{
        \foreach \y in {0,1}
        \node[box,fill=white] at (\x,\y){};
      }
      \node[box,fill=ourdark] at (0,0){};  
      \node[box,fill=ourdark] at (2,1){};

      \node[box,fill=YellowOrange] at (0,1){};  
      \node[box,fill=YellowOrange] at (1,0){};  
      \node[box,fill=YellowOrange] at (1,1){};  

      \node at (-1,0) {2};
      \node at (-1,1) {1};

      \node at (0,2) {3};
      \node at (1,2) {4};
      \node at (2,2) {5};
    \end{tikzpicture}.
  \]
  Then the induced partial colouring $\hat\beta$ on the grid $(I\cup J)\times
  (I\cup J) = [5]\times [5]$ w.r.t.\ $\bSigma$ is
  \[
    \begin{tikzpicture}
      [box/.style={rectangle,draw=black,thick, minimum size=1cm},
      scale=0.4, every node/.style={scale=0.4}
      ]
      \foreach \x in {0,1,2,3,4}{
        \foreach \y in {0,1,2,3,4}
        \node[box,fill=white] at (\x,\y){};
      }
      \node[box,fill=ourdark] at (2,3){};  
      \node[box,fill=ourdark] at (4,4){};

      \node[box,fill=YellowOrange] at (2,4){};  
      \node[box,fill=YellowOrange] at (3,4){};  
      \node[box,fill=YellowOrange] at (3,3){};  

      \node[box,fill=ourdark] at (0,0){};  
      \node[box,fill=ourdark] at (1,2){};

      \node[box,fill=YellowOrange] at (0,2){};  
      \node[box,fill=YellowOrange] at (0,1){};  
      \node[box,fill=YellowOrange] at (1,1){};  

      \node at (-1,0) {5};
      \node at (-1,1) {4};
      \node at (-1,2) {3};
      \node at (-1,3) {2};
      \node at (-1,4) {1};

      \node at (0,5) {1};
      \node at (1,5) {2};
      \node at (2,5) {3};
      \node at (3,5) {4};
      \node at (4,5) {5};
    \end{tikzpicture}.
  \]
  By deleting the diagonal cells from this grid, we obtain the colouring
  associated with $\beta$ on the grid $[5]\odot [5]$ w.r.t\ $\bGamma$.
  It is of course no coincidence that the construction of $\hat\beta$ from
  $\beta$ is reminiscent of the definitions of $\SymBoard_{d\times e}$ and $\AltBoard_{d\times
    e}$ in terms of $\Board_{d\times e}$ in \S\ref{ss:intro_relations}.
\end{ex}

\paragraph{Towards Corollary~\ref{cor:sym}.}
Let the notation and $\hat\beta\colon \sS(V,V) \to \Coloursb$ be as in
Definition~\ref{d:betahat}.
The following is the last missing piece towards Corollary~\ref{cor:sym}.

\begin{prop}
  \label{prop:embedded_Sigma}
  $\hat\beta$ is $\bSigma(I\cup J,I\cup J)$-admissible if and only if
  $\beta$ is $\bRho(I,J)$-admissible.
\end{prop}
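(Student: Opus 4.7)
The approach is to work directly with the board-game moves of Definition~\ref{d:reduce} and Definition~\ref{d:adm}, leveraging the following structural observations about $(\bSigma,\hat\beta)$ on $\cG(V,V) = V \times V$. All diagonal cells $(v,v)$ are blank and isolated; all off-diagonal cells with both coordinates in $I$ (or both in $J$) are blank; the remaining cells form cell classes $\{(i,j),(j,i)\}$ for $i \in I$, $j \in J$, each of $\hat\beta$-colour $\beta((i,j))$. This yields a natural bijection between $\hat\beta$-cell classes of colour $c$ in $\cG(V,V)$ and $\beta$-cells of colour $c$ in $I \times J$, which serves as the main book-keeping device throughout.

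For sufficiency, assume $\beta$ is $\bRho(I,J)$-admissible and fix non-empty $H \subset V$; write $H_I = H \cap I$ and $H_J = H \cap J$. Deleting the $H$-columns via blank isolated diagonal cells first gives $(H,V) \xto[\bSigma,\hat\beta]{*} (H, V \setminus H)$. When both $H_I$ and $H_J$ are non-empty, every column of $V \setminus H$ can be deleted using a blank isolated cell from the $I \times I$ or $J \times J$ block (isolated because the mirror cell lies outside $H \times (V \setminus H)$), ending at $(H,\emptyset)$. When $H \subset I$, deleting $I \setminus H$ using blank cells in the $I \times I$ block leaves state $(H, J)$. Here $H \cap J = \emptyset$ makes every cell of $\cG(H,J) = H \times J$ $\bSigma$-isolated, and via Lemma~\ref{lem:gridcol} and the cell-class bijection the induced $\hat\beta$-colouring on $H \times J$ agrees with $\beta(H,J)$: a class $\{(i,j'),(j',i)\}$ of colour $c$ meets $H \times J$ iff $i \in H$, which is exactly the condition $\{c\}\beta^- \subset H \times J$ characterising $\beta(H,J)$. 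Lemma~\ref{lem:sym_vs_rec} then identifies the $\bSigma$- and $\bRho$-games on $(H,J)$, and $\bRho(I,J)$-admissibility of $\beta$ yields $(H,J) \xto[\bRho,\beta]{*} (H,\emptyset)$. The case $H \subset J$ is symmetric, using Lemma~\ref{lem:transpose_adm} to pass to $\beta^\top$.

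For necessity, assume $\hat\beta$ is $\bSigma(V,V)$-admissible, fix non-empty $H' \subset I$, and let $(H',V) = (H',V_0) \xto[\bSigma,\hat\beta]{} \cdots \xto[\bSigma,\hat\beta]{} (H',V_k) = (H',\emptyset)$ be a witnessing sequence with $V_{t-1} \setminus V_t = \{c_t\}$ arising from an isolated blank cell $(h_t,c_t)$ of $\cG(H',V_{t-1})$. Let $t_1 < \cdots < t_m$ enumerate the indices with $c_{t_s} \in J$; since $V_k = \emptyset$, $\{c_{t_1},\ldots,c_{t_m}\} = J$. Setting $J_{s-1} := J \setminus \{c_{t_1},\ldots,c_{t_{s-1}}\}$, I will show that each $(h_{t_s},c_{t_s})$ is an isolated blank cell of $\cG(H',J_{s-1})$ in the $\bRho$-game, so that the subsequence yields $(H',J) \xto[\bRho,\beta]{*} (H',\emptyset)$, establishing $\bRho(I,J)$-admissibility. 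Isolation is automatic. For blankness, Lemma~\ref{lem:gridcol} says $(h_{t_s},c_{t_s})$ is $\bSigma$-blank in $\cG(H',V_{t_s-1})$ iff either $\beta((h_{t_s},c_{t_s})) = \blank$ or some $\hat\beta$-cell class of colour $c := \beta((h_{t_s},c_{t_s}))$ fails to meet $H' \times V_{t_s-1}$. Because $H' \cap J = \emptyset$, a class $\{(i,j),(j,i)\}$ meets $H' \times V_{t_s-1}$ iff $i \in H'$ and $j \in V_{t_s-1} \cap J = J_{s-1}$, matching verbatim the $\bRho$-blankness condition on $(H',J_{s-1})$ via the cell-class bijection.

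The main obstacle lies in accurately tracking how the induced colourings evolve under restriction (Lemma~\ref{lem:gridcol}); the key simplification is that whenever $H \cap J = \emptyset$, only one representative of each $\hat\beta$-cell class lies in the $I \times J$ block where $\hat\beta$ is non-blank, so the $\bSigma$ ``meet'' condition aligns precisely with the singleton-class $\bRho$ condition, making the two games interchangeable.
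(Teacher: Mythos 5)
Your proof is correct and follows essentially the same route as the paper's: decompose $H\subset I\cup J$ into its $I$- and $J$-parts, use the blank isolated diagonal and same-block cells to reduce the $\bSigma$-game on $(H,V)$ to the $\bRho$-game on $(H_I,J)$ or $(H_J,I)$, and invoke Lemmas~\ref{lem:gridcol}, \ref{lem:sym_vs_rec}, and \ref{lem:transpose_adm}. Your explicit subsequence-extraction argument for necessity just spells out what the paper leaves implicit in its ``if and only if'' reductions.
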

\begin{proof}
  Let $\emptyset \not= H\subset V$.
  Write $H = I' \cup J'$ for $I'\subset I$ and $J'\subset J$.
  For all $i'\in I'$ (resp.~$j'\in J'$), the cell $(i',i')$ (resp.~$(j,j')$)
  is isolated and blank within $\cG(H,V)$.
  We can thus delete all columns in $I'\cup J'$ to obtain
  $(H,V) \xto[\bSigma,\hat\beta]* (H,V\setminus H)$.
  Next, each cell in $I'\times (I\setminus I')$ or $J'\times(J\setminus J')$ is
  $(H,V \setminus H)$-isolated and blank.
  Thus, if $I'\not= \emptyset \not= J'$, then $(H,V\setminus
  H)\xto[\bSigma,\hat\beta]* (H,\emptyset)$.
  We are thus left to consider the cases $J' = \emptyset$ or $I' =
  \emptyset$.

  Suppose that $J' = \emptyset$ so that $H = I'$ and $(H,V\setminus H)
  = (I', (I\setminus I') \cup J)
  \xto[\bSigma,\hat\beta]* (I',J)$.
  Using Lemma~\ref{lem:gridcol},
  it is easy to see that $\hat\beta(I',J)$ agrees with $\beta(I',J)$
  on $I'\times J$.
  By Lemma~\ref{lem:sym_vs_rec},
  for all non-empty $I''\subset I$,
  $(I'',J) \xto[\bSigma,\hat\beta]* (I'',\emptyset)$  if and only if
  $(I'',J) \xto[\bRho,\beta]* (I'',\emptyset)$.

  Finally, suppose that $I' = \emptyset$ so that $H = J'$
  and $(H,V\setminus H) \xto[\bSigma,\hat\beta]* (J',I)$.
  In this case, $\hat\beta(J',I)$ agrees with $\beta^\top(J',I)$
  on $J'\times I$.
  Hence, for all non-empty $J''\subset I$,
  $(J'',I) \xto[\bSigma,\hat\beta]* (J'',\emptyset)$
  if and only if
  $(J'',I) \xto[\bRho,\beta^\top]* (J'',\emptyset)$.
  The claim thus follows from Lemma~\ref{lem:transpose_adm}.
\end{proof}

Corollary~\ref{cor:sym} follows by combining
Proposition~\ref{prop:embedded_Sigma} and Corollary~\ref{cor:Gamma_zeta}---the translation between matrices and 
families of module representations is similar to 
the proof of Corollary~\ref{cor:rec} in \S\ref{ss:proof_rec}.

\paragraph{Towards Corollary~\ref{cor:asym}.}
The main difference between Corollaries~\ref{cor:asym} and \ref{cor:sym} is that
our proof of the former will involve admissibility of level $1$ rather than
$0$.
Let the notation and $\hat\beta\colon \sE(V,V) \to \Coloursb$ be as in
Definition~\ref{d:betahat}.

\begin{prop}
  \label{prop:embedded_Gamma}
  If $\beta$ is $\bRho(I,J)$-admissible,
  then $\hat\beta$ is $\bGamma(I\cup J,I\cup J)$-admissible of level $1$.
\end{prop}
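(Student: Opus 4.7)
Plan: I will verify the level-$1$ $\bGamma(V,V)$-admissibility of $\hat\beta$ by exhibiting, for every non-empty $H\subset V = I\cup J$, a set $D(H)$ of size at most $1$ so that $(H, V\setminus D(H)) \xto[\bGamma,\hat\beta]* (H,\emptyset)$. Write $H = I' \cup J'$ with $I' = H\cap I$ and $J' = H\cap J$, and split into three cases according to whether both, only one, or neither of $I', J'$ is empty (the last being vacuous). The organising observation is: a cell $(h,v)\in\cG(H,V')=H\odot V'$ is isolated iff its mirror $(v,h)$ drops out of the grid, which happens iff $v\notin H$ or $h$ has been removed as a column; and the $\hat\beta$-blank cells of $\cG(V,V)$ are exactly those with both indices in $I$ or both in $J$. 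The role of $D(H)$ is to make some row fully isolated so that the recipe from the rectangular case in \S\ref{ss:proof_rec} can be mimicked.

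First I would handle the case $I',J'\ne \emptyset$. Pick $h_0\in I'$ and set $D(H)=\{h_0\}$. In $\cG(H, V\setminus\{h_0\})$ the whole row $h_0$ is isolated, so the blank cells $(h_0,v)$ with $v\in I\setminus\{h_0\}$ delete every column of $I\setminus\{h_0\}$, leaving $(H,J)$. For each $v\in J\setminus J'$, the cell $(j_1,v)$ with any $j_1\in J'$ is isolated (its mirror has row $v\notin H$) and blank (both indices in $J$); deleting those columns brings us to $(H,J')$. Now the isolated cells of $\cG(H,J')$ are exactly those in $I'\times J'$, the cells of $J'\odot J'$ being blocked from isolation by their mirrors. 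A short check via Lemma~\ref{lem:gridcol} shows that $(i',j)\in I'\times J'$ is $\hat\beta(H,J')$-blank iff every element of the $\beta$-fibre of $\beta(i',j)$ lies in $I'\times J'$, i.e.\ iff $(i',j)$ is $\beta(I',J')$-blank. Since Corollary~\ref{cor:adm_subsets} and Proposition~\ref{prop:two_adm} together make $\beta$ into a $\bRho(I',J')$-admissible colouring, the resulting sequence $(I',J')\xto[\bRho,\beta]* (I',\emptyset)$ lifts verbatim to a sequence $(H,J')\xto[\bGamma,\hat\beta]* (H,\emptyset)$.

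In the remaining case $J'=\emptyset$ (the case $I'=\emptyset$ being symmetric via Lemma~\ref{lem:transpose_adm} applied to $\beta^\top$), pick $i_0\in I'$ and set $D(H)=\{i_0\}$. Row $i_0$ is fully isolated in $\cG(I',V\setminus\{i_0\})$; its blank cells delete $I'\setminus\{i_0\}$, and then the blank cells in $I'\times(I\setminus I')$ delete $I\setminus I'$, landing at $(I', J)$. Since $I'\cap J=\emptyset$, Lemma~\ref{lem:sym_vs_rec} identifies $\bGamma$- and $\bRho$-moves on $\cG(I',J)$, and a one-line calculation with Lemma~\ref{lem:gridcol} shows that the induced colouring on $I'\times J$ coincides with $\beta(I',J)$; the $\bRho(I',J)$-admissibility supplied again by Corollary~\ref{cor:adm_subsets} and Proposition~\ref{prop:two_adm} then finishes the case.

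The main obstacle is combinatorial bookkeeping rather than any structural difficulty. Lemma~\ref{lem:gridcol} imposes a global ``colour-closure'' condition on the induced colouring of every subgrid, so one must check at each stage that the colouring we actually work with matches the rectangular restriction $\beta(I',J')$, $\beta(I',J)$, or $\beta^\top(J',I)$ that we want to invoke. Once this matching is in hand, the sequences of $\bGamma$-moves on the intermediate grids are literal transcriptions of $\bRho$-moves on the underlying rectangular sub-colourings, and the $\bRho(I,J)$-admissibility of $\beta$ transports intact.
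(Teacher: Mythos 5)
Your proof is correct and follows essentially the same route as the paper: take $D(H)$ to be a single element of $I'$ (or of $J'$, via transposition and Lemma~\ref{lem:transpose_adm}), use the resulting fully isolated row to clear the columns lying in $I$, and then transfer the $\bRho$-admissibility of $\beta$ to the remaining rectangular subgrid via the blank-cell correspondence supplied by Lemma~\ref{lem:gridcol}. The only divergence is in the sub-case $I',J'\neq\emptyset$: where you lift the entire $\bRho(I',J')$ deletion sequence to $\cG(H,J')$, the paper instead takes a single blank cell $(i',j')$ of $\cG(I',J')$ (Lemma~\ref{lem:ex_blank}) and observes that after deleting column $j'$ the row $j'$ becomes entirely blank and isolated, clearing all remaining columns in one sweep; both arguments are valid.
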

\begin{proof}
  Let $H = I' \cup J' \not= \emptyset$ for $I' \subset I$ and $J'\subset J$. 
  Suppose that $I'\not= \emptyset$; analogously to the proof of
  Proposition~\ref{prop:embedded_Sigma},
  using Lemma~\ref{lem:transpose_adm}, the case $J'\not= \emptyset$ 
  of the following is similar.
  Let $i \in I'$ and $D := D(H) := \{ i\}$.
  Within the $i$th row of $\cG(H, V \setminus D)$,
  all cells with columns in $I$ are blank and isolated.
  Thus, $(H,V\setminus D) \xto[\bGamma,\hat\beta]* (H,J)$.
  Similar to the proof of Proposition~\ref{prop:embedded_Gamma},
  within the grid $\cG(H,J)$,
  (a) the induced partial colouring on
  $I'\times J$ coincides with $\beta(I',J)$
  and (b) all cells in $J'\times J$ are blank.
  We consider two cases:
  \begin{enumerate}
  \item
    Suppose that $J'\not= \emptyset$.
    As all cells within the non-empty set $J'\times (J\setminus  J')\subset
    \cG(H,J)$ are blank and isolated, $(H,J) \xto[\bGamma,\hat\beta]* (H,J')$.
    Since $\beta$ is $\bRho(I,J)$-admissible,
    by Lemma~\ref{lem:ex_blank} and Proposition~\ref{prop:two_adm},
    $\cG(I',J')$ contains a blank cell, say $(i',j')$.
    Since $J'\times J'\subset \cG(H,J')$ is entirely blank,
    $(i',j')$ is also a blank and isolated cell of $\cG(H,J')$.
    In particular, $(H,J')\xto[\bGamma,\hat\beta]{} (H,J'\setminus\{j'\})$.
    Within the grid $\cG(H,J'\setminus\{j'\})$, all cells in the $j'$th row are
    blank and isolated whence $(H,J') \xto[\bGamma,\hat\beta]* (H,\emptyset)$.
  \item
    If $J' = \emptyset$, then $(H,J) = (I',J) \xto[\bGamma,\hat\beta]*
    (I',\emptyset)$ by Lemma~\ref{lem:sym_vs_rec}. \qedhere
  \end{enumerate}
\end{proof}

Corollary~\ref{cor:sym} follows by combining
Proposition~\ref{prop:embedded_Gamma}
and Corollary~\ref{cor:Sigma_zeta}
similarly to the proof of Corollary~\ref{cor:rec} in \S\ref{ss:proof_rec}.

%%%%%%%%%%%%%%%%%%%%%%%%%%%%%%%%%%%%%%%%%%%%%%%%%%%%%%%%%%%%%%%%%%%%%%%%%
\subsection{Proof of Theorem~\ref{thm:embedded}}
%%%%%%%%%%%%%%%%%%%%%%%%%%%%%%%%%%%%%%%%%%%%%%%%%%%%%%%%%%%%%%%%%%%%%%%%%

As the culmination of the techniques developed in the present article,
the following provides the (admittedly technical) template for all three
results in Theorem~\ref{thm:embedded}. 
Recall the matrix notation for maps from \S\ref{ss:pushouts}.

\begin{thm}
  \label{thm:template}
  Let $\bTheta$ be a combinatorial family of module representations
  over $R$ as in Definition~\ref{d:combfam}.
  Let $I,J\in \Powf(\NN)$ and suppose that $\bTheta$ is $(I,J)$-constant of
  rank $\ell \ge 0$ (Definition~\ref{d:IJconst}).
  Define $\UU$ and $\acute\dtimes$ as in \S\ref{ss:relmod}.
  Write $\sB_\infty := \bigcup_{I,J\in\Powf(\NN)}\sB(I,J)$
  and let $\beta\colon \sB_{\infty}\to\Coloursb$ be a partial colouring;
  we assume that $\sB_\infty\subset \UU$.
  Suppose that $\beta$ is $\bTheta(I,J)$-admissible of level $\ell$ (Definition~\ref{d:adm}).
  Let $\tilde I,\tilde J\in \Powf(\NN)$
  with $I \subset \tilde I$ and $J \subset \tilde J$.
  Let $N$ be a finitely generated $R$-module and let $\eta\colon N\to \Hom(R\tilde
  I,R\tilde J)$ be a module representation.
  Recall the definition of $\infl^{\tilde I,\tilde J}_{I,J}(\dtimes)$ from~\S\ref{s:ask}.
  Define
  \[
  \begin{aligned}
    \sigma & := 
    \begin{bmatrix}
      \inf^{\tilde I,\tilde J}_{I,J}\bigl(\theta(I,J)\sslash \beta\bigr) \\
      \acute\eta
    \end{bmatrix} \!\colon &
    \Board(\sB(I,J) \sslash \beta)\oplus \acute N&\to \Hom(\acute R\tilde I,\acute R\tilde J)
    \text{ and}\\
    \tilde\sigma &:=
    \begin{bmatrix}
      \inf^{\tilde I,\tilde J}_{I,J}\bigl(\acute\theta(I,J)\bigr) \\
      \acute\eta
    \end{bmatrix}\!\colon&
    \acute R \, \sB(I,J) \oplus \acute N &\to \Hom(\acute R\tilde I,\acute R\tilde J).
  \end{aligned}
  \]
  Let $\fO$ be an $\acute R$-algebra which is a compact \DVR{}.
  Then
  $\Zeta^{\ak}_{\sigma^\fO}(T) = \Zeta^{\ak}_{\tilde\sigma^\fO}(T)$.
\end{thm}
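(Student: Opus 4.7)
The strategy is to identify $\sigma$ as an orbital subrepresentation of $\tilde\sigma$, after which Lemma~\ref{lem:orbital_same_zeta} will yield the equality of ask zeta functions. The core input is Corollary~\ref{cor:adm_blueprint}\ref{cor:adm_blueprint2}: the combined hypotheses that $\bTheta$ is $(I,J)$-constant of rank $\ell$ and that $\beta$ is $\bTheta(I,J)$-admissible of level $\ell$ imply that $\theta(I,J)\sslash\beta(I,J)$ is an orbital subrepresentation of $\acute\theta(I,J)$. The task is then to propagate this orbitality through the two constructions applied to build $\sigma$ and $\tilde\sigma$, namely inflation and direct sum.

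First, I would invoke Lemma~\ref{lem:orbital_enlarge} with the inclusions $I \subset \tilde I$ and $J\subset \tilde J$ to conclude that $\inf^{\tilde I,\tilde J}_{I,J}(\theta(I,J)\sslash\beta)$ is an orbital subrepresentation of $\inf^{\tilde I,\tilde J}_{I,J}(\acute\theta(I,J))$. This lemma is tailor-made for the passage from $(I,J)$-indexed orbit modules to $(\tilde I,\tilde J)$-indexed orbit modules; no further work is required here since the hypothesis of the lemma is exactly what Corollary~\ref{cor:adm_blueprint}\ref{cor:adm_blueprint2} supplies.

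Next, I would apply Lemma~\ref{lem:orbital_sum} to the two-term family indexed by $A = \{1,2\}$, with
\[
  \theta_1 = \inf^{\tilde I,\tilde J}_{I,J}\bigl(\acute\theta(I,J)\bigr),\qquad
  \theta_1' = \inf^{\tilde I,\tilde J}_{I,J}\bigl(\theta(I,J)\sslash\beta\bigr),\qquad
  \theta_2 = \theta_2' = \acute\eta.
\]
The first pair is orbital by the previous step; the second pair is trivially so since the two representations coincide. Lemma~\ref{lem:orbital_sum} then shows that $\sigma = [\theta_1',\theta_2']^\top$ is an orbital subrepresentation of $\tilde\sigma = [\theta_1,\theta_2]^\top$.

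Finally, since $\fO$ is an $\acute R$-algebra and $N$ (hence $\acute N$) is finitely generated, Lemma~\ref{lem:orbital_same_zeta} applies and yields $\Zeta^{\ak}_{\sigma^\fO}(T) = \Zeta^{\ak}_{\tilde\sigma^\fO}(T)$. There is no substantive obstacle in this proof: all of the combinatorial and geometric work has been absorbed into Corollary~\ref{cor:adm_blueprint}, and the operations appearing in the definitions of $\sigma$ and $\tilde\sigma$ (inflation and direct sum of module representations) are precisely those for which Lemmas~\ref{lem:orbital_enlarge}--\ref{lem:orbital_sum} guarantee that orbitality is preserved.
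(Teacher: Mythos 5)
Your proposal is correct and matches the paper's own proof essentially verbatim: the paper also invokes Corollary~\ref{cor:adm_blueprint}\ref{cor:adm_blueprint2} to get orbitality of $\theta(I,J)\sslash\beta$ in $\acute\theta(I,J)$, then Lemmas~\ref{lem:orbital_enlarge}--\ref{lem:orbital_sum} to propagate it through inflation and the two-term sum, and finally Lemma~\ref{lem:orbital_same_zeta}. Your write-up merely makes the intermediate applications more explicit.
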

\begin{proof}
  By Corollary~\ref{cor:adm_blueprint}\ref{cor:adm_blueprint2},
  $\theta(I,J)\sslash\beta$ is an orbital subrepresentation of
  $\acute\theta(I,J)$.
  Lemmas~\ref{lem:orbital_enlarge}--\ref{lem:orbital_sum} thus show that
  $\sigma$ is an orbital subrepresentation of $\tilde\sigma$.
  The claim now follows from Lemma~\ref{lem:orbital_same_zeta}.
\end{proof}

Let $I',J'\in \Powf(\NN)$ and let $\beta\colon I'\times J'\to \Coloursb$ be
$\bRho(I',J')$-admissible.

\begin{enumerate}[label=(\alph{*})]
\item
  \label{pf_embedded1}
  Recall that $\bRho$ is $(I',J')$-constant of rank $0$
  (Example~\ref{ex:Rho_crk}).
  We may thus apply Theorem~\ref{thm:template} with $\bTheta = \bRho$,
  $I = I'$, $J=J'$, and $\ell = 0$.
\item
  \label{pf_embedded2}
  For $\bTheta = \bSigma$,
  suppose that $I'\cap J' = \emptyset$.
  Given $\beta$, define $\hat\beta$ as in Definition~\ref{d:betahat}.
  By Proposition~\ref{prop:embedded_Sigma},
  $\hat\beta$ is $\bSigma(I' \cup J',I' \cup J')$-admissible.
  By Example~\ref{ex:Sigma_crk}, $\bSigma$ is $(I'\cup J',I'\cup J')$-constant of
  rank $0$.
  We may thus apply Theorem~\ref{thm:template} with $\bTheta = \bSigma$,
  $I = J = I'\cup J'$, and $\ell = 0$.
\item
  \label{pf_embedded3}
  For $\bTheta = \bGamma$,
  again suppose that $I'\cap J' = \emptyset$ and
  define $\hat\beta$ as in Definition~\ref{d:betahat}.
  By Proposition~\ref{prop:embedded_Gamma},
  $\hat\beta$ is $\bGamma(I' \cup J',I' \cup J')$-admissible of level $1$.
  Suppose that $I' \cup J' \not= \emptyset$.
  By Corollary~\ref{cor:Gamma_crk}, $\bGamma$ is $(I'\cup J',I'\cup
  J')$-constant of rank $1$.
  We may thus apply Theorem~\ref{thm:template} with $\bTheta = \bGamma$,
  $I = J = I'\cup J'$, and $\ell = 1$.
\end{enumerate}

The preceding points
\ref{pf_embedded1}--\ref{pf_embedded3}
imply Theorem~\ref{thm:embedded}.
Indeed, by permuting rows and columns in Theorem~\ref{thm:embedded},
we may assume that $r_i = i$ and $c_j = j$ for all
$1\le i\le m$ and $1\le j\le n$.
By applying \ref{pf_embedded1} with $R = \fO$, $I = [d]$, $J = [e]$,
$\tilde I = [\tilde m]$, $\tilde J = [\tilde n]$,
and $\eta = \bigl(N\incl \Mat_{\tilde m\times \tilde n}(\fO)\bigr)$, we obtain
the case $M = \Mat_{d\times e}(\fO)$ and $M' = \Board_{d\times e}(\cA,u,\fO)$ of
Theorem~\ref{thm:embedded};
the final translation from module representations to matrices is again based
on Example~\ref{ex:abstract_intro_relations}.
The other two cases in Theorem~\ref{thm:embedded} follow very similarly using
\ref{pf_embedded2}--\ref{pf_embedded3}.

%%%%%%%%%%%%%%%%%%%%%%%%%%%%%%%%%%%%%%%%%%%%%%%%%%%%%%%%%%%%%%%%%%%%%%%%
% References
%%%%%%%%%%%%%%%%%%%%%%%%%%%%%%%%%%%%%%%%%%%%%%%%%%%%%%%%%%%%%%%%%%%%%%%%
{
  \def\emph{\itemph}
  \bibliographystyle{abbrv}
  \tiny
  \phantomsection
  \addcontentsline{toc}{section}{References}
  \bibliography{board}
}

\vspace*{.5em}

{\small
\noindent
{\footnotesize
\begin{minipage}[t]{0.5\textwidth}
  Angela Carnevale\\
  E-mail: \href{mailto:angela.carnevale@nuigalway.ie}{angela.carnevale@nuigalway.ie}
\end{minipage}
\hfill
\begin{minipage}[t]{0.45\textwidth}
  Tobias Rossmann\\
  E-mail: \href{mailto:tobias.rossmann@nuigalway.ie}{tobias.rossmann@nuigalway.ie}
\end{minipage}

\vspace*{1.1em}

\noindent
School of Mathematical and Statistical Sciences\\
National University of Ireland, Galway \\
Ireland
}
}

\end{document}